\newcommand{\mb}{\mathbf} \newcommand{\mc}{\mathcal}
 \renewcommand{\Re}{\mathrm{Re}\,}
\renewcommand{\Im}{\mathrm{Im}\,} \newcommand{\rg}{\mathrm{rg}\,}
\newcommand{\N}{\mathbb{N}} \newcommand{\R}{\mathbb{R}}
\newcommand{\C}{\mathbb{C}} \newcommand{\Z}{\mathbb{Z}}
\newcommand{\B}{\mathbb{B}} \renewcommand{\S}{\mathbb{S}}
\newcommand{\supp}{\mathrm{supp}\,} 
 \newcommand{\Dh}{\mathcal{D}}
\newcommand{\const}{\mathrm{const}}
\newtheorem{lemma}{Lemma}[section]
\newtheorem{theorem}[lemma]{Theorem}
\newtheorem{corollary}[lemma]{Corollary}
\newtheorem{proposition}[lemma]{Proposition}
 \theoremstyle{remark}
\newtheorem{remark}[lemma]{Remark}
 \theoremstyle{definition}
\newtheorem{definition}[lemma]{Definition}
\numberwithin{equation}{section}
\title[Blowup for wave maps]{Hyperboloidal similarity coordinates and
a globally stable blowup profile for supercritical wave maps}
\author{Pawe\l {} Biernat} \address{Rheinische Friedrich-Wilhelms-Universit\"at
  Bonn, Life \& Medical Sciences Institute,
Carl-Troll-Strasse 31,
D-53115 Bonn, Germany}
\email{pawel.biernat@gmail.com}
\author{Roland Donninger} \address{Rheinische
  Friedrich-Wilhelms-Universit\"at Bonn, Mathematisches Institut,
  Endenicher Allee 60, D-53115 Bonn, Germany}
\email{donninge@math.uni-bonn.de}
\address{Universit\"at Wien, Fakult\"at f\"ur Mathematik,
  Oskar-Morgenstern-Platz 1, A-1090 Vienna, Austria}
\email{roland.donninger@univie.ac.at}
\author{Birgit Sch\"orkhuber} \address{Universit\"at Wien, Fakult\"at
  f\"ur Mathematik, Oskar-Morgenstern-Platz 1, A-1090 Vienna, Austria}
\email{birgit.schoerkhuber@univie.ac.at}
\address{Karlsruhe Institute of Technology, Department of Mathematics,
  Institute for Analysis,
  Englerstra{\ss}e 2, 76131 Karlsruhe, Germany}
\email{birgit.schoerkhuber@kit.edu}
\thanks{Roland Donninger is supported by the Alexander von Humboldt Foundation via
a Sofja Kovalevskaja Award endowed by the German Federal Ministry of Education
and Research. Birgit Sch\"orkhuber is supported by the Austrian Science Fund
(FWF) via the Hertha Firnberg Program, Project Nr. T 739-N25. Partial support by the Deutsche Forschungsgemeinschaft 
(DFG), CRC 1060 'The Mathematics of Emergent Effects', is also gratefully acknowledged.}
\begin{document}

\begin{abstract}
  We consider co-rotational wave maps from (1+3)-dimensional Minkowski
  space into the three-sphere. This model exhibits an explicit blowup
  solution and we prove the asymptotic nonlinear stability of this
  solution in the whole space under small perturbations of the initial
  data. The key ingredient is the introduction of a novel coordinate
  system that allows one to track the evolution past the blowup time
  and almost up to the Cauchy horizon of the singularity. As a
  consequence, we also obtain a result on continuation beyond blowup.
\end{abstract}

\maketitle

\section{Introduction}
\noindent Wave maps $U: \R^{1,3}\to\S^3$ from $(1+3)$-dimensional
Minkowski space into the three-sphere are defined as critical points
of the action functional
\begin{equation}
  \label{eq:wmaction} \int_{\R^{1,3}}\partial^\mu U^a\partial_\mu U^b
  g_{ab}\circ U, 
\end{equation}
where $g$ is the standard round metric
on $\S^3$ and Einstein's summation convention is in force.  By
choosing spherical coordinates $(t,r,\theta,\varphi)$ on Minkowski
space and hyperspherical coordinates on the three-sphere, one may
restrict oneself to so-called co-rotational maps which take the form
$U(t,r,\theta,\varphi)=(\psi(t,r),\theta,\varphi)$.  Under this
symmetry reduction, the Euler-Lagrange equations associated to the
action \eqref{eq:wmaction} reduce to the single scalar wave equation
\begin{equation}
  \label{eq:main}
  \left (\partial_t^2-\partial_r^2-\frac{2}{r}\partial_r \right )\psi(t,r)+\frac{\sin(2\psi(t,r))}{r^2}=0
\end{equation}
for the angle $\psi$.  Note that the singularity at the center $r=0$
enforces the boundary condition $\psi(t,0)=\frac{m}{2}\pi$ for $m\in
\Z$. To begin with, we restrict ourselves to $m=0$. By
testing Eq.~\eqref{eq:main} with $\partial_t \psi(t,r)$, we obtain the
conserved energy
\begin{equation}
  \label{eq:energy}
  \int_0^\infty \left
    [\tfrac12 |\partial_t\psi(t,r)|^2+  \tfrac12 |\partial_r\psi(t,r)|^2+\frac{\sin^2(\psi(t,r))}{r^2}\right
  ]r^2 dr
\end{equation}
and finiteness of the latter requires
$\lim_{r\to\infty}\psi(t,r)=n\pi$ for $n\in \Z$.  

Despite the existence of a positive definite energy,
Eq.~\eqref{eq:main} develops singularities in finite time. This was
first demonstrated by Shatah \cite{Sha88} who constructed a
self-similar solution $\psi_T(t,r)=f_0(\frac{r}{T-t})$ to
Eq.~\eqref{eq:main} by a variational argument. Here, $T>0$ is a free
parameter (the \emph{blowup time}). In fact, $f_0(\rho)=2\arctan\rho$,
as was observed later \cite{TurSpe90}. The solution $\psi_T(t,r)$ is
perfectly smooth for $t<T$ but develops a gradient blowup at the
spacetime point $(t,r)=(T,0)$. In \cite{Don11, DonSchAic12,
  CosDonXia16, CosDonGlo17} it is shown that the blowup solution
$\psi_T(t,r)=f_0(\frac{r}{T-t})$ is asymptotically stable in the
backward lightcone of the blowup point $(T,0)$ under small perturbations of the
initial data.  This result leaves open two major questions which shall
be addressed in the present paper:
\begin{itemize}
\item How does the solution behave outside the backward lightcone?
\item Is it possible to continue the solution beyond the singularity
  in a well-defined way?
\end{itemize}

As for the second question, we note that $\psi_T(t,r)$ is defined for
$t<T$ only.  However, $\psi_T$ is closely related to the principal
value of the argument function $\arg$ in complex analysis. 
More precisely, we have $\psi_T(t,r)=2\arg(T-t+ir)$ and this
suggests that there exists a natural continuation of $\psi_T$ beyond
the blowup time $t=T$.  Indeed, the tangent half-angle formula yields
the representation
\[ \arg z= 2\arctan\left (\frac{\Im z}{\Re z+\sqrt{(\Re z)^2 +(\Im
      z)^2}} \right ),
\]
valid if $\Im z\not= 0$ or $\Re z>0$, and this leads to the more
general blowup solution
\[ \psi_T^*(t,r):=4\arctan\left
  (\frac{r}{T-t+\sqrt{(T-t)^2+r^2}}\right ).
\]
The skeptical reader may check by a direct computation that $\psi_T^*$
is indeed a solution to Eq.~\eqref{eq:main} for all $t\in \R$ and
$r>0$.  The point is that $\psi_T^*$ is smooth everywhere away from
the center and thus, $\psi_T^*$ extends $\psi_T$ smoothly beyond the
blowup time $t=T$. Furthermore,
\[ \lim_{r\to 0+}\psi(t,r)=2\pi \]
if $t>T$, whereas $\psi(t,0)=0$ for $t<T$.  Consequently, the blowup
is accompanied by a change of the boundary condition at the center.
After the blowup, the solution $\psi_T^*$ settles down to the constant
function $2\pi$, i.e., for any $r>0$ we have $\lim_{t\to\infty}\psi_T^*(t,r)=2\pi$.

\subsection{Statement of the main result}
In view of the boundary condition $\psi(t,0)=0$ it is
natural to switch to the new variable
\[ \widehat u(t,r):=\frac{\psi(t,r)}{r}. \]
In terms of $\widehat u$, Eq.~\eqref{eq:main} reads
\begin{equation}
  \label{eq:mainuhat}
  \left (\partial_t^2-\partial_r^2-\frac{4}{r}\partial_r \right
  )\widehat u(t,r)+\frac{\sin(2r\widehat u(t,r))-2r\widehat u(t,r)}{r^3}=0,
\end{equation}
which is a radial, semilinear wave equation in 5 space dimensions. For
notational purposes it is convenient to rewrite
Eq.~\eqref{eq:mainuhat} as
\begin{equation}
  \label{eq:mainu}
  \left (\partial_t^2-\Delta_x\right )u(t,x)=\frac{2|x|u(t,x)-\sin(2|x|u(t,x))}{|x|^3}
\end{equation}
for $u: \R\times \R^5\to \R$ given by $u(t,x)=\widehat u(t,|x|)$.  By
the above, Eq.~\eqref{eq:mainu} has the explicit one-parameter family
$\{u_T^*: T\in \R\}$ of blowup solutions given by
\[ u_T^*(t,x):=\frac{4}{|x|}\arctan\left
  (\frac{|x|}{T-t+\sqrt{(T-t)^2+|x|^2}}\right ). \]
We introduce the following spacetime region, depicted in
Fig.~\ref{fig:Omega}.
\begin{definition}
  For $T,b\in \R$ we set
  \[ \Omega_{T,b}:=\{(t,x)\in \R\times \R^5: 0\leq t<T+b|x|\}. \]
\end{definition}

\begin{figure}[ht]
  \centering
  \includegraphics{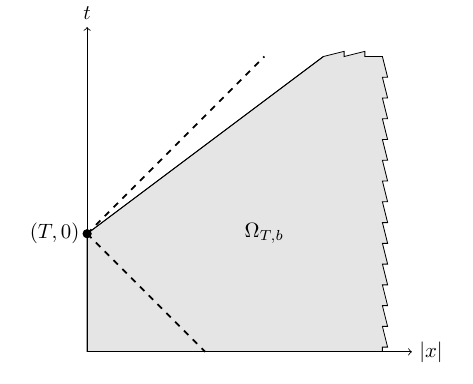}
  \caption{A spacetime diagram depicting the region
    $\Omega_{T,b}$. The dashed lines are the boundaries of the forward
    and backward lightcones of the point $(T,0)$. The solid line
    emerging from $(T,0)$ has slope $b$ and marks the upper boundary
    of the shaded region $\Omega_{T,b}$.}
  \label{fig:Omega}
\end{figure}

\FloatBarrier

Note that $u_T^*\in C^\infty(\Omega_{T,b})$. 
Our main result
establishes the stability of $u_T^*$ under small perturbations of the
initial data.

\begin{theorem}
  \label{thm:main}
  Fix $b\in (0,1)$ and $m\in \N$, $m\geq 8$. Then there exist positive
  constants $\delta,\epsilon,M,\omega_0$ such that the following
  holds.
  \begin{enumerate}
  \item For any pair of radial functions
    $(f,g)\in C^\infty(\R^5)\times C^\infty(\R^5)$, supported in the
    ball $\B^5_\epsilon$ and satisfying
    \[ \|(f,g)\|_{H^m(\R^5)\times H^{m-1}(\R^5)}\leq
    \tfrac{\delta}{M}, \]
    there exists a $T\in [1-\delta,1+\delta]$ and a unique function
    $u\in C^\infty(\Omega_{T,b})$ that satisfies Eq.~\eqref{eq:mainu}
    for all $(t,x)\in \Omega_{T,b}$ and
    \begin{align*}
      u(0,x)&=u_1^*(0,x)+f(x) \\
      \partial_0 u(0,x)&=\partial_0 u_1^*(0,x)+g(x)
    \end{align*}
    for all $x\in \R^5$.

  \item The solution $u$ converges to $u_T^*$ in the sense
    that\footnote{Note that $\|(u_T^*\circ
      \eta_T)(s,\cdot)\|_{H^{m-3}(\B^5_R)}\simeq e^s$ and
$\|\partial_s (u_T^*\circ
      \eta_T)(s,\cdot)\|_{H^{m-4}(\B^5_R)}\simeq e^s$. This motivates
      the normalization factors $e^{-s}$ on the left-hand sides of the estimates.}

    \begin{align*}
      e^{-s}\|(u\circ \eta_T)(s,\cdot)-(u_T^* \circ
      \eta_T)(s,\cdot)\|_{H^{m-3}(\B^5_R)}
      &\leq \delta e^{-\omega_0s} \\
      e^{-s}\|\partial_s
      (u\circ\eta_T)(s,\cdot)-\partial_s (u_T^*\circ
      \eta_T)(s,\cdot)\|_{H^{m-4}(\B^5_R)}&\leq \delta e^{-\omega_0 s}
    \end{align*}
    for all $s\geq 0$, where
    \begin{align*}
      \eta_T(s,y)&=(T+e^{-s}h(y), e^{-s}y),\qquad h(y)=\sqrt{2+|y|^2}-2 \\
      R&=\frac{2b+\sqrt{2(1+b^2)}}{1-b^2}.
    \end{align*}

  \item In the domain
    $\Omega_{T,b}\setminus \eta_T([s_0,\infty)\times \B_R^5)$, where $s_0=\log(-\frac{h(0)}{1+2\epsilon})$, we have
    $u=u_1^*$.
  \end{enumerate}
\end{theorem}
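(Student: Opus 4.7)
The plan is to recast Eq.~\eqref{eq:mainu} in the hyperboloidal similarity coordinates $(s,y)=\eta_T^{-1}(t,x)$ defined in the theorem. Upon the rescaling $\psi(s,y):=e^{-s}(u\circ\eta_T)(s,y)$, the one-parameter family $u_T^*$ degenerates to a single $s$-independent profile $\psi_0$ of the transformed equation, so the instability in $T$ is converted into a simple eigenvalue analysis. The key geometric point is that the slices $\{s=\text{const}\}$ are spacelike hyperboloids that foliate $\Omega_{T,b}$; in particular, $\eta_T([0,\infty)\times\B_R^5)$ covers precisely the part of $\Omega_{T,b}$ above the initial slice whose evolution is not determined by data outside the support of $(f,g)$. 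This observation reduces the analysis of \eqref{eq:mainu} on the globally large region $\Omega_{T,b}$ to a local-in-space problem on the ball $\B_R^5$.

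Writing the perturbation $v=\psi-\psi_0$ as a first-order system $\Phi=(v,\partial_s v)$, I would formulate the evolution as an abstract Cauchy problem
\[
  \partial_s\Phi=\mathbf{L}\Phi+\mathbf{N}(\Phi),\qquad \Phi(0)=\Phi_0,
\]
on the Hilbert space $\mc{H}:=H^{m-3}(\B_R^5)\times H^{m-4}(\B_R^5)$. The first analytic task is to show that $\mathbf{L}$ generates a strongly continuous semigroup on $\mc{H}$; this requires verifying quasi-accretivity of the symmetric principal part together with an outflow check at $|y|=R$, where the characteristic speed of the transformed wave operator is nonnegative so that no boundary condition is needed. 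The heart of the argument, and what I expect to be the main obstacle, is the spectral analysis of $\mathbf{L}$: using compactness of a relatively bounded perturbation of the transport generator, the spectrum in the half-plane $\{\Re\lambda>-\omega_0\}$ reduces to finitely many eigenvalues, and one must prove that the \emph{only} such eigenvalue is a simple one at $\lambda=1$, arising from differentiation in $T$ of the symmetry family $u_T^*$. This amounts to a delicate non-existence theorem for solutions of a singular second-order ODE on $(0,R)$ with prescribed regular behavior at both endpoints, uniformly in a large complex parameter region, and is the step where the hyperboloidal geometry (rather than the lightcone geometry of the earlier works) really matters.

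With the semigroup decomposition $\mc{H}=\langle g_0\rangle\oplus\mc{H}_s$ and the bound $\|e^{s\mathbf{L}}|_{\mc{H}_s}\|\lesssim e^{-\omega_0 s}$ in hand, parts (1) and (2) follow from a standard modulation (Lyapunov--Perron) argument: one chooses $T=T(f,g)\in[1-\delta,1+\delta]$ by an implicit function theorem so as to kill the projection of the orbit onto $\langle g_0\rangle$, and closes a contraction on the space of $\mc{H}_s$-valued, exponentially decaying trajectories using Duhamel's formula and the Moser-type estimate for $\mathbf{N}$ afforded by the smoothness exponent $m\geq 8$. The resulting mild solution is upgraded to $C^\infty(\Omega_{T,b})$ by persistence of regularity for the semigroup combined with a bootstrap in the hyperboloidal slab, and is transported back to $(t,x)$-coordinates via $\eta_T$. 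Finally, part (3) is a pure finite-speed-of-propagation statement: since the perturbation $(f,g)$ is supported in $\B_\epsilon^5$ and $u_1^*$ already solves \eqref{eq:mainu} everywhere, the choice $s_0=\log(-h(0)/(1+2\epsilon))$ is arranged so that $\eta_T([s_0,\infty)\times\B_R^5)$ contains the full forward domain of influence of $\B_\epsilon^5\times\{0\}$ inside $\Omega_{T,b}$, forcing $u\equiv u_1^*$ on the complement.
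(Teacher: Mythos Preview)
Your architecture is correct and matches the paper: rewrite in hyperboloidal similarity coordinates, rescale by $e^{-s}$ so that $u_T^*$ becomes a static profile, set up the first-order system $\partial_s\Phi=\mathbf L\Phi+\mathbf N(\Phi)$, establish a semigroup with a spectral gap on the stable subspace, run Lyapunov--Perron, and modulate in $T$ to kill the unstable direction. Two points, however, deserve comment.

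First, your expectation about the spectral step is backwards. You say the mode-stability ODE is ``the step where the hyperboloidal geometry (rather than the lightcone geometry of the earlier works) really matters.'' In the paper the opposite is true: the eigenvalue equation $(\lambda\mathbf I-\mathbf L)\mathbf f=0$ is transformed, via the explicit change of variables $(\tau,\xi)=(s-\log(-h(y)),-y/h(y))$ relating HSC to standard similarity coordinates, into precisely the spectral equation for the lightcone problem. The mode stability (no eigenvalues with $\Re\lambda\ge 0$ other than $\lambda=1$) is then \emph{quoted} from \cite{CosDonXia16,CosDonGlo17} rather than reproved. The hyperboloidal geometry is essential for the domain covered by the evolution, not for the spectral analysis itself; the latter is height-function independent. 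If you attempted a direct ODE analysis on $(0,R)$ in the $y$-variable you would be redoing, in a more complicated setting, work that already exists.

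Second, you omit the passage from data at $t=0$ to data on the initial hyperboloid. The slice $\{s=0\}$ (or $\{s=s_0\}$) is not $\{t=0\}$, so you cannot simply take $\Phi(0)=\Phi_0$ computed from $(f,g)$. The paper first runs the standard Cauchy theory for Eq.~\eqref{eq:mainu} in a thin slab $\Lambda_\epsilon$ around $t=0$ (Lemma~\ref{lem:locwm}), then evaluates that local solution on the hyperboloid $s=s_0=\log(-h(0)/(1+2\epsilon))$ to produce the initial datum $\mathbf U((f,g),T)$ for the hyperboloidal evolution. The decomposition $\mathbf U((f,g),T)=\gamma_\epsilon(T-1)\mathbf f_1^*+O(\|(f,g)\|+|T-1|^2)$ is what makes the $T$-fixed-point argument work. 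A smaller methodological difference: the paper obtains the free semigroup bound not by a direct 5D dissipativity estimate but by reducing the radial 5D problem to a 1D problem via the intertwining operator $\mathbf D_5$ (Proposition~\ref{prop:D}) and then analyzing the resulting pair of transport equations $\partial_s v_\pm=L_\pm v_\pm$ by hand.
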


\begin{remark}
  It seems appropriate to comment on the comparatively high degree of
  regularity $m\geq 8$ in Theorem \ref{thm:main}.
 The proof of
 Theorem \ref{thm:main}
 rests on the analysis of the evolution in a novel coordinate
  system which uses a hyperboloidal foliation of spacetime, see
  below. Therefore, it is necessary to first
  transport the data given at $t=0$ to the initial hyperboloid. This
  is done via the standard Cauchy evolution and in order to evaluate
  the solution on the hyperboloid, we use the Sobolev embedding which
  requires a sufficiently high degree of regularity. We are generous
  and assume $m\geq 8$. 
\end{remark}

\begin{remark}
 To avoid any possible confusion, we note that the parameter $b$ in
 Theorem \ref{thm:main} is fixed and only enters via $R$, i.e., it
 will not show up in the proofs below.
\end{remark}

\subsection{Discussion}
Theorem \ref{thm:main} gives a complete description of the evolution
up to the blowup time $t=T$. In particular, Theorem \ref{thm:main}
shows that the solution does not develop singularities outside the
backward lightcone of $(T,0)$ at some time $t<T$, a scenario which
could not be ruled out by the results in \cite{Don11, DonSchAic12}.
Furthermore, causally separated from the blowup point $(T,0)$, the
evolution is controlled even beyond the blowup time and the whole
region $\Omega_{T,b}$ is free of singularities.  In other words, we
also obtain some partial information on the evolution \emph{after} the blowup.
In fact, by taking $b$ close to $1$, we approach the
\emph{Cauchy horizon} of the singularity, that is, the boundary of the
future lightcone of the point $(T,0)$, see Fig.~\ref{fig:Omega}. The
solution is therefore controlled everywhere outside the \emph{future}
lightcone of the blowup point $(T,0)$.

The key ingredient for the proof of Theorem \ref{thm:main} is the
introduction of a novel coordinate system $(s,y)$ which we call
``hyperboloidal similarity coordinates'' (HSC). The coordinates are
defined by the function $\eta_T$ in Theorem \ref{thm:main}, i.e.,
\[ (t,x)=\eta_T(s,y)=(T+e^{-s}h(y),e^{-s}y), \]
and depicted in Fig.~\ref{fig:HSC}.
\begin{figure}[ht]
  \centering
  \includegraphics{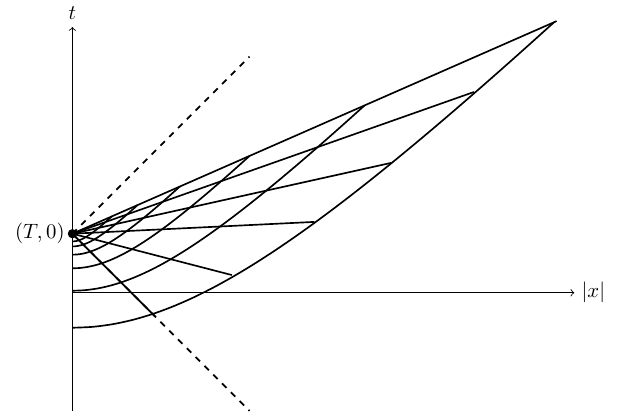}
  \caption{The hyperboloidal similarity coordinates. The hyperboloids
    are the lines $s=\const$ and the straight lines emerging radially
    from the blowup point $(T,0)$ correspond to $y=\const$. The dashed
    lines are the boundaries of the forward and backward lightcones of
    the singularity.}
  \label{fig:HSC}
\end{figure}
The coordinate system is hyperboloidal in the sense of \cite{Fri83,
  Zen11} but at the same time compatible with self-similarity, that is
to say, the fraction $\frac{x}{T-t}=-\frac{y}{h(y)}$ is independent of
the new time coordinate $s$.  The hyperboloidal similarity coordinates
are a generalization of the standard similarity coordinates
$(\tau,\xi)=(-\log(T-t), \frac{x}{T-t})$ which are traditionally used
in the study of self-similar blowup \cite{MerZaa03, MerZaa05}. By
their very definition, the coordinates $(\tau,\xi)$ are restricted to
$t<T$. This limitation is not present in the HSC. More precisely, the
point is that the slices of constant $s$ are curved, as opposed to the
constant $\tau$ slices. As a consequence, the coordinate system
$(s,y)$ covers a much larger portion of spacetime than the traditional
similarity coordinates $(\tau,\xi)$, see Fig.~\ref{fig:HSC}.

The bulk of the paper is concerned with the development of a nonlinear
perturbation theory in the coordinates $(s,y)$ that is capable of
controlling the wave maps flow near the blowup solution $u_T^*$. The
approach is similar in spirit to the earlier works \cite{Don11,
  DonSchAic12}, where the standard similarity coordinates $(\tau,\xi)$
are used, and based on semigroup methods, nonself-adjoint spectral
theory, and ideas from infinite-dimensional dynamical systems.

\subsection{Related work}

The problem of finite-time blowup for wave maps attracted a lot of
interest in the recent past. The bulk of the literature focuses on the
two-dimensional case which is energy-critical. The existence of
finite-time blowup for energy-critical wave maps into the two-sphere
has first been observed numerically in the work of Bizo\'n-Chmaj-Tabor
\cite{BizChmTab01}. Rigorously, the existence of blowup solutions was
proved by Krieger-Schlag-Tataru \cite{KriSchTat08},
Rodnianski-Sterbenz \cite{RodSte10}, and Rapha\"el-Rodnianski
\cite{RapRod12}, see also \cite{Sha16, CanKri15}. 
We remark that the blowup in the energy-critical case is of type II
and proceeds by dynamical rescaling of a soliton, cf.~\cite{Str03}.
In fact, there are by now powerful nonperturbative techniques for
energy-critical equations which allow one to prove versions of the
celebrated \emph{soliton resolution conjecture}, see the work by C\^ote \cite{Cot15},
C\^ote-Kenig-Lawrie-Schlag \cite{CotKenLawSch15a, CotKenLawSch15b},
and, very recently, Jia-Kenig \cite{JiaKen15},
Duyckaerts-Jia-Kenig-Merle \cite{DuyJiaKenMer16}, see also
\cite{Gri17, JenLaw17}. Large-data global well-posedness and
scattering is addressed in the fundamental work by 
Tataru-Sterbenz \cite{SteTat10a, SteTat10b} and Krieger-Schlag
\cite{KriSch12}.

The present paper deals with energy-supercritical wave maps and much
less is known in this case. In the equivariant setting, local
well-posedness at critical regularity was settled by
Shatah-Tahvildar-Zahdeh \cite{ShaTah94} and the general case is
treated in the papers by Tataru \cite{Tat01}, Tao \cite{Tao01a,
  Tao01b}, Klainerman-Rodnianski \cite{KlaRod01}, Shatah-Struwe
\cite{ShaStr02}, Nahmod-Stefanov-Uhlenbeck \cite{NahSteUhl03}, and
Krieger \cite{Kri03}, see also \cite{Tat05}. For energy-supercritical
equations the existence of self-similar solutions is typical and in
fact, the model under investigation has plenty of them
\cite{Biz00}. As already mentioned, the local stability of the
``ground-state'' self-similar solution in the backward lightcone was
established in \cite{Don11, DonSchAic12, CosDonXia16, CosDonGlo17},
see \cite{DonSch12, DonSch14, Don14, DonSch16, DonSch16a} for other
equations. The ``ground state'' actually exists in any dimension
\cite{BizBie15, BieBizMal16} and its stability in the backward
lightcone was recently
established in \cite{CosDonGlo17, ChaDonGlo17}.  Furthermore, Dodson-Lawrie
\cite{DodLaw15} showed that type II blowup is impossible.
This, however, does not mean that every blowup is self-similar.
Indeed, a novel blowup mechanism in high dimensions was discovered
recently by
Ghoul-Ibrahim-Nguyen \cite{GhoIbrNgu17}, building on the work by
Merle-Rapha\"el-Rodnianski \cite{MerRapRod15} on the supercritical
nonlinear Schr\"odinger equation.
To avoid confusion, we remark that this
type of nonself-similar blowup is also called ``type II'' in
\cite{GhoIbrNgu17}
but is
different from the notion of type II blowup used by Dodson-Lawrie
\cite{DodLaw15}.
Furthermore, Germain \cite{Ger08,
  Ger09} studied self-similar wave maps, Widmayer considered the
question of uniqueness of weak wave maps \cite{Wid15} and
Chiodaroli-Krieger \cite{ChiKri17} constructed large global
solutions. Finally, we remark that stable self-similar blowup also
exists for wave maps with negatively curved targets \cite{CazShaTah98,
  DonGlo17}.

\subsection{Notation}

Most of the notation we use is standard in the field or
self-explanatory. We write $\B_R^d(x_0):=\{x\in \R^d: |x-x_0|<R\}$ and
abbreviate $\B_R^d:=\B_R^d(0)$ as well as $\B^d:=\B_1^d(0)$. 
The symbol $\N:=\{1,2,3,\dots\}$ denotes the natural numbers and we
set $\N_0:=\{0\}\cup \N$.
We denote by
$H^k(\R^d)$, $k\in \N_0$, the completion of the Schwartz space $\mc S(\R^d)$ with
respect to the Sobolev norm 
\[ \|f\|_{H^k(\R^d)}^2:=\sum_{|\alpha|\leq k}\|\partial^\alpha
f\|_{L^2(\R^d)}^2. \]
Here, we employ the usual multi-index notation
\[ \partial^\alpha
f:=\partial_1^{\alpha_1}\partial_2^{\alpha_2}\dots\partial_d^{\alpha_d}f \]
for $\alpha=(\alpha_1,\alpha_2,\dots,\alpha_d)\in \N_0^d$ and
$|\alpha|:=\alpha_1+\alpha_2+\dots+\alpha_d$.
The homogeneous Sobolev space $\dot H^k(\R^d)$ is defined analogously
but with the homogeneous Sobolev norm
\[ \|f\|_{\dot H^k(\R^d)}:=\sum_{|\alpha|=k}\|\partial^\alpha
f\|_{L^2(\R^d)}^2. \]
Similarly, we define $H^k(\B_R^d(x_0))$
as the corresponding completion of
$C^\infty(\overline{\B_R^d(x_0)})$.
If $k>\frac{d}{2}$,
we have $H^k(\R^d)\hookrightarrow
C(\R^d)$ and we denote by
$H^k_{\mathrm{rad}}(\R^d)$
and $H^k_\mathrm{rad}(\B_R^d)$
the subsets of $H^k(\R^d)$
and $H^k(\B_R^d)$, respectively, that consist of radial functions.

As usual, $A\lesssim B$ means that there exists a constant $C>0$ such
that $A\leq CB$. Possible dependencies of the implicit constant $C$ on
additional parameters follow from the context. 
We also write $A\simeq B$ if $A\lesssim B$
and $B\lesssim A$. In general, the letter $C$ is used to denote a
constant that may change its value at each occurrence. For the sake of
clarity we sometimes indicate dependencies on additional parameters by subscripts.

We follow the tradition in relativity and number the slots of functions
defined on Minkowski space $\R^{1,d}$ starting at $0$, i.e.,
$\partial_0 u(t,x)=\partial_t u(t,x)$. In general, Greek indices run
from $0$ to $d$ whereas Latin indices run from $1$ to $d$ and
Einstein's summation convention is in force. For the signature of the
Minkowski metric we use the convention that spacelike vectors have
positive lengths. 

For a linear operator $\mb L$ on a Banach space we denote by $\mc
D(\mb L)$, $\sigma(\mb L)$, and $\sigma_p(\mb L)$ its domain,
spectrum, and point spectrum, respectively. Furthermore, for
$\lambda\in \rho(\mb L):=\C\setminus \sigma(\mb L)$, we set $\mb
R_{\mb L}(\lambda):=(\lambda\mb I-\mb L)^{-1}$.
We use boldface lowercase Latin letters to denote 2-component
functions, e.g.~$\mathbf f=(f_1,f_2)$ and we also use the notation
$[\mb f]_j:=f_j$ to extract the components. 

Finally,
$e_1:=(1,0,0,\dots,0)\in \R^d$ is the first unit vector in $\R^d$,
where the dimension $d$ follows from the context.

\section{Review of the standard Cauchy theory}

\noindent The proof of Theorem \ref{thm:main} relies on the
formulation of the problem in adapted hyperboloidal coordinates. In
order to construct data on the initial hyperboloid, we employ some
elementary results on the standard Cauchy theory which are reviewed in
the following. For simplicity we restrict ourselves to spatial dimensions
$d\geq 3$. Furthermore, we only consider wave evolution to the
future starting at $t=0$. By time translation and reflection this 
is in fact already the most general situation. 

\subsection{Wave propagators}
Recall that the solution of the Cauchy problem
\begin{equation}
  \label{eq:wave}
  \left \{ \begin{array}{l}
             (\partial_t^2-\Delta_x)u(t,x)=0,\quad (t,x)\in \R^{1,d} \\
             u(0,\cdot)=f,\qquad \partial_0 u(0,\cdot)=g
           \end{array} \right . 
       \end{equation}
       for $f,g\in \mc S(\R^d)$, say, is given by
       \begin{equation}
         \label{eq:wavesol} u(t,\cdot)=\cos(t|\nabla|)f+\frac{\sin(t|\nabla|)}{|\nabla|}g, 
       \end{equation}
       where $\phi(|\nabla|)f:=\mc F_d^{-1}(\phi(|\cdot|)\mc F_d f)$
       for $\phi \in C(\R)$ and $\mc F_d$ is the Fourier transform
       \[ \mc F_df(\xi):=\int_{\R^d}e^{-i \xi x}f(x)d x .\]
       The \emph{wave propagators} $\cos(t|\nabla|)$ and
       $\frac{\sin(t|\nabla|)}{|\nabla|}$ extend by continuity to
       rough data, e.g.~$(f,g)\in \dot H^1(\R^d)\times L^2(\R^d)$.
       This yields a canonical notion of strong solutions, i.e., one
       says that $u$ solves Eq.~\eqref{eq:wave} if
       Eq.~\eqref{eq:wavesol} holds.  Note further that for any fixed
       $t\in \R$, the wave propagators map $\mc S(\R^d)$ to itself
because the symbols involved are smooth and bounded along with their
derivatives.

       \subsection{Finite speed of propagation}

       The wave equation enjoys finite speed of propagation in the
       following sense.

       \begin{proposition}
         \label{prop:finite}
         Let $x_0\in \R^d$ and $d\geq 3$. Then there exists a
         continuous function $\gamma_d: [0,\infty)\to [1,\infty)$ such
         that
         \begin{align*}
           \|\partial_t^\ell \cos(t|\nabla|)f\|_{\dot
           H^k(\B_{T-t}^d(x_0))}
           &\leq \|f\|_{\dot H^{k+\ell}(\B^d_{T}(x_0))} \\
           \left \|\partial_t^\ell \frac{\sin(t|\nabla|)}{|\nabla|}f
           \right \|_{\dot H^k(\B^d_{T-t}(x_0))}
           &\leq \|f\|_{\dot H^{k+\ell-1}(\B^d_{T}(x_0))} 
         \end{align*}
         as well as
         \begin{align*}
           \|\partial_t^\ell \cos(t|\nabla|)f\|_{L^2(\B_{T-t}^d(x_0))}
           &\leq \gamma_d(T)\|f\|_{H^{1+\ell}(\B^d_{T}(x_0))} \\
           \left \|\partial_t^\ell \frac{\sin(t|\nabla|)}{|\nabla|}f
           \right \|_{L^2(\B^d_{T-t}(x_0))}
           &\leq \gamma_d(T) \|f\|_{H^\ell(\B^d_{T}(x_0))}
         \end{align*}
         for all $f\in \mc S(\R^d)$, $T>0$, $t\in [0,T)$, $k\in \N$, and
         $\ell\in \N_0$.
       \end{proposition}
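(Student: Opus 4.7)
The plan is a classical bootstrap from the local energy identity for the wave equation. I start with the base case $k = 1$, $\ell = 0$, using the multiplier method. For $f, g \in \mc S(\R^d)$, the function $u := \cos(t|\nabla|) f + \sin(t|\nabla|)/|\nabla|\,g$ lies in $C^\infty(\R\times \R^d)$ and satisfies $(\partial_t^2 - \Delta_x) u = 0$ with $u(0) = f$ and $\partial_t u(0) = g$. Multiplying the equation by $\partial_t u$ gives the pointwise identity $\partial_\tau e = \nabla \cdot (\partial_\tau u\,\nabla u)$ with $e := \tfrac{1}{2}(|\partial_\tau u|^2 + |\nabla u|^2)$. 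Integrating over the truncated backward cone $K_t := \{(\tau, y) : 0 \leq \tau \leq t,\ |y - x_0| \leq T - \tau\}$ and applying the divergence theorem in spacetime, the lateral flux through the null portion of $\partial K_t$ evaluates to $\tfrac{1}{\sqrt{2}}\bigl(e - \partial_\tau u\,\nabla u \cdot \tfrac{y-x_0}{|y-x_0|}\bigr)$, which is nonnegative by Cauchy--Schwarz. Hence
\[ \|\partial_t u(t)\|_{L^2(\B_{T-t}^d(x_0))}^2 + \|\nabla u(t)\|_{L^2(\B_{T-t}^d(x_0))}^2 \leq \|g\|_{L^2(\B_T^d(x_0))}^2 + \|\nabla f\|_{L^2(\B_T^d(x_0))}^2. \]
Setting $g = 0$ and $f = 0$ in turn yields the $k = 1, \ell = 0$ and $k = 0, \ell = 1$ cases of the first pair.

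To obtain the full homogeneous estimates in $(k, \ell)$, I commute. Because spatial translations commute with the d'Alembertian, $\partial^\beta u$ solves the wave equation with data $(\partial^\beta f, \partial^\beta g)$; applying the base inequality to $\partial^\beta u$ and summing over multi-indices $|\beta| = k - 1$ produces the $\dot H^k$ bounds up to the combinatorial identification of equivalent multi-index norms. Time derivatives are then eliminated by iterating $\partial_t^2 u = \Delta u$: inductively $\partial_t^{2j} \cos(t|\nabla|) f = \cos(t|\nabla|) \Delta^j f$ and $\partial_t^{2j+1} \cos(t|\nabla|) f = (\sin(t|\nabla|)/|\nabla|)\,\Delta^{j+1} f$, with parallel identities for the sine propagator. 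Substituting into the $\ell = 0$ $\dot H^k$ bounds just proved completes the first pair.

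For the inhomogeneous $L^2$ estimates (carrying $\gamma_d(T)$) the wave flow does not preserve $L^2$ locally, so I instead invoke the fundamental theorem of calculus
\[ u(t, x) = u(0, x) + \int_0^t \partial_\tau u(\tau, x) \, d\tau. \]
Taking $L^2(\B_{T-t}^d(x_0))$ norms, using the inclusion $\B_{T-t}^d(x_0) \subset \B_{T-\tau}^d(x_0)$ for $\tau \in [0, t]$, and applying Minkowski gives
\[ \|u(t)\|_{L^2(\B_{T-t}^d(x_0))} \leq \|f\|_{L^2(\B_T^d(x_0))} + t \max_{0 \leq \tau \leq t} \|\partial_\tau u(\tau)\|_{L^2(\B_{T-\tau}^d(x_0))}, \]
and the maximum is controlled by the base energy estimate, contributing $\|\nabla f\|_{L^2(\B_T^d(x_0))} + \|g\|_{L^2(\B_T^d(x_0))}$. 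Choosing $\gamma_d(T) := 1 + T$ and specializing to $(f, 0)$ and $(0, g)$ yields the $\ell = 0$ statements of the second pair; the extra derivative in $H^{1+\ell}(\B_T^d)$ over $H^\ell(\B_T^d)$ is exactly what is needed to absorb the $\|\nabla f\|_{L^2}$ term produced by the time integration. Higher $\ell$ reduces to $\ell = 0$ by the same $\partial_t^2 = \Delta$ substitution used in the homogeneous argument. The entire proof is classical; the only care required is to package the $T$-dependence cleanly into a single continuous function $\gamma_d : [0, \infty) \to [1, \infty)$, which is immediate from the affine bound above.
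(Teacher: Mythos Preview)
Your argument is correct. For the homogeneous $\dot H^k$ estimates you proceed exactly as the paper does: the local energy flux identity gives the base case $k=1$, $\ell=0$, and commutation with $\partial^\alpha$ together with the substitution $\partial_t^2=\Delta$ handles general $(k,\ell)$.

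The genuine difference is in the $L^2$ bounds. You obtain them by writing $u(t,x)=u(0,x)+\int_0^t \partial_\tau u(\tau,x)\,d\tau$, applying Minkowski, and feeding in the already-proved energy estimate for $\partial_\tau u$; this yields the affine constant $\gamma_d(T)=1+T$ directly. The paper instead proves a Poincar\'e-type inequality with a boundary term (Lemma~\ref{lem:embed}), augments the standard local energy by $(T-t)^{-1}\|u(t,\cdot)\|_{L^2(\partial\B_{T-t}^d)}^2$, verifies that this augmented quantity is still monotone (Lemma~\ref{lem:apxenid}), and finally bounds the boundary term at $t=0$ via the trace theorem. Your route is shorter and entirely self-contained; the paper's route is more elaborate but produces an explicit monotone Lyapunov functional dominating $\|u(t,\cdot)\|_{L^2(\B_{T-t}^d)}^2$, which is occasionally useful in its own right. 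Either approach suffices for the purposes of the proposition.
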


       The bounds in homogeneous Sobolev spaces $\dot H^k$ follow
       directly from the energy identity. The $L^2$-bounds are
       slightly more involved and in order to prove them, we need the
       following result which gives us control on the $L^2$-norm in
       balls in terms of the $\dot H^1$-norm and a boundary term.

       \begin{lemma}
         \label{lem:embed}
         Let $x_0\in \R^d$ and $d\geq 3$. Then we have 
         \[ \|f\|_{L^2(\B^d_R(x_0))}^2\leq R^2 \|\nabla
         f\|_{L^2(\B^d_R(x_0))}^2+\tfrac{d-1}{2}R \|f\|_{L^2(\partial
           \B^d_R(x_0))}^2 \]
         for all $f\in C^1(\overline{\B^d_R(x_0)})$ and $R>0$.
       \end{lemma}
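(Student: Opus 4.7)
The plan is to extract the inequality from a single divergence-theorem identity, followed by Cauchy-Schwarz and the elementary inequality $2ab \le a^2 + b^2$.

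First, by translation invariance I reduce to the case $x_0 = 0$. I then apply the divergence theorem to the radial vector field $V(x) := x\,|f(x)|^2$ on $\B_R^d$. A direct computation gives $\nabla \cdot V = d\,|f|^2 + 2 f\,(x\cdot \nabla f)$, while the outward unit normal on $\partial \B_R^d$ is $\nu = x/R$, so $V\cdot \nu = R\,|f|^2$ on the boundary. This produces the exact identity
\[
d\,\|f\|_{L^2(\B_R^d)}^2 + 2\int_{\B_R^d} f\,(x\cdot \nabla f)\,dx = R\,\|f\|_{L^2(\partial \B_R^d)}^2.
\]
(For complex-valued $f$ the middle term is replaced by $2\Re\int \bar f\,(x\cdot \nabla f)\,dx$, but the subsequent bounds are unaffected.)

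Next, I estimate the cross term via Cauchy-Schwarz and the pointwise bound $|x\cdot \nabla f|\le R\,|\nabla f|$ to obtain
\[
d\,\|f\|_{L^2(\B_R^d)}^2 \le R\,\|f\|_{L^2(\partial \B_R^d)}^2 + 2R\,\|f\|_{L^2(\B_R^d)}\,\|\nabla f\|_{L^2(\B_R^d)}.
\]
Applying $2ab \le a^2 + b^2$ with $a = \|f\|_{L^2(\B_R^d)}$ and $b = R\,\|\nabla f\|_{L^2(\B_R^d)}$ absorbs the $\|f\|^2$-piece into the left-hand side and yields
\[
(d-1)\,\|f\|_{L^2(\B_R^d)}^2 \le R\,\|f\|_{L^2(\partial \B_R^d)}^2 + R^2\,\|\nabla f\|_{L^2(\B_R^d)}^2.
\]

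The claim then follows by dividing through by $d-1$ and noting that for $d \ge 3$ one has both $\tfrac{1}{d-1} \le 1$ and $\tfrac{1}{d-1} \le \tfrac{d-1}{2}$, the latter being equivalent to $(d-1)^2 \ge 2$. I expect no genuine obstacle; the only things to verify carefully are the divergence computation for $V$ and the boundary integrand, both of which are routine. It is reassuring that the hypothesis $d \ge 3$ enters only at the very last step, precisely where relaxing the constants to the form stated in the lemma requires it.
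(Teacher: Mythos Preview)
Your argument is correct. The divergence identity for $V(x)=x\,|f(x)|^2$ is computed properly, the Cauchy--Schwarz step and the AM--GM absorption are fine, and the final relaxation of the constants indeed uses only $d\ge 3$.

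The paper takes a different route: it works in polar coordinates, writes $r^{(d-1)/2}f(r\omega)$ as the integral of its $r$-derivative from $0$, applies Cauchy--Schwarz along each ray, expands the square, and integrates the cross term by parts; the hypothesis $d\ge 3$ is used there to discard the resulting nonpositive term $-\tfrac{(d-1)(d-3)}{4}\int_0^R f(s\omega)^2 s^{d-3}\,ds$. That computation lands exactly on the constants $R^2$ and $\tfrac{d-1}{2}R$ stated in the lemma. Your approach is shorter and actually proves the sharper inequality $(d-1)\|f\|_{L^2}^2 \le R\|f\|_{L^2(\partial\B_R^d)}^2 + R^2\|\nabla f\|_{L^2}^2$ before you relax the constants to match the statement; the paper's method, by contrast, yields the final constants directly without any giveaway at the end. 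Either argument would serve the downstream application (Proposition~\ref{prop:finite}) equally well.
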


\begin{proof}
  By translation we may assume $x_0=0$.  Introducing polar coordinates
  $r=|x|$ and $\omega=\frac{x}{|x|}$, we compute
  \begin{align*}
    r^{\frac{d-1}{2}} f(r\omega)=\int_0^r \partial_s \left [s^{\frac{d-1}{2}}f(s\omega)\right ]d s
    =\int_0^r \left [s^{\frac{d-1}{2}}\partial_s f(s\omega)+\tfrac{d-1}{2}s^{\frac{d-3}{2}}f(s\omega) \right ]d s
  \end{align*}
  and Cauchy-Schwarz yields
  \[ r^{d-1}|f(r\omega)|^2 \leq R \int_0^R \left |
    s^{\frac{d-1}{2}}\partial_s
    f(s\omega)+\tfrac{d-1}{2}s^{\frac{d-3}{2}}f(s\omega) \right |^2
  ds \] for all $r\in [0,R]$.  Expanding the square, we find
  \begin{align*}
    \tfrac{1}{R}r^{d-1}|f(r\omega)|^2&\leq \int_0^R |\partial_s f(s\omega)|^2 s^{d-1}ds
                                       +(d-1)\int_0^R \underbrace{\partial_s f(s\omega)f(s\omega)}_{\frac12 \partial_s [f(s\omega)]^2}s^{d-2}ds \\
                                     &\quad +\left (\frac{d-1}{2}\right )^2  \int_0^R f(s\omega)^2s^{d-3}d s \\
                                     &=\int_0^R |\partial_s f(s\omega)|^2 s^{d-1}ds+\tfrac{d-1}{2}R^{d-2}f(R\omega)^2 \\
                                     &\quad -\frac{(d-1)(d-3)}{4}\int_0^R f(s\omega)^2 s^{d-3}ds \\
                                     &\leq \int_0^R |\omega^j \partial_j f(s\omega)|^2 s^{d-1}ds+\tfrac{d-1}{2}R^{d-2}f(R\omega)^2. 
  \end{align*}
  Integrating this inequality yields
  \begin{align*} \int_0^R \int_{\S^{d-1}}f(r\omega)^2 d\sigma(\omega)r^{d-1}dr&\leq R^2 \int_0^R \int_{\S^{d-1}} |\nabla f(r\omega)|^2d\sigma(\omega)r^{d-1}dr \\
                                                                              &\quad +\tfrac{d-1}{2}R^d \int_{\S^{d-1}}f(R\omega)^2d\sigma(\omega) \\
                                                                              &=R^2
                                                                                \|\nabla
                                                                                f\|_{L^2(\B^d_R)}^2+\tfrac{d-1}{2}R\|f\|_{L^2(\partial\B^d_R)}^2,
  \end{align*}
  which is the claim.
\end{proof}

\begin{proof}[Proof of Proposition \ref{prop:finite}]
  Let $u(t,\cdot)=\cos(t|\nabla|)f$. Then
  $u(t,\cdot), \partial_t u(t,\cdot)\in \mc S(\R^d)$ for all
  $t\in \R$, $u\in C^\infty(\R^{1,d})$, $u(0,\cdot)=f$,
  $\partial_0 u(0,\cdot)=0$, and $(\partial_t^2 -\Delta_x)u(t,x)=0$.
  Since $\partial^\alpha u(t,\cdot)$ for any multi-index
  $\alpha\in \N_0^d$ satisfies the same equation, it is sufficient to
  consider the case $k=1$.  Furthermore, by translation invariance we
  may assume $x_0=0$.  We start with the case $\ell=0$.  A
  straightforward computation yields
  \[ \frac{d}{dt}\left [\int_{\B^d_{T-t}}\left (|\nabla_x
      u(t,x)|^2+|\partial_t u(t,x)|^2 \right )dx \right ] \leq 0, \]
cf.~the proof of Lemma \ref{lem:apxenid},
  and thus,
  \[ \|u(t,\cdot)\|_{\dot H^1(\B^d_{T-t})}^2=\|\nabla
  u(t,\cdot)\|_{L^2(\B^d_{T-t})}^2\leq \|\nabla
  u(0,\cdot)\|_{L^2(\B^d_T)}^2+\|\partial_0 u(0,\cdot)\|_{L^2(\B^d_T)}^2
  =\|f\|_{\dot H^1(\B^d_T)}^2 \] since $\partial_0 u(0,\cdot)=0$.

  For the $L^2$-bound we appeal to Lemma \ref{lem:embed} and note that
  the energy may be augmented by a boundary term that does not destroy
  the monotonicity. Indeed, we have
  \[ \frac{d}{dt}\left [\int_{\B^d_{T-t}}\left (|\nabla_x
      u(t,x)|^2+|\partial_t u(t,x)|^2 \right )dx +
    \frac{1}{T-t}\int_{\partial \B_{T-t}^d}u(t,\omega)^2 d\sigma(\omega)
  \right ] \leq 0, \] 
see Lemma \ref{lem:apxenid}.
Consequently, Lemma \ref{lem:embed} implies
  \begin{align*}
    \|u(t,\cdot)\|_{L^2(\B^d_{T-t})}^2
    &\leq (T-t)^2\left [ \|\nabla u(t,\cdot)\|_{L^2(\B^d_{T-t})}^2 
      +\tfrac{d-1}{2}(T-t)^{-1}\|u(t,\cdot)\|_{L^2(\partial\B^d_{T-t})}^2\right ] \\
    &\leq \tfrac{d-1}{2}(T-t)^2 \left [\|\nabla
      u(0,\cdot)\|_{L^2(\B^d_T)}^2
      +\|\partial_0 u(0,\cdot)\|_{L^2(\B^d_T)}^2+T^{-1}\|u(0,\cdot)\|_{L^2(\partial\B^d_T)}^2 \right ] \\
    &\leq \widetilde \gamma_d(T) \|u(0,\cdot)\|_{H^1(\B^d_T)}^2
  \end{align*}
for a continuous function $\widetilde\gamma_d: [0,\infty)\to [1,\infty)$,
  where the last step follows from the trace theorem.  For
  $\ell\geq 1$ we repeat the above arguments with $u$ replaced by
  $\partial_0^\ell u$ and use the equation to transform temporal
  derivatives into spatial ones.  The proof for the sine propagator is
  identical.
\end{proof}

\begin{remark}
  By approximation, finite speed of propagation holds for rough data
  as well.
\end{remark}

In view of Proposition \ref{prop:finite} it is natural to extend the
definition of the wave propagators to functions defined on balls only.
This is most conveniently realized by means of Sobolev extensions.

\begin{lemma}
  \label{lem:extension}
Let $d\in \N$ and $x_0\in \R^d$.
For any $r>0$ there exists a linear map $\mc E_{r,x_0,d}: L^2(\B_r^d(x_0))\to
L^2(\R^d)$ such that $\mc E_{r,x_0,d} f|_{\B_r^d(x_0)}=f$ a.e.~and $f\in
H^k(\B_r^d(x_0))$ for $k\in \N$ implies $\mc E_{r,x_0,d} f\in
H^k(\R^d)$. Furthermore, there exists a constant $C_{r,k,d}>0$ such that
\[ \|\mc E_{r,x_0,d} f\|_{H^k(\R^d)}\leq C_{r,k,d} \|f\|_{H^k(\B_r^d(x_0))} \]
for all $k\in \N_0$, $x_0\in \R^d$, and $f\in H^k(\B_r^d(x_0))$.
\end{lemma}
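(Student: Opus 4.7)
The plan is to reduce to a fixed reference ball by translation and scaling, and then invoke a classical Sobolev extension theorem for the unit ball. First I would translate, setting $\mc E_{r, x_0, d} f(x) := (\mc E_{r, 0, d}[f(\cdot + x_0)])(x - x_0)$, which, since translation is an isometry of every $H^k$ space, reduces the problem to $x_0 = 0$ with no loss of constants.

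Second, I would invoke a classical Sobolev extension theorem on the unit ball — for instance Stein's universal extension built from higher-order reflections across $\partial \B^d$ and a fixed compactly supported cutoff — to produce a single linear operator $E: L^2(\B^d) \to L^2(\R^d)$ with $Eg|_{\B^d} = g$ a.e.\ and $\|Eg\|_{H^k(\R^d)} \leq \widetilde C_{k,d} \|g\|_{H^k(\B^d)}$ for every $k \in \N_0$. I would then set $\tilde f(y) := f(ry)$ and define $\mc E_{r, 0, d} f(x) := (E \tilde f)(x/r)$; a change of variables immediately gives $\mc E_{r, 0, d} f = f$ a.e.\ on $\B_r^d$, and expanding $\|\mc E_{r, 0, d} f\|_{H^k(\R^d)}^2$ via the dilation identities
\[
\|\partial^\alpha G(\cdot/r)\|_{L^2(\R^d)}^2 = r^{d-2|\alpha|} \|\partial^\alpha G\|_{L^2(\R^d)}^2, \qquad \|\partial^\beta \tilde f\|_{L^2(\B^d)}^2 = r^{2|\beta|-d} \|\partial^\beta f\|_{L^2(\B_r^d)}^2,
\]
yields the desired bound.

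The main obstacle is making the final constant genuinely independent of $r$: the $r$-powers from the dilation of $E \tilde f$ and from the unscaling of $\tilde f$ cancel cleanly only within a single derivative order, whereas the non-homogeneous $H^k$ norm mixes all orders. To close the estimate with an $r$-uniform constant one needs the refined componentwise version of Stein's estimate, namely $\|\partial^\alpha Eg\|_{L^2(\R^d)} \lesssim_{k,d} \|g\|_{H^{|\alpha|}(\B^d)}$, rather than merely the aggregate $H^k \to H^k$ bound. This refined estimate is built into the universal Stein extension because the reflection coefficients are chosen, via a Vandermonde argument, so that each $\partial^\alpha Eg$ depends only on derivatives of $g$ of order at most $|\alpha|$; with this in hand the exponents in the dilation identities match term by term and the constant reassembles into $C_{k,d}$ depending only on $k$ and $d$.
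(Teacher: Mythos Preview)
Your approach --- translate to the origin, dilate to the unit ball, and invoke a universal Sobolev extension on $\B^d$ --- is precisely the paper's: it sets $\mc E_{r,x_0,d}f(x):=\mc E_d(f_{1/r}(\cdot+x_0/r))_r(x-x_0)$ with $f_\lambda(x)=f(x/\lambda)$, cites Adams--Fournier for $\mc E_d$, and then simply declares it ``straightforward to verify'' the stated bound. You actually go further than the paper by isolating the real issue, namely that the inhomogeneous $H^k$ norm does not scale homogeneously under dilation.

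That said, your resolution does not close the gap for small $r$, and in fact no extension can. Running your own scheme with the componentwise bound $\|\partial^\alpha Eg\|_{L^2(\R^d)}\lesssim\|g\|_{H^{|\alpha|}(\B^d)}$ (which, incidentally, is just the aggregate bound applied at level $|\alpha|$, not an additional refinement) gives
\[
\|\partial^\alpha \mc E_{r,0,d} f\|_{L^2(\R^d)}^2 \;\lesssim\; \sum_{|\beta|\le|\alpha|} r^{2(|\beta|-|\alpha|)}\,\|\partial^\beta f\|_{L^2(\B_r^d)}^2,
\]
and $r^{2(|\beta|-|\alpha|)}\le 1$ only when $r\ge 1$. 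The genuinely homogeneous version you seem to have in mind, $\|\partial^\alpha Eg\|_{L^2}\lesssim\|g\|_{\dot H^{|\alpha|}(\B^d)}$, is incompatible with $E$ taking values in $L^2(\R^d)$: for $g\equiv 1$ the right-hand side vanishes for $|\alpha|\ge 1$, forcing $Eg$ to be constant and hence not in $L^2$. More directly, for $d=1$ and $f\equiv 1$ on $(-r,r)$ one has $\|f\|_{H^1(\B_r)}^2=2r$, while any $H^1(\R)$ extension $g$ satisfies $g(0)=1$ and thus $1=g(0)^2\le 2\|g\|_{L^2(\R)}\|g'\|_{L^2(\R)}\le\|g\|_{H^1(\R)}^2$; so the optimal constant is at least $(2r)^{-1/2}\to\infty$. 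The upshot is that the $r$-uniform claim is too strong (the paper's one-line proof glosses over the same point); what your argument does correctly deliver is uniformity over $r\ge r_0$ for any fixed $r_0>0$, which is what the subsequent lightcone arguments actually use.
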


\begin{proof}
  From e.g.~\cite{AdaFou03} we infer the existence of an
  extension $\mc E_d: L^2(\B^d)\to L^2(\R^d)$ such that $\mc E_d
  f|_{\B^d}=f$ a.e.~and $\|\mc E_d f\|_{H^k(\R^d)}\leq C_{k,d}
  \|f\|_{H^k(\B^d)}$ for all $k\in \N_0$ and all $f\in H^k(\B^d)$.
Note further that $f\in H^d(\B^d)$ implies $\mc E_d f\in
H^d(\R^d)\hookrightarrow C(\R^d)$ by Sobolev embedding and thus, $f$
and $\mc E_d f$ may be identified with continuous functions such that
$\mc E_d f|_{\B^d}=f$.
For $f\in H^d(\B_r^d(x_0))$ we now set 
$\mc E_{r,x_0,d}f(x):=\mc E_d (f_{1/r}(\cdot+x_0/r))(\frac{x-x_0}{r})$, where
$f_\lambda(x):=f(\frac{x}{\lambda})$ for any $\lambda>0$.
By density, $\mc E_{r,x_0,d}$ extends to all of $L^2(\B_r^d(x_0))$ and
it is straightforward to verify that $\mc E_{r,x_0,d}$ has the desired properties.
\end{proof}

\begin{definition}
\label{def:wploc}
Let $T>0$, $t\in [0,T)$, and $d\in \N$, $d\geq 3$. Then we define
\[ \cos(t|\nabla|), \frac{\sin(t|\nabla|)}{|\nabla|}:
L^2(\B_T^d(x_0))\to L^2(\B_{T-t}^d(x_0)) \] by
\begin{align*}
  \cos(t|\nabla|)f&:=\left . \left (\cos(t|\nabla|)\mc E_{T,x_0,d}f
                    \right ) \right |_{\B_{T-t}^d(x_0)} \\
  \frac{\sin(t|\nabla|)}{|\nabla|}f&:=\left (\left .
\frac{\sin(t|\nabla|)}{|\nabla|}\mc E_{T,x_0,d}f\right ) \right |_{\B_{T-t}^d(x_0)},
\end{align*}
where $\mc E_{T,x_0,d}$ is a Sobolev extension as in Lemma \ref{lem:extension}.
\end{definition}

\begin{remark}
  Proposition \ref{prop:finite} implies that Definition
  \ref{def:wploc} is independent of the extension chosen and
  that the wave propagators are bounded linear maps from $H^k(\B_T^d(x_0))$ to
  $H^k(\B_{T-t}^d(x_0))$ for all $k\in \N_0$, $T>0$, $t\in [0,T)$, and $x_0\in \R^d$.
\end{remark}

\subsection{Local well-posedness of semilinear wave
  equations}\label{Sec:LWP}

Next, we turn to the local Cauchy problem for nonlinear wave equations
of the form
\begin{equation}
  \label{Eq:NLW5d}
  \left \{
    \begin{array}{l}
      \partial_t^2 u(t,\cdot)-\Delta u(t,\cdot)  = \mc N(u(t,\cdot)) \\
      u(0,\cdot) = f, \quad \partial_0 u(0,\cdot) = g
    \end{array} \right . ,
\end{equation}
where $\mc N$ is some nonlinear operator.  In fact, we are going to
restrict ourselves to the following class of \emph{admissible}
nonlinearities.

\begin{definition}
  \label{def:adm}
  Let  $k\in \N$ and $x_0\in \R^d$, $d\in \N$. 
A map $\mc N: H^k(\R^d)\to H^{k-1}_\mathrm{loc}(\R^d)$ is called \emph{$(k,x_0)$-admissible}
  iff
$\mc N(0)=0$ and for any $R\geq 1$ there exists a constant
  $C_{R,k,x_0,d}>0$ such that
  \[ \|\mc N(f)-\mc N(g)\|_{H^{k-1}(\B_r^d(x_0))}\leq
  C_{R,k,x_0,d}
     \|f-g\|_{H^k(\B_r^d(x_0))} \]
    for all $r\in [\frac12 R,R]$ and all $f,g\in H^k(\R^d)$ satisfying
\[ \|f\|_{H^k(\B_R^d(x_0))}+\|g\|_{H^k(\B_R^d(x_0))}\leq R. \]
\end{definition}

\begin{remark}
  For any $r>0$, a $(k,x_0)$-admissible nonlinearity $\mc N$
  naturally restricts to a map $\mc N_r: H^k(\B^d_r(x_0))\to
  H^{k-1}(\B^d_r(x_0))$ by 
  \[ \mc N_{r}(f):=\mc N(\mc E_{r,x_0,d} f)|_{\B^d_r(x_0)}, \]
  where $\mc E_{r,x_0,d}$ is a Sobolev extension as in Lemma
  \ref{lem:extension}.  The Lipschitz bound in Definition
  \ref{def:adm} ensures that $\mc N_r$ is independent of the extension
  chosen.  For notational convenience we will identify $\mc N$ with
  $\mc N_r$.
\end{remark}

\begin{definition}
  Let $k\in \N_0$, $T>0$, $x_0\in \R^d$, $d\in \N$, and $T'\in (0,T)$. The Banach
  space $X^k_{T,x_0}(T')$ consists of functions
  \[ u: \bigcup_{t\in [0,T']}\{t\}\times \B^d_{T-t}(x_0) \to \R \]
  such that $u(t,\cdot)\in H^k(\B^d_{T-t}(x_0))$ for each
  $t\in [0,T']$ and the map
  $t\mapsto \|u(t,\cdot)\|_{H^k(\B^d_{T-t}(x_0))}$ is continuous on
  $[0,T']$. Furthermore, we set
  \[ \|u\|_{X^k_{T,x_0}(T')}:=\max_{t\in
    [0,T']}\|u(t,\cdot)\|_{H^k(\B^d_{T-t}(x_0))}. \]
For brevity we write $X_T^k(T'):=X_{T,0}^k(T')$.
\end{definition}

Appealing to Duhamel's principle, we consider the following notion of
solutions.
\begin{definition}
  Let $k\in \N$, $T>0$, $T'\in (0, T)$, and $x_0\in \R^d$, $d\geq 3$.
  Furthermore, assume that $\mc N$ is $(k,x_0)$-admissible.  We say
  that a function
  \[ u: \bigcup_{t\in [0,T']}\{t\}\times \B^d_{T-t}(x_0)\to \R \]
  is a \emph{strong $H^k$ solution} of Eq.~\eqref{Eq:NLW5d} in the
  truncated lightcone
  $\bigcup_{t\in [0,T']}\{t\}\times \B^d_{T-t}(x_0)\subset \R^{1,d}$
  iff $u\in X^k_{T,x_0}(T')$ and
  \[
  u(t,\cdot)=\cos(t|\nabla|)u(0,\cdot)+\frac{\sin(t|\nabla|)}{|\nabla|}\partial_0 u(0,\cdot)
  +\int_0^t \frac{\sin((t-s)|\nabla|)}{|\nabla|}\mc N(u(s,\cdot))ds \]
  for all $t\in [0,T']$.  
\end{definition}

\begin{theorem}[Local existence in lightcones]
  \label{thm:LWP}
  Let $k \in \N$, $M_0, T > 0$, and $x_0\in \R^d$, $d\geq 3$.
  Furthermore, assume that $\mc N$ is $(k,x_0)$-admissible. Then
  there exists a $T'\in (0, T)$ such that for all
  $ (f,g) \in H^k(\B^d_T(x_0)) \times H^{k-1}(\B_T^d(x_0))$ satisfying
  \[ \|f\|_{H^k(\B^d_T(x_0))}+\|g\|_{H^{k-1}(\B_T^d(x_0))} \leq M_0, \]
  the initial value problem Eq.~\eqref{Eq:NLW5d} has a strong $H^k$
  solution $u_{f,g}$ in the truncated lightcone
  $\bigcup_{t\in [0,T']}\{t\}\times \B^d_{T-t}(x_0)$.  Furthermore,
  $\partial_0 u_{f,g}\in X^{k-1}_{T,x_0}(T')$ and the solution map
  \[ (f,g)\mapsto (u_{f,g},\partial_0 u_{f,g}) \]
  is Lipschitz as a function from (a ball in)
  $H^k(\B^d_T(x_0))\times H^{k-1}(\B^d_T(x_0))$ to
  $X^k_{T,x_0}(T') \times X^{k-1}_{T,x_0}(T')$.
\end{theorem}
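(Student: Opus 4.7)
The plan is a standard Banach fixed-point argument applied to Duhamel's formula. I define
$$\Phi(u)(t,\cdot):=\cos(t|\nabla|)f+\frac{\sin(t|\nabla|)}{|\nabla|}g+\int_0^t\frac{\sin((t-s)|\nabla|)}{|\nabla|}\mc N(u(s,\cdot))\,ds,$$
interpreting each wave propagator via Definition \ref{def:wploc} on the appropriate ball, and each $\mc N(u(s,\cdot))$ via the admissible restriction to $\B^d_{T-s}(x_0)$. A strong $H^k$ solution is exactly a fixed point of $\Phi$ in $X^k_{T,x_0}(T')$. Combining Proposition \ref{prop:finite} with Definition \ref{def:wploc} yields a constant $C_{T,k,d}$ with
$$\|\cos(t|\nabla|)f\|_{H^k(\B^d_{T-t}(x_0))}+\Bigl\|\tfrac{\sin(t|\nabla|)}{|\nabla|}g\Bigr\|_{H^k(\B^d_{T-t}(x_0))}\le C_{T,k,d}\bigl(\|f\|_{H^k(\B^d_T(x_0))}+\|g\|_{H^{k-1}(\B^d_T(x_0))}\bigr)$$
for all $t\in[0,T')$, so the free evolution contributes at most $C_{T,k,d}M_0$ to the $X^k_{T,x_0}(T')$-norm. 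I therefore fix $R:=2C_{T,k,d}M_0+1$ and work in the closed ball $B_R\subset X^k_{T,x_0}(T')$ centred at $0$.

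For the Duhamel term I translate the cone bound in time to one based at $s$, giving
$$\Bigl\|\tfrac{\sin((t-s)|\nabla|)}{|\nabla|}\mc N(u(s,\cdot))\Bigr\|_{H^k(\B^d_{T-t}(x_0))}\le C_{T,k,d}\,\|\mc N(u(s,\cdot))\|_{H^{k-1}(\B^d_{T-s}(x_0))}$$
for $0\le s\le t\le T'$. The $(k,x_0)$-admissibility of $\mc N$ at radius $R$, combined with $\mc N(0)=0$, yields Lipschitz bounds with constant $C_{R,k,x_0,d}$, and hence
$$\|\Phi(u)(t,\cdot)-\Phi(v)(t,\cdot)\|_{H^k(\B^d_{T-t}(x_0))}\le C_{T,k,d}\,C_{R,k,x_0,d}\,T'\,\|u-v\|_{X^k_{T,x_0}(T')}$$
and an analogous bound $\|\Phi(u)\|_{X^k_{T,x_0}(T')}\le C_{T,k,d}M_0+C_{T,k,d}C_{R,k,x_0,d}T'R$. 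Choosing $T'$ so small that $C_{T,k,d}C_{R,k,x_0,d}T'\le\tfrac12$ (depending only on $M_0,T,k,d,x_0$) makes $\Phi$ a strict contraction of $B_R$ into itself, and Banach's theorem delivers the unique fixed point $u_{f,g}\in B_R$. Continuity of $t\mapsto\|u_{f,g}(t,\cdot)\|_{H^k(\B^d_{T-t}(x_0))}$ is inherited from the strong continuity of the propagators on Schwartz data, extended by density via the same finite-speed bounds applied to time differences.

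To extract $\partial_0 u_{f,g}\in X^{k-1}_{T,x_0}(T')$, I formally differentiate Duhamel,
$$\partial_0 u_{f,g}(t,\cdot)=-|\nabla|\sin(t|\nabla|)f+\cos(t|\nabla|)g+\int_0^t\cos((t-s)|\nabla|)\mc N(u_{f,g}(s,\cdot))\,ds,$$
and rerun the analogous estimates at regularity $k-1$, which is exactly what the $\ell=1$ case of Proposition \ref{prop:finite} together with admissibility provides. Lipschitz dependence of $(f,g)\mapsto(u_{f,g},\partial_0 u_{f,g})$ then follows by subtracting the fixed-point equations for two data pairs, absorbing the nonlinear difference into the contraction constant and leaving a clean linear dependence on $\|f_1-f_2\|_{H^k(\B^d_T(x_0))}+\|g_1-g_2\|_{H^{k-1}(\B^d_T(x_0))}$. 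The main technical obstacle is the bookkeeping of the family of \emph{shrinking} balls $\B^d_{T-t}(x_0)$: every propagator application moves one from a larger ball at an earlier time to a smaller one at a later time, and the admissible nonlinearity is defined via Sobolev extension from each individual ball. One must therefore check that all implicit constants are uniform on $[0,T']$ and independent of the extensions involved, which is exactly what the bounds of Proposition \ref{prop:finite}, combined with the extension-independence of $\mc N_r$ noted after Definition \ref{def:adm}, are designed to supply.
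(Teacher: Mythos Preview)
Your proof is correct and follows essentially the same approach as the paper: a Banach fixed-point argument applied to the Duhamel map on a closed ball in $X^k_{T,x_0}(T')$, using Proposition~\ref{prop:finite} for the propagator bounds and $(k,x_0)$-admissibility for the nonlinearity, then differentiating Duhamel for $\partial_0 u_{f,g}$ and subtracting the fixed-point equations for the Lipschitz dependence. The only cosmetic differences are the naming of the map ($\mc K_{f,g}$ versus your $\Phi$) and the precise choice of the ball radius.
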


\begin{proof}
  Without loss of generality we may assume $x_0=0$. 
We set
  $M:=2M_0 \gamma$, where
  $\gamma:=\max_{s\in [0,\frac{T}{2}]}\gamma_d(T-s)$ and $\gamma_d$
  is the continuous function from Proposition \ref{prop:finite}.
  Furthermore, for $T'\in [0,\frac{T}{2}]$ we set
  \[ Y(T'):=\{u\in X_T^k(T'): \|u\|_{X^k_T(T')}\leq
  M\} \]
  and define a map $\mc K_{f,g}$ on
  $Y(T')$ by
  \[ \mc
  K_{f,g}(u)(t):=\cos(t|\nabla|)f+\frac{\sin(t|\nabla|)}{|\nabla|}g
  +\int_0^t \frac{\sin((t-s)|\nabla|)}{|\nabla|}\mc N(u(s,\cdot))
  ds,\quad t\in [0,T']. \]
  Let $u\in Y(T')$.  From Proposition \ref{prop:finite} and
  Definition \ref{def:adm} we
  infer the existence of a constant $\alpha>0$
   such that
  \begin{align*}
    \|\mc K_{f,g}(u)(t)\|_{H^k(\B^d_{T-t})}
    &\leq \gamma\|f\|_{H^k(\B^d_T)}+\gamma\|g\|_{H^{k-1}(\B^d_T)} 
      + \gamma\int_0^t \|\mc N(u(s,\cdot))\|_{H^{k-1}(\B^d_{T-s})}ds  \\
    &\leq \frac{M}{2}+\alpha\gamma\int_0^t \|u(s,\cdot)\|_{H^k(\B^d_{T-s})} ds \\
    &\leq \frac{M}{2}+\alpha\gamma T'\|u\|_{X^k_T(T')} \\
&\leq \frac{M}{2}+\alpha\gamma T'M
  \end{align*}
  for all $t\in [0,T']$.  Consequently, by choosing $T'>0$ small
  enough, we obtain
  \[ \|\mc K_{f,g}(u)\|_{X^k_T(T')}\leq M, \]
  which means that $\mc K_{f,g}(u)\in Y(T')$ whenever
  $u\in Y(T')$.  Similarly, for $u,v\in Y(T')$, we
  infer
  \begin{align*}
    \|\mc K_{f,g}(u)(t)-\mc K_{f,g}(v)(t)\|_{H^k(\B^d_{T-t})}
    &\leq  \gamma\int_0^t \|\mc N(u(s,\cdot))-\mc N(v(s,\cdot))\|_{H^{k-1}(\B^d_{T-s})}ds \\
    &\leq \alpha\gamma\, T'  \|u-v\|_{X^k_T(T')}
  \end{align*}
  for all $t\in [0,T']$, which yields
  \[ \|\mc K_{f,g}(u)-\mc K_{f,g}(v)\|_{X^k_T(T')}\leq \tfrac12
  \|u-v\|_{X^k_T(T')} \]
  upon choosing $T'>0$ sufficiently small.  Thus, since $Y(T')$ is a
  closed subset of the Banach space $X_T^k(T')$, the contraction
  mapping principle implies the existence of a fixed point
  $u_{f,g}\in Y(T')$ of $\mc K_{f,g}$.  Furthermore, we have
  \begin{equation}
    \label{eq:lwpdtu}
    \partial_t u_{f,g}(t,\cdot)=\partial_t \cos(t|\nabla|)f+\partial_t
    \frac{\sin(t|\nabla|)}{|\nabla|}g+\int_0^t \partial_t
    \frac{\sin((t-s)|\nabla|)}{|\nabla|}\mc N(u_{f,g}(s,\cdot))ds 
  \end{equation}
and Proposition \ref{prop:finite} yields
  \begin{align*}
    \|\partial_t u_{f,g}(t,\cdot)\|_{H^{k-1}(\B^d_{T-t})}
    &\lesssim 
      \|f\|_{H^k(\B^d_T)}+\|g\|_{H^{k-1}(\B^d_T)}+\int_0^t \|\mc N(u_{f,g}(s,\cdot))\|_{H^{k-1}(\B^d_{T-s})}ds \\
    &\lesssim  M_0+\|u_{f,g}\|_{X^k_T(T')}
  \end{align*}
  for all $t\in [0,T']$, which shows
  $\partial_0 u_{f,g}\in X^{k-1}_T(T')$.

  It remains to prove the Lipschitz continuity of the solution map
  $(f,g)\mapsto u_{f,g}$.  We have
  \begin{align*}
    \|u_{f,g}(t)-u_{\widetilde f, \widetilde g}(t)\|_{H^k(\B^d_{T-t})}
    &=\|\mc
      K_{f,g}(u_{f,g})(t)-\mc  K_{\widetilde f,\widetilde
      g}(u_{\widetilde f, \widetilde g})(t)\|_{H^k(\B^d_{T-t})} \\
    &\leq  \|\mc K_{f,g}(u_{f,g})(t)-\mc
      K_{f,g}(u_{\widetilde f, \widetilde g})(t)\|_{H^k(\B^d_{T-t})} \\
    &\quad  +\|\mc K_{f,g}(u_{\widetilde f, \widetilde g})(t)-\mc K_{\widetilde
      f,\widetilde g}(u_{\widetilde f, \widetilde g})(t)\|_{H^k(\B^d_{T-t})} \\
    &\leq \tfrac12 \|u_{f,g}- u_{\widetilde f,\widetilde
      g}\|_{X^k_T(T')} \\
    &\quad +\gamma_d(T)\|f-\widetilde
      f\|_{H^k(\B^d_T)}+\gamma_d(T)\|g-\widetilde
      g\|_{H^{k-1}(\B^d_T)}
  \end{align*}
  for all $t\in [0,T']$ and thus,
  \[ \|u_{f,g}-u_{\widetilde f,\widetilde g}\|_{X^k_T(T')}\lesssim
  \|(f,g)-(\widetilde f,\widetilde g)\|_{H^k(\B_T^d)\times
    H^{k-1}(\B^d_T)}. \] 
Finally, from Eq.~\eqref{eq:lwpdtu} we infer
  \begin{align*}
    \|\partial_t u_{f,g}(t,\cdot)-\partial_t u_{\widetilde
    f,\widetilde g}(t,\cdot)\|_{H^{k-1}(\B_{T-t}^d)}
&\lesssim \|(f,g)-(\widetilde f,\widetilde g)\|_{H^k(\B_T^d)\times
  H^{k-1}(\B_T^d)} \\
&\quad +\int_0^t \|\mc N(u_{f,g}(s,\cdot))-\mc
  N(u_{\widetilde f,\widetilde g}(s,\cdot))\|_{H^{k-1}(\B^d_{T-s})}ds \\
&\lesssim \|(f,g)-(\widetilde f,\widetilde g)\|_{H^k(\B_T^d)\times
  H^{k-1}(\B_T^d)} \\
&\quad +\|u_{f,g}-u_{\widetilde f,\widetilde g}\|_{X^k_T(T')} \\
&\lesssim \|(f,g)-(\widetilde f,\widetilde g)\|_{H^k(\B_T^d)\times
  H^{k-1}(\B_T^d)}
  \end{align*}
for all $t\in [0,T']$, which finishes the proof.
\end{proof}

Finite speed of propagation is valid for nonlinear equations as
well. This is expressed by the following uniqueness result.

\begin{theorem}[Uniqueness in lightcones]
  \label{thm:uniq}
  Let $k \in \N$, $T > 0$, $T'\in [0,T)$, and $x_0\in \R^d$,
  $d\geq 3$. Furthermore, assume that $\mc N$ is
  $(k,x_0)$-admissible.  Suppose $u$ and $v$ are both strong $H^k$
  solutions of Eq.~\eqref{Eq:NLW5d} in the truncated lightcone
  $\bigcup_{t\in [0,T']}\{t\}\times \B^d_{T-t}(x_0)$ with the same
  initial data, i.e., $u(0,\cdot)=v(0,\cdot)$ and
  $\partial_0 u(0,\cdot)=\partial_0 v(0,\cdot)$.  Then $u=v$.
\end{theorem}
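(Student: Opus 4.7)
The plan is to set $w := u - v$, derive a Volterra integral inequality for $\phi(t) := \|w(t,\cdot)\|_{H^k(\B^d_{T-t}(x_0))}$, and close it via Grönwall. Subtracting the Duhamel representations of $u$ and $v$ kills the linear part since the Cauchy data agree, leaving
\[ w(t,\cdot) = \int_0^t \frac{\sin((t-s)|\nabla|)}{|\nabla|}\bigl[\mc N(u(s,\cdot))-\mc N(v(s,\cdot))\bigr]\,ds \]
for $t\in[0,T']$; the integrand at time $s$ lies in $H^{k-1}(\B^d_{T-s}(x_0))$ and is propagated over time $t-s$ into $H^k(\B^d_{T-t}(x_0))$ via the local sine propagator of Definition \ref{def:wploc}.

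Next I would invoke the local bounds of Proposition \ref{prop:finite}, combining the $L^2$ estimate with the homogeneous $\dot H^j$ estimates for $j=1,\dots,k$ and using that $\gamma_d$ is bounded on $[0,T]$, to obtain a constant $C_1=C_1(k,d,T)$ with
\[ \phi(t) \leq C_1 \int_0^t \|\mc N(u(s,\cdot))-\mc N(v(s,\cdot))\|_{H^{k-1}(\B^d_{T-s}(x_0))}\,ds. \]
To convert the right-hand side into an expression in $\phi$, I would set $M := \max(\|u\|_{X^k_{T,x_0}(T')},\|v\|_{X^k_{T,x_0}(T')})$, which is finite because $u,v \in X^k_{T,x_0}(T')$ by definition of strong solution, and then $R := \max(2M,T,1)$. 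For every $s\in[0,T']$ the radius $r=T-s$ lies in $[0,R]$ and $\|u(s,\cdot)\|_{H^k(\B^d_r(x_0))} + \|v(s,\cdot)\|_{H^k(\B^d_r(x_0))} \leq R$, so Definition \ref{def:adm} yields a single Lipschitz constant $C_{R,k,x_0,d}$ (independent of $s$) with
\[ \|\mc N(u(s,\cdot))-\mc N(v(s,\cdot))\|_{H^{k-1}(\B^d_{T-s}(x_0))} \leq C_{R,k,x_0,d}\,\phi(s). \]

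Combining the two displays gives $\phi(t) \leq C\int_0^t \phi(s)\,ds$ on $[0,T']$ for a constant $C=C(k,d,T,M,x_0)$. Since $w\in X^k_{T,x_0}(T')$, the function $\phi$ is continuous on $[0,T']$, and Grönwall's inequality with vanishing forcing then forces $\phi\equiv 0$, i.e.~$u=v$. The one subtlety I expect to have to handle carefully is the $s$-uniformity of the Lipschitz constant: because $T'$ is arbitrary and not small, the short-time contraction-mapping shortcut used in the proof of Theorem \ref{thm:LWP} is unavailable, and the argument relies on the fact that Definition \ref{def:adm} allows the scale $r\in[0,R]$ and the norm bound to share the \emph{same} constant $C_{R,k,x_0,d}$. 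Granted this observation, no further ideas are needed—everything localizable is already packaged into Proposition \ref{prop:finite}.
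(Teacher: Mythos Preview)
Your argument is correct and follows the same route as the paper: subtract the two Duhamel representations, bound the sine propagator via Proposition~\ref{prop:finite}, invoke the admissibility Lipschitz estimate, and close with Gr\"onwall. Your explicit choice of $R=\max(2M,T,1)$ to secure an $s$-uniform Lipschitz constant is a careful spelling-out of what the paper hides behind the $\lesssim$ symbol, but no new idea is involved.
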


\begin{proof}
  We have
\[ u(t,\cdot)-v(t,\cdot)=\int_0^t
\frac{\sin((t-s)|\nabla|)}{|\nabla|}\left [\mc N(u(s,\cdot))-\mc
  N(v(s,\cdot))\right ]ds \]
and thus,
\begin{align*}
\| u(t,\cdot)-v(t,\cdot)\|_{H^k(\B^d_{T-t})}
&\lesssim \int_0^t \|\mc N(u(s,\cdot))-\mc
  N(v(s,\cdot))\|_{H^{k-1}(\B_{T-s}^d)}ds \\
&\lesssim \int_0^t \|u(s,\cdot)-v(s,\cdot)\|_{H^k(\B_{T-s}^d)}ds
\end{align*}
for all $t\in [0,T']$. Consequently, Gronwall's inequality yields
$\|u(t,\cdot)-v(t,\cdot)\|_{H^k(\B_{T-t}^d)}=0$ for all $t\in [0,T']$.
\end{proof}

\subsection{Upgrade of regularity}
Now we take a different viewpoint and \emph{assume} that we already
have a strong $H^k$ solution. We would then like to conclude that the
solution is in fact smooth, provided the data are smooth.  To this
end, we need to strengthen the assumptions on the nonlinearity. We
start with an auxiliary result which will also be useful later in a
different context.

\begin{lemma}
  \label{lem:Moser}
Let $d\in \N$, $x_0\in \R^d$, $k\in \N$, and $k>\frac{d}{2}$. Furthermore,
let $F\in C^\infty(\R\times \R^d)$, $F(0,x)=\partial_1 F(0,x)=0$ for all $x\in \R^d$,
and for $f: \R^d\to\R$ set
\[ \mc N(f)(x):=F(f(x),x). \]
Then $\mc N$ maps $H^k(\R^d)$ to $H^{k}_\mathrm{loc}(\R^d)$ and for
any $R\geq 1$
there exists a constant $C_{R,k,x_0,d}>0$ such that
\begin{align*}
  \|\mc N(f)-\mc N(g)\|_{H^k(\B_r^d(x_0))}\leq 
  C_{R,k,x_0,d}\left (\|f\|_{H^k(\B_r^d(x_0))}+\|g\|_{H^k(\B_r^d(x_0))}\right )\|f-g\|_{H^k(\B_r^d(x_0))} 
\end{align*}
for all $r\in [0,R]$ and all $f,g\in H^k(\B_r^d)$ satisfying
$\|f\|_{H^k(\B_r^d)}+\|g\|_{H^k(\B_r^d)}\leq R$.
In particular, $\mc N$ is $(k,x_0)$-admissible. 
\end{lemma}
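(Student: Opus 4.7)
The plan is to combine a two-step Taylor expansion in the first slot of $F$ with standard $H^k$ algebra and Moser-type composition estimates that are available because $k>d/2$.

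\emph{Taylor expansion.} Since $\partial_1 F(0,x)=0$, one can write $\partial_1F(s,x)=s\,K(s,x)$ with $K(s,x):=\int_0^1\partial_1^2F(\tau s,x)\,d\tau\in C^\infty(\R\times\R^d)$. The fundamental theorem of calculus then yields the pointwise identity
\begin{equation*}
\mc N(f)(x)-\mc N(g)(x)=(f(x)-g(x))\int_0^1 h_t(x)\,K\bigl(h_t(x),x\bigr)\,dt,\qquad h_t:=g+t(f-g).
\end{equation*}
The outer factor $(f-g)$ is what one estimates against $\|f-g\|_{H^k}$, and the extra factor $h_t$ inside the integrand (which appears precisely because $\partial_1F(0,x)=0$) is what produces the factor $\|f\|_{H^k}+\|g\|_{H^k}$ on the right-hand side.

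\emph{Algebra and composition.} For $k>d/2$, $H^k(\B_r^d(x_0))$ is a Banach algebra and embeds continuously into $L^\infty(\B_r^d(x_0))$; via Lemma~\ref{lem:extension} both constants can be taken uniform in $r\in(0,R]$. A classical Moser estimate, proved by applying the Fa\`a di Bruno formula to $\partial^\alpha K(h_t,\cdot)$ and controlling the arising products by Gagliardo--Nirenberg together with the $L^\infty$-bound $\|h_t\|_{L^\infty}\lesssim_R 1$, gives
\begin{equation*}
\sup_{t\in[0,1]}\|K(h_t,\cdot)\|_{H^k(\B_r^d(x_0))}\leq C_{R,k,x_0,d}.
\end{equation*}
Applying the algebra property twice (first to the product $h_t\cdot K(h_t,\cdot)$, then to $(f-g)$ times the result), pulling the norm inside the $t$-integral, and using $\sup_t\|h_t\|_{H^k}\leq\|f\|_{H^k}+\|g\|_{H^k}$ yields the claimed inequality. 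Admissibility in the sense of Definition~\ref{def:adm} then follows by setting $g=0$ and using $\mc N(0)=0$, which holds since $F(0,x)=0$, with the extra factor $\|f\|_{H^k}\leq R$ absorbed into the constant.

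\emph{Main obstacle.} The only delicate ingredient is the composition estimate for $K(\cdot,\cdot)$, because $K$ is merely $C^\infty(\R\times\R^d)$ and its derivatives need not be globally bounded in $x$. The cleanest fix is to work from the start on $\B_r^d(x_0)\subset\B_R^d(x_0)$ and multiply $\mc E_{r,x_0,d}h_t$ by a smooth cutoff supported in $\B_{R+1}^d(x_0)$ before composing, so that only values of $x$ in the compact set $\overline{\B_{R+1}^d(x_0)}$ matter; on this compact set all derivatives of $F$ and $K$ appearing in the Fa\`a di Bruno expansion are uniformly bounded, and the resulting constant depends only on $R$, $k$, $x_0$, and $d$, as required.
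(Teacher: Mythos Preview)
Your argument is correct and follows the same skeleton as the paper's proof: factor out $(f-g)$ via the fundamental theorem of calculus, invoke the $H^k$ algebra property for $k>d/2$, and handle the composition by a Moser-type estimate after cutting off in $x$ to make all $x$-derivatives of $F$ bounded. The one organizational difference is in how the extra factor $(\|f\|_{H^k}+\|g\|_{H^k})$ is produced. You pull it out \emph{before} the composition step by writing $\partial_1 F(s,x)=s\,K(s,x)$ and then only need the bound $\|K(h_t,\cdot)\|_{H^k(\B_r^d(x_0))}\leq C_R$ for a function $K$ that does not vanish at $s=0$; on a bounded ball this is fine, but it means your Fa\`a di Bruno/Gagliardo--Nirenberg argument must also control the zeroth-order term $K(0,\cdot)$ separately. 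The paper instead applies the black-box Moser inequality (Rauch, Theorem~6.4.1) directly to $\mc N'(f)(x)=\partial_1 F(f(x),x)$, using the hypothesis $\partial_1 F(0,x)=0$ to make the cited inequality applicable, and the linear factor $\|f\|_{H^k}$ then comes out of Moser's inequality itself. Both routes arrive at the same estimate; the paper's is marginally cleaner because it cites a single known result rather than redoing the Fa\`a di Bruno computation.
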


\begin{proof}
  We assume without loss of generality that $x_0=0$ and
  note that the assumption $k>\frac{d}{2}$ implies
  $H^k(\B_r^d)\hookrightarrow C(\overline{\B_r^d})$ for any
  $r>0$. Thus, elements of $H^k(\B_r^d)$ can be identified with
  continuous functions.  Furthermore, $H^k(\R^d)$ is a Banach algebra
  and thus,
  \begin{equation}
    \begin{split}
      \label{eq:algBR}
      \|fg\|_{H^k(\B_r^d)} &=\|\mc E_r f\mc E_r g\|_{H^k(\B_r^d)} \leq
      \|\mc E_r f\mc E_r g\|_{H^k(\R^d)}\lesssim \|\mc E_r
      f\|_{H^k(\R^d)}\|\mc E_r g\|_{H^k(\R^d)} \\
      &\lesssim \|f\|_{H^k(\B_r^d)}\|g\|_{H^k(\B_r^d)}
    \end{split}
  \end{equation}
for all $r>0$ and $f,g\in H^k(\B_r^d)$, where $\mc E_r:=\mc E_{r,0,d}$
is an extension as in Lemma \ref{lem:extension}.

Now we use the
fundamental theorem of calculus to obtain the identity
  \begin{align*}
    \mc N(f)(x)-\mc N(g)(x)
    &=F(f(x),x)-F(g(x),x)=\int_0^1 \tfrac{d}{ds} F\big
      (sf(x)+(1-s)g(x),x\big )ds \\
    &=[f(x)-g(x)]\int_0^1 \partial_1 F\big
      (sf(x)+(1-s)g(x),x\big )ds \\
&=[f(x)-g(x)]\int_0^1 \mc N'\big
      (sf+(1-s)g\big )(x)ds 
  \end{align*}
for all $x\in \R^d$, where $\mc N'(f)(x):=\partial_1 F(f(x),x)$.
We claim that $\mc N'$ maps $H^k(\B_r^d)$ to itself for any $r>0$ and
that for any $R\geq 1$, there exists a continuous function $\gamma_R:
[0,\infty)\to [0,\infty)$ such that
\begin{equation}
  \label{eq:MoserN'}
  \|\mc N'(f)\|_{H^k(\B_r^d)}\leq \gamma_R(\|\mc E_r f\|_{H^k(\R^d)})\|f\|_{H^k(\B_r^d)}
\end{equation}
for all $r\in (0,R]$ and $f\in H^k(\B_r^d)$.
Assume for the moment that this is true. 
Then Eq.~\eqref{eq:algBR} and the triangle inequality yield
\begin{align*}
  \|\mc N(f)-\mc N(g)\|_{H^k(\B^d_r)}
&\lesssim \|f-g\|_{H^k(\B_r^d)}\int_0^1 \left \|
  \mc N'\big(sf+(1-s)g\big)\right \|_{H^k(\B_r^d)}ds.
\end{align*}
Furthermore,
\begin{align*}
\int_0^1 &\left \|
  \mc N'\big(sf+(1-s)g\big)\right \|_{H^k(\B_r^d)}ds \\
&\leq \int_0^1 \gamma_R\left (\|s\mc E_r f+(1-s)\mc E_r g\|_{H^k(\R^d)}\right)\|sf+(1-s)g\|_{H^k(\B_r^d)}ds \\
&\leq \left (\|f\|_{H^k(\B_r^d)}+\|g\|_{H^k(\B_r^d)}\right
  )\int_0^1 \gamma_R\left (\|s\mc E_r f+(1-s)\mc E_r g\|_{H^k(\R^d)}\right)ds
\end{align*}
for all $r\in (0,R]$. This yields the stated bound and finishes the proof.
Consequently, it remains to prove Eq.~\eqref{eq:MoserN'}.

To this end, we employ a smooth cut-off $\chi_R:\R^d\to [0,1]$
satisfying $\chi_R(x)=1$ for $|x|\leq R$ and $\chi_R(x)=0$ for
$|x|\geq 2R$. We set $F_R(u,x):=\chi_R(x)\partial_1 F(u,x)$. Then $F_R\in
C^\infty(\R\times \R^d)$ and for any compact $K\subset \R$ and any
multi-index $\alpha\in \N_0^{1+d}$, we have $\partial^\alpha F_R\in
L^\infty(K\times \R^d)$.
Furthermore, by assumption, $F_R(0,x)=0$ for all $x\in \R^d$.
Thus, by Moser's inequality, see e.g.~\cite{Rau12}, Theorem
6.4.1, $x\mapsto F_R(f(x),x)$ belongs to $H^k(\R^d)$ for any $f\in
H^k(\R^d)$ and there exists a continuous function $\widetilde\gamma_R:
[0,\infty)\to [0,\infty)$ such that
\begin{align*}
  \|\mc N'(f)\|_{H^k(\B_r^d)}
  &=\|
    F_R(\mc E_r f(\cdot),\cdot)\|_{H^k(\B_r^d)}
    \leq \|F_R(\mc E_r f(\cdot),\cdot)\|_{H^k(\R^d)} \\
&\leq \widetilde\gamma_R\left (\|\mc E_r f\|_{H^k(\R^d)}\right )\|\mc
  E_r f\|_{H^k(\R^d)} \\
&\lesssim \widetilde\gamma_R\left (\|\mc E_r f\|_{H^k(\R^d)}\right )\|f\|_{H^k(\B_r^d)},
\end{align*}
which proves Eq.~\eqref{eq:MoserN'}.
\end{proof}

\begin{theorem}[Upgrade of regularity]
  \label{thm:reg}
  Let $k\in \N$, $k>\frac{d}{2}$, $T>0$, $T'\in [0,T)$, and
  $x_0\in \R^d$, $d\geq 3$.  Furthermore, assume that the nonlinear
  operator is given by $\mc N(f)(x)=F(f(x),x)$ for a function
  $F\in C^\infty(\R\times\R^d)$ satisfying $F(0,x)=\partial_1 F(0,x)=0$ for all
  $x\in\R^d$. Suppose that $u$ is a strong $H^k$ solution of
  Eq.~\eqref{Eq:NLW5d} in the truncated lightcone
  $\bigcup_{t\in [0,T']}\{t\}\times \B^d_{T-t}(x_0)$. If
  $u(0,\cdot), \partial_0 u(0,\cdot) \in C^\infty(\overline{\B^d_T})$
  then $u\in C^\infty (\overline{\bigcup_{t\in [0,T']}\{t\}\times
    \B^d_{T-t}(x_0)})$ and $u$ is a classical solution, i.e.,
\[ (\partial_t^2 -\Delta_x)u(t,x)=F(u(t,x),x) \]
for all $t\in [0,T']$ and $x\in \overline{\B_{T-t}^d(x_0)}$.
\end{theorem}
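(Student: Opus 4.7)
The plan is to bootstrap: first upgrade the spatial regularity of $u$ from $H^k$ to $H^j$ for every integer $j \geq k$, then extract temporal regularity from the equation itself, and finally invoke Sobolev embedding to conclude smoothness and the classical solution property.

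For the spatial bootstrap, fix $j \geq k$. Since $(f,g) \in C^\infty(\overline{\B_T^d(x_0)})$, we have $(f,g) \in H^j(\B_T^d(x_0)) \times H^{j-1}(\B_T^d(x_0))$, and Lemma \ref{lem:Moser} ensures $\mc N$ is $(j,x_0)$-admissible. Theorem \ref{thm:LWP} then produces a strong $H^j$ solution $u^{(j)}$ on some subinterval $[0,T_j'] \subset [0,T']$, and Theorem \ref{thm:uniq} (applied at the regularity $H^k$ that both solutions enjoy) identifies $u^{(j)}$ with $u$ on $[0,T_j']$. The crucial step is to extend $u^{(j)}$ to all of $[0,T']$, which requires an a priori bound that controls the $H^j$ norm of $u$ by its $H^k$ norm. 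Since $u \in X_{T,x_0}^k(T')$, there is $M>0$ with $\|u(s,\cdot)\|_{H^k(\B_{T-s}^d(x_0))} \leq M$ for all $s \in [0,T']$. Because $k > d/2$, the embedding $H^k \hookrightarrow L^\infty$ bounds $\|u(s,\cdot)\|_{L^\infty}$ by a multiple of $M$, and the classical tame Moser inequality (e.g.~Theorem~6.4.1 of \cite{Rau12}) then yields
\[ \|\mc N(u(s,\cdot))\|_{H^{j-1}(\B_{T-s}^d(x_0))} \leq C(M)\,\|u(s,\cdot)\|_{H^j(\B_{T-s}^d(x_0))}. \]
Inserting this into the Duhamel formula and applying Proposition \ref{prop:finite} gives a linear integral inequality for $\|u(t,\cdot)\|_{H^j(\B_{T-t}^d(x_0))}$, and Gronwall's inequality rules out blow-up on $[0,T']$; combined with the standard continuation principle this extends $u^{(j)}$ to the entire interval and upgrades $u$ to $X_{T,x_0}^j(T')$ for every $j \in \N$.

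With $u \in X_{T,x_0}^j(T')$ for all $j$, temporal regularity follows by successive application of the equation. The Duhamel identity for $\partial_t u$ as in \eqref{eq:lwpdtu} together with Proposition \ref{prop:finite} yields $\partial_t u \in X_{T,x_0}^{j-1}(T')$, and the identity $\partial_t^2 u = \Delta u + \mc N(u)$ then gives $\partial_t^2 u \in C([0,T'], H^{j-2}(\B_{T-t}^d(x_0)))$; iterating, one obtains $\partial_t^\ell u \in C([0,T'], H^{j-\ell}(\B_{T-t}^d(x_0)))$ for every $\ell$. Taking $j$ arbitrarily large makes every mixed space-time derivative lie in a sufficiently high continuous Sobolev class, and Sobolev embedding in the joint $(t,x)$ variable delivers $u \in C^\infty(\overline{\bigcup_{t \in [0,T']}\{t\}\times \B_{T-t}^d(x_0)})$. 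Pointwise evaluation of the integral equation then shows that $u$ solves the PDE classically.

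The main obstacle is the continuation step. Lemma \ref{lem:Moser} on its own only yields a quadratic bound of the form $\|\mc N(f)\|_{H^j} \lesssim \|f\|_{H^j}^2$, which is too weak to close a Gronwall argument driven by the $H^k$ bound alone; one must invoke the genuinely \emph{tame} version of Moser's inequality, which separates the high-regularity factor (appearing linearly) from the low-regularity factor (appearing through the $L^\infty$ norm, equivalently $H^k$ for $k > d/2$). A minor secondary point is to verify that this tame estimate transfers cleanly to truncated lightcones, which is handled via the Sobolev extension of Lemma \ref{lem:extension}.
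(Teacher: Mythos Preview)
Your approach is correct but takes a longer detour than the paper's. You re-run local existence at level $H^j$, invoke the tame Moser inequality to obtain a linear-in-$H^j$ bound on $\mc N$, and close via Gronwall plus a continuation principle. The paper instead exploits a feature of Lemma~\ref{lem:Moser} that you explicitly set aside as ``too weak'': it shows $\mc N$ maps $H^k$ into $H^k$, not merely $H^{k-1}$. Since the sine propagator $\frac{\sin(t|\nabla|)}{|\nabla|}$ gains one derivative, the Duhamel integral lands in $H^{k+1}$ as soon as $u(s,\cdot)\in H^k$ is known, and the right-hand side is controlled by the already-finite quantity $\|u\|_{X^k_T(T')}$. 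This yields $u\in X^{k+1}_{T,x_0}(T')$ directly from $u\in X^k_{T,x_0}(T')$, with no Gronwall, no continuation argument, and no tame estimate; one then inducts on the regularity index one step at a time. Your route is the standard persistence-of-regularity argument and would be needed if the nonlinearity genuinely lost a derivative, but here the paper's one-step bootstrap is markedly shorter and uses only ingredients already established in the paper.
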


\begin{proof}
Without loss of generality we assume $x_0=0$. 
By assumption, we have
\begin{equation}
\label{eq:usmooth}
u(t,\cdot)=\cos(t|\nabla|)u(0,\cdot)+\frac{\sin(t|\nabla|)}{|\nabla|}\partial_0
u(0,\cdot)+\int_0^t
\frac{\sin((t-s)|\nabla|)}{|\nabla|}\mc N(u(s,\cdot))ds 
\end{equation}
for all $t\in [0,T']$. Furthermore, Lemma \ref{lem:Moser} yields $\mc
N(u(t,\cdot))\in H^k(\B_{T-t}^d)$ for all $t\in [0,T']$ and
from Eq.~\eqref{eq:usmooth} we infer
\begin{align*}
  \|u(t,\cdot)\|_{H^{k+1}(\B_{T-t}^d)}
&\lesssim
  \|u(0,\cdot)\|_{H^{k+1}(\B_T^d)}+\|\partial_0 u(0,\cdot)\|_{H^k(\B_T^d)}+\int_0^t \|\mc
  N(u(s,\cdot))\|_{H^k(\B_{T-s}^d)}ds \\
&\lesssim \int_0^t \|u(s,\cdot)\|_{H^k(\B_{T-s}^d)}ds \\
&\lesssim \|u\|_{X^k_T(T')},
\end{align*}
which implies $u(t,\cdot)\in H^{k+1}(\B_{T-t}^d)$ for all
$t\in [0,T']$. Inductively, we find $u(t,\cdot)\in H^\ell(\B^d_{T-t})$
for all $t\in [0,T']$ and any $\ell\in \N_0$.  By Sobolev embedding we
therefore obtain $u(t,\cdot)\in C^\infty(\overline{\B_{T-t}^d})$.  The
same type of argument yields
$\partial_t u(t,\cdot)\in C^\infty(\overline{\B_{T-t}^d})$.
Furthermore, with $\mc E_T:=\mc E_{T,0,d}$ the extension from Lemma
\ref{lem:extension}, we infer
\begin{align*}
\partial_t^2 u(t,\cdot)
&=\partial_t^2 \cos(t|\nabla|)\mc E_T u(0,\cdot)+\partial_t^2
\frac{\sin(t|\nabla|)}{|\nabla|}\mc E_T \partial_0
u(0,\cdot) \\
&\quad +\int_0^t
\partial_t^2 \frac{\sin((t-s)|\nabla|)}{|\nabla|}\mc E_{T-s}\mc
  N(u(s,\cdot))ds 
+\mc E_{T-t} \mc N(u(t,\cdot)) \\
&=\Delta u(t,\cdot)+\mc E_{T-t}\mc N(u(t,\cdot))
\end{align*}
and thus, $\partial_t^2 u(t,x)-\Delta_x u(t,x)=F(u(t,x),x)$ for all
$t\in [0,T']$ and $x\in \overline{\B_{T-t}^d(x_0)}$. Inductively, it
follows that $u\in C^\infty (\overline{\bigcup_{t\in [0,T']}\{t\}\times
    \B^d_{T-t}(x_0)})$.
\end{proof}

\subsection{Application to the wave maps equation}
To conclude this section, we show that the above theory applies to the
wave maps equation. To this end it suffices to prove that the
nonlinearity in Eq.~\eqref{eq:mainu} satisfies the hypotheses of Lemma
\ref{lem:Moser}.

\begin{lemma}
  \label{lem:wmsmooth}
Let $F: \R\times \R^5\to \R$ be given by
\[ F(u,x):=\frac{2|x|u-\sin(2|x|u)}{|x|^3}. \]
Then $F(0,x)=\partial_1 F(0,x)=0$ for all $x\in \R^5$ and $F\in
C^\infty(\R\times \R^5)$.
\end{lemma}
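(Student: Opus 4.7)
The only subtlety is the apparent singularity $|x|^{-3}$; once we factor it out using the zeros of $y-\sin y$, everything reduces to smoothness of compositions. My plan is to exploit the odd Taylor expansion of the sine function to rewrite $F$ as a smooth expression without any division by $|x|$.

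First, I observe that the one-variable function $G(y):=y-\sin y$ is entire with Taylor series
\[
  G(y)=\sum_{k=1}^\infty \frac{(-1)^{k+1}}{(2k+1)!}\,y^{2k+1}
       =y^3\sum_{k=0}^\infty \frac{(-1)^{k}}{(2k+3)!}\,y^{2k}.
\]
Setting $h(z):=\sum_{k=0}^\infty \frac{(-1)^k}{(2k+3)!}z^k$, the function $h:\R\to\R$ is real-analytic (in fact entire), and $G(y)=y^3 h(y^2)$ for all $y\in\R$. Applying this with $y=2|x|u$ gives the crucial identity
\[
  F(u,x)=\frac{G(2|x|u)}{|x|^3}=\frac{(2|x|u)^3\,h((2|x|u)^2)}{|x|^3}
         =8\,u^3\,h\bigl(4|x|^2 u^2\bigr),
\]
valid a priori for $x\neq 0$, but the right-hand side extends continuously (and indeed smoothly) across $x=0$.

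Second, I would use this representation to read off the two required properties directly. The factor $u^3$ makes $F(0,x)=0$ obvious. For the derivative,
\[
  \partial_1 F(u,x)=24\,u^2 h(4|x|^2 u^2)+128\,|x|^2 u^4\,h'(4|x|^2 u^2),
\]
and both summands vanish at $u=0$, so $\partial_1 F(0,x)=0$. For the smoothness claim, I note that $(u,x)\mapsto 4|x|^2 u^2$ is a polynomial in the Cartesian coordinates of $(u,x)$, hence $C^\infty$ on $\R\times \R^5$; composing with the entire (and therefore $C^\infty$) function $h$ and multiplying by $8u^3$ preserves smoothness. Hence $F\in C^\infty(\R\times \R^5)$.

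There is no real obstacle here; the only thing to be careful about is to define $F$ at $x=0$ through the formula $8u^3 h(4|x|^2 u^2)$ (rather than the original quotient), and to verify that this extension agrees with $F$ away from the origin, which is immediate from $G(y)=y^3 h(y^2)$. Everything else is bookkeeping.
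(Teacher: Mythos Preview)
Your proof is correct and follows essentially the same idea as the paper: both extract the cubic zero of $y\mapsto y-\sin y$ at the origin via Taylor expansion, the paper using the integral remainder form to obtain $F(u,x)=4u^3\int_0^1 \cos(2s|x|u)(1-s)^2\,ds$ and then invoking evenness of cosine, while you use the full power series to write $F(u,x)=8u^3 h(4|x|^2 u^2)$ with $h$ entire. (A harmless slip: the coefficient $128$ in your formula for $\partial_1 F$ should be $64$, but this does not affect the conclusion.)
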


\begin{proof}
  From Taylor's theorem with integral remainder,
  \[ f(t_0+t)=\sum_{n=0}^N
  \frac{f^{(n)}(t_0)}{n!}t^n+\frac{t^{N+1}}{N!}\int_0^1
  f^{(N+1)}(t_0+st)(1-s)^N ds, \] we infer
\[ \sin(2|x|u)=2|x|u-4|x|^3 u^3\int_0^1 \cos(2s|x|u)(1-s)^2
ds \]
and thus,
\[ F(u,x)=4u^3\int_0^1 \cos(2s|x|u)(1-s)^2 ds. \]
Since cosine is an even function, it follows that $F\in C^\infty(\R\times \R^5)$
and $F(0,x)=\partial_1 F(0,x)=0$ for all $x\in \R^5$ is obvious.
\end{proof}

\section{The wave equation in hyperboloidal similarity coordinates}

\noindent In this section we study the \emph{free} wave equation on
$\R^{1,d}$ in hyperboloidal similarity coordinates.  In fact, we will
focus on the dimensions $d=1$ and $d=5$ and restrict ourselves to the
radial case.

\subsection{Coordinate systems}
Throughout this paper we use three different coordinate systems on
(portions of) $\R^{1,d}$, which we consistently denote by
\begin{align*}
  (t,x)&=(t,x^1,\dots, x^d)=(x^0, x^1, \dots, x^d) \in \R^{1+d} \\
  (\tau,\xi)&=(\tau,\xi^1,\dots,\xi^d)= (\xi^0,\xi^1,\dots,\xi^d) \in \R^{1+d}  \\
  (s,y)&=(s,y^1,\dots,y^d)=(y^0,y^1,\dots ,y^d) \in \R^{1+d}.
\end{align*}
Naturally, $(t,x)$ are the standard Cartesian coordinates where the
Minkowski metric takes the form $\eta=\mathrm{diag}(-1,1,\dots,1)$.
The \emph{standard similarity coordinates} $(\tau,\xi)$ are defined by
\[ (t,x)=(T-e^{-\tau}, e^{-\tau}\xi), \]
where $T\in \R$ is a free parameter. Strictly speaking, the
coordinates $(\tau,\xi)$ depend on $T$ but we suppress this in the
notation.  We have
\begin{align*}
  \partial_\tau u(T-e^{-\tau},e^{-\tau}\xi)&=e^{-\tau}\partial_0 u(T-e^{-\tau},e^{-\tau}\xi)-e^{-\tau}\xi^j\partial_j u(T-e^{-\tau},e^{-\tau}\xi)  \\
  \partial_{\xi^j}u(T-e^{-\tau},e^{-\tau}\xi)&=e^{-\tau}\partial_j u(T-e^{-\tau},e^{-\tau}\xi)
\end{align*}
and as a consequence, the wave operator is given by
\begin{align*}
  -\partial^\mu\partial_\mu &u(T-e^{-\tau}, e^{-\tau}\xi) \\
                            &=e^{2\tau}
                              \left [ \partial_\tau^2 + 2\xi^j \partial_{\xi^j}\partial_\tau -(\delta^{jk}-\xi^j\xi^k)\partial_{\xi^j}\partial_{\xi^k}
                              +\partial_\tau+2\xi^j\partial_{\xi^j} \right ]u(T-e^{-\tau}, e^{-\tau}\xi).
\end{align*}
The coordinates $(s,y)$ are defined by
\[ (t,x)=(T+e^{-s}h(y), e^{-s}y), \]
where again $T\in \R$ is a free parameter and
\[ h(y):=\sqrt{2+|y|^2}-2 \]
is called the \emph{height function}.
There is a high degree of arbitrariness in our choice of $h$.
The only really essential property is the fact that
$\{(h(y),y)\subset \R^{1,1}: y\in \R\}$ is a Cauchy surface
that is asymptotic to a forward lightcone. The particular
$h$ we use is convenient because it leads to comparatively simple
algebraic expressions in the following.
Note also that the choice $h(y)=-1$
yields the standard similarity coordinates $(\tau,\xi)$ from
above. By the chain rule, we infer
\begin{align*}
      \partial_s u(T+e^{-s}h(y),e^{-s}y)&=-e^{-s}h(y)\partial_0
                                          u(T+e^{-s}h(y),e^{-s}y)-e^{-s}y^j\partial_ju(T+e^{-s}h(y),e^{-s}y) \\
  \partial_{y^j} u(T+e^{-s}h(y),e^{-s}y)&=e^{-s}\partial_j
                                          h(y)\partial_0
                                          u(T+e^{-s}h(y),e^{-s}y)+e^{-s}\partial_j u(T+e^{-s}h(y),e^{-s}y).
\end{align*}
For brevity we introduce the following notation for the partial
derivatives expressed in the new coordinates.
\begin{definition}
  \label{def:mcD}
  We define
  \begin{align*}
    (\Dh_0 v)(s,y)&:=\frac{e^s}{y^k\partial_k h(y)-h(y)}\left [
                    \partial_s+y^k\partial_{y^k} \right ]v(s,y) \\
    (\Dh_j v)(s,y)&:=e^s\partial_{y^j} v(s,y)-\partial_j h(y)(\Dh_0 v)(s,y)
  \end{align*}
\end{definition}
Then we have $\mc D_\mu v(s,y)=\partial_\mu u(T+e^{-s}h(y),e^{-s}y)$
and thus,
\[ \partial^\mu \partial_\mu u(T+e^{-s}h(y),e^{-s}y)=\mc D^\mu\mc
D_\mu v(s,y), \]
where $v(s,y)=u(T+e^{-s}h(y),e^{-s}y)$.  Note that by construction,
the differential operators $\mc D_\mu$ and $\mc D_\nu$ commute.  In
the case $d=1$ we have
\begin{align*}
  \Dh_0 v(s,y)&=\frac{e^s}{y h'(y)-h(y)}(\partial_s+y\partial_y)v(s,y)  \\
  \Dh_1 v(s,y)&=-\frac{e^s}{yh'(y)-h(y)}[h'(y)\partial_s+h(y)\partial_y]v(s,y).
\end{align*}

Finally, we note that there is a convenient direct relation between
the coordinates $(\tau,\xi)$ and $(s,y)$ given by
\begin{equation}
  \label{eq:sytauxi}
  (\tau,\xi)=\left (s-\log(-h(y)), -\frac{y}{h(y)} \right ).
\end{equation}
In particular, this implies the identity
\begin{equation}
  \label{eq:sytauxiwave}
  -\mc D^\mu\mc D_\mu v(s,y)=e^{2\tau}
  \left [ \partial_\tau^2 + 2\xi^j \partial_{\xi^j}\partial_\tau -(\delta^{jk}-\xi^j\xi^k)\partial_{\xi^j}\partial_{\xi^k}
    +\partial_\tau+2\xi^j\partial_{\xi^j} \right ]w(\tau,\xi),
\end{equation}
where $v(s,y)=w(s-\log(-h(y)), -y/h(y))$.

 \subsection{Control of the wave evolution}
 Let $u\in C^2(\R^{1,1})$ satisfy the wave equation
 \[ \partial_t^2 u(t,x)-\partial_x^2 u(t,x)=0. \]
 Furthermore, assume that $u(t,\cdot)$ is odd for all $t\in \R$.  In
 HSC we obtain
 \begin{equation}
   \begin{split}
     \label{eq:wavev} 0&=\Dh_0^2 v-\Dh_1^2
     v=(\Dh_0-\Dh_1)(\Dh_0+\Dh_1)v =(\Dh_0+\Dh_1)(\Dh_0-\Dh_1)v
   \end{split}
 \end{equation}
 where $v(s,y)=u(T+e^{-s}h(y),e^{-s}y)$.  If we set
 $v_\pm:=\mc D_0 v\pm \mc D_1 v$, Eq.~\eqref{eq:wavev} implies
 \begin{equation}
   \label{eq:wavevpm}
   \begin{split}
     [1-h'(y)]\partial_s v_-(s,y)&=-[y-h(y)]\partial_y v_-(s,y) \\
     [1+h'(y)]\partial_s v_+(s,y)&=-[y+h(y)]\partial_y v_+(s,y).
   \end{split}
 \end{equation}
 Note that $y-h(y)$ has a unique zero at $y=-\frac12$ and $y-h(y)<0$
 for $y<-\frac12$.  Geometrically, $y=\pm\frac12$ is the boundary of
 the backward lightcone with tip $(T,0)$.  By testing the first
 equation with $v_-$ and integrating over $[-R, R]$, we find
 \begin{align}
   \label{eq:energyv-}
   \frac{d}{ds}\int_{-R}^{R}v_-(s,y)^2 [1-h'(y)]dy &=-\int_{-R}^{R} \partial_y [v_-(s,y)^2] [y-h(y)]dy \nonumber \\
                                                   &= -v_-(s,y)^2 [y-h(y)]\Big |_{-R}^{R}
                                                     +\int_{-R}^{R} v_-(s,y)^2 [1-h'(y)]dy \nonumber \\
                                                   &\leq \int_{-R}^{R} v_-(s,y)^2 [1-h'(y)]dy,
 \end{align}
 provided $R\geq \frac12$.  Integration with respect to $s$ and
 $1-h'(y)\simeq 1$ for $y\in [-R,R]$ yield the bound
 \[ \|v_-(s,\cdot)\|_{L^2(\B_R)}\lesssim
 e^{s/2}\|v_-(s_0,\cdot)\|_{L^2(\B_R)} \]
 for all $s\geq s_0$ and any fixed $s_0$.  Analogously, we infer
 \begin{equation}
\label{eq:aprioriv+}
 \|v_+(s,\cdot)\|_{L^2(\B_R)}\lesssim
 e^{s/2}\|v_+(s_0,\cdot)\|_{L^2(\B_R)}. 
\end{equation}
 Consequently, from
 \begin{equation}
   \label{eq:vvpm}
   \begin{split}
     \partial_s v(s,y)&=\tfrac12 e^{-s}[y-h(y)]v_-(s,y)-\tfrac12 e^{-s}[y+h(y)]v_+(s,y) \\
     \partial_y v(s,y)&=-\tfrac12 e^{-s}[1-h'(y)] v_-(s,y)+\tfrac12
     e^{-s}[1+h'(y)]v_+(s,y)
   \end{split}
 \end{equation}
 we obtain the bound
 \begin{align*}
   \|v(s,\cdot)\|_{\dot H^1(\B_R)}+\|\partial_s v(s,\cdot)\|_{L^2(\B_R)}&\lesssim e^{-s}\left (\|v_-(s,\cdot)\|_{L^2(\B_R)}+\|v_+(s,\cdot)\|_{L^2(\B_R)} \right ) \\
                                                                        &\lesssim e^{-s/2}\left (\|v_-(s_0,\cdot)\|_{L^2(\B_R)}+\|v_+(s_0,\cdot)\|_{L^2(\B_R)} \right ) \\
                                                                        &\lesssim e^{-s/2}\left (\|v(s_0,\cdot)\|_{\dot H^1(\B_R)}+\|\partial_0 v(s_0,\cdot)\|_{L^2(\B_R)} \right ),
 \end{align*}
 where we have used the fact that $yh'(y)-h(y)\geq \frac12$ for all
 $y\in \R$.  Since $u(t,\cdot)$ is assumed to be odd, we have the
 boundary condition $v(s,0)=u(T+e^{-s}h(0),0)=0$ for all $s\in \R$ and
 thus,
 \[ v(s,y)=\int_0^y \partial_{y'} v(s,y')dy' \]
 which, by Cauchy-Schwarz, yields the final \emph{energy estimate}
 \[ \|v(s,\cdot)\|_{H^1(\B_R)}+\|\partial_s
 v(s,\cdot)\|_{L^2(\B_R)}\lesssim e^{-s/2}\left
   (\|v(s_0,\cdot)\|_{H^1(\B_R)}+\|\partial_0
   v(s_0,\cdot)\|_{L^2(\B_R)} \right ). \]
 We generalize to higher derivatives.

 \begin{lemma}
   \label{lem:vHk}
   Fix $s_0\in \R$, $R\geq \frac12$, $T\in \R$, and $k\in \N$, $k\geq 2$. Furthermore, assume
   that $u\in C^k(\R^{1,1})$ satisfies
   \[ \partial_t^2 u(t,x)-\partial_x^2 u(t,x)=0 \]
   and suppose $u(t,\cdot)$ is odd for all $t\in \R$. Let
   $v(s,y):=u(T+e^{-s}h(y),e^{-s}y)$.  Then we have the bounds
   \[ \| v(s,\cdot)\|_{H^\ell(\B_R)}+ \|\partial_s
   v(s,\cdot)\|_{H^{\ell-1}(\B_R)}\lesssim \,e^{-s/2}\left (\|
     v(s_0,\cdot)\|_{H^\ell(\B_R)}+\|\partial_0
     v(s_0,\cdot)\|_{H^{\ell-1}(\B_R)}\right ) \]
   for all $s\geq s_0$ and all $\ell\in \N$ satisfying $\ell\leq k$.
 \end{lemma}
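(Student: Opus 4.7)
The proof proceeds by induction on $\ell$. The case $\ell=1$ is exactly what was just established. So assume the claim holds for $\ell-1$ with $2 \leq \ell \leq k$. By the inductive hypothesis, $\|v(s,\cdot)\|_{H^{\ell-1}(\B_R)}+\|\partial_s v(s,\cdot)\|_{H^{\ell-2}(\B_R)}$ is already controlled by $e^{-s/2}$ times the initial-data norm appearing in the statement, so it remains only to bound the top-order contributions $\|\partial_y^\ell v(s,\cdot)\|_{L^2(\B_R)}$ and $\|\partial_s\partial_y^{\ell-1}v(s,\cdot)\|_{L^2(\B_R)}$.

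The key idea is to re-express Cartesian derivatives of $v$ using the commuting, wave-equation-preserving operators $\Dh_0, \Dh_1$. A direct computation from Definition \ref{def:mcD} in dimension one gives the universal identities
\[ \partial_y w = e^{-s}\bigl(h'(y)\Dh_0 + \Dh_1\bigr)w, \qquad \partial_s w = -e^{-s}\bigl(h(y)\Dh_0 + y\Dh_1\bigr)w, \]
valid for any smooth $w$. Iterating these formulas and exploiting the commutativity of the $\Dh_\mu$, one obtains by a direct induction on $\ell$ the expansion
\[ \partial_y^\ell v = \sum_{m=1}^\ell e^{-ms}\sum_{a+b=m} c_{a,b}^{(\ell)}(y)\,\Dh_0^a\Dh_1^b v, \]
with smooth coefficients $c_{a,b}^{(\ell)}$ depending only on $h$ and its derivatives, and an entirely analogous expansion for $\partial_s\partial_y^{\ell-1}v$. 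The crucial feature is that every summand carries a prefactor $e^{-ms}$ with $m\geq 1$.

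For each $(a,b)$ with $m := a+b \in \{1,\dots,\ell\}$, the function $\Dh_0^a\Dh_1^b v$ is the hyperboloidal-similarity pullback of $\partial_t^a\partial_x^b u$, which is itself a solution of the free $(1+1)$-dimensional wave equation. Setting $\tilde v := \Dh_0^{a-1}\Dh_1^b v$ when $a\geq 1$, or $\tilde v := \Dh_1^{b-1} v$ when $a=0$, the combinations $\tilde v_\pm := \Dh_0\tilde v \pm \Dh_1\tilde v$ obey the transport equations \eqref{eq:wavevpm} and one has $\Dh_0^a\Dh_1^b v = \tfrac12(\tilde v_+ \pm \tilde v_-)$. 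Since the a priori bound \eqref{eq:aprioriv+} (and its $v_-$-analogue) requires only that the underlying function solve the wave equation, it applies verbatim to $\tilde v_\pm$ and yields
\[ \|\Dh_0^a\Dh_1^b v(s,\cdot)\|_{L^2(\B_R)} \lesssim e^{s/2}\bigl\{\|v(s_0,\cdot)\|_{H^\ell(\B_R)}+\|\partial_0 v(s_0,\cdot)\|_{H^{\ell-1}(\B_R)}\bigr\}, \]
where the initial-data bound is obtained by writing $\tilde v_\pm(s_0,\cdot)$ as a polynomial in $\partial_s^j\partial_y^i v(s_0,\cdot)$ with $i+j \leq m \leq \ell$ and reducing every occurrence of $\partial_s^j$ with $j\geq 2$ via the wave equation $\Dh_0^2 v = \Dh_1^2 v$ evaluated at $s=s_0$. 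This reduction is legitimate because the coefficient of $\partial_s^2 v$ in $\Dh_0^2 v - \Dh_1^2 v$ is proportional to $1-h'(y)^2$, which is uniformly positive on $\B_R$ thanks to $|h'(y)| = |y|/\sqrt{2+y^2}<1$.

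Substituting these $L^2$-bounds into the expansion gives
\[ \|\partial_y^\ell v(s,\cdot)\|_{L^2(\B_R)} \lesssim \sum_{m=1}^\ell e^{(1/2 - m)s}\bigl\{\|v(s_0,\cdot)\|_{H^\ell(\B_R)}+\|\partial_0 v(s_0,\cdot)\|_{H^{\ell-1}(\B_R)}\bigr\} \lesssim e^{-s/2}\bigl\{\cdots\bigr\}, \]
because $1/2 - m \leq -1/2$ for every $m\geq 1$; the bound for $\partial_s\partial_y^{\ell-1}v$ follows identically from the analogous expansion. The main technical obstacle is the combinatorial bookkeeping required to verify both expansions and to carry out the initial-time reduction of multi-time derivatives, but both steps are routine consequences of the commutator structure of $\{\Dh_0, \Dh_1\}$ and the non-degeneracy $|h'|<1$.
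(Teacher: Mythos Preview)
Your proof is correct, but it takes a genuinely different route from the paper's.

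The paper never expands $\partial_y^\ell v$ in the commuting fields $\Dh_0,\Dh_1$. Instead it works entirely with the transport variables $v_\pm$ and introduces the first-order operators
\[
L_\pm f(y)=-\frac{y\pm h(y)}{1\pm h'(y)}f'(y),\qquad D_\pm f(y)=\frac{1}{1\pm h'(y)}f'(y),
\]
so that the half-wave equations read $\partial_s v_\pm=L_\pm v_\pm$. The key algebraic fact is the commutator $[D_\pm,L_\pm]=-D_\pm$, which gives
$\partial_s D_\pm^j v_\pm=(L_\pm-j)D_\pm^j v_\pm$ and hence, by the same integration-by-parts that produced the $j=0$ bound, the sharper estimate
$\|D_\pm^j v_\pm(s,\cdot)\|_{L^2(\B_R)}^2\lesssim e^{(1-2j)s}\|D_\pm^j v_\pm(s_0,\cdot)\|_{L^2(\B_R)}^2$.
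The proof is then completed by the norm equivalence $\|f\|_{H^{\ell-1}(\B_R)}\simeq\sum_{j=0}^{\ell-1}\|D_\pm^j f\|_{L^2(\B_R)}$ together with Eq.~\eqref{eq:vvpm}.

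Your argument is more geometric: you exploit that $\Dh_0^a\Dh_1^b v$ is again the HSC pullback of a wave solution, apply the \emph{unimproved} $e^{s/2}$ bound to each such piece, and extract the decay from the explicit $e^{-ms}$ weights in the expansion of $\partial_y^\ell$. The cost is the bookkeeping for the expansion and, more substantially, the reduction of higher $\partial_s$-derivatives at $s=s_0$ via the equation; the paper avoids both by using $D_\pm^j$ as the higher-order norm and letting the commutator do the work. A further dividend of the paper's choice is that the operators $L_\pm,D_\pm$ and their commutator reappear verbatim in the construction of the semigroup $S_\pm$ (Proposition~\ref{prop:gen}), so the same computation serves twice.
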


\begin{proof}
  Define the differential operators
  \[ (L_\pm f)(y):=-\frac{y\pm h(y)}{1\pm h'(y)}f'(y),\qquad (D_\pm
  f)(y):=\frac{1}{1\pm h'(y)}f'(y). \]
  Then Eq.~\eqref{eq:wavevpm} can be written as
  \begin{equation}
    \label{eq:Lv} 
    \partial_s v_\pm(s,\cdot)=L_\pm v_\pm(s,\cdot). 
  \end{equation}
  We have the commutator relation $[D_\pm,L_\pm]=-D_\pm$ and thus,
  applying $D_\pm^j$ to Eq.~\eqref{eq:Lv}, for $0\leq j\leq k-1$, yields
  \[ \partial_s D_\pm^jv_\pm(s,\cdot)=D_\pm^j L_\pm
  v_\pm(s,\cdot)=L_\pm D_\pm^j
  v_\pm(s,\cdot)-jD_{\pm}^jv_\pm(s,\cdot). \]
  Consequently, by testing with $[1\pm h'(y)]D_\pm^j v_\pm(s,\cdot)$,
  we infer
  \begin{align}
    \label{eq:kbound}
    \|D_\pm^jv_\pm(s,\cdot)\|_{L^2(\B_R)}^2\lesssim e^{(1-2j) s} \|D_\pm^j v_\pm(s_0,\cdot)\|_{L^2(\B_R)}^2
  \end{align}
  for any $0\leq j\leq k-1$, cf.~\eqref{eq:aprioriv+}.
  Now we claim that, for any  $\ell\in \N_0$,
  \begin{equation}
    \label{eq:Hkequiv}
    \|f\|_{H^\ell(\B_R)}\simeq \sum_{j=0}^\ell \|D_\pm^j f\|_{L^2(\B_R)} .
  \end{equation}
  Suppose for the moment that Eq.~\eqref{eq:Hkequiv} is true. Then
  Eq.~\eqref{eq:kbound} implies
  \begin{align*}
    \|v_\pm(s,\cdot)\|_{H^{\ell-1}(\B_R)}
&\simeq \sum_{j=0}^{\ell-1}\|D_\pm^j v_\pm(s,\cdot)\|_{L^2(\B_R)}
\lesssim e^{s/2} \sum_{j=0}^{\ell-1}\|D_\pm^j v_\pm(s_0,\cdot)\|_{L^2(\B_R)} \\
&\lesssim e^{s/2}\|v_\pm(s_0,\cdot)\|_{H^{\ell-1}(\B_R)}
  \end{align*}
  for any $0\leq\ell\leq k$
  and the claim follows from Eq.~\eqref{eq:vvpm} and the boundary
  condition $v(s,0)=0$.

  It remains to prove Eq.~\eqref{eq:Hkequiv}. Note that
  $1\pm h'(y)\gtrsim 1$ for all $y\in \B_R$. Consequently, the bound
  $\|D_\pm^j f\|_{L^2(\B_R)}\lesssim \|f\|_{H^j(\B_R)}$ is trivial.
  Conversely,
  \[ D_\pm^{\ell}f(y)=\sum_{j=0}^{\ell-1}
  a_{\pm,j}(y)f^{(j)}(y)+\frac{1}{1\pm h'(y)}f^{(\ell)}(y) \]
  for functions $a_{\pm,j}\in C^\infty(\R)$ and thus,
  \[ \|f^{(\ell)}\|_{L^2(\B_R)}\lesssim
  \|D_\pm^{\ell}f\|_{L^2(\B_R)}+\sum_{j=0}^{\ell-1}\|f^{(j)}\|_{L^2(\B_R)}
  \lesssim \|D_\pm^{\ell}f\|_{L^2(\B_R)}+\|f\|_{H^{\ell-1}(\B_R)}. \]
  Consequently, the claim follows inductively.
\end{proof}

\begin{remark}
  Lemma \ref{lem:vHk} shows that the full range of energy bounds is
  available in the HSC. Even better, the evolution decays
  exponentially in these coordinates. This is a scaling effect.
\end{remark}

\subsection{Radial wave evolution in 5 space dimensions}
Let $u\in C^\infty(\R^{1,d})$ satisfy
$(\partial_t^2-\Delta)u(t,\cdot)=0$ and suppose $u(t,\cdot)$ is
radial. Then there exists a function
$\widehat u \in C^\infty(\R^{1,1})$ such that
$u(t,x)=\widehat u(t,|x|)$. In addition, $\widehat u(t,\cdot)$ is even
and satisfies
\[ (\partial_t^2-\partial_r^2-\tfrac{d-1}{r}\partial_r)\widehat
u(t,r)=0. \]
It is well-known that radial wave evolution in five space dimensions
can be reduced to the case $d=1$. 
This is a consequence of the intertwining identity\footnote{Similar
  formulas exist for all odd dimensions.}
\begin{equation}
\label{eq:ident51}
\partial_r^2 (r^2\partial_r+3r)=(r^2\partial_r+3r)(\partial_r^2+\tfrac{4}{r}\partial_r). 
\end{equation}
More precisely, let $\Omega\subset \R^{1,1}$ be a domain and suppose
$\widehat u\in C^3(\Omega)$.
Then it follows directly from Eq.~\eqref{eq:ident51} that
$(\partial_t^2-\partial_r^2-\frac{4}{r}\partial_r)\widehat u(t,r)=0$ implies
$(\partial_t^2-\partial_r^2)\widetilde u(t,r)=0$, where $\widetilde
u(t,r):=(r^2\partial_r+3r)\widehat u(t,r)$.
The converse is slightly more subtle. To begin with,
$(\partial_t^2-\partial_r^2)\widetilde u(t,r)=0$ implies that
$(\partial_t^2-\partial_r^2-\frac{4}{r}\partial_r)\widehat u(t,r)$
belongs to the kernel of $r^2\partial_r+3r$. The equation
$(r^2\partial_r+3r)U(t,r)=0$ has the general solution $U(t,r)=\frac{f(t)}{r^3}$ for a
free function $f$. Consequently, we obtain
$(\partial_t^2-\partial_r^2-\frac{4}{r}\partial_r)\widehat u(t,r)=0$ \emph{but
  only for those $t$ where $(t,0)\in \Omega$}.
This appears to cause problems for the evolution in HSC, cf.~Fig.~\ref{fig:HSC}.

In order to deal with this issue, we first
recall that
\[ \mc D_0 \widehat v(s,\eta)=e^s
h_1(\eta)(\partial_s+\eta\partial_\eta)\widehat v(s,\eta),\qquad
h_1(\eta):=\frac{1}{\eta h'(\eta)-h(\eta)} \]
and thus,
\begin{align*}
\mc D_0^2 \widehat v(s,\eta)
&=
e^{2s}h_1(\eta)^2\left [\partial_s^2+2\eta\partial_s\partial_\eta
  +\eta^2\partial_\eta^2
+\left (\eta\tfrac{h_1'(\eta)}{h_1(\eta)}+1\right )\partial_s+\left 
(\eta^2\tfrac{h_1'(\eta)}{h_1(\eta)}+2\eta\right )\partial_\eta
\right ]\widehat v(s,\eta).
\end{align*}
Similarly,
\[ \mc D_1 \widehat v(s,\eta)=-e^s
h_1(\eta)[h'(\eta)\partial_s+h(\eta)\partial_\eta]\widehat
v(s,\eta) \]
and therefore,
\begin{align*}
  \mc D_1^2 \widehat v(s,\eta)
=
e^{2s}h_1(\eta)^2 \Big [
&h'(\eta)^2\partial_s^2+2h'(\eta)h(\eta)\partial_s\partial_y+h(\eta)^2\partial_\eta^2 \\
&+\Big ( h''(\eta)h(\eta)+h'(\eta)^2+h'(\eta)h(\eta)\tfrac{h_1'(\eta)}{h_1(\eta)}\Big)\partial_s \\
&+\Big ( \tfrac{h_1'(\eta)}{h_1(\eta)}h(\eta)^2+2h'(\eta)h(\eta)\Big
  )\partial_\eta
\Big ]\widehat v(s,\eta) .
\end{align*}
Consequently, the radial, $d$-dimensional wave equation in HSC,
\[ \mc D_0^2 \widehat v(s,\eta)-\mc D_1^2 \widehat
v(s,\eta)-\frac{(d-1)e^s}{\eta}\mc D_1 \widehat v(s,\eta)=0, \]
can be written as the system
\begin{equation}
\label{eq:wavedHSCsys}
 \partial_s \begin{pmatrix}\widehat v(s,\cdot) \\ \partial_s
  \widehat v(s,\cdot) \end{pmatrix}=\widehat{\mb L}_d \begin{pmatrix}
  \widehat v(s,\cdot) \\ \partial_s \widehat v(s,\cdot) \end{pmatrix}, 
\end{equation}
with the spatial differential operator
\[ \widehat{\mb L}_d \begin{pmatrix} \widehat f_1 \\ \widehat
  f_2 \end{pmatrix}:=\begin{pmatrix} \widehat f_2 \\
c_{12}\widehat f_1''
+c^d_{11}\widehat f_1'+c_{21}\widehat f_2'
+c^d_{20}\widehat f_2 \end{pmatrix} \]
and the coefficients
\begin{align*}
c_{12}(\eta)&=\frac{h(\eta)^2-\eta^2}{1-h'(\eta)^2} \\
c^d_{11}(\eta)&=-\frac{1}{1-h'(\eta)^2}\left [
\frac{(d-1)h(\eta)}{\eta
              h_1(\eta)}+[\eta^2-h(\eta)^2]\frac{h_1'(\eta)}{h_1(\eta)}
+2[\eta-h'(\eta)h(\eta)] \right ] \\
c_{21}(\eta)&=2\frac{h'(\eta)h(\eta)-\eta}{1-h'(\eta)^2} \\
c^d_{20}(\eta)&=-\frac{1}{1-h'(\eta)^2}\left [ \frac{(d-1)h'(\eta)}{\eta
              h_1(\eta)}+[\eta-h'(\eta)h(\eta)]\frac{h_1'(\eta)}{h_1(\eta)}
-h''(\eta)h(\eta)-h'(\eta)^2+1 \right ].
\end{align*}
The equation $\widetilde u(t,r)=(r^2\partial_r+3r)\widehat u(t,r)$ in HSC reads
\begin{equation}
\begin{split}
\label{eq:intertwHSC}
 \widetilde v(s,\eta)&=e^{-2s}\eta^2 \mc D_1 \widehat
  v(s,\eta)+3e^{-s}\eta \widehat v(s,\eta) \\
&=-e^{-s}\eta^2 h_1(\eta)h'(\eta)\partial_s\widehat
  v(s,\eta)-e^{-s}\eta^2 h_1(\eta)h(\eta)\partial_\eta\widehat
  v(s,\eta)
+3e^{-s}\eta\widehat v(s,\eta) \\
&=:e^{-s}\left [
a_{11}(\eta)\partial_\eta+a_{10}(\eta)+a_{20}(\eta)\partial_s\right ]\widehat v(s,\eta).
\end{split}
\end{equation}
and differentiation with respect to $s$ yields
\begin{align*}
 \partial_s \widetilde v(s,\eta)
=&e^{-s}\left
   [-a_{11}(\eta)\partial_\eta-a_{10}(\eta)+[a_{10}(\eta)-a_{20}(\eta)]\partial_s
+a_{11}(\eta)\partial_\eta\partial_s + a_{20}(\eta)\partial_s^2\right
   ]\widehat v(s,\eta).
\end{align*}
If we assume for the moment that $\widehat v$ solves
Eq.~\eqref{eq:wavedHSCsys} with $d=5$, we may replace $\partial_s^2 \widehat
v(s,\eta)$ by lower-order derivatives in $s$. Explicitly, this yields
\begin{equation}
\begin{split}
\label{eq:intertwHSC_s}
   \partial_s \widetilde v(s,\eta)
=e^{-s}\Big [ &a_{20}(\eta)c_{12}(\eta)\partial_\eta^2
+[a_{20}(\eta)c^5_{11}(\eta)-a_{11}(\eta)]\partial_\eta
-a_{10}(\eta) \\
&+[a_{20}(\eta)c_{21}(\eta)+a_{11}(\eta)]\partial_\eta\partial_s \\
&+[a_{10}(\eta)+a_{20}(\eta)(c^5_{20}(\eta)-1)]\partial_s
\Big ]\widehat v(s,\eta).
\end{split}
\end{equation}
We combine Eqs.~\eqref{eq:intertwHSC} and \eqref{eq:intertwHSC_s} into
the single vector-valued equation
\[ \begin{pmatrix}\widetilde v(s,\cdot) \\ \partial_s \widetilde
  v(s,\cdot)
\end{pmatrix}=e^{-s}\widehat{\mb D}_5
\begin{pmatrix}\widehat v(s,\cdot) \\ \partial_s \widehat
  v(s,\cdot)
\end{pmatrix} \]
with the spatial differential operator
\[ \widehat{\mb D}_5 \begin{pmatrix}
\widehat f_1 \\ \widehat f_2 \end{pmatrix}
=\begin{pmatrix}
a_{11} \widehat f_1' + a_{10}\widehat f_1+a_{20}\widehat f_2 \\
b_{12}\widehat f_1''
+b_{11}\widehat f_1'
+b_{10}\widehat f_1
+b_{21}\widehat f_2'
+b_{20}\widehat f_2
\end{pmatrix} \]
and the coefficients
\begin{align*}
  b_{12}&=a_{20}c_{12} & b_{11}&=a_{20}c^5_{11}-a_{11} & 
b_{10}&=-a_{10} \\
 b_{21}&=a_{20}c_{21}+a_{11} & b_{20}&=a_{10}+a_{20}(c^5_{20}-1).
\end{align*}
The intertwining relation Eq.~\eqref{eq:ident51} now manifests itself
as 
\begin{equation}
  \label{eq:D5L5}
  \widehat{\mb D}_5\widehat{\mb L}_5 \widehat{\mb f}=
\widehat{\mb L}_1\widehat{\mb D}_5 \widehat{\mb f}+\widehat{\mb
  D}_5\widehat{\mb f},
\end{equation}
which may be verified by a straightforward (but, admittedly,
lengthy) computation.

     \begin{definition}
       \label{def:D}
       For $R>0$, $\eta\in [-R,R]$, and
       $\mb f \in C^\infty(\overline{\B_R^5})^2$ radial, we set
       $\mb E_1 \mb f(\eta):=\mb f(\eta e_1)$. Furthermore,
       \[ \mb D_5 \mb f:=\widehat {\mb D}_5\mb E_1 \mb f. \]
     \end{definition}

\begin{definition}
  Let $I\subset \R$ be a symmetric interval around the origin and
  $k\in \N$. Then we set
  \begin{align*}
    C^k_\pm(I)&:=\{f\in C^k(I): f(x)=\pm f(-x) \mbox{ for all }x\in I\} \\
    C^\infty_\pm(I)&:=\{f\in C^\infty(I): f(x)=\pm f(-x) \mbox{ for all }x\in I\} \\
    H^k_\pm(I)&:=\{f\in H^k(I): f(x)=\pm f(-x) \mbox{ for all }x\in I\} 
  \end{align*}
\end{definition}

The following result establishes the key mapping properties of
$\mb D_5$ which, in conjunction with Lemma \ref{lem:vHk}, yield the
desired energy bounds for $v$. The proof is rather lengthy and
therefore postponed to the appendix.

\begin{proposition}
  \label{prop:D}
  Fix $R>0$, $k\in \N$, and $k\geq 2$. Then the operator $\mb D_5$
  extends to a bijective map
  \[ \mb D_5: H^{k+1}_\mathrm{rad}(\B_R^5) \times
  H_\mathrm{rad}^{k}(\B_R^5)\to H^{k}_-(\B_R)\times H^{k-1}_-(\B_R) \]
  and we have
  \[ \|\mb D_5\mb f\|_{H^k(\B_R)\times H^{k-1}(\B_R)}\simeq \|\mb
  f\|_{H^{k+1}(\B^5_R)\times H^{k}(\B^5_R)} \]
  for all
  $\mb f\in H^{k+1}_\mathrm{rad}(\B_R^5)\times
  H^{k}_\mathrm{rad}(\B_R^5)$.
\end{proposition}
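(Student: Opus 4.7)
The proof proceeds in three main stages. First, I verify that all coefficients $a_{ij}$, $c_{ij}^5$, $b_{ij}$ are smooth on $[-R,R]$: with $h(\eta)=\sqrt{2+\eta^2}-2$ one computes $\eta h'(\eta)-h(\eta)=2-2/\sqrt{2+\eta^2}$ and $1-h'(\eta)^2=2/(2+\eta^2)$, both positive and bounded away from zero on $\R$, so $h_1$ and all denominators in the $c_{ij}$'s are smooth. A parity tally, using that $h$ is even and $h'$ is odd, then shows $a_{11}, c_{12}, c_{20}^5, b_{11}, b_{21}$ are even while $a_{10}, a_{20}, c_{11}^5, c_{21}, b_{12}, b_{10}, b_{20}$ are odd. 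Since radial functions on $\B_R^5$ restrict via $\mb E_1$ to even functions of $\eta$, every term of $\widehat{\mb D}_5\mb E_1\mb f$ turns out odd, so the map indeed lands in $H^k_-(\B_R)\times H^{k-1}_-(\B_R)$.

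For the upper bound $\|\mb D_5\mb f\|_{H^k\times H^{k-1}}\lesssim \|\mb f\|_{H^{k+1}\times H^k}$, I would combine the Banach-algebra property of $H^k([-R,R])$ (valid for $k\geq 1$, and ensured by the hypothesis $k\geq 2$) with a trace-type estimate controlling the one-dimensional Sobolev norm of $\mb E_1\mb f$ by the five-dimensional Sobolev norm of $\mb f$. Reading off the orders of the entries of $\widehat{\mb D}_5$ (at most order one in $\widehat f_1$ in the first entry, order two in $\widehat f_1$ and order one in $\widehat f_2$ in the second) then yields the claimed derivative count.

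The heart of the proof is the bijectivity together with the matching lower bound. The crucial observation is that $\widehat{\mb D}_5$ implements, at the level of Cauchy data in HSC, the classical $5\to 1$-dimensional reduction $\widetilde u=(r^2\partial_r+3r)\widehat u=r^{-1}\partial_r(r^3\widehat u)$ together with its $s$-derivative. In physical coordinates this operator is inverted by
\[
\widehat u(t,r)=\frac{1}{r^3}\int_0^r\rho\,\widetilde u(t,\rho)\,d\rho=\frac{1}{2r^3}\int_0^{r^2} u^{1/2}V(t,u)\,du,
\]
where $\widetilde u(t,r)=rV(t,r^2)$ is the smooth odd factorization; the last integral is manifestly smooth in $r^2$, so every smooth odd $\widetilde u$ arises from a unique smooth radial $\widehat u$. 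Transferred back to HSC via $(t,r)=(T+e^{-s}h(\eta), e^{-s}\eta)$, this inverts the first component of $\mb D_5$ and recovers $\widehat f_1$ from $g_1$. The second component then becomes an inhomogeneous first-order linear equation in $\eta$ for $\widehat f_2$, whose coefficients inherit the parity and smoothness above and which is solvable by an integrating factor. Injectivity is immediate from the uniqueness of these inversions.

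\textbf{Main obstacle.} Turning the pointwise inversion into the Sobolev lower bound $\|\mb f\|_{H^{k+1}(\B_R^5)\times H^k(\B_R^5)}\lesssim \|\mb D_5\mb f\|_{H^k(\B_R)\times H^{k-1}(\B_R)}$ is the main difficulty. The inversion formulas involve negative powers of $r$ combined with the radial weight $r^4\,dr$, and the gain of one full derivative on $\widehat f_1$ has to be extracted by Hardy-type inequalities together with the algebraic cancellation encoded in the intertwining identity $\widehat{\mb D}_5\widehat{\mb L}_5=(\widehat{\mb L}_1+\mathrm{Id})\widehat{\mb D}_5$ in Eq.~\eqref{eq:D5L5}. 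Commuting radial derivatives through this identity and reading off the Sobolev bounds at each order of $k$ is precisely what makes the computation lengthy and motivates deferring the details to the appendix.
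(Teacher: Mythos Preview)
Your inversion scheme for the lower bound does not work as written. The first component of $\widehat{\mb D}_5$ is
\[
g_1 = a_{11}\widehat f_1' + a_{10}\widehat f_1 + a_{20}\widehat f_2,
\]
and $a_{20}(\eta)=-\eta^2 h_1(\eta)h'(\eta)$ is not identically zero (it vanishes only to third order at the origin). So $g_1$ alone cannot determine $\widehat f_1$; both components of $\mb g$ are needed. The physical-space formula $\widehat u(t,r)=r^{-3}\int_0^r\rho\,\widetilde u(t,\rho)\,d\rho$ inverts $r^2\partial_r+3r$ at \emph{fixed} $t$, but the slice $s=\mathrm{const}$ is a hyperboloid on which $t=T+e^{-s}h(\eta)$ varies with $\eta$. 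This is precisely why the $a_{20}\widehat f_2$ term appears: the HSC ``spatial'' derivative $\mc D_1$ mixes $\partial_t$ and $\partial_r$, so the Cauchy data at fixed $s$ are genuinely coupled. Your proposed sequential inversion (first $\widehat f_1$ from $g_1$, then $\widehat f_2$ from the second row) therefore breaks down at the first step.

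The paper's route is different and more direct: use the first equation to eliminate $\widehat f_2$ algebraically, which turns the second equation into a \emph{second}-order ODE in $\eta$ for $\widehat f_1$ with inhomogeneity built from $g_1'$ and $g_2$. The homogeneous equation has the explicit fundamental system $\phi(\eta)=(\sqrt{2+\eta^2}-\sqrt 2)/\eta^3$, $\psi(\eta)=\eta^{-3}$, and variation of constants gives $\widehat f_1=T_1 g_1+T_2 g_2$ as a pair of integral operators. Injectivity follows because the free constants $c_0,c_1$ are forced to vanish by $f_1\in H^3(\B_R^5)$. The Sobolev lower bound is then obtained not from the intertwining identity~\eqref{eq:D5L5} but from the weighted norm equivalence $\|f\|_{H^k(\B_R^5)}\simeq\||\cdot|^2\widehat f\|_{H^k(\B_R)}$ (Corollary~\ref{cor:Hk5}) together with direct $H^k$-bounds on the integral operators $T_1,T_2$ (and their analogues $S_1,S_2$ for $\widehat f_2$) via Lemma~\ref{lem:intop}; several of these bounds rely on explicit algebraic cancellations in the Wronskian and the coefficients, rather than on commuting derivatives through~\eqref{eq:D5L5}. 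Your upper-bound sketch via a ``trace-type estimate'' is also off: restricting a radial $H^{k+1}(\B_R^5)$ function to a line costs derivatives; what one actually uses is again Corollary~\ref{cor:Hk5} and Hardy's inequality.
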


\begin{proof}
  See Section \ref{sec:proofD}.
\end{proof}

\subsection{Semigroup formulation}
 
So far we have proved a priori bounds, i.e., we have assumed that the
solution already exists. Now we turn to the proof of existence. To
this end, we employ the machinery of strongly continuous semigroups.
As before, we start with the case $d=1$.  From above we know that if
$u$ satisfies $\partial_t^2 u(t,x)-\partial_x^2 u(t,x)=0$ and
$v(s,y)=u(T+e^{-s}h(y),e^{-s}y)$, then $v_\pm:=\mc D_0v\pm \mc D_1 v$
satisfy
\[ [1\pm h'(y)]\partial_s v_\pm(s,y)=-[y\pm h(y)]\partial_y
v_\pm(s,y). \] Equivalently,
\[ \partial_s v_\pm(s,\cdot)=L_\pm v_\pm(s,\cdot) \]
with the spatial differential operator
\[ L_\pm f(y):=-\frac{y\pm h(y)}{1\pm h'(y)}f'(y). \]

\begin{proposition}
  \label{prop:gen}
  Let $R\geq \tfrac12$ and $k\in \N$. Then the operator
  $L_\pm: C^\infty(\overline{\B_R})\subset H^{k-1}(\B_R)\to
  H^{k-1}(\B_R)$
  is closable and its closure $\overline{L_\pm}$ generates a strongly
  continuous one-parameter semigroup $S_\pm$ on $H^{k-1}(\B_R)$ with
  the bound
  \[ \|S_\pm(s)f\|_{H^{k-1}(\B_R)}\lesssim
  e^{s/2}\|f\|_{H^{k-1}(\B_R)} \]
  for all $s\geq 0$ and $f\in H^{k-1}(\B_R)$.
\end{proposition}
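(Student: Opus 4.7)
\noindent The plan is to apply the Lumer--Phillips theorem on $H^{k-1}(\B_R)$ equipped with an inner product adapted to $L_\pm$. Since the a priori estimate of Lemma~\ref{lem:vHk} suggests the exponential type $e^{s/2}$, I would look for a \emph{contraction} semigroup generated by $L_\pm - \tfrac12$. By the norm equivalence $\|f\|_{H^{k-1}(\B_R)}^2 \simeq \sum_{j=0}^{k-1}\|D_\pm^j f\|_{L^2(\B_R)}^2$ from the proof of Lemma~\ref{lem:vHk} together with the fact that $1\pm h'(y)\simeq 1$ on $\overline{\B_R}$, the quantity
\[ \|f\|_{k,\pm}^2 := \sum_{j=0}^{k-1}\int_{-R}^{R}|D_\pm^j f(y)|^2\,[1\pm h'(y)]\,dy \]
defines a Hilbert norm on $H^{k-1}(\B_R)$ equivalent to the standard one, where $D_\pm$ is the operator introduced in the proof of Lemma~\ref{lem:vHk}.

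The dissipativity step exploits the commutator $[D_\pm, L_\pm] = -D_\pm$, iterated to $D_\pm^j L_\pm f = L_\pm D_\pm^j f - j D_\pm^j f$. Setting $g_j := D_\pm^j f$ and integrating by parts as in the derivation of \eqref{eq:energyv-},
\[ \mathrm{Re}\int_{-R}^{R}L_\pm g_j\,\overline{g_j}\,[1\pm h']\,dy = \tfrac12\int_{-R}^{R}|g_j|^2[1\pm h']\,dy - \tfrac12[y\pm h(y)]|g_j(y)|^2\Big|_{-R}^{R}. \]
For $R\geq\tfrac12$ the boundary contribution is non-negative: the factor $y+h(y)$ (resp.\ $y-h(y)$) has its unique zero at $+\tfrac12$ (resp.\ $-\tfrac12$) with the correct signs at the endpoints $\pm R$. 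Summing over $j\in\{0,\dots,k-1\}$ and dropping the favorable $-j\,D_\pm^j f$ contributions, one obtains $\mathrm{Re}\,\langle L_\pm f, f\rangle_{k,\pm}\leq \tfrac12\|f\|_{k,\pm}^2$ for $f\in C^\infty(\overline{\B_R})$, i.e., dissipativity of $L_\pm - \tfrac12$.

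For the range condition I would solve, for some $\lambda>\tfrac12$ and $g\in C^\infty(\overline{\B_R})$, the first-order ODE
\[ (\lambda-L_\pm)f = g \quad\Longleftrightarrow\quad \lambda f(y) + a_\pm(y) f'(y) = g(y), \qquad a_\pm(y):=\frac{y\pm h(y)}{1\pm h'(y)}. \]
Since $a_\pm$ is smooth on $\overline{\B_R}$ with a unique simple zero $y_\pm^* = \pm\tfrac12$ and a direct computation gives $a_\pm'(y_\pm^*) = 1$, the standard integrating factor yields explicit one-sided solutions on $(y_\pm^*, R]$ and $[-R, y_\pm^*)$. At $y_\pm^*$ itself, a Frobenius-type ansatz $f(y) = \sum_{n\geq 0} b_n(y-y_\pm^*)^n$ gives a recursion of the form $(\lambda+n)\,b_n = (\text{explicit data from } g, b_0, \dots, b_{n-1})$, uniquely solvable for all $n\in\N_0$ since $\lambda>0$. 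Matching the local smooth solution at $y_\pm^*$ with the one-sided integrating-factor solutions then produces a unique $f\in C^\infty(\overline{\B_R})\subset H^{k-1}(\B_R)$, and density of $C^\infty(\overline{\B_R})$ in $H^{k-1}(\B_R)$ gives density of the range.

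With these two ingredients the Lumer--Phillips theorem applies: $L_\pm - \tfrac12$ on the domain $C^\infty(\overline{\B_R})$ is closable and its closure generates a contraction semigroup on $(H^{k-1}(\B_R),\|\cdot\|_{k,\pm})$; equivalently, $\overline{L_\pm}$ generates a $C_0$-semigroup $S_\pm$ with $\|S_\pm(s)f\|_{k,\pm}\leq e^{s/2}\|f\|_{k,\pm}$, which by norm equivalence yields the stated $H^{k-1}$ bound. I expect the main obstacle to be the range condition: the dissipativity computation is essentially a replay of the a priori argument of Lemma~\ref{lem:vHk}, whereas showing that the resolvent ODE admits a globally smooth solution across the characteristic fixed point $y_\pm^*$ requires the Frobenius analysis and matching described above.
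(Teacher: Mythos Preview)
Your proposal is correct and follows essentially the same route as the paper: the adapted inner product, the commutator identity $[D_\pm,L_\pm]=-D_\pm$, and the resulting dissipativity estimate are exactly what the paper does. The only difference is in the range condition. You set up a Frobenius expansion at $y_\pm^*$ and propose to match it with one-sided integrating-factor solutions; the paper instead takes $\lambda=1$ and writes down the global integrating-factor solution directly,
\[
f(y)=\frac{1}{y\pm h(y)}\int_{\pm 1/2}^{y}[1\pm h'(t)]\,g(t)\,dt
=\frac{y\mp\tfrac12}{y\pm h(y)}\int_0^1 [1\pm h'(\pm\tfrac12+t(y\mp\tfrac12))]\,g(\pm\tfrac12+t(y\mp\tfrac12))\,dt,
\]
which is manifestly in $C^\infty(\overline{\B_R})$ since $y\pm h(y)$ has a simple zero at $y_\pm^*$. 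This single formula already encodes both the Frobenius solution near $y_\pm^*$ and the integrating-factor solution away from it, so no separate matching step is needed; your anticipated ``main obstacle'' is in fact a one-liner.
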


\begin{proof}
  We define two inner products on $L^2(\B_R)$ by
  \[ (f|g)_\pm:=\int_{-R}^R f(y)\overline{g(y)}[1\pm h'(y)]dy \]
  and denote the induced norms by $\|\cdot\|_\pm$.  A straightforward
  integration by parts using $R\geq\frac12$ yields the bound
  \[ \Re (L_\pm f|f)_\pm\leq \tfrac12 \|f\|_\pm^2 \]
  for all $f\in C^\infty(\overline{\B_R})$,
  cf.~Eq.~\eqref{eq:energyv-}.  Furthermore, we set
  \[ D_\pm f(y)=\frac{1}{1\pm h'(y)}f'(y) \]
  and define an inner product
  \[ (f|g)_{\pm,k-1}:=\sum_{j=0}^{k-1} (D_\pm^j f|D_\pm^j g)_\pm \]
  with induced norm $\|\cdot\|_{\pm,k-1}$.  Recall from the proof of
  Lemma \ref{lem:vHk} that
  $\|\cdot\|_{\pm,k-1}\simeq \|\cdot\|_{H^{k-1}(\B_R)}$.  We have the
  commutator relation $[D_\pm,L_\pm]=-D_\pm$ and thus,
  \begin{align*}
    \Re(L_\pm f|f)_{\pm,k-1}&=\Re \sum_{j=0}^{k-1} (D_\pm^j L_\pm f|D_\pm^j f)_\pm=\sum_{j=0}^{k-1} \left [\Re (L_\pm D_\pm^j f|D_\pm^j f)_\pm-j(D_\pm^j f|D_\pm^j f)_\pm \right ] \\
                            &\leq \tfrac12 \sum_{j=0}^{k-1} \|D_\pm^j f\|_\pm^2 \\
                            &=\tfrac12 \|f\|_{\pm,k-1}^2
  \end{align*}
  for all $f\in C^\infty(\overline{\B_R})$.  Thus, by the
  Lumer-Phillips Theorem \cite{EngNag00} it suffices to prove that the
  range of $1-L_\pm$ is dense in $H^{k-1}(\B_R)$. In other words, we
  have to show that for each given $F\in C^\infty(\overline{\B_R})$,
  there exists an $f\in C^\infty(\overline{\B_R})$ such that
  $(1-L_\pm)f=F$.  The equation $(1-L_+)f=F$ reads
  \[ \frac{y+h(y)}{1+h'(y)}f'(y)+f(y)=F(y). \]
  An explicit solution is given by
  \begin{align*}
    f(y)&=\frac{1}{y+h(y)}\int_{1/2}^y [1+h'(t)]F(t)dt \\
        &=
          \frac{y-\frac12}{y+h(y)}\int_0^1 [1+h'(\tfrac12+t(y-\tfrac12))]F(\tfrac12+t(y-\tfrac12))dt. 
  \end{align*}
  Since $y=\frac12$ is the only zero of $y+h(y)$ and
  $1+h'(\frac12)\not=0$, it is evident that
  $f\in C^\infty(\overline{\B_R})$.  Analogously, one proves the
  density of the range of $1-L_-$ and we are done.
\end{proof}

The next lemma shows that the closure $\overline{L_\pm}$ acts as a
classical differential operator, provided the underlying Sobolev space
contains $C^1(\overline{\B_R})$.

\begin{lemma}
  \label{lem:closure}
  Let $R\geq\frac12$, $k\in \N$, $k\geq 3$, and consider the closure
  $\overline{L_\pm}$ of the operator
  $L_\pm: C^\infty(\overline{\B_R} )\subset H^{k-1}(\B_R)\to
  H^{k-1}(\B_R)$.
  Then $\mc D(\overline{L_\pm})\subset C^{k-2}(\overline{\B_R})$ and
  we have
  \[ \overline{L_\pm} f(y)=-\frac{y\pm h(y)}{1\pm h'(y)}f'(y) \]
  for all $f\in \mc D(\overline{L_\pm})$.
\end{lemma}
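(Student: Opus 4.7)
The plan is to exploit one-dimensional Sobolev embedding together with the definition of the closure. Since we are in one spatial dimension and $k \geq 3$, the embedding $H^{k-1}(\B_R) \hookrightarrow C^{k-2}(\overline{\B_R})$ holds (because $k-1 > (k-2) + \tfrac12$). The coefficient $y \mapsto -\frac{y \pm h(y)}{1 \pm h'(y)}$ is smooth on $\overline{\B_R}$: recall $h(y) = \sqrt{2+y^2}-2$, so $h'(y) = y/\sqrt{2+y^2}$ satisfies $|h'(y)| < 1$, which means $1 \pm h'(y)$ is bounded away from zero. Hence the map $f \mapsto -\frac{y\pm h(y)}{1\pm h'(y)}f'(y)$ is a bounded linear operator from $C^{k-2}(\overline{\B_R})$ to $C^{k-3}(\overline{\B_R})$, and in particular continuous into $C(\overline{\B_R})$.

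Given $f \in \mc D(\overline{L_\pm})$, by definition of the closure there exists a sequence $(f_n) \subset C^\infty(\overline{\B_R})$ with $f_n \to f$ in $H^{k-1}(\B_R)$ and $L_\pm f_n \to \overline{L_\pm} f$ in $H^{k-1}(\B_R)$. The Sobolev embedding then yields $f_n \to f$ in $C^{k-2}(\overline{\B_R})$; in particular $f \in C^{k-2}(\overline{\B_R})$ and $f_n' \to f'$ uniformly on $\overline{\B_R}$. Consequently
\[
L_\pm f_n(y) = -\frac{y \pm h(y)}{1 \pm h'(y)} f_n'(y) \;\longrightarrow\; -\frac{y \pm h(y)}{1 \pm h'(y)} f'(y)
\]
uniformly on $\overline{\B_R}$, hence also in $L^2(\B_R)$.

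On the other hand, $L_\pm f_n \to \overline{L_\pm} f$ in $H^{k-1}(\B_R) \hookrightarrow L^2(\B_R)$. Uniqueness of limits in $L^2(\B_R)$ forces the identity
\[
\overline{L_\pm} f(y) = -\frac{y \pm h(y)}{1 \pm h'(y)} f'(y)
\]
for almost every $y \in \B_R$, and since both sides are continuous (using $f \in C^{k-2}(\overline{\B_R})$ and $k-2 \geq 1$), the equality holds pointwise on $\overline{\B_R}$. No serious obstacle is expected; the only thing to check carefully is that the Sobolev embedding used here is the standard one-dimensional $H^s \hookrightarrow C^j$ for $s > j + \tfrac12$, which is why the hypothesis $k \geq 3$ is imposed.
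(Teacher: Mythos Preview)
Your proposal is correct and follows essentially the same argument as the paper: take an approximating sequence from the definition of the closure, use the one-dimensional Sobolev embedding $H^{k-1}(\B_R)\hookrightarrow C^{k-2}(\overline{\B_R})$ (which is why $k\geq 3$ is assumed) to get uniform convergence of $f_n'$ to $f'$, and identify the limit by uniqueness. The paper's version is slightly terser but the logic is identical.
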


\begin{proof}
  Let $f\in \mc D(\overline{L_\pm})$. By definition, there exists a
  sequence $(f_n)_{n\in\N}\subset C^\infty(\overline{\B_R})$ such that
  $f_n\to f$ and $L_\pm f_n\to \overline{L_\pm}f$ in
  $H^{k-1}(\B_R)$. By Sobolev embedding we see that
  $f \in C^{k-2}(\overline{\B_R})$ and
  \begin{align*}
    \left | L_\pm f_n(y)+\frac{y\pm h(y)}{1\pm h'(y)}f'(y)\right |&\lesssim  
                                                                    \|f_n'-f'\|_{L^\infty(\B_R)}\lesssim \|f_n-f\|_{H^{k-1}(\B_R)}\to 0
  \end{align*}
  as $n\to\infty$.
\end{proof}

As a corollary, we obtain classical solutions for the half-wave
equations.

\begin{corollary}
  \label{cor:class}
  Let $R\geq \frac12$, $k\in \N$, and $k\geq 3$. Furthermore, let
  $f_\pm \in C^\infty(\overline{\B_R})$ and set
  \[ v_\pm(s,y):=S_\pm(s)f_\pm(y), \]
  where $S_\pm$ is the semigroup on $H^{k-1}(\B_R)$ from Proposition
  \ref{prop:gen}.  Then
  $v_\pm\in C^1([0,\infty)\times \overline{\B_R})$ and
  \[ (\mc D_0\mp \mc D_1)v_\pm(s,y)=0. \]
\end{corollary}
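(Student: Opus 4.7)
The plan is to combine the semigroup machinery with a Sobolev embedding and then verify the half-wave equation by a direct algebraic computation. Since $f_\pm\in C^\infty(\overline{\B_R})$, we have $f_\pm\in \mc D(L_\pm)\subset \mc D(\overline{L_\pm})$, so standard strongly continuous semigroup theory gives that $s\mapsto v_\pm(s,\cdot)=S_\pm(s)f_\pm$ is a $C^1$ curve in $H^{k-1}(\B_R)$ satisfying
\[ \partial_s v_\pm(s,\cdot)=\overline{L_\pm}\,v_\pm(s,\cdot) \]
for all $s\geq 0$.

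Next, I would upgrade pointwise-in-$s$ information to joint regularity on $[0,\infty)\times\overline{\B_R}$. Since the spatial domain is one-dimensional and $k\geq 3$, the Sobolev embedding gives $H^{k-1}(\B_R)\hookrightarrow C^1(\overline{\B_R})$, so $v_\pm\in C([0,\infty),C^1(\overline{\B_R}))$, which yields joint continuity of both $v_\pm$ and $\partial_y v_\pm$ on $[0,\infty)\times\overline{\B_R}$. Invoking Lemma \ref{lem:closure} to identify the abstract generator with the classical first-order transport operator, we obtain
\[ \partial_s v_\pm(s,y)=\overline{L_\pm}\,v_\pm(s,y)=-\frac{y\pm h(y)}{1\pm h'(y)}\,\partial_y v_\pm(s,y). \]
Since the right-hand side is jointly continuous in $(s,y)$ and the coefficient is smooth, $\partial_s v_\pm$ is jointly continuous and thus $v_\pm\in C^1([0,\infty)\times\overline{\B_R})$.

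For the half-wave equation $(\mc D_0\mp\mc D_1)v_\pm=0$, I would plug in the explicit dimension-one expressions for $\mc D_0$ and $\mc D_1$ given just after Definition \ref{def:mcD}. A brief calculation yields the identity
\[ (\mc D_0\mp\mc D_1)v_\pm(s,y)=\frac{e^s}{y h'(y)-h(y)}\Bigl[(1\pm h'(y))\,\partial_s v_\pm(s,y)+(y\pm h(y))\,\partial_y v_\pm(s,y)\Bigr], \]
and substituting $\partial_s v_\pm=L_\pm v_\pm=-\tfrac{y\pm h(y)}{1\pm h'(y)}\partial_y v_\pm$ makes the bracket vanish identically.

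The argument is essentially routine; the only place requiring care is ensuring joint $C^1$-regularity rather than just pointwise-in-$s$ information. The coupling between semigroup $C^1$-regularity in $H^{k-1}$ and the one-dimensional Sobolev embedding $H^{k-1}(\B_R)\hookrightarrow C^1(\overline{\B_R})$ (which forces the assumption $k\geq 3$) is the key technical ingredient; once this is in hand, Lemma \ref{lem:closure} immediately converts abstract differentiability into the pointwise transport identity used in the verification.
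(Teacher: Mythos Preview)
Your proof is correct and follows essentially the same approach as the paper: invoke semigroup theory to get $\partial_s v_\pm=\overline{L_\pm}v_\pm$ for data in $\mc D(\overline{L_\pm})$, then use Lemma~\ref{lem:closure} to convert this into the classical transport identity. You have simply been more explicit than the paper about the joint $C^1$-regularity (via the embedding $H^{k-1}(\B_R)\hookrightarrow C^1(\overline{\B_R})$) and about the algebraic verification of $(\mc D_0\mp\mc D_1)v_\pm=0$, both of which the paper subsumes in the sentence ``Lemma~\ref{lem:closure} finishes the proof.''
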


\begin{proof}
  Since
  $f_\pm \in C^\infty(\overline{\B_R})\subset \mc
  D(\overline{L_\pm})$,
  semigroup theory implies
  $\partial_s v_\pm(s,\cdot)=\overline{L_\pm}v_\pm(s,\cdot)$.
  Consequently, Lemma \ref{lem:closure} finishes the proof.
\end{proof}

Now we can easily construct a semigroup that produces a solution to
the one-dimensional wave equation in HSC.

\begin{definition}
  Let $R\geq\frac12$, $k\in \N$, and $k\geq 3$. For
  $(f_1,f_2)\in H^{k}(\B_R)\times H^{k-1}(\B_R)$ and
  $(f_-,f_+)\in H^{k-1}(\B_R)\times H^{k-1}(\B_R)$ we set
  \begin{align*} \mb A\left (\begin{array}{c} f_1 \\ f_2 \end{array}
    \right )(y):= \frac{1}{yh'(y)-h(y)}\left (
    \begin{array}{c}
      (y+h(y))f_1'(y)+(1+h'(y))f_2(y) \\
      (y-h(y))f_1'(y)+(1-h'(y))f_2(y)
    \end{array} 
    \right )
  \end{align*}
  and
  \[
  \mb B\left (\begin{array}{c}f_- \\ f_+\end{array} \right )(y):=
  \frac12 \left (\begin{array}{c}
                   -\int_0^y (1-h'(t))f_-(t)dt+\int_0^y (1+h'(t))f_+(t)dt \\
                   (y-h(y))f_-(y)-(y+h(y))f_+(y)
                 \end{array} \right ).
               \]
               Furthermore, for $s\geq 0$, we define
               $\mb S_1(s): H^k(\B_R)\times H^{k-1}(\B_R) \to
               H^k(\B_R)\times H^{k-1}(\B_R)$ by
               \[ \mb S_1(s):=e^{-s}\mb B\left (\begin{array}{cc}S_-(s) & 0 \\
                                                  0 &
                                                      S_+(s) \end{array}
                                                      \right )\mb
                                                      A, \]
                                                      where $S_\pm$
                                                      are the
                                                      semigroups on
                                                      $H^{k-1}(\B_R)$
                                                      constructed in
                                                      Proposition
                                                      \ref{prop:gen}.
                                                    \end{definition}

                                                    As the following
                                                    result shows,
                                                    $\mb S_1$ is the
                                                    solution operator
                                                    for the
                                                    one-dimensional
                                                    wave equation in
                                                    HSC with a
                                                    Dirichlet
                                                    condition at the center.

      \begin{proposition}
        \label{prop:S1}
        Let $\mb f\in C_-^\infty(\overline{\B_R})^2$ and set
        $v(s,y):=[\mb S_1(s)\mb f]_1(y)$.  Then
        $v\in C^2([0,\infty)\times \overline{\B_R})$, $v(s,\cdot)$ is
        odd for all $s\geq 0$, and we have
        $\partial_s v(s,y)=[\mb S_1(s)\mb f]_2(y)$ as well as
        \[ \mc D_0^2 v(s,y)-\mc D_1^2 v(s,y)=0 \]
for all $(s,y)\in [0,\infty)\times \overline{\B_R}$.
        Furthermore, the family $\{\mb S_1(s): s\geq 0\}$ forms a
        strongly continuous semigroup of bounded operators on
        $H_-^k(\B_R)\times H_-^{k-1}(\B_R)$ with generator
        \[ \mb L_1 =\mb B\left (\begin{array}{cc} \overline{L_-} & 0
            \\ 0 & \overline{L_+} \end{array} \right )\mb A-\mb I. \]
      \end{proposition}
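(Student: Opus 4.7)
The plan is to verify that the intertwining operators $\mb A$ and $\mb B$ are (nearly) mutual inverses and that the 1D wave equation in HSC factors under this change of frame into the two decoupled half-wave equations driven by $\overline{L_\pm}$; the operator $\mb B$ reconstructs a wave solution from half-wave data while automatically installing the Dirichlet condition $v(s,0)=0$ through integration from the origin.

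The starting point is two algebraic identities, each proved by a direct calculation exploiting the non-vanishing of $yh'(y)-h(y)$ on $[-R,R]$ and the crucial cancellation $(y-h)(1+h')-(y+h)(1-h')=2(yh'-h)$: unconditionally $\mb A\mb B=\mb I$ on $H^{k-1}(\B_R)\times H^{k-1}(\B_R)$, and $\mb B\mb A\mb f=\mb f$ on the subspace where $[\mb f]_1(0)=0$, in particular on $H^k_-(\B_R)\times H^{k-1}_-(\B_R)$. A short parity check shows that $\mb f$ being componentwise odd is equivalent to its image $\mb A\mb f=(g_1,g_2)$ satisfying the coupled symmetry $g_1(-y)=-g_2(y)$, and conversely $\mb B$ sends such coupled pairs back to odd outputs. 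Since $h$ is even and $h'$ is odd, the reflection $y\mapsto -y$ conjugates the equation $\partial_s f=L_-f$ to $\partial_s f=L_+f$; uniqueness of the semigroups from Proposition~\ref{prop:gen} then forces the diagonal semigroup $\mb S(s):=\mathrm{diag}(S_-(s),S_+(s))$ to preserve the coupled symmetry.

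Now fix $\mb f\in C^\infty_-(\overline{\B_R})^2$. Because $L_\pm$ maps $C^\infty(\overline{\B_R})$ into itself, iteration places $\mb A\mb f$ in the intersection of all iterated domains, and Lemma~\ref{lem:closure} combined with standard semigroup smoothing yields $V_\pm(s,y):=(S_\pm(s)[\mb A\mb f]_{1,2})(y)\in C^\infty([0,\infty)\times\overline{\B_R})$. Set $v:=[\mb S_1(s)\mb f]_1$ and $w:=[\mb S_1(s)\mb f]_2$. Plugging the definition of $\mb B$ into the formulas for $\mc D_\mu$ and using the same cancellation identity as above produces the clean factorization $(\mc D_0\pm\mc D_1)v=V_\pm$, with the exponential weights balancing exactly. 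Corollary~\ref{cor:class} gives $(\mc D_0-\mc D_1)V_+=0$, and hence
\[
(\mc D_0^2-\mc D_1^2)v=(\mc D_0-\mc D_1)(\mc D_0+\mc D_1)v=(\mc D_0-\mc D_1)V_+=0,
\]
establishing the wave equation. The identity $\partial_s v=w$ follows by differentiating the $\mb B$-integral in $s$, invoking $\partial_s V_\pm=L_\pm V_\pm$, and performing one integration by parts in the spatial variable; the boundary term at $t=0$ vanishes because the preserved coupled symmetry forces $V_-(s,0)+V_+(s,0)=0$. The regularity $v\in C^2$ and oddness of $v(s,\cdot)$ are then immediate from the smoothness of $V_\pm$ and the parity preservation.

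For the semigroup claims, the decomposition $\mb S_1(s)=e^{-s}\mb B\mb S(s)\mb A$ together with $\mb A\mb B=\mb I$ yields $\mb S_1(s+t)=\mb S_1(s)\mb S_1(t)$ in one line. Boundedness of $\mb S_1(s)$ on $H^k_-(\B_R)\times H^{k-1}_-(\B_R)$ follows from $\mb A:H^k\times H^{k-1}\to H^{k-1}\times H^{k-1}$ and $\mb B:H^{k-1}\times H^{k-1}\to H^k\times H^{k-1}$ being bounded (the gain of one derivative in the first component of $\mb B$ coming from the integral against $y$), combined with Proposition~\ref{prop:gen}. Strong continuity at $s=0$ is inherited from that of $S_\pm$ together with $\mb B\mb A=\mb I$ on the odd subspace, and a density argument extends the smooth computation to general $\mb f\in H^k_-(\B_R)\times H^{k-1}_-(\B_R)$. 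Differentiating $s\mapsto e^{-s}\mb B\mb S(s)\mb A\mb f$ at $s=0$ with the product rule gives the stated generator, the $-\mb I$ term arising from differentiating $e^{-s}$ and applying $\mb B\mb A=\mb I$. The main nuisance throughout is parity bookkeeping: the individual half-wave semigroups preserve no fixed parity, and one must rely on uniqueness to propagate the coupled symmetry that simultaneously encodes oddness of $v$ and the Dirichlet condition at the center.
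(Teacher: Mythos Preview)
Your proof is correct and follows essentially the same approach as the paper: both establish $\mb A\mb B=\mb I$, propagate the coupled symmetry $v_-(s,-y)=-v_+(s,y)$ from the initial data (you via semigroup uniqueness, the paper via the a priori bound Eq.~\eqref{eq:aprioriv+}, which amounts to the same thing), and then read off oddness, the wave equation, and the semigroup property. Your write-up is more explicit about $\mb B\mb A=\mb I$ on the odd subspace and about the integration-by-parts verification of $\partial_s v=w$; one minor quibble is the phrase ``standard semigroup smoothing,'' since $S_\pm$ are transport semigroups and do not smooth---the regularity comes instead from $\mb A\mb f\in\bigcap_n\mc D(\overline{L_\pm}^n)$, which you do state correctly.
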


\begin{proof}
We define $v_\pm$ by
\[ \begin{pmatrix}v_-(s,\cdot) \\ v_+(s,\cdot)\end{pmatrix}:=
\begin{pmatrix}S_-(s) & 0 \\ 0 & S_+(s)\end{pmatrix}\mb A\mb f. \]
From Corollary \ref{cor:class} we have $v_\pm \in C^1([0,\infty)\times
\overline{\B_R})$ and $(\mc D_0\mp \mc D_1)
v_\pm=0$. Furthermore,
\begin{align*}
  v_-(0,-y)&=\frac{1}{-yh'(-y)-h(-y)}\left
             [(-y+h(-y))f_1'(-y)+(1+h'(-y))f_2(-y)\right] \\
&=\frac{1}{yh'(y)-h(y)}\left [(-y+h(y))f_1'(y)-(1-h'(y))f_2(y)\right]
  \\
&=-v_+(0,y).
\end{align*}
Since $v_-(s,-y)$ satisfies the same equation as $v_+(s,y)$, it
follows from the a priori bound Eq.~\eqref{eq:aprioriv+} that
\[ \|v_+(s,\cdot)+v_-(s,-(\cdot))\|_{L^2(\B_R)}
\lesssim e^{s/2}\|v_+(0,\cdot)+v_-(0,-(\cdot))\|_{L^2(\B_R)}=0 \]
and thus, 
$v_-(s,-y)=-v_+(s,y)$ for all $(s,y)\in [0,\infty)\times \overline{\B_R}$.
Consequently,
the function
 $y\mapsto -(1-h'(y))v_-(s,y)+(1+h'(y))v_+(s,y)$ is even, whereas
 $y\mapsto (y-h(y))v_-(s,y)-(y+h(y))v_+(s,y)$ is odd.
This shows that $\mb S_1(s)$ maps odd functions to odd functions.
  The first statement now follows from Eq.~\eqref{eq:vvpm} (or a straightforward computation).  To prove
  the semigroup property, we first note that $\mb A\mb B=\mb I$ and
  thus,
  \begin{align*}
    \mb S_1(s+t)&=e^{-s-t}\mb B\left (\begin{array}{cc}S_-(s+t) & 0 \\
                                        0 & S_+(s+t) \end{array} \right )\mb A \\
                &=e^{-s-t}\mb B\left (\begin{array}{cc}S_-(s) & 0 \\
                                        0 & S_+(s) \end{array} \right )\mb A\mb B
                                            \left (\begin{array}{cc}S_-(t) & 0 \\
                                                     0 & S_+(t) \end{array} \right )\mb A \\
                &=\mb S_1(s)\mb S_1(t)
  \end{align*}
  for all $s,t\geq 0$.  Furthermore, it is obvious that
  $s\mapsto \mb S_1(s)$ is strongly continuous. 
Finally, $\mb S_1(0)\mb f=\mb B\mb A \mb f=\mb f$ since $\mb f$ is
odd.
The statement about the generator is obvious.
\end{proof}

By conjugating with $\mb D_5$, we obtain the solution operator for the
5-dimensional wave equation. This leads to the main result of this
section.

\begin{definition}
  \label{def:S5}
  For $s\geq 0$ we define
  $\mb S_5(s): H^{k+1}_\mathrm{rad}(\B_R^5)\times
  H^k_\mathrm{rad}(\B_R^5) \to H^{k+1}_\mathrm{rad}(\B_R^5)\times
  H^k_\mathrm{rad}(\B_R^5)$ by
  \[ \mb S_5(s):=e^s\mb D_5^{-1}\mb S_1(s)\mb D_5. \]
\end{definition}

\begin{theorem}
  \label{thm:S5}
  The family $\{\mb S_5(s): s\geq 0\}$ forms a strongly continuous
  semigroup of bounded operators on
  $H^{k+1}_\mathrm{rad}(\B_R^5)\times H^k_{\mathrm{rad}}(\B_R^5)$ and
  we have
  \[ \|\mb S_5(s)\mb f\|_{H^{k+1}(\B_R^5)\times H^k(\B_R^5)}\lesssim
  e^{s/2} \|\mb f\|_{H^{k+1}(\B_R^5)\times H^k(\B_R^5)} \]
  for all $s\geq 0$ and
  $\mb f\in H^{k+1}_\mathrm{rad}(\B_R^5)\times
  H^k_\mathrm{rad}(\B_R^5)$.
  The generator $\mb L_5$ of $\mb S_5$ is given by
  \[ \mb L_5=\mb D_5^{-1}\mb B\left (\begin{array}{cc}\overline{L_-} & 0 \\
                                       0 & \overline{L_+}\end{array}
                                           \right )\mb A\mb D_5. \]
                                           Furthermore, the function
                                           $v(s,\cdot)=[\mb S_5(s) \mb
                                           f]_1$
                                           belongs to
                                           $C^2([0,\infty)\times
                                           \overline{\B_R^5})$
                                           and satisfies
                                           \[ \mc D_0^2 v-\mc D^j\mc
                                           D_j v=0. \]
                                           Finally,
                                           $\partial_s v(s,\cdot)=[\mb
                                           S_5(s)\mb f]_2$.
                                         \end{theorem}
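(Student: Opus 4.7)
The plan is to bootstrap from Propositions \ref{prop:D}, \ref{prop:gen}, and \ref{prop:S1} via the conjugation built into Definition \ref{def:S5}, transferring the one-dimensional semigroup $\mathbf{S}_1$ to five dimensions, compensating its growth with the $e^s$ factor, and translating the one-dimensional wave equation using the intertwining identity \eqref{eq:D5L5}.

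First I would establish the semigroup structure and the growth bound. By Proposition \ref{prop:D}, $\mathbf{D}_5$ is a Banach-space isomorphism between $H^{k+1}_\mathrm{rad}(\mathbb{B}_R^5) \times H^k_\mathrm{rad}(\mathbb{B}_R^5)$ and $H^k_-(\mathbb{B}_R) \times H^{k-1}_-(\mathbb{B}_R)$ with equivalent norms. The semigroup property follows by inserting $\mathbf{D}_5 \mathbf{D}_5^{-1} = \mathbf{I}$ between the factors of $\mathbf{S}_1(s)\mathbf{S}_1(t) = \mathbf{S}_1(s+t)$ and using $e^s e^t = e^{s+t}$, while strong continuity at $s=0$ is inherited from the strong continuity of $\mathbf{S}_1$ modulo the bounded conjugation. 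For the growth estimate, Proposition \ref{prop:gen} gives $\|S_\pm(s)\|\lesssim e^{s/2}$ on $H^{k-1}(\mathbb{B}_R)$; combined with the boundedness of $\mathbf{A}$ and $\mathbf{B}$ (the integrating first component of $\mathbf{B}$ exactly compensates the derivative loss in $\mathbf{A}$), this yields $\|\mathbf{B}\,\mathrm{diag}(S_-(s),S_+(s))\,\mathbf{A}\|\lesssim e^{s/2}$ on $H^k_-(\mathbb{B}_R) \times H^{k-1}_-(\mathbb{B}_R)$. Since $\mathbf{L}_1 = \mathbf{B}\,\mathrm{diag}(\overline{L_-},\overline{L_+})\,\mathbf{A} - \mathbf{I}$ by Proposition \ref{prop:S1}, the $-\mathbf{I}$ contributes an $e^{-s}$ factor so that $\|\mathbf{S}_1(s)\|\lesssim e^{-s/2}$. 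Conjugating with $\mathbf{D}_5$ and multiplying by $e^s$ then delivers the stated bound $\|\mathbf{S}_5(s)\mathbf{f}\|\lesssim e^{s/2}\|\mathbf{f}\|$.

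The generator formula will follow from the standard chain rule for similarity transforms: differentiating $e^s \mathbf{D}_5^{-1}\mathbf{S}_1(s)\mathbf{D}_5$ at $s=0$ gives $\mathbf{I} + \mathbf{D}_5^{-1}\mathbf{L}_1 \mathbf{D}_5 = \mathbf{D}_5^{-1}(\mathbf{L}_1 + \mathbf{I})\mathbf{D}_5 = \mathbf{D}_5^{-1}\mathbf{B}\,\mathrm{diag}(\overline{L_-},\overline{L_+})\,\mathbf{A}\,\mathbf{D}_5$, with domain $\mathbf{D}_5^{-1}(\mathcal{D}(\mathbf{L}_1))$, precisely as claimed.

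To produce the classical solution, I set $\mathbf{g} := \mathbf{D}_5 \mathbf{f}\in H^k_-(\mathbb{B}_R) \times H^{k-1}_-(\mathbb{B}_R)$ and apply Proposition \ref{prop:S1} to obtain $\widetilde v(s,y) := [\mathbf{S}_1(s)\mathbf{g}]_1(y) \in C^2$, odd in $y$, satisfying $(\mathcal{D}_0^2 - \mathcal{D}_1^2)\widetilde v = 0$ and $\partial_s \widetilde v = [\mathbf{S}_1(s)\mathbf{g}]_2$. Writing $\mathbf{v}(s) := \mathbf{S}_5(s)\mathbf{f} = e^s \mathbf{D}_5^{-1}\mathbf{S}_1(s)\mathbf{g}$, the intertwining relation \eqref{eq:D5L5} together with the semigroup identity $\partial_s \mathbf{v}(s) = \mathbf{L}_5 \mathbf{v}(s)$ gives $\mathbf{D}_5 \partial_s \mathbf{v}(s) = (\mathbf{L}_1 + \mathbf{I}) \mathbf{D}_5 \mathbf{v}(s)$, which is exactly the first-order system \eqref{eq:wavedHSCsys} for $d=5$. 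Hence $v = [\mathbf{v}(s)]_1$ satisfies $\mathcal{D}_0^2 v - \mathcal{D}^j \mathcal{D}_j v = 0$ and $\partial_s v = [\mathbf{v}(s)]_2$. The main obstacle I anticipate lies in the $C^2$ regularity on $\overline{[0,\infty)\times \mathbb{B}_R^5}$, which is not a purely semigroup statement: spatial $C^2$ smoothness needs the Sobolev embedding $H^{k+1}(\mathbb{B}_R^5) \hookrightarrow C^2(\overline{\mathbb{B}_R^5})$, which forces $k$ large enough, while temporal $C^2$ regularity follows by applying the generator $\mathbf{L}_5$ twice along orbits with data in $\mathcal{D}(\mathbf{L}_5^2)$ and density-approximating general initial data.
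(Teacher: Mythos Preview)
The paper does not give a separate proof of this theorem; it is meant to follow directly from Definition~\ref{def:S5}, Propositions~\ref{prop:D}, \ref{prop:gen}, \ref{prop:S1}, and the intertwining identity~\eqref{eq:D5L5}, precisely along the lines you sketch. Your outline is correct in substance.

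Two small corrections to the presentation. First, the equation $\mb D_5\partial_s\mb v=(\mb L_1+\mb I)\mb D_5\mb v$ that you derive from the generator formula is \emph{not} itself the system~\eqref{eq:wavedHSCsys} for $d=5$; the intertwining~\eqref{eq:D5L5} is needed at this point (not earlier) to say $(\widehat{\mb L}_1+\mb I)\widehat{\mb D}_5=\widehat{\mb D}_5\widehat{\mb L}_5$, and together with the fact that the abstract $\mb L_1$ acts as $\widehat{\mb L}_1$ on smooth data (implicit in Proposition~\ref{prop:S1}) plus injectivity of $\mb D_5$, you then obtain $\partial_s\mb v=\widehat{\mb L}_5\mb v$, which \emph{is} the five-dimensional first-order system. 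Second, the $C^2$ assertion should be read, as in Proposition~\ref{prop:S1}, for smooth $\mb f$: then $\mb D_5\mb f\in C^\infty_-(\overline{\B_R})^2$, the orbit under $\mb S_1$ stays smooth, and $\mb D_5^{-1}$ returns smooth radial functions; density approximation in Sobolev norm does not propagate $C^2$ regularity, so that last remark of yours should be dropped.
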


                                         Finally, we obtain the
                                         explicit form of $\mb L_5$.
                                         To keep equations within
                                         margins, we define the
                                         following auxiliary
                                         quantities.

                                         \begin{definition}
                                           \label{def:H}
                                           We set
                                           \begin{align*}
                                             H_0{}^0(s,y)&:=\frac{e^s}{y^\ell\partial_\ell h(y)-h(y)} & H_0{}^j(s,y)&:=\frac{e^s y^j}{y^\ell\partial_\ell h(y)-h(y)} \\
                                             H_j{}^0(s,y)&:=-\frac{e^s\partial_j h(y)}{y^\ell\partial_\ell h(y)-h(y)} &
                                                                                                                        H_j{}^k(s,y)&:=e^s\delta_j{}^k-\frac{e^s\partial_j h(y)}{y^\ell\partial_\ell h(y)-h(y)}y^k
                                           \end{align*}
                                         \end{definition}
                                         Then we have
                                         $\mc
                                         D_\mu=H_\mu{}^\nu \partial_\nu$,
                                         see Definition \ref{def:mcD},
                                         and thus,
                                         \begin{equation}
                                           \begin{split}
                                             \label{eq:DmuDmu}
                                             \mc D^\mu\mc
                                             D_\mu&=H^{\mu\nu}\partial_\nu(H_\mu{}^\lambda\partial_\lambda)
                                             =H^{\mu\nu}H_\mu{}^\lambda\partial_\nu\partial_\lambda+H^{\mu\nu}\partial_\nu H_\mu{}^\lambda\partial_\lambda \\
                                             &=H^{\mu
                                               0}H_\mu{}^0\partial_0^2+2
                                             H^{\mu
                                               j}H_\mu{}^0\partial_j\partial_0
                                             +H^{\mu\nu}\partial_\nu
                                             H_\mu{}^0\partial_0
                                             +H^{\mu
                                               j}H_\mu{}^k \partial_j\partial_k+H^{\mu\nu}\partial_\nu
                                             H_\mu{}^j \partial_j.
                                           \end{split}
                                         \end{equation}
                                         It follows that
                                         $\mc D^\mu\mc D_\mu v=0$ is
                                         equivalent to
                                         \[ \partial_0 \left
                                           (\begin{array}{c}v
                                             \\ \partial_0
                                             v \end{array} \right )
                                         =\left (\begin{array}{c}\partial_0 v \\
                                                   -\frac{H^{\mu
                                                   j}H_\mu{}^k}{H^{\mu
                                                   0}H_\mu{}^0}\partial_j\partial_k
                                                   v-\frac{H^{\mu\nu}\partial_\nu
                                                   H_\mu{}^j}{H^{\mu
                                                   0}H_\mu{}^0}\partial_j
                                                   v -2\frac{H^{\mu
                                                   j}H_\mu{}^0}{H^{\mu
                                                   0}H_\mu{}^0}\partial_j \partial_0
                                                   v-\frac{H^{\mu\nu}\partial_\nu
                                                   H_\mu{}^0}{H^{\mu
                                                   0}H_\mu{}^0} \partial_0
                                                   v \end{array}
  \right )\] and thus,
  \begin{equation}
    \label{eq:L5}
    \mb L_5 \left (\begin{array}{c} f_1 \\ f_2 \end{array} \right )=
    \left (\begin{array}{c}
             f_2 \\ 
             -\frac{H^{\mu j}H_\mu{}^k}{H^{\mu 0}H_\mu{}^0}\partial_j\partial_k f_1-\frac{H^{\mu\nu}\partial_\nu H_\mu{}^j}{H^{\mu 0}H_\mu{}^0}\partial_j f_1
             -2\frac{H^{\mu j}H_\mu{}^0}{H^{\mu 0}H_\mu{}^0}\partial_j f_2-\frac{H^{\mu\nu}\partial_\nu H_\mu{}^0}{H^{\mu 0}H_\mu{}^0} f_2 \end{array} \right ).
       \end{equation}

       \section{Wave maps in hyperboloidal similarity coordinates}

       \noindent Now we return to the wave maps equation
       \begin{equation}
         \label{eq:mainuu}
         \left (\partial_t^2-\Delta_x\right )u(t,x)=\frac{2|x|u(t,x)-\sin(2|x|u(t,x))}{|x|^3}
       \end{equation}
       with the one-parameter family $\{u_T^*: T\in \R\}$ of blowup
       solutions given by
       \[ u_T^*(t,x)=\frac{4}{|x|}\arctan\left
         (\frac{|x|}{T-t+\sqrt{(T-t)^2+|x|^2}}\right ). \]

       \subsection{Perturbations of the blowup solution}
       \label{sec:pert}
       We would like to study the stability of $u^*_{T}$ and thus, we
       insert the ansatz $u(t,x)=u^*_{T}(t,x)+\widetilde u(t,x)$ into
       Eq.~\eqref{eq:mainuu} which yields
       \begin{equation}
         \label{eq:mainutilde}
         \left [\partial_t^2-\Delta_x+V_T(t,x)\right ]\widetilde
         u(t,x)=F_T(\widetilde u(t,x),t,x),
       \end{equation}
       where
       \begin{align*}
         V_T(t,x)&=\frac{2\cos(2|x|u_T^*(t,x))-2}{|x|^2}  
       \end{align*}
       and
       \begin{align*}
         F_T(\widetilde u(t,x),t,x)=-|x|^{-3}\Big [ 
         &\sin\big (2|x|u_T^*(t,x)+2|x|\widetilde u(t,x)\big ) \\
         &-\sin(2|x|u_T^*(t,x))-2|x|\cos(2|x|u_T^*(t,x))\widetilde u(t,x)
           \Big ].
       \end{align*}

       In hyperboloidal similarity coordinates,
       Eq.~\eqref{eq:mainutilde} reads
       \begin{equation}
         \label{eq:mainv}
         -\mc D^\mu\mc D_\mu v(s,y)+V_T(\eta_T(s,y))v(s,y)=F_T(v(s,y),\eta_T(s,y)),
       \end{equation}
       where, as always, $\eta_T(s,y)=(T+e^{-s}h(y), e^{-s}y)$ and
       $v(s,y)=\widetilde u(\eta_T(s,y))$.  By definition of
       $\mb L_5$, Eq.~\eqref{eq:mainv} is equivalent to
       \begin{equation}
         \begin{split}
           \label{eq:vsys}
           \partial_s \begin{pmatrix}
             v(s,y) \\
             \partial_s v(s,y)
           \end{pmatrix}
           =& \mb L_5\begin{pmatrix}
             \ v(s,\cdot) \\
             \partial_s v(s,\cdot)
           \end{pmatrix}(y) +\begin{pmatrix} 0 \\ (H^{\mu
               0}(s,y)H_\mu{}^0(s,y))^{-1}V_T(\eta_T(s,y)) v(s,y)
           \end{pmatrix} \\
           &-\begin{pmatrix}
             0 \\
             (H^{\mu 0}(s,y)H_\mu{}^0(s,y))^{-1} F_T(
             v(s,y),\eta_T(s,y))
           \end{pmatrix}.
         \end{split}
       \end{equation}
       Note that
       \[ H^{\mu 0}(s,y)H_\mu{}^0
       (s,y)=-e^{2s}\frac{1-\partial^jh(y)\partial_j
         h(y)}{[y^\ell \partial_\ell h(y)-h(y)]^2}=:e^{2s}H(y)^{-1} \]
       and
       \[ u_T^*(\eta_T(s,y))=\frac{4e^s}{|y|}\arctan\left
         (\frac{|y|}{\sqrt{|y|^2+h(y)^2}-h(y)}\right
       )=:e^s\alpha_0(y). \]
       Observe that $\alpha_0\in C^\infty(\R^5)$.  Consequently,
       \[ V(y):=(H^{\mu 0}(s,y)H_\mu{}^0(s,y))^{-1}V_T(\eta_T(s,y))=
       H(y)\frac{2\cos(2|y|\alpha_0(y))-2}{|y|^2} \]
       is independent of $s$.  Furthermore,
       \begin{align*}
         F_T(v(s,y),\eta_T(s,y))
         =-e^{3s}|y|^{-3}\Big [ 
         &\sin\big (2e^{-s}|y|u_T^*(\eta_T(s,y))+2e^{-s}|y|v(s,y)\big ) \\
         &-\sin\big(2e^{-s}|y|u_T^*(\eta_T(s,y))\big ) \\
         &-2e^{-s}|y|\cos \big (2e^{-s}|y|u_T^*(\eta_T(s,y))\big )v(s,y)
           \Big ]
       \end{align*}
       and we can write
       \[(H^{\mu 0}(s,y)H_\mu{}^0(s,y))^{-1} F_T(
       v(s,y),\eta_T(s,y))=e^s H(y)N(e^{-s}|y|v(s,y),y), \] where
       \begin{align*}
         N(p,y):=-\frac{ 
         \sin(2|y|\alpha_0(y)+2p)
         -\sin(2|y|\alpha_0(y))-2\cos(2|y|\alpha_0(y))p}{|y|^3}.
       \end{align*}
       In order to obtain an autonomous equation, we rescale and write
       Eq.~\eqref{eq:vsys} in terms of
       \[ \Phi(s)(y):=\begin{pmatrix} \phi_1(s)(y) \\
         \phi_2(s)(y)\end{pmatrix}:=e^{-s}\begin{pmatrix} v(s,y)
         \\ \partial_s v(s,y) \end{pmatrix}. \] This yields
       \begin{equation}
         \label{eq:Psi}
         \partial_s \Phi(s)=(\mb L_5-\mb I+\mb L')\Phi(s)+\mb N(\Phi(s)), 
       \end{equation}
       where
       \begin{align*}
         \mb L' \begin{pmatrix}
           f_1 \\ f_2 \end{pmatrix}(y)&:=\begin{pmatrix} 0 \\ V(y) f_1(y) \end{pmatrix}  \\
         \mb N \left (\begin{array}{c}
                        f_1 \\ f_2 \end{array} \right )(y)&:=\left (\begin{array}{c} 0 \\ -H(y) N(|y|f_1(y),y) \end{array} \right ).
       \end{align*}
       In the following, we write $\mb L:=\mb L_5-\mb I+\mb L'$.

\subsection{Analysis of the linearized evolution}
The rest of this section is devoted to the analysis of
Eq.~\eqref{eq:Psi}.  The first step is to develop a sufficiently good
understanding of the linearized equation that is obtained from
Eq.~\eqref{eq:Psi} by dropping the nonlinearity.  We start with a
simple lemma that constructs a semigroup $\mb S$ which governs the
linearized flow. In particular, this yields the well-posedness of the
linearized Cauchy problem in the sense of semigroup theory.

\begin{definition}
  For $R>0$ and $k\in \N_0$ we set
  \[ \mc H^{k}_R:=H^{k+1}_\mathrm{rad}(\B_R^5)\times
  H^{k}_\mathrm{rad}(\B_R^5) \] and
  \[ \|(f_1,f_2)\|_{\mc
    H^{k}_R}^2:=\|f_1\|_{H^{k+1}(\B_R^5)}^2+\|f_2\|_{H^{k}(\B_R^5)}^2 \]
\end{definition}

\begin{lemma}
  \label{lem:S}
  Let $R\geq\frac12$, $k\in \N$, and $k\geq 3$.  Then the operator
  $\mb L=\mb L_5-\mb I+\mb L'$ is the generator of a strongly
  continuous semigroup $\{\mb S(s): s\geq 0\}$ on $\mc H^k_R$.
  Furthermore, every $\lambda\in \sigma(\mb L)$ with
  $\Re\lambda>-\frac12$ is an eigenvalue with finite algebraic
  multiplicity.
\end{lemma}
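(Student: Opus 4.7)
The plan is to realise $\mb L$ as a compact bounded perturbation of the dissipative operator $\mb L_5-\mb I$ and then extract the spectral information via an analytic Fredholm argument. By Theorem \ref{thm:S5}, $\mb L_5$ generates a strongly continuous semigroup $\mb S_5$ on $\mc H^k_R$ with $\|\mb S_5(s)\|\lesssim e^{s/2}$. Hence $\mb L_5-\mb I$ generates $s\mapsto e^{-s}\mb S_5(s)$, a semigroup of growth bound at most $-\tfrac12$, which yields $\sigma(\mb L_5-\mb I)\subset\{\Re\lambda\leq-\tfrac12\}$ together with $\|\mb R_{\mb L_5-\mb I}(\lambda)\|\to 0$ as $\Re\lambda\to\infty$. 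Next I would verify that $V(y)=H(y)\frac{2\cos(2|y|\alpha_0(y))-2}{|y|^2}$ belongs to $C^\infty(\overline{\B^5_R})$: the apparent $|y|^{-2}$ singularity is cancelled via $\cos(z)-1=-\tfrac12 z^2+O(z^4)$ and the smoothness of $\alpha_0$ recorded after Definition \ref{def:D}, while $H$ is manifestly smooth and bounded on $\overline{\B^5_R}$. Since $k\geq 3$ makes $H^{k+1}(\B^5_R)$ a Banach algebra, multiplication by $V$ is bounded on $H^{k+1}(\B^5_R)$, so $\mb L'(f_1,f_2)=(0,Vf_1)$ is a bounded operator on $\mc H^k_R$. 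The bounded perturbation theorem \cite{EngNag00} then yields the generator property of $\mb L=(\mb L_5-\mb I)+\mb L'$.

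For the spectral claim I would upgrade boundedness of $\mb L'$ to compactness. The range of $\mb L'$ lies in $\{0\}\times H^{k+1}_{\mathrm{rad}}(\B^5_R)$, which embeds compactly into $\{0\}\times H^k_{\mathrm{rad}}(\B^5_R)$ by Rellich--Kondrachov, so $\mb L'$ is a compact operator on $\mc H^k_R$. For any $\lambda$ with $\Re\lambda>-\tfrac12$, one has $\lambda\in\rho(\mb L_5-\mb I)$ and may factor
\[ \lambda\mb I-\mb L=\bigl(\lambda\mb I-(\mb L_5-\mb I)\bigr)\bigl(\mb I-\mb R_{\mb L_5-\mb I}(\lambda)\mb L'\bigr), \]
so that $\lambda\mapsto \mb R_{\mb L_5-\mb I}(\lambda)\mb L'$ is a holomorphic compact-operator-valued function on $\{\Re\lambda>-\tfrac12\}$ that tends to $0$ in norm as $\Re\lambda\to\infty$. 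The analytic Fredholm theorem then forces $\mb I-\mb R_{\mb L_5-\mb I}(\lambda)\mb L'$ to be invertible on all of $\{\Re\lambda>-\tfrac12\}$ except on a discrete subset, and to be Fredholm of index zero with finite-dimensional kernel at the exceptional points. Through the factorisation this translates into: $\sigma(\mb L)\cap\{\Re\lambda>-\tfrac12\}$ consists of isolated eigenvalues, each of finite algebraic multiplicity, since the Riesz projection onto the corresponding generalised eigenspace has finite rank.

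The main technical point I expect to wrestle with is the smoothness of $V$ on all of $\overline{\B^5_R}$: away from $y=0$ this is immediate, but absorbing the $|y|^{-2}$ factor at the origin requires exploiting the vanishing of $\cos(2|y|\alpha_0(y))-1$ to second order in a form that survives differentiation in the full space variable $y\in\R^5$, not merely in $|y|$. Once this is under control, the rest is a routine assembly of bounded-perturbation theory and the Riesz--Fredholm framework.
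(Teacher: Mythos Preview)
Your proposal is correct and follows essentially the same strategy as the paper: realise $\mb L'$ as a compact operator on $\mc H^k_R$ (via $V\in C^\infty(\overline{\B^5_R})$ and the compact embedding $H^{k+1}_{\mathrm{rad}}(\B^5_R)\hookrightarrow H^k_{\mathrm{rad}}(\B^5_R)$), invoke the bounded perturbation theorem for the generator property, and then extract the spectral statement from the factorisation of $\lambda\mb I-\mb L$. The only cosmetic difference is that the paper uses the mirror factorisation $\lambda\mb I-\mb L=[\mb I-\mb L'\mb R_{\mb L_5-\mb I}(\lambda)](\lambda\mb I-\mb L_5+\mb I)$ and concludes finite algebraic multiplicity by invoking stability of the essential spectrum under compact perturbations (Kato, Theorems~5.28 and~5.35), whereas you package the same content through the analytic Fredholm theorem; both routes are standard and equivalent here.
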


\begin{proof}
 Since $H^{k+1}_\mathrm{rad}(\B_R^5)\hookrightarrow
 H^k_{\mathrm{rad}}(\B_R^5)$ is compact and $V\in C^\infty(\R^5)$, it
 follows that $\mb L': \mc H_R^k\to\mc H_R^k$ is compact and the
 bounded perturbation theorem implies that $\mb L_5-\mb I+\mb L'$
 generates a semigroup $\mb S(s)$ on $\mc H_R^k$.
Now suppose $\lambda\in \sigma(\mb L)$ and
$\Re\lambda>-\frac12$. Since $\sigma(\mb L_5-\mb I)\subset \{z\in \C:
\Re z\leq -\frac12\}$ by the growth bound in Theorem \ref{thm:S5},
we have the identity $\lambda \mb I-\mb L=[\mb I-\mb L'\mb
R_{\mb L_5-\mb I}(\lambda)](\lambda\mb I-\mb L_5+\mb
I)$. Consequently, $1\in \sigma(\mb L'\mb R_{\mb L_5-\mb I}(\lambda))$ and by
the compactness of $\mb L'$ we see that in fact $1\in \sigma_p(\mb
L'\mb R_{\mb L_5-\mb I}(\lambda))$.
This means that there exists a nonzero $\mb g\in \mc H_R^k$ in the
kernel of $\mb I-\mb L'\mb R_{\mb L_5-\mb I}(\lambda)$. Thus, $\mb
f:=\mb R_{\mb L_5-\mb I}(\lambda)\mb g$ is nonzero, belongs to $\mc
D(\mb L)$, and satisfies
\[ (\lambda\mb I-\mb L)\mb f= 
[\mb I-\mb L'\mb
R_{\mb L_5-\mb I}(\lambda)](\lambda\mb I-\mb L_5+\mb
I)\mb R_{\mb L_5-\mb I}(\lambda)\mb g=[\mb I-\mb L'\mb
R_{\mb L_5-\mb I}(\lambda)]\mb g=\mb 0.
 \]
 In other words, $\mb f$ is an eigenfunction of $\mb L$ to the
 eigenvalue $\lambda\in \sigma_p(\mb L)$.  Finally, suppose that
 $\lambda$ has infinite algebraic multiplicity. Then, by \cite{Kat95},
 p.~239, Theorem 5.28, $\lambda$ would belong to the essential
 spectrum of $\mb L$. This, however, is impossible since
 $\lambda\notin \sigma(\mb L_5-\mb I)$ and the essential spectrum is
 stable under compact perturbations, see \cite{Kat95}, p.~244, Theorem
 5.35.
\end{proof}

\subsection{Spectral analysis of the generator}

Next, we turn to the analysis of the point spectrum of $\mb L$.  As a
matter of fact, the spectral analysis of $\mb L$ is essentially
independent of the particular choice of the height function $h$ and
can be reduced to the case $h(y)=-1$.  This will allow us to utilize
the spectral information from \cite{CosDonXia16, CosDonGlo17} to show
that the only unstable eigenvalue of $\mb L$ is $\lambda=1$.

\begin{definition}
  \label{def:f1}
  We set
  \[ \mb f_1^*(y):=\begin{pmatrix}f_{1,1}^*(y) \\
    f_{1,2}^*(y)\end{pmatrix} :=\frac{1}{|y|^2+h(y)^2}\begin{pmatrix}1
    \\ 2\end{pmatrix}. \]
\end{definition}

\begin{lemma}
  \label{lem:sigmapL}
  Let $R\geq \frac12$, $k\in \N$, and $k\geq 4$. Furthermore, let
  $\mb L: \mc D(\mb L)\subset \mc H^k_R\to\mc H^k_R$ be the operator
  defined in Lemma \ref{lem:S}.  Then
  $\ker(\mb I-\mb L)=\langle \mb f_1^*\rangle$. Moreover, if
  $\lambda\in \sigma(\mb L)$ and $\Re\lambda\geq 0$, then $\lambda=1$.
\end{lemma}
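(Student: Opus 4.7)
The strategy is to combine two ingredients: the reduction of the spectral problem to one already solved in the literature, and an explicit computation identifying the unstable eigenmode.

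First, I would translate $\mb L \mb f = \lambda \mb f$ into a statement about self-similar solutions of the linearized wave maps equation. Given the rescaling $\Phi = e^{-s}(v, \partial_s v)$ and $\mb L = \mb L_5 - \mb I + \mb L'$, an eigenpair $(\lambda, (f_1, f_2))$ corresponds to the function $v(s, y) = e^{(\lambda + 1)s} f_1(y)$ solving
\[
-\mc D^\mu \mc D_\mu v + V_T(\eta_T) v = 0.
\]
Invoking \eqref{eq:sytauxi} and \eqref{eq:sytauxiwave}, the change of variables $(\tau, \xi) = (s - \log(-h(y)), -y/h(y))$ recasts this as the analogous linearized equation in standard similarity coordinates: the ansatz becomes $w(\tau, \xi) = e^{(\lambda + 1)\tau} g(\xi)$ with $g(\xi) = (-h(y))^{\lambda + 1} f_1(y)$, and the exponent $\lambda + 1$ is preserved. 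Since the radial map $y \mapsto -y/h(y)$ is a smooth diffeomorphism on $\{|y| < \sqrt{2}\}$, Sobolev regularity of $f_1$ transfers to $g$, and eigenfunctions of $\mb L$ with $\Re\lambda \geq 0$ correspond bijectively to eigenfunctions of the analogous generator in standard similarity coordinates. The latter was fully analyzed in \cite{CosDonXia16, CosDonGlo17}: the only eigenvalue in the closed right half-plane is $\lambda = 1$, with one-dimensional eigenspace spanned by the mode coming from the time-translation symmetry of $\{u_T^* : T \in \R\}$.

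To exhibit this eigenmode as $\mb f_1^*$ explicitly, I would differentiate the blowup family with respect to $T$: since $\partial_T u_T^*$ solves the linearized equation, a direct computation from the explicit formula for $u_T^*$ yields
\[
\partial_T u_T^*(t, x) = -\frac{2}{(t - T)^2 + |x|^2}.
\]
Pulling back by $\eta_T$ and applying the rescaling, this becomes
\[
\phi_1(s, y) := e^{-s}(\partial_T u_T^*)(\eta_T(s, y)) = -\frac{2\, e^s}{|y|^2 + h(y)^2},
\]
a scalar multiple of $e^s f_{1,1}^*(y)$. The identity $\phi_2 = \partial_s \phi_1 + \phi_1$ then gives $\phi_2 = 2\phi_1$ at $\lambda = 1$, matching the second component of $\mb f_1^*$ in Definition \ref{def:f1}. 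Combined with the one-dimensionality from the reduction, this yields $\ker(\mb I - \mb L) = \langle \mb f_1^* \rangle$.

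The main obstacle will be making the coordinate-change argument rigorous across the full range $R \geq \tfrac{1}{2}$ permitted in the lemma, since $h$ vanishes at $|y| = \sqrt{2}$ and the map $y \mapsto -y/h(y)$ degenerates there. For $R \in [\tfrac12, \sqrt{2})$ the diffeomorphism is direct; for larger $R$ one has to argue that the point-spectral structure of $\mb L$ on $\mc H_R^k$ is independent of the outer radius, which reduces to the observation that, in the radial category, the eigenvalue equation is an ODE whose admissible solutions are pinned down by regularity conditions at $y = 0$ and at the backward lightcone $|y| = \tfrac{1}{2}$, rather than by the value of $R$.
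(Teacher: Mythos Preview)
Your proposal is correct and matches the paper's approach: reduce via the coordinate change \eqref{eq:sytauxi} to standard similarity coordinates and invoke \cite{CosDonXia16, CosDonGlo17}, and identify $\mb f_1^*$ by differentiating $u_T^*$ in $T$. The obstacle you anticipate for large $R$ does not actually arise: the paper simply restricts the eigenfunction to $\B^5_{1/2}$---where $h<0$ and the map $y\mapsto -y/h(y)$ is a smooth diffeomorphism onto the unit $\xi$-ball $\B^5$---and then uses a Frobenius argument at the regular singular endpoints to upgrade the transformed radial profile to $C^\infty([0,1])$ before citing the mode-stability results.
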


\begin{proof}
  Obviously, $\mb f_1^*\in C^\infty(\overline{\B_R^5})^2$ and thus,
  $\mb f_1^*\in \mc D(\mb L)$.  The blowup solution $u_T^*$ satisfies
  \[
  (\partial_t^2-\Delta_x)u_T^*(t,x)=\frac{2|x|u_T^*(t,x)-\sin(2|x|u_T^*(t,x))}{|x|^3} \]
  and differentiating this equation with respect to $T$ yields
  \[ (\partial_t^2-\Delta_x)\partial_T
  u_T^*(t,x)=-\frac{2\cos(2|x|u_T^*(t,x))-2}{|x|^2}\partial_T
  u_T^*(t,x).
  \]
  A straightforward computation yields
  \[ \partial_T u_T^*(t,x)=\frac{4}{|x|}\partial_T \arctan\left
    (\frac{|x|}{T-t+\sqrt{(T-t)^2+|x|^2}}\right )
  =-\frac{2}{(T-t)^2+|x|^2} \] and thus,
  \[ (\partial_T
  u_T^*)(T+e^{-s}h(y),e^{-s}y)=-\frac{2e^{2s}}{|y|^2+h(y)^2}.
  \]
  Consequently, $\partial_s (e^s \mb f_1^*)=\mb L (e^s \mb f_1^*)$,
  which is equivalent to $(\mb I-\mb L)\mb f_1^*=\mb 0$ and thus,
  $\langle \mb f_1^*\rangle \subset \ker(\mb I-\mb L)$. The reverse
  inclusion is a simple consequence of basic ODE theory since we
  restrict ourselves to radial functions.

  Suppose now that $\lambda\in \sigma(\mb L)$ and $\Re\lambda\geq 0$.
  By Lemma \ref{lem:S} it follows that $\lambda\in \sigma_p(\mb L)$
  and thus, there exists a nontrivial
  $\mb f=(f_1,f_2)\in \mc D(\mb L)$ such that
  $(\lambda\mb I-\mb L)\mb f=\mb 0$.  Equivalently,
  $\partial_s (e^{\lambda s} \mb f)=\mb L(e^{\lambda s} \mb f)$ or
  \[ \partial_s (e^{(\lambda+1)s}\mb f)=(\mb L_5+\mb
  L')(e^{(\lambda+1)s}\mb f). \]
  By Sobolev embedding, the function $v(s,y):=e^{(\lambda+1)s}f_1(y)$
  belongs to $C^2(\R\times \B^5_{1/2})$ and by definition of $\mb L_5$
  and $\mb L'$, $v$ satisfies
  \begin{equation}
    \label{eq:sigmaPLv}
    -\mc D^\mu \mc D_\mu v(s,y)+V_T(\eta_T(s,y))v(s,y)=0 
  \end{equation}
  for all $(s,y)\in \R\times \B^5_{1/2}$.  Note that $v$ is nontrivial
  since the first component of $(\lambda\mb I-\mb L)\mb f=\mb 0$ reads
  $\lambda f_1-f_2+f_1=0$.  Now recall that
  \[ V_T(\eta_T(s,y))=e^{2s}\frac{2\cos(2|y|\alpha_0(y))-2}{|y|^2} \]
  and, since $h(y)<0$ for all $y\in \overline{\B_{1/2}^5}$, we can
  write
  \[ \alpha_0(y)=\frac{4}{|y|}\arctan\left
    (\frac{|y|}{\sqrt{|y|^2+h(y)^2}-h(y)}\right
  )=\frac{4}{|y|}\arctan\left
    (\frac{-|y|/h(y)}{1+\sqrt{1+|y|^2/h(y)^2}}\right ). \]
  Consequently,
  \[
  V_T(\eta_T(s,y))=e^{2s}h(y)^{-2}\frac{2\cos(2|y|\alpha_0(y))-2}{|y|^2/h(y)^2}=e^{2s}h(y)^{-2}V_0(y/h(y)) \]
  with
  \begin{equation}
    \label{def:V0}
    V_0(\xi)=\frac{2}{|\xi|^2}\left [\cos\left (8\arctan\left
          (\frac{|\xi|}{1+\sqrt{1+|\xi|^2}}\right)\right)-1\right
    ]=-\frac{16}{(1+|\xi|^2)^2}. 
  \end{equation}
  Therefore, by setting $v(s,y):=w(s-\log(-h(y)),-y/h(y))$,
  Eq.~\eqref{eq:sigmaPLv} transforms into
  \[ \left [
    \partial_\tau^2+2\xi^j\partial_{\xi^j}\partial_\tau-(\delta^{jk}-\xi^j\xi^k)\partial_{\xi^j}\partial_{\xi^k}
    +\partial_\tau+2\xi^j\partial_{\xi^j}+V_0(\xi)\right
  ]w(\tau,\xi)=0 \]
  for all $(\tau,\xi)\in \R\times \B^5$, see
  Eq.~\eqref{eq:sytauxiwave}.  Explicitly, we have
  \begin{align*}
    w(\tau,\xi)&=v\left (\tau+\log\left (\frac{2}{2+\sqrt{2(1+|\xi|^2)}}\right ), \frac{2\xi}{2+\sqrt{2(1+|\xi|^2)}}\right ) \\
               &=e^{(\lambda+1)\tau} \left (\frac{2}{2+\sqrt{2(1+|\xi|^2)}}\right )^{\lambda+1}f_1\left (\frac{2\xi}{2+\sqrt{2(1+|\xi|^2)}}\right ) \\
               &=:e^{(\lambda+1)\tau}f(\xi)
  \end{align*}
  and thus, $f$ satisfies
  \begin{equation}
    \label{eq:specf}
    \left [
      -(\delta^{jk}-\xi^j\xi^k)\partial_{\xi^j}\partial_{\xi^k}+2(\lambda+2)\xi^j\partial_{\xi^j}
      +(\lambda+1)(\lambda+2)+V_0(\xi)\right ]f(\xi)=0 
  \end{equation}
  for all $\xi\in \B^5$.  Note that $f\in H^5(\B^5)$ and thus, by
  Sobolev embedding, $f\in C^2(\overline{\B^5})$.  Furthermore, since
  $f$ is radial, we may write $f(\xi)=\widehat f(|\xi|)/|\xi|$ for a
  nontrivial odd function $\widehat f\in C^2([0,1])$.  In terms of
  $\widehat f$, Eq.~\eqref{eq:specf} reads
  \[ -(1-\rho^2)\widehat f''(\rho)-\frac{2}{\rho}\widehat
  f'(\rho)+2(\lambda+1)\rho \widehat f'(\rho) +\lambda(\lambda+1)\widehat
  f(\rho)+\frac{2(1-6\rho^2+\rho^4)}{\rho^2(1+\rho^2)^2}\widehat
  f(\rho)=0 \]
  for $\rho\in (0,1)$.  Frobenius' method yields
  $\widehat f\in C^\infty([0,1])$ and thus, by \cite{CosDonXia16,
    CosDonGlo17}, we conclude that $\lambda=1$.
\end{proof}

\begin{remark}
  The proof of Lemma \ref{lem:sigmapL} shows that the existence of the
  eigenvalue $\lambda=1$ is a mere consequence of the time translation
  symmetry of the wave maps equation \eqref{eq:main}. Consequently,
  this instability of the linearized flow does \emph{not} indicate an
  instability of the blowup profile.
\end{remark}

By Lemma \ref{lem:sigmapL}, the eigenvalue $1\in \sigma_p(\mb L)$ is
isolated. This allows us to define the corresponding spectral
projection.

\begin{definition}
  Fix $R\geq\frac12$, $k\in \N$, $k\geq 4$, and let
  $\mb L: \mc D(\mb L)\subset \mc H^k_R\to\mc H^k_R$ be the operator
  from Lemma \ref{lem:S}.  Furthermore, let $\gamma: [0,2\pi]\to \C$
  be given by $\gamma(t)=1+\frac12 e^{i t}$. Then we set
  \[ \mb P:=\frac{1}{2\pi i}\int_\gamma \mb R_{\mb
    L}(\lambda)d\lambda. \]
\end{definition}

\begin{proposition}
  \label{prop:P}
  The projection $\mb P$ commutes with the semigroup $\mb S(s)$ and we
  have
  \[ \rg \mb P=\langle \mb f_1^*\rangle. \]
\end{proposition}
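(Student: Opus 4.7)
The plan is to split into two parts: the commutation with $\mb S(s)$ is routine, whereas identifying $\rg\mb P$ with $\langle \mb f_1^*\rangle$ requires ruling out Jordan chains and is handled by mimicking the coordinate reduction of Lemma \ref{lem:sigmapL} and then appealing to the Fuchsian spectral analysis in \cite{CosDonXia16, CosDonGlo17}.

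For the commutation I would start from the standard identity $\mb R_{\mb L}(\lambda)\mb S(s) = \mb S(s)\mb R_{\mb L}(\lambda)$, valid for every $\lambda \in \rho(\mb L)$ and $s\geq 0$, which holds because $\mb L$ commutes with the semigroup it generates on $\mc D(\mb L)$. By Lemma \ref{lem:sigmapL} the contour $\gamma$ lies in $\rho(\mb L)$ apart from the enclosed eigenvalue $\lambda = 1$, so integration over $\gamma$ preserves the commutation and yields $\mb P \mb S(s) = \mb S(s)\mb P$.

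The inclusion $\langle \mb f_1^*\rangle \subseteq \rg\mb P$ is an immediate residue calculation: from $(\mb L - \mb I)\mb f_1^* = \mb 0$ we read off $\mb R_{\mb L}(\lambda)\mb f_1^* = (\lambda - 1)^{-1}\mb f_1^*$, whence $\mb P\mb f_1^* = \mb f_1^*$. For the converse, the general Riesz projection theory together with the finite algebraic multiplicity of $\lambda = 1$ from Lemma \ref{lem:S} identifies $\rg\mb P$ with the generalized eigenspace $\ker((\mb L - \mb I)^N)$ for some $N\in\N$. A straightforward induction on $N$ reduces the task to excluding Jordan chains of length two, i.e.~to showing that $(\mb L - \mb I)\mb g = \mb f_1^*$ admits no solution $\mb g\in\mc D(\mb L)$.

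To rule this out I would argue by contradiction. If such a $\mb g$ existed, then $V(s,y) := e^{2s}[\mb g]_1(y) + se^{2s}[\mb f_1^*]_1(y)$ would solve the linearized wave-type equation $-\mc D^\mu\mc D_\mu V + V_T V = 0$ in HSC. Using $e^s = -h(y)e^\tau$ and passing to standard similarity coordinates via Eq.~\eqref{eq:sytauxi} converts $V$ into $w(\tau,\xi) = e^{2\tau}[G(\xi) + \tau F(\xi)]$, where the radial profile of the eigenmode enters as the explicit smooth function $F(\xi) = 1/(1+|\xi|^2)$ (using $|y|^2 = |\xi|^2 h(y)^2$ and Definition \ref{def:f1}). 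Inserting this ansatz into the $(\tau,\xi)$-form of the equation (namely Eq.~\eqref{eq:sytauxiwave} with the potential $V_0$ from Eq.~\eqref{def:V0}) and comparing $\tau$-independent and $\tau$-linear parts separately recovers Eq.~\eqref{eq:specf} at $\lambda = 1$ for $F$ (automatic, since $F$ is the eigenmode) together with the inhomogeneous radial Fuchsian equation $LG = -(5F + 2\xi^j\partial_{\xi^j}F)$ for $G$, where $L$ is the spectral operator in Eq.~\eqref{eq:specf} at $\lambda = 1$. By the spectral analysis of $L$ in \cite{CosDonXia16, CosDonGlo17}, $\ker L$ on functions smooth on $[0,1]$ is one-dimensional, and a Fredholm/Frobenius analysis at the two regular singular points $\rho = 0$ and $\rho = 1$ — carried out using the indicial exponents already recorded in those references — shows that this explicit source is not in the range of $L$ on smooth functions. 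Hence $\mb g$ cannot exist, closing the induction. The main obstacle is precisely this Fredholm verification, a concrete but nontrivial ODE computation at the two regular singular points.
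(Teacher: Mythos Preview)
Your overall strategy coincides with the paper's: reduce to excluding a Jordan chain of length two at $\lambda=1$, translate the resulting generalized-eigenvector equation into an inhomogeneous Fuchsian ODE, and show the latter has no solution with the required regularity. Your coordinate reduction to the standard similarity picture and the source term $-(5+|\xi|^2)/(1+|\xi|^2)^2$ are correct.

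The difference lies entirely in how the obstruction is established. You defer it to ``a Fredholm/Frobenius analysis \dots\ using the indicial exponents already recorded in \cite{CosDonXia16, CosDonGlo17}'', but those references are invoked in the paper only for mode stability (location of eigenvalues), not for algebraic simplicity; it is not clear they contain the statement you need, and a bare indicial-exponent count does not by itself decide whether a \emph{specific} right-hand side lies in the range. The paper instead carries out the step concretely by a sign argument: it writes the generalized-eigenvector equation in HSC as a radial second-order ODE with an explicit inhomogeneity $G$, checks by direct computation that $G$ has a definite sign on the whole interval, produces an explicit fundamental system $\{\phi,\psi\}$ for the homogeneous problem (obtained from the $(\tau,\xi)$-picture, so $\phi$ is regular and $\psi$ is singular at both endpoints), and then applies variation of constants. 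Regularity at the center kills the $\psi$-coefficient, and regularity at the lightcone forces the vanishing of an integral whose integrand is sign-definite --- the desired contradiction. Your route would close the same way: your source $-(5+\rho^2)/(1+\rho^2)^2$ is strictly negative, and the fundamental system for Eq.~\eqref{eq:Pfrad} together with its Wronskian are already given explicitly in the paper's proof, so the identical variation-of-constants/sign argument applies in your $(\tau,\xi)$ coordinates. Replacing the appeal to the references by this explicit computation would make your argument complete.
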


\begin{proof}
  The fact that $\mb P$ commutes with $\mb S(s)$ follows from the
  abstract theory, see e.g.~\cite{Kat95, EngNag00}.  To prove the
  statement about $\rg \mb P$, we first recall from Lemma \ref{lem:S}
  that $\rg \mb P\subset \mc D(\mb L)$ is finite-dimensional.
  Consequently, the part $\mb L_{\rg \mb P}$ of $\mb L$ in $\rg \mb P$
  is an operator acting on a finite-dimensional Hilbert space with
  spectrum $\sigma(\mb L_{\rg\mb P})=\{1\}$.  This implies that
  $\mb I-\mb L_{\rg\mb P}$ is nilpotent. Thus, there exists an
  $\ell\in \N$ such that $(\mb I-\mb L_{\rg\mb P})^\ell=\mb 0$.  We
  claim that $\mb I-\mb L_{\rg\mb P}=\mb 0$.  Suppose this were not
  true, i.e., $\mb I-\mb L_{\rg \mb P}\not= \mb 0$. Then, by Lemma
  \ref{lem:sigmapL},
  \[ \rg (\mb I-\mb L_{\rg\mb P})\subset \ker(\mb I-\mb L_{\rg\mb
    P})\subset \ker(\mb I-\mb L)=\langle \mb f_1^* \rangle \]
  and thus, there exists an
  $\mb f=(f_1,f_2)\in \rg\mb P \subset H^5_\mathrm{rad}(\B^5_R)\times
  H_\mathrm{rad}^4(\B^5_R)\subset C^2(\overline{\B^5_{1/2}})\times
  C^1(\overline{\B^5_{1/2}})$ such that
  \[ \mb f_1^*=(\mb I-\mb L_{\rg\mb P})\mb f=(\mb I-\mb L)\mb f=(2\mb
  I-\mb L_5-\mb L')\mb f. \]
  From the explicit form of $\mb L_5$ in Eq.~\eqref{eq:L5} we infer
  $f_{1,1}^*=2f_1-f_2$ and
  \begin{align*}
    H^{\mu 0}H_\mu{}^0(s,y)f_{1,2}^*(y)=&H^{\mu j}H_\mu{}^k(s,y)\partial_j\partial_k f_1(y)
                                          +H^{\mu\nu}\partial_\nu H_\mu{}^j\partial_j f_1(y) \\
                                        &+2H^{\mu j}H_\mu{}^0(s,y)\partial_j f_2(y)
                                          +[2H^{\mu
                                          0}H_{\mu}{}^0(s,y)+H^{\mu\nu}\partial_\nu
                                          H_\mu{}^0(s,y)]
                                          f_2(y) \\
                                        &-e^{2s}h(y)^{-2}V_0(y/h(y))f_1(y)
  \end{align*}
  for all $(s,y)\in \R\times \B^5_{1/2}$. The potential $V_0$ is given
  in Eq.~\eqref{def:V0}.  Consequently,
  \begin{equation}
    \begin{split}
      \label{eq:Pf1}
      e^{-2s}&H^{\mu j}H_\mu{}^k(s,y)\partial_j\partial_k f_1(y)
      +e^{-2s}\left [ H^{\mu\nu}\partial_\nu H_\mu{}^j(s,y) + 4H^{\mu j}H_\mu{}^0(s,y)\right ]\partial_j f_1(y) \\
      &+2e^{-2s}[2H^{\mu 0}H_\mu{}^0(s,y)+H^{\mu\nu}\partial_\nu H_\mu{}^0(s,y)]
      f_1(y)-h(y)^{-2}V_0(y/h(y))f_1(y)=G(y),
    \end{split}
  \end{equation}
  where
  \begin{align*}
    G(y)&=2e^{-2s}H^{\mu j}H_\mu{}^0(s,y)\partial_j f_{1,1}^*(y)
          +e^{-2s}[2H^{\mu 0}H_{\mu}{}^0(s,y)+H^{\mu\nu}\partial_\nu
          H_\mu{}^0(s,y)]f_{1,1}^*(y) \\
          &\quad +e^{-2s}H^{\mu 0}H_\mu{}^0(s,y)f_{1,2}^*(y) \\
        &=2e^{-2s}H^{\mu j}H_\mu{}^0(s,y)\partial_j f_{1,1}^*(y)
          +e^{-2s}[4H^{\mu 0}H_\mu{}^0(s,y)+H^{\mu\nu}\partial_\nu H_\mu{}^0(s,y)]f_{1,1}^*(y).
  \end{align*}
  Obviously, $G$ is radial and belongs to
  $C^\infty(\overline{\B^5_{1/2}})$.
  Explicitly, we have
  \begin{align*}
    e^{-2s}H^{\mu j}H_\mu{}^0(s,y)
    &=-e^{-2s}H_0{}^jH_0{}^0(s,y)+e^{-2s}H^{k j}H_k{}^0(s,y) \\
    &=-\frac{y^j}{[y^\ell\partial_\ell h(y)-h(y)]^2}
      -\frac{\partial^j h(y)}{y^\ell\partial_\ell
      h(y)-h(y)}+\frac{\partial^k h(y)\partial_k
      h(y)}{[y^\ell\partial_\ell h(y)-h(y)]^2}y^j \\
    &=-\frac{1-\partial^k h(y)\partial_k h(y)}{[y^\ell\partial_\ell
      h(y)-h(y)]^2}y^j
      -\frac{\partial^j h(y)}{y^\ell\partial_\ell h(y)-h(y)} \\
    &=-h_1(|y|)\left [h_1(|y|)\left [1-\widehat h'(|y|)^2\right ]+\frac{\widehat
      h'(|y|)}{|y|}\right ]y^j,
  \end{align*}
  where $\widehat h(|y|):=h(y)=\sqrt{2+|y|^2}-2$ and
  \[ h_1(|y|):=\frac{1}{|y|\widehat h'(|y|)-\widehat h(|y|)}. \] 
Next,  
  \begin{align*}
    e^{-2s}H^{\mu 0}H_\mu{}^0(s,y)
    &=-e^{-2s}H_0{}^0H_0{}^0(s,y)+e^{-2s}H^{j 0}H_j{}^0(s,y) 
=-\frac{1-\partial^j h(y)\partial_j h(y)}{[y^\ell\partial_\ell
      h(y)-h(y)]^2} \\
      &=-h_1(|y|)^2\left [1-\widehat h'(|y|)^2\right].
  \end{align*}
  Furthermore, we have
  \begin{align*} H^{\mu\nu}\partial_\nu H_\mu{}^0
    &=H^{\mu 0}\partial_0 H_\mu{}^0+H^{\mu k}\partial_k H_\mu{}^0
      =H^{\mu 0}H_\mu{}^0+H^{0k}\partial_k H_0{}^0+H^{jk}\partial_k
      H_j{}^0 \\
    &=H^{\mu 0}H_\mu{}^0-H_0{}^k\partial_k H_0{}^0+H^{jk}\partial_k H_j{}^0
  \end{align*}                                                 
 and 
  \begin{align*}
    e^{-2s}H_0{}^k\partial_k H_0{}^0(s,y)
    &=\frac{y^k}{y^\ell \partial_\ell
      h(y)-h(y)}\partial_{y^k}\frac{1}{y^\ell\partial_\ell h(y)-h(y)}
=|y|h_1'(|y|)h_1(|y|).
  \end{align*}
  Finally,
  \begin{align*}
    e^{-2s}H^{jk}\partial_k H_j{}^0(s,y)
    &=-\left (\delta^{jk}-\frac{\partial^j h(y)y^k}{y^\ell \partial_\ell
      h(y)-h(y)}\right )
      \partial_{y^k}\frac{\partial_j h(y)}{y^\ell\partial_\ell
      h(y)-h(y)} \\
    &=-\frac{\partial^j\partial_j h(y)}{y^\ell \partial_\ell
      h(y)-h(y)}
      -\partial_j h(y)\partial_{y_j}\frac{1}{y^\ell\partial_\ell
      h(y)-h(y)} \\
      &\quad +\frac{\partial^j h(y)y^k}{[y^\ell\partial_\ell
        h(y)-h(y)]^2}\partial_j\partial_k h(y)  \\
    &\quad +\frac{\partial^j h(y)\partial_j h(y)}{y^\ell\partial_\ell
        h(y)-h(y)}
        y^k\partial_{y^k}\frac{1}{y^\ell\partial_\ell h(y)-h(y)}
  \end{align*}
  and thus, in terms of $\widehat h$ and $h_1$,
  \begin{align*}
    e^{-2s}H^{jk}\partial_k H_j{}^0(s,y)
    &=-h_1(|y|)\left [\widehat h''(|y|)+\frac{4\widehat
      h'(|y|)}{|y|}\right ]-h_1'(|y|)\widehat h'(|y|) \\
    &\quad +|y|h_1(|y|)^2\widehat h'(|y|)\widehat
      h''(|y|)+|y|h_1'(|y|)h_1(|y|)\widehat h'(|y|)^2.
  \end{align*}
  In summary,
  \begin{align*}
    e^{-2s}H^{\mu\nu}\partial_\nu H_\mu{}^0(s,y)
    &=-h_1(|y|)\left [\widehat h''(|y|)+\frac{4\widehat
      h'(|y|)}{|y|}\right ]
      +|y|h_1(|y|)^2\widehat h''(|y|)\widehat h'(|y|) \\
    &\quad -\left [h_1(|y|)^2+|y|h_1'(|y|)h_1(|y|)\right]\left [1-\widehat
      h'(|y|)^2\right]
      -h_1'(|y|)\widehat h'(|y|).
  \end{align*}
  With these explicit expressions at hand it is straightforward to
  check that $G(y)<0$ for all $y\in \B_{1/2}^5$.
  In particular, $G$ has no zeros in $\B^5_{1/2}$ and this will be the
  key property.

  Observe that $(\mb I-\mb L)\mb f_1^*=\mb 0$ implies that $f_{1,1}^*$
  solves Eq.~\eqref{eq:Pf1} with $G=0$. We claim that another solution
  is given by
  \[ \widetilde
  f_{1,1}^*(y)=\frac{1}{|y|^2+h(y)^2}\int_{1/2}^{-|y|/h(y)}\frac{(1+\rho^2)^2}{\rho^4(1-\rho^2)}d\rho. \]
  To see this, we start from the radial version of
  Eq.~\eqref{eq:specf} with $\lambda=1$,
  \begin{equation}
    \label{eq:Pfrad}
    -(1-\rho^2)f''(\rho)-\frac{4}{\rho}f'(\rho)+6\rho f'(\rho)+6f(\rho)-\frac{16}{(1+\rho^2)^2}f(\rho)=0. 
  \end{equation}
  Eq.~\eqref{eq:Pfrad} is of the form $f''+pf'+qf=0$ with
  $p(\rho)=\frac{4}{\rho}-\frac{2\rho}{1-\rho^2}$.  Consequently, the
  Wronskian $W(f,g)$ of two solutions $f,g$ of Eq.~\eqref{eq:Pfrad} is
  given by
  \[ W(f,g)(\rho)=W(f,g)(\tfrac12)e^{-\int_{1/2}^\rho
    p(t)dt}=\tfrac{3}{64}W(f,g)(\tfrac12)\frac{1}{\rho^4(1-\rho^2)}. \]
  Note that $\rho\mapsto \frac{1}{1+\rho^2}$ is a solution of
  Eq.~\eqref{eq:Pfrad} (cf.~the proof of Lemma \ref{lem:sigmapL}) and
  thus, by the reduction formula, another solution is given by
  \[ \rho\mapsto \frac{1}{1+\rho^2}\int_{1/2}^\rho
  \frac{(1+r^2)^2}{r^4(1-r^2)}dr. \]
  As a consequence, we see that the function
  \[ w(\tau,\xi):=e^{2\tau}\frac{1}{1+|\xi|^2}\int_{1/2}^{|\xi|}
  \frac{(1+\rho^2)^2}{\rho^4(1-\rho^2)}d\rho \] satisfies
  \[ e^{2\tau}\left [
    \partial_\tau^2+2\xi^j\partial_{\xi^j}\partial_\tau-(\delta^{jk}-\xi^j\xi^k)\partial_{\xi^j}\partial_{\xi^k}
    +\partial_\tau+2\xi^j\partial_{\xi^j}+V_0(\xi)\right
  ]w(\tau,\xi)=0 \]
  for all $(\tau,\xi)\in \R\times \B^5\setminus \{0\}$.  This means
  that $v(s,y)=w(s-\log(-h(y)),-y/h(y))$ satisfies
  \[ -\mc D^\mu\mc D_\mu v(s,y)+e^{2s}h(y)^{-2}V_0(y/h(y))v(s,y)=0 \]
  for all $(s,y)\in \R\times \B_{1/2}^5\setminus \{0\}$,
  cf.~Eq.~\eqref{eq:sytauxiwave}.
  We have
  \begin{align*}
    v(s,y)&=w(s-\log(-h(y)),-y/h(y))=e^{2s}h(y)^{-2}\frac{1}{1+|y|^2/h(y)^2}
            \int_{1/2}^{-|y|/h(y)}\frac{(1+\rho^2)^2}{\rho^4(1-\rho^2)}d\rho \\
          &=e^{2s}\widetilde f_{1,1}^*(y)
  \end{align*}
  and thus, $\widetilde f_{1,1}^*$ satisfies Eq.~\eqref{eq:Pf1} with
  $G=0$ and for all $y\in \B_{1/2}^5\setminus \{0\}$, as claimed.

  By definition, we have
  \begin{align*}
    e^{-2s}H^{\mu j}H_\mu{}^k(s,y)
    &=e^{-2s}[-H_0{}^jH_0{}^k(s,y)+H^{mj}H_m{}^k(s,y)] \\
    &=\delta^{jk}-\frac{1-\partial^m h(y)\partial_m h(y)}{[y^\ell \partial_\ell h(y)-h(y)]^2}y^jy^k
      -\frac{1}{y^\ell \partial_\ell h(y)-h(y)}\left [y^j \partial^k h(y)+y^k\partial^j h(y)\right ]
  \end{align*}
  and thus, if $f(y)=\widehat f(|y|)$, we obtain
  \begin{align*}
    e^{-2s}H^{\mu j}H_\mu{}^k(s,y)\partial_j\partial_k f(y)
    &=\widehat f''(|y|)+\frac{4}{|y|}\widehat f'(|y|)
-\frac{1-\partial^j h(y)\partial_j h(y)}{[y^\ell\partial_\ell h(y)-h(y)]^2}|y|^2\widehat f''(|y|) \\
    &\quad -\frac{2y^j\partial_j h(y)}{y^\ell\partial_\ell h(y)-h(y)}\widehat f''(|y|) \\
    &=[1-a(|y|)]\widehat f''(|y|)+\frac{4}{|y|}\widehat f'(|y|),
  \end{align*}
  where
  \begin{align*}
    a(|y|)&=\frac{2\sqrt{2+|y|^2}-1}{2(\sqrt{2+|y|^2}-1)^2}|y|^2.
  \end{align*}
  Consequently, if we write $f_1(y)=\widehat f_1(|y|)$, we see that
  Eq.~\eqref{eq:Pf1} is of the form
  \begin{equation}
    \label{eq:Pf1hat}
    \widehat f_1''(\eta)+\widetilde p(\eta)\widehat
    f_1'(\eta)+\widetilde q(\eta)f_1(\eta)
=\frac{\widehat G(\eta)}{1-a(\eta)},\qquad \eta\in (0,\tfrac12), 
  \end{equation}
  where $G(y)=\widehat G(|y|)$.  By the above, the homogeneous version of
  Eq.~\eqref{eq:Pf1hat} has the solutions \begin{align*}
                                            \phi(\eta)&=f_{1,1}^*(\eta e_1)=\frac{1}{\eta^2+h(\eta e_1)^2} \\
                                            \psi(\eta)&=\widetilde
                                                        f_{1,1}^*(\eta
                                                        e_1)=\phi(\eta)\int_{1/2}^{-\eta/h(\eta
                                                        e_1)}
                                                        \frac{(1+\rho^2)^2}{\rho^4(1-\rho^2)}d\rho.
                                          \end{align*}
                                          As for the asymptotic
                                          behavior, we note that
                                          $\phi\in
                                          C^\infty([0,\frac12])$
                                          whereas
                                          \begin{align*}
                                            |\psi(\eta)|&\simeq
                                                          \eta^{-3}|\log(\tfrac12-\eta)|,
                                            & |\psi'(\eta)|&\simeq \eta^{-4}(\tfrac12-\eta)^{-1} 
                                          \end{align*}
                                          for all
                                          $\eta\in (0,\frac12)$.
                                          Furthermore, we have
                                          $W(\phi,\psi)(\eta)\simeq
                                          \eta^{-4}(\frac12-\eta)^{-1}$.
                                          Consequently, by the
                                          variation of constants
                                          formula, there exist
                                          constants $c_1, c_2\in \C$
                                          such that
                                          \begin{equation}
                                            \begin{split}
                                              \label{eq:Pvoc}
                                              \widehat f_1(\eta)=&c_1 \phi(\eta)+c_2\psi(\eta) \\
                                              &-\phi(\eta)\int_0^\eta
                                              \frac{\psi(\rho)}{W(\phi,\psi)(\rho)}\frac{\widehat
                                                G(\rho)}{1-a(\rho)}d\rho
                                              +\psi(\eta)\int_0^\eta
                                              \frac{\phi(\rho)}{W(\phi,\psi)(\rho)}\frac{\widehat
                                                G(\rho)}{1-a(\rho)}d\rho
                                            \end{split}
                                          \end{equation} 
                                          for $\eta\in
                                          (0,\frac12)$.
                                          Taking the limit
                                          $\eta\to 0+$ yields $c_2=0$
                                          since
                                          $\widehat f_1, \widehat G\in
                                          C([0,\frac12])$.
                                          Note further that
                                          \[ \lim_{\eta\to \frac12-}
                                          \int_0^\eta
                                          \frac{\psi(\rho)}{W(\phi,\psi)(\rho)}\frac{\widehat
                                            G(\rho)}{1-a(\rho)}d\rho \]
                                          exists and thus, by sending
                                          $\eta\to \frac12-$,
                                          Eq.~\eqref{eq:Pvoc} implies
                                          \[ \int_0^{1/2}
                                          \frac{\phi(\rho)}{W(\phi,\psi)(\rho)}\frac{\widehat
                                            G(\rho)}{1-a(\rho)}d\rho=0. \]
                                          But this is impossible
                                          because the integrand has no
                                          zeros in $(0,\frac12)$.
                                          This contradiction shows
                                          that
                                          $(\mb I-\mb L)\mb f=\mb 0$
                                          for all $\mb f\in \rg\mb P$
                                          and from Lemma
                                          \ref{lem:sigmapL} we
                                          conclude that
                                          $\rg\mb P=\langle \mb
                                          f_1^*\rangle$.
                                        \end{proof}

\subsection{Control of the linearized flow}
We arrive at the main result on the linearized flow.

\begin{theorem}
  \label{thm:S}
  Fix $R\geq\frac12$, $k\in \N$, $k\geq 4$ and let $\mb S$ be the
  semigroup on $\mc H^k_R$ from Lemma \ref{lem:S}. Then there exist
  $\omega_0,M>0$ such that
  \begin{align*}
    \mb S(s)\mb P\mb f&=e^s\mb P\mb f \\
    \|\mb S(s)(\mb I-\mb P)\mb f\|_{\mc H_R^{k}}&\leq 
                                                  M e^{-\omega_0 s}\|(\mb I-\mb P)\mb f\|_{\mc H^k_R}
  \end{align*}
  for all $s\geq 0$ and $\mb f\in \mc H^k_R$.
\end{theorem}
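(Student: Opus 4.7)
The plan is to decompose $\mc H_R^k$ into the two $\mb S$-invariant subspaces $\rg\mb P$ and $\ker\mb P$, handle the one-dimensional piece by a direct computation, and extract exponential decay on the complement via the Gearhart--Pr\"uss theorem for $C_0$-semigroups on Hilbert spaces.

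For the first identity, Proposition \ref{prop:P} gives $\mb P\mb S(s) = \mb S(s)\mb P$ and $\rg\mb P = \langle \mb f_1^*\rangle$, while the proof of Lemma \ref{lem:sigmapL} records $\mb L\mb f_1^* = \mb f_1^*$. Hence on the one-dimensional invariant subspace $\rg\mb P$ the generator $\mb L$ acts as the identity, the restricted semigroup is $s\mapsto e^s\mb I$, and $\mb S(s)\mb P\mb f = e^s\mb P\mb f$.

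For the decay estimate on $\ker\mb P$, set $\widetilde{\mb S}(s) := \mb S(s)|_{\ker\mb P}$ with generator $\widetilde{\mb L}$, so that $\sigma(\widetilde{\mb L}) = \sigma(\mb L)\setminus\{1\}$. Since $\ker\mb P$ is a Hilbert space, Gearhart--Pr\"uss reduces the target bound $\|\widetilde{\mb S}(s)\|\leq Me^{-\omega_0 s}$ to producing an $\omega_0>0$ such that $\{\Re\lambda>-\omega_0\}\subset \rho(\widetilde{\mb L})$ and
\[
\sup_{\Re\lambda>-\omega_0}\|\mb R_{\widetilde{\mb L}}(\lambda)\|_{\mc H_R^k\to\mc H_R^k} < \infty.
\]
To obtain this, I exploit the splitting $\mb L = (\mb L_5-\mb I)+\mb L'$ with $\mb L'$ compact. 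Theorem \ref{thm:S5} gives $\|e^{s(\mb L_5-\mb I)}\|\lesssim e^{-s/2}$, and a further application of Gearhart--Pr\"uss to the unperturbed generator provides uniform boundedness of $\mb R_{\mb L_5-\mb I}(\lambda)$ on every half-plane $\{\Re\lambda\geq -\tfrac12+\varepsilon\}$. The factorization
\[
\lambda\mb I - \mb L = [\mb I-\mb L'\mb R_{\mb L_5-\mb I}(\lambda)](\lambda\mb I-\mb L_5+\mb I)
\]
then reduces the problem to uniform invertibility of $\mb I-\mb L'\mb R_{\mb L_5-\mb I}(\lambda)$; Lemma \ref{lem:sigmapL} guarantees that $\lambda=1$ is the only pole in the closed right half-plane, and it disappears after passing to $\widetilde{\mb L}$.

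The decisive step, and the main obstacle, is the high-frequency norm decay
\[
\|\mb L'\mb R_{\mb L_5-\mb I}(\lambda)\|_{\mc H_R^k\to\mc H_R^k}\longrightarrow 0\qquad\text{as } |\Im\lambda|\to\infty,
\]
uniformly for $\Re\lambda$ in a compact subinterval of $(-\tfrac12,\infty)$. I expect to extract it from the Laplace representation $\mb R_{\mb L_5-\mb I}(\lambda)\mb f = \int_0^\infty e^{-\lambda s}e^{s(\mb L_5-\mb I)}\mb f\,ds$: Riemann--Lebesgue yields pointwise vanishing of the resolvent applied to any fixed vector, and compactness of $\mb L'$ upgrades this to norm vanishing of the composition. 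Once this is available, $[\mb I-\mb L'\mb R_{\mb L_5-\mb I}(\lambda)]^{-1}$ is uniformly bounded for $|\Im\lambda|$ large, and on the remaining bounded window of the strip Lemma \ref{lem:S} ensures that the spectrum of $\widetilde{\mb L}$ is discrete with finite algebraic multiplicities and, by Lemma \ref{lem:sigmapL}, disjoint from the closed right half-plane; shrinking $\omega_0>0$ slightly then gives continuity and hence boundedness of the resolvent by compactness. Combining the two regions produces the required uniform resolvent estimate, which via Gearhart--Pr\"uss yields the claimed decay.
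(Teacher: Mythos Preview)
Your overall architecture matches the paper's: reduce to Gearhart--Pr\"uss on $\ker\mb P$, use the factorization $\lambda\mb I-\mb L=[\mb I-\mb L'\mb R_{\mb L_5-\mb I}(\lambda)](\lambda\mb I-\mb L_5+\mb I)$, obtain a uniform resolvent bound for large $|\lambda|$ via a Neumann series, and handle the remaining compact window using the spectral information from Lemma~\ref{lem:sigmapL}.

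The genuine gap is in your high-frequency step. You assert that strong convergence $\mb R_{\mb L_5-\mb I}(\lambda)\mb f\to 0$ (from Riemann--Lebesgue) together with compactness of $\mb L'$ forces $\|\mb L'\mb R_{\mb L_5-\mb I}(\lambda)\|\to 0$. In general this is false: if $K$ is compact and $T_n\to 0$ strongly with $\sup_n\|T_n\|<\infty$, it does \emph{not} follow that $\|KT_n\|\to 0$. Take $T_n=\langle\,\cdot\,,e_n\rangle e_1$ on $\ell^2$ and any compact $K$ with $Ke_1\neq 0$; then $\|KT_ne_n\|=\|Ke_1\|$ for all $n$. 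The implication you want runs the other way, $\|T_nK\|\to 0$. Your argument can be salvaged by passing to adjoints---on a Hilbert space the adjoint semigroup is again $C_0$ with the same growth bound, so $\mb R_{\mb L_5-\mb I}(\lambda)^*\to 0$ strongly and then $(\mb L'\mb R_{\mb L_5-\mb I}(\lambda))^*=\mb R_{\mb L_5-\mb I}(\lambda)^*(\mb L')^*\to 0$ in norm---but this needs to be said, and you would also have to argue uniformity in $\Re\lambda$.

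The paper sidesteps this soft reasoning entirely by exploiting the specific structure of $\mb L'$: it reads only the first component and measures it in $H^k$ (one derivative below the ambient $H^{k+1}$). The first row of the resolvent identity, $(\lambda+1)[\mb R_{\mb L_5-\mb I}(\lambda)\mb f]_1-[\mb R_{\mb L_5-\mb I}(\lambda)\mb f]_2=f_1$, then gives directly
\[
\|\mb L'\mb R_{\mb L_5-\mb I}(\lambda)\mb f\|_{\mc H_R^k}\lesssim \|[\mb R_{\mb L_5-\mb I}(\lambda)\mb f]_1\|_{H^k(\B_R^5)}\lesssim |\lambda|^{-1}\|\mb f\|_{\mc H_R^k},
\]
which yields a quantitative Neumann bound without any compactness argument.
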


\begin{proof}
  The first statement follows directly from Lemma \ref{lem:sigmapL}
  and Proposition \ref{prop:P}. As for the second statement, we first
  claim that there exists an $N\in \N$ such that
  \begin{equation}
    \label{eq:SR}
    \|\mb R_{\mb L}(\lambda)\mb f\|_{\mc H_R^k}\lesssim \|\mb
    f\|_{\mc H_R^k} 
  \end{equation}
  for all $\mb f\in \mc H_R^k$ and all
  $\lambda\in \Omega_N:=\{z\in \C: \Re z\geq -\frac14, |z|\geq N\}$.
  Indeed, from Theorem \ref{thm:S5} we infer
  $\Omega_N\subset \rho(\mb L_5-\mb I)$ and thus, for any
  $\lambda\in \Omega_N$ we have the identity
  $\lambda\mb I-\mb L=[\mb I-\mb L'\mb R_{\mb L_5-\mb
    I}(\lambda)](\lambda\mb I-\mb L_5+\mb I)$
  which shows that $\lambda\in \rho(\mb L)$ if and only if the
  operator $\mb I-\mb L'\mb R_{\mb L_5-\mb I}(\lambda)$ is bounded
  invertible. By a Neumann series argument we see that this is the
  case if $\|\mb L'\mb R_{\mb L_5-\mb I}(\lambda)\|_{\mc
    H^k_R}<1$. Recall that
  \[ \mb L'\mb R_{\mb L_5-\mb I}(\lambda)\mb f(y)=\begin{pmatrix} 0 \\
    V(y)[\mb R_{\mb L_5-\mb I}(\lambda)\mb f]_1(y)
  \end{pmatrix}
  \]
  and from the first component of the identity
  $(\lambda\mb I-\mb L_5+\mb I)\mb R_{\mb L_5-\mb I}(\lambda)\mb f=\mb f$
  we infer
  \[ (\lambda+1)[\mb R_{\mb L_5-\mb I}(\lambda)\mb f]_1-[\mb R_{\mb
    L_5-\mb I}(\lambda)\mb f]_2=f_1. \]
  Consequently, by noting that $V\in C^\infty(\overline{\B_R^5})$, we
  obtain
  \begin{align*}
    \|\mb L'\mb R_{\mb L_5-\mb I}(\lambda)\mb f\|_{\mc H_R^k}
    &\lesssim \|[\mb R_{\mb L_5-\mb I}(\lambda)\mb f]_1\|_{H^k(\B_R^5)}
      \lesssim |\lambda|^{-1}\|\mb f\|_{\mc H^k_R}+|\lambda|^{-1}\|\mb
      R_{\mb L_5-\mb I}(\lambda)\mb f\|_{\mc H^k_R} \\
    &\lesssim |\lambda|^{-1}\|\mb f\|_{\mc H^k_R}.
  \end{align*}
  If $N\in \N$ is sufficiently large, we therefore have
  $\|\mb L'\mb R_{\mb L_5-\mb I}(\lambda)\|_{\mc H_R^k}\leq \frac12$
  for all $\lambda\in \Omega_N$ and Eq.~\eqref{eq:SR} follows.

  Furthermore, from Lemma \ref{lem:sigmapL} we infer the existence of
  an $\omega_0>0$ such that
  \[ \|\mb R_{\mb L}(\lambda)(\mb I-\mb P)\|_{\mc H^k_R}\lesssim 1 \]
  for all $\lambda\in \C$ satisfying $\Re\lambda\geq -\omega_0$ and
  $|\lambda|\leq N$.  Combining this with Eq.~\eqref{eq:SR} we obtain
  \[ \|\mb R_{\mb L}(\lambda)(\mb I-\mb P)\|_{\mc H^k_R}\lesssim 1 \]
  for all $\lambda\in \C$ with $\Re\lambda\geq -\omega_0$.
  Consequently, an application of the Gearhart-Pr\"uss Theorem (see
  e.g.~\cite{EngNag00}, p.~302, Theorem 1.11) finishes the proof.
\end{proof}

\begin{definition}
  From now on, $\omega_0>0$ denotes the corresponding constant from
  Theorem \ref{thm:S}.
\end{definition}

\subsection{Bounds on the nonlinearity}

Next, we show that the nonlinearity is locally Lipschitz.

\begin{lemma}
  \label{lem:N}
  Fix $R, M>0$ and $k\in \N$, $k\geq 2$. Then we have the bound
  \[ \|\mb N(\mb f)-\mb N(\mb g)\|_{\mc H_R^k}\lesssim \left (\|\mb
    f\|_{\mc H_R^k}+\|\mb g\|_{\mc H_R^k}\right )\|\mb f-\mb g\|_{\mc
    H_R^k}
  \]
  for all $\mb f,\mb g\in \mc H_R^k$ satisfying
  $\|\mb f\|_{\mc H_R^k}, \|\mb g\|_{\mc H_R^k}\leq M$.
\end{lemma}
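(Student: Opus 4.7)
The plan is to first rewrite the scalar nonlinearity $N(|y|f_1(y),y)$ in a form that exposes a factor of $f_1(y)^2$ times a smooth function of $(f_1,y)$, and then reduce the vector bound to a straightforward product estimate in a Banach algebra. Invoking Taylor's theorem with integral remainder exactly as in Lemma~\ref{lem:wmsmooth}, we have
\[
\sin(a+2p)-\sin(a)-2p\cos(a)=-4p^{2}\int_{0}^{1}(1-s)\sin(a+2sp)\,ds,
\]
which applied to $a=2|y|\alpha_0(y)$ and $p=|y|f_1(y)$ yields
\[
N(|y|f_1(y),y)=f_1(y)^{2}\,G(f_1(y),y),\qquad G(u,y):=4\int_{0}^{1}(1-s)\,\frac{\sin\bigl(2|y|(\alpha_0(y)+su)\bigr)}{|y|}\,ds.
\]
Using the identity $\sin(x)/x=\int_{0}^{1}\cos(sx)\,ds$ together with the fact that $\cos(|y|w)$ is smooth in $y$ because $\cos$ is even (so $\cos(|y|w)$ is a convergent power series in $|y|^{2}w^{2}$), and since $\alpha_0\in C^\infty(\R^5)$, one verifies $G\in C^\infty(\R\times\R^{5})$.

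Next I would split the difference multiplicatively. Write
\[
f_1^{2}G(f_1,y)-g_1^{2}G(g_1,y)=(f_1+g_1)(f_1-g_1)\,G(f_1,y)+g_1^{2}(f_1-g_1)\!\int_{0}^{1}\!\partial_1 G\bigl(sf_1+(1-s)g_1,y\bigr)\,ds,
\]
so that $\mb N(\mb f)-\mb N(\mb g)=(0,-H(\cdot)[\cdots])$ with $H\in C^\infty(\overline{\B_R^{5}})$ harmlessly multiplying the scalar expression. Since $k\ge 2$ forces $k+1\ge 3>5/2$, the space $H^{k+1}(\B_R^{5})$ is a Banach algebra and embeds in $L^\infty(\B_R^{5})$; in particular $\|uv\|_{H^{k+1}}\lesssim\|u\|_{H^{k+1}}\|v\|_{H^{k+1}}$ on the ball. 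Applying this algebra bound factor by factor gives, on the set where $\|\mb f\|_{\mc H_R^{k}}+\|\mb g\|_{\mc H_R^{k}}\le 2M$,
\[
\|(f_1+g_1)(f_1-g_1)G(f_1,\cdot)\|_{H^{k+1}(\B_R^{5})}\lesssim\bigl(\|f_1\|_{H^{k+1}}+\|g_1\|_{H^{k+1}}\bigr)\|f_1-g_1\|_{H^{k+1}}\|G(f_1,\cdot)\|_{H^{k+1}},
\]
with an analogous bound for the second summand, and the embedding $H^{k+1}(\B_R^{5})\hookrightarrow H^{k}(\B_R^{5})$ then yields the desired $H^{k}$ estimate on the second component of $\mb N(\mb f)-\mb N(\mb g)$.

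To close the argument one needs uniform control of $\|G(f_1,\cdot)\|_{H^{k+1}}$ and $\|\partial_1 G(sf_1+(1-s)g_1,\cdot)\|_{H^{k+1}}$ by a constant depending only on $M$. This is a composition estimate of Moser type: since $G,\partial_1 G\in C^\infty(\R\times\R^{5})$ and the composition argument in the proof of Lemma~\ref{lem:Moser} (via a smooth cutoff in $y$ followed by Moser's inequality) applies verbatim up to an additive constant coming from $G(0,\cdot)$, one obtains $\|G(h,\cdot)\|_{H^{k+1}(\B_R^{5})}\le\gamma(\|h\|_{H^{k+1}})$ for any $h\in H^{k+1}(\B_R^{5})$, and similarly for $\partial_1 G$. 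Inserting this into the two summand bounds gives the stated Lipschitz inequality.

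The main technical obstacle in the above plan is the first step, namely verifying that $G(u,y)$, which is defined through $|y|^{-1}\sin(2|y|q(u,y))$, genuinely extends to a $C^\infty$ function across $y=0$; this is not automatic and is the place where the structural cancellation in the wave maps nonlinearity is used. Once this smoothness (together with the Moser-type composition bound that handles a non-vanishing $G(0,y)$) is in hand, the remaining algebra and embedding arguments are routine, and the bound $\|H(f_1^{2}G(f_1)-g_1^{2}G(g_1))\|_{H^{k}(\B_R^{5})}\lesssim(\|\mb f\|_{\mc H_R^{k}}+\|\mb g\|_{\mc H_R^{k}})\|\mb f-\mb g\|_{\mc H_R^{k}}$ follows.
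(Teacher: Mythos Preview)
Your proof is correct and follows essentially the same strategy as the paper: expand the nonlinearity via Taylor's theorem to expose the quadratic structure in $f_1$, verify that the resulting coefficient function is smooth in $(u,y)$ across $y=0$, and then appeal to Moser-type composition estimates together with the algebra property of $H^{k+1}(\B_R^5)$ (using $k+1>5/2$). The paper organizes things slightly more efficiently: it expands one order further, writing $N(|y|f_1(y),y)=\frac{2\sin(2|y|\alpha_0(y))}{|y|}f_1^2+f_1^3\,\Phi_0(f_1,|y|)$, so that the scalar map $f\mapsto -H(y)N(|y|f(y),y)$ satisfies exactly the hypotheses $F(0,\cdot)=\partial_1 F(0,\cdot)=0$ of Lemma~\ref{lem:Moser}, and then invokes that lemma directly for the Lipschitz bound rather than splitting the difference by hand; this avoids having to revisit Moser's inequality for a $G$ with $G(0,\cdot)\neq 0$.
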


\begin{proof}
  Recall that
  \[ \mb N \begin{pmatrix} f_1 \\ f_2 \end{pmatrix}(y)
  =\begin{pmatrix} 0 \\ -H(y)N(|y|f_1(y), y) \end{pmatrix},
  \]
  where $H\in C^\infty(\overline{\B_R^5})$ and
  \begin{align*}
    N(|y|f_1(y),y)&=-\frac{\sin(2|y|\alpha_0(y)+2|y|f_1(y))-\sin(2|y|\alpha_0(y))-2|y|\cos(2|y|\alpha_0(y))f_1(y)}{|y|^3}.
  \end{align*}
  From Taylor's theorem with integral remainder we infer
  \begin{align*}
    \sin(x_0+x)-\sin(x_0)-\cos(x_0)x&=-\frac{\sin
                                      x_0}{2}x^2-\frac{x^3}{2}\int_0^1
                                      \cos(x_0+tx)(1-t)^2 dt
  \end{align*}
  and thus,
  \begin{align*}
    N(|y|f_1(y),y)&=\frac{2\sin(2|y|\alpha_0(y))}{|y|}f_1(y)^2 \\
                  &\quad +4f_1(y)^3\int_0^1 
                    \cos(2|y|\alpha_0(y)+2t|y|f_1(y))(1-t)^2 dt \\
                  &=\frac{2\sin(2|y|\alpha_0(y))}{|y|}f_1(y)^2+f_1(y)^3\Phi_0(f_1(y),|y|),
  \end{align*}
  where
  \[ \Phi_0(u,|y|)=4\int_0^1 \cos \left (2|y|\left
      (\alpha_0(y)+tu\right) \right )(1-t)^2 dt.
  \]
  Note that $y\mapsto \frac{2\sin(2|y|\alpha_0(y))}{|y|}$ belongs to
  $C^\infty(\R^5)$. Furthermore, $\Phi_0\in C^\infty(\R\times\R)$ and
  $\partial_u^j \Phi_0(u,\cdot)$ is even for any $j\in \N_0$.
  Consequently, the map $(u,y)\mapsto \Phi_0(u, |y|)$ belongs to
  $C^\infty(\R\times \R^5)$. We set $\mc N(f)(y):=-H(y)N(|y|f(y),y)$. 
 Then, by Lemma \ref{lem:Moser},
 \begin{align*}
   \|\mb N(\mb f)-\mb N(\mb g)\|_{\mc H^k_R}
&\lesssim \|\mc N(f_1)-\mc N(g_1)\|_{H^k(\B_R^5)}
\leq \|\mc N(f_1)-\mc N(g_1)\|_{H^{k+1}(\B_R^5)} \\
&\lesssim \left (\|f_1\|_{H^{k+1}(\B_R^5)}+\|g_1\|_{H^{k+1}(\B_R^5)}\right
  )
\|f_1-g_1\|_{H^{k+1}(\B_R^5)} \\
&\lesssim \left ( \|\mb f\|_{\mc H^k_R}+\|\mb g\|_{\mc H^k_R}\right )
\|\mb f-\mb g\|_{\mc H^k_R}
 \end{align*}
since $k+1\geq 3>\frac52$.
\end{proof}

\subsection{Analysis of the nonlinear evolution}
Now we turn to the full equation \eqref{eq:Psi}. By Duhamel's
principle,
\begin{equation}
  \label{eq:Psiw}
  \Phi(s)=\mb S(s-s_0)\Phi(s_0)+\int_{s_0}^s \mb S(s-s')\mb N(\Phi(s'))ds'
\end{equation}
is a weak formulation of Eq.~\eqref{eq:Psi}.  In general, this
equation will not have a solution for all $s\geq s_0$ due to the
one-dimensional instability of the linearized flow. Consequently, as
an intermediate step, we modify Eq.~\eqref{eq:Psiw} according to the
Lyapunov-Perron method from dynamical systems theory.
\begin{definition}
  For $R\geq\frac12$, $k\in \N$, $k\geq 4$, $s_0\in \R$, and $\omega_0>0$
  from Theorem \ref{thm:S}, we set
  \[ \mc X_R^k(s_0):=\{\Phi\in C([s_0,\infty),\mc H_R^k):
  \|\Phi\|_{\mc X_R^k(s_0)}<\infty\}, \] where
  \[ \|\Phi\|_{\mc X_R^k(s_0)}:=\sup_{s>s_0}\left [e^{\omega_0
      s}\|\Phi(s)\|_{\mc H_R^k}\right ]. \]
  Furthermore, we define
  $\mb C_{s_0}: \mc X_R^k(s_0)\times \mc H_R^k \to \rg \mb P$ by
  \[ \mb C_{s_0}(\Phi, \mb f):=\mb P\left (\mb f+\int_{s_0}^\infty
    e^{s_0-s'}\mb N(\Phi(s'))ds' \right ). \]
\end{definition}
Instead of Eq.~\eqref{eq:Psiw} we now consider the modified equation
\begin{equation}
  \label{eq:Psim}
  \Phi(s)=\mb S(s-s_0)\left [\mb f-\mb C_{s_0}(\Phi,\mb f)\right ]+\int_{s_0}^s \mb
  S(s-s')\mb N(\Phi(s'))ds'.
\end{equation}
This modification is standard in center manifold theory
and it allows one to later mod out the instability by adjusting the blowup time
$T$. 

\begin{proposition}
  \label{prop:mod}
  Fix $R\geq\frac12$, $s_0\in \R$, and $k\in \N$, $k\geq 4$. Then there
  exists a $c>0$ and a $\delta>0$ such that for any
  $\mb f\in \mc H_R^k$ satisfying
  $\|\mb f\|_{\mc H_R^k}\leq \frac{\delta}{c}$, there exists a unique
  solution $\Phi_{\mb f}\in C([s_0,\infty),\mc H_R^k)$ to
  Eq.~\eqref{eq:Psim} that satisfies $\|\Phi_{\mb f}(s)\|_{\mc H_R^k}\leq \delta e^{-\omega_0 s}$ for all
  $s\geq s_0$. Furthermore, the solution map $\mb f\mapsto \Phi_{\mb f}$ is
  Lipschitz as a function from (a small ball in) $\mc H_R^k$ to
  $\mc X_R^k(s_0)$.
\end{proposition}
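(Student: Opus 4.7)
The plan is to apply Banach's fixed point theorem to the map
\[
\mb K_{\mb f}(\Phi)(s) := \mb S(s-s_0)\bigl[\mb f - \mb C_{s_0}(\Phi,\mb f)\bigr] + \int_{s_0}^{s} \mb S(s-s')\mb N(\Phi(s'))\, ds'
\]
on the closed ball $\mc B_\delta := \{\Phi \in \mc X_R^k(s_0) : \|\Phi\|_{\mc X_R^k(s_0)} \le \delta\}$. The decisive feature is that the subtraction of $\mb C_{s_0}(\Phi,\mb f)$ is engineered to cancel the one-dimensional instability of the linearized flow in $\rg \mb P$. Since $\mb P$ commutes with $\mb S(s-s_0)$ and $\mb S(s-s_0)\mb P = e^{s-s_0}\mb P$ by Theorem \ref{thm:S}, while $\mb C_{s_0}(\Phi,\mb f) = \mb P\mb f + \mb P\int_{s_0}^\infty e^{s_0-s'}\mb N(\Phi(s'))\,ds'$, a telescoping calculation gives
\[
\mb P\mb K_{\mb f}(\Phi)(s) = -\int_s^\infty e^{s-s'}\mb P\mb N(\Phi(s'))\, ds',
\]
while $(\mb I-\mb P)\mb C_{s_0}(\Phi,\mb f) = 0$ leaves
\[
(\mb I-\mb P)\mb K_{\mb f}(\Phi)(s) = \mb S(s-s_0)(\mb I-\mb P)\mb f + \int_{s_0}^{s} \mb S(s-s')(\mb I-\mb P)\mb N(\Phi(s'))\, ds'.
\]

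Next I combine these representations with the quadratic bound $\|\mb N(\Phi(s'))\|_{\mc H_R^k} \lesssim \|\Phi(s')\|_{\mc H_R^k}^2 \le \delta^2 e^{-2\omega_0 s'}$ from Lemma \ref{lem:N} and the decay estimate of Theorem \ref{thm:S}. The unstable part satisfies $\|\mb P\mb K_{\mb f}(\Phi)(s)\|_{\mc H_R^k} \lesssim \delta^2 \int_s^\infty e^{s-s'}e^{-2\omega_0 s'}\,ds' \lesssim \delta^2 e^{-\omega_0 s}$, and the stable part is bounded by $Me^{\omega_0 s_0}\|\mb f\|_{\mc H_R^k}e^{-\omega_0 s} + C\delta^2 e^{-\omega_0 s}$. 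Choosing $\delta$ small and $c$ large enough (both depending on $s_0, R, k, M, \omega_0$), the self-mapping property $\mb K_{\mb f}(\mc B_\delta) \subset \mc B_\delta$ holds whenever $\|\mb f\|_{\mc H_R^k} \le \delta/c$. Using the Lipschitz estimate $\|\mb N(\Phi(s')) - \mb N(\Psi(s'))\|_{\mc H_R^k} \lesssim \delta e^{-2\omega_0 s'}\|\Phi - \Psi\|_{\mc X_R^k(s_0)}$ from Lemma \ref{lem:N}, the same computation yields a contraction constant of order $\delta$, which can be arranged to be at most $1/2$ after further shrinking $\delta$.

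Banach's theorem then produces a unique fixed point $\Phi_{\mb f} \in \mc B_\delta$. For the Lipschitz dependence on $\mb f$, I write
\[
\Phi_{\mb f} - \Phi_{\mb g} = \bigl[\mb K_{\mb f}(\Phi_{\mb f}) - \mb K_{\mb f}(\Phi_{\mb g})\bigr] + \bigl[\mb K_{\mb f}(\Phi_{\mb g}) - \mb K_{\mb g}(\Phi_{\mb g})\bigr].
\]
The first bracket has norm at most $\tfrac12\|\Phi_{\mb f} - \Phi_{\mb g}\|_{\mc X_R^k(s_0)}$ by the contraction bound. For the second, the identity $\mb C_{s_0}(\Phi_{\mb g},\mb f) - \mb C_{s_0}(\Phi_{\mb g},\mb g) = \mb P(\mb f - \mb g)$ reduces the difference to $\mb S(s-s_0)(\mb I-\mb P)(\mb f - \mb g)$, which Theorem \ref{thm:S} controls in $\mc X_R^k(s_0)$ by a constant multiple of $\|\mb f - \mb g\|_{\mc H_R^k}$. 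Absorbing the contraction contribution on the left-hand side delivers the desired Lipschitz estimate.

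The principal obstacle is verifying the cancellation formula for $\mb P\mb K_{\mb f}(\Phi)$; once that identity is in place, all remaining bounds follow directly from the exponential dichotomy of $\mb S$ and the quadratic/Lipschitz structure of $\mb N$, so the argument reduces to the standard Lyapunov-Perron contraction scheme used in stable manifold theory.
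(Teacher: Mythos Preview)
Your proposal is correct and follows essentially the same approach as the paper's proof: the same fixed-point map $\mb K_{\mb f}$ on the same closed ball, the same decomposition into $\mb P$ and $\mb I-\mb P$ pieces with the key telescoping identity $\mb P\mb K_{\mb f}(\Phi)(s)=-\int_s^\infty e^{s-s'}\mb P\mb N(\Phi(s'))\,ds'$, and the same Lipschitz argument via $\mb K_{\mb f}(\Phi_{\mb g})(s)-\mb K_{\mb g}(\Phi_{\mb g})(s)=\mb S(s-s_0)(\mb I-\mb P)(\mb f-\mb g)$.
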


\begin{proof}
We set $\mc Y_\delta:=\{\Phi\in \mc X_R^k(s_0): \|\Phi\|_{\mc
  X_R^k(s_0)}\leq \delta\}$ and
\[ \mb K_{\mb f}(\Phi)(s):=  \mb S(s-s_0)\left [\mb f-\mb C_{s_0}(\Phi,\mb f)\right ]+\int_{s_0}^s \mb
  S(s-s')\mb N(\Phi(s'))ds'. \]
Let $\Phi\in \mc Y_\delta$. By Theorem \ref{thm:S} and Proposition \ref{prop:P},
\begin{align*}
\mb P\mb K_{\mb f}(\Phi)(s)
&=e^{s-s_0}[\mb P\mb f-\mb P\mb C_{s_0}(\Phi,\mb f)]+\int_{s_0}^s
  e^{s-s'}\mb P\mb N(\Phi(s'))ds' \\
&=-\int_s^\infty e^{s-s'}\mb P\mb N(\Phi(s'))ds'
\end{align*}
and, since $\mb N(\mb 0)=\mb 0$, Lemma \ref{lem:N} yields
\begin{align*}
 \|\mb P\mb K_{\mb f}(\Phi)(s)\|_{\mc H_R^k}
&\lesssim
e^s\int_s^\infty e^{-s'}
\|\Phi(s')\|_{\mc H_R^k}^2 ds'
\lesssim \|\Phi\|_{\mc X_R^k(s_0)}^2 e^s\int_s^\infty e^{-s'-2\omega_0
  s'}ds'  \\
&\lesssim \delta^2 e^{-2\omega_0 s}
\end{align*}
for all $s\geq s_0$.
Furthermore,
\begin{align*}
  (\mb I-\mb P)\mb K_{\mb f}(\Phi)(s)
&=\mb S(s-s_0)(\mb I-\mb P)\mb f+\int_{s_0}^s \mb S(s-s')(\mb I-\mb P)\mb N(\Phi(s'))ds'
\end{align*}
and thus, by Theorem \ref{thm:S},
\begin{align*}
  \|(\mb I-\mb P)\mb K_{\mb f}(\Phi)(s)\|_{\mc H_R^k}
&\lesssim e^{-\omega_0 s}\|\mb f\|_{\mc H_R^k}+\int_{s_0}^s
  e^{-\omega_0(s-s')}\|\mb N(\Phi(s'))\|_{\mc H_R^k}ds' \\
&\lesssim \tfrac{\delta}{c}e^{-\omega_0 s}
+e^{-\omega_0 s}\int_{s_0}^s e^{\omega_0 s'}\|\Phi(s')\|_{\mc H_R^k}^2
  ds'  \\
&\lesssim \tfrac{\delta}{c}e^{-\omega_0 s} + \|\Phi\|_{\mc
  X_R^k(s_0)}^2 e^{-\omega_0
  s}\int_{s_0}^s e^{-\omega_0 s'}ds' \\
&\lesssim \tfrac{\delta}{c}e^{-\omega_0 s} +\delta^2 e^{-\omega_0 s}
\end{align*}
for all $s\geq s_0$. 
Consequently, by choosing $c>0$ large enough and $\delta>0$ small enough, we infer $\|\mb K_{\mb
  f}(\Phi)(s)\|_{\mc H_R^k}\leq \delta e^{-\omega_0 s}$ for all $s\geq
s_0$. In other words,
$\mb K_{\mb f}(\Phi)\in \mc Y_\delta$ for all $\Phi\in\mc Y_\delta$.

Next, we show that $\mb K_{\mb f}$ is a contraction on $\mc Y_\delta$.
For $\Phi,\Psi\in \mc Y_\delta$ we have
\begin{align*}
  \mb P\mb K_{\mb f}(\Phi)(s)-\mb P\mb K_{\mb
  f}(\Psi)(s)=-\int_s^\infty e^{s-s'}\mb P\left [
\mb N(\Phi(s'))-\mb N(\Psi(s'))\right ]ds'
\end{align*}
and thus, by Lemma \ref{lem:N},
\begin{align*}
  \|\mb P\mb K_{\mb f}(\Phi)(s)-\mb P\mb K_{\mb
  f}(\Psi)(s)\|_{\mc H_R^k}
&\lesssim e^s \int_s^\infty e^{-s'}\left [\|\Phi(s')\|_{\mc
  H_R^k}+\|\Psi(s')\|_{\mc H_R^k}\right ]\|\Phi(s')-\Psi(s')\|_{\mc
  H_R^k}ds' \\
&\lesssim \delta \|\Phi-\Psi\|_{\mc X_R^k(s_0)}e^s\int_s^\infty
  e^{-s'-2\omega_0 s'}ds' \\
&\lesssim \delta e^{-2\omega_0 s}\|\Phi-\Psi\|_{\mc X_R^k(s_0)}
\end{align*}
for all $s\geq s_0$.
Similarly,
\begin{align*}
(\mb I-\mb P)\mb K_{\mb f}(\Phi)(s)-(\mb I-\mb P)\mb K_{\mb
  f}(\Psi)(s)=
\int_{s_0}^s \mb S(s-s')(\mb I-\mb P)\left [
\mb N(\Phi(s'))-\mb N(\Psi(s'))\right ]ds'
\end{align*}
and thus, by Theorem \ref{thm:S} and Lemma \ref{lem:N},
\begin{align*}
  \|&(\mb I-\mb P)\mb K_{\mb f}(\Phi)(s)-(\mb I-\mb P)\mb K_{\mb
  f}(\Psi)(s)\|_{\mc H_R^k} \\
&\lesssim \int_{s_0}^s e^{-\omega_0(s-s')}\|\mb N(\Phi(s'))-\mb
  N(\Psi(s'))\|_{\mc H_R^k}ds' \\
&\lesssim \int_{s_0}^s e^{-\omega_0(s-s')}
\left [\|\Phi(s')\|_{\mc
  H_R^k}+\|\Psi(s')\|_{\mc H_R^k}\right ]\|\Phi(s')-\Psi(s')\|_{\mc
  H_R^k} ds' \\
&\lesssim \delta \|\Phi-\Psi\|_{\mc X_R^k(s_0)}e^{-\omega_0
  s}\int_{s_0}^s e^{-\omega_0 s'}ds' \\
&\lesssim \delta e^{-\omega_0 s}\|\Phi-\Psi\|_{\mc X_R^k(s_0)}
\end{align*}
for all $s\geq s_0$. In summary, $\|\mb K_\mb f(\Phi)-\mb K_{\mb
  f}(\Psi)\|_{\mc X_R^k(s_0)}\lesssim \delta \|\Phi-\Psi\|_{\mc
  X_R^k(s_0)}$ for all $\Phi,\Psi\in \mc Y_\delta$ and upon choosing
$\delta>0$ sufficiently small, the contraction mapping principle
yields the existence of a unique fixed point $\Phi_{\mb f}\in \mc
Y_\delta$ of $\mb K_{\mb f}$.

Finally, we prove the Lipschitz continuity of the solution map.
We have
\begin{align*}
  \|\Phi_{\mb f}-\Phi_{\mb g}\|_{\mc X_R^k(s_0)}
&=\|\mb K_{\mb f}(\Phi_{\mb f})-\mb K_{\mb g}(\Phi_{\mb g})\|_{\mc
  X_R^k(s_0)} \\
&\leq \|\mb K_{\mb f}(\Phi_{\mb f})-\mb K_{\mb f}(\Phi_{\mb g})\|_{\mc
  X_R^k(s_0)}+
\|\mb K_{\mb f}(\Phi_{\mb g})-\mb K_{\mb g}(\Phi_{\mb g})\|_{\mc
  X_R^k(s_0)} \\
&\lesssim \delta \|\Phi_{\mb f}-\Phi_{\mb g}\|_{\mc X_R^k(s_0)}
+\|\mb K_{\mb f}(\Phi_{\mb g})-\mb K_{\mb g}(\Phi_{\mb g})\|_{\mc
  X_R^k(s_0)}
\end{align*}
and, since
\[ \mb K_{\mb f}(\Phi_{\mb g})(s)-\mb K_{\mb g}(\Phi_{\mb g})(s)=\mb
S(s-s_0)(\mb I-\mb P)(\mb f-\mb g), \]
Theorem \ref{thm:S} yields
\[ \|\mb K_{\mb f}(\Phi_{\mb g})(s)-\mb K_{\mb g}(\Phi_{\mb
  g})(s)\|_{\mc H_R^k}\lesssim e^{-\omega_0 s}\|\mb f-\mb g\|_{\mc
  H_R^k} \]
for all $s\geq s_0$.
In summary, $\|\Phi_{\mb f}-\Phi_{\mb g}\|_{\mc X_R^k(s_0)}\lesssim 
\delta \|\Phi_{\mb f}-\Phi_{\mb g}\|_{\mc X_R^k(s_0)}+\|\mb f-\mb
g\|_{\mc H_R^k}$ and if $\delta>0$ is sufficiently small, the claimed
Lipschitz bound follows.
\end{proof}

\section{Proof of the main result}

\noindent We are now in a position to prove Theorem \ref{thm:main}

\subsection{Construction of the data on the hyperboloid}

As a first step, we evolve the data prescribed at $t=0$ using the
standard Cauchy theory. 
For this we use the following local existence result.

\begin{definition}
  For $\epsilon>0$ we define the spacetime region
  $\Lambda_\epsilon\subset \R^{1,5}$ by
  \[ \Lambda_\epsilon:=[-4\epsilon,4\epsilon]\times \R^5 \cup
  \{(t,x)\in \R^{1,5}: -|x|+\epsilon\leq t\leq |x|-\epsilon\}, \]
  see Fig.~\ref{fig:Lambda}.
\end{definition}

\begin{definition}
  For $\delta,\epsilon>0$ and $m\in \N$ we set
  \[ \mc B_{\delta,\epsilon}^m:=\{(f,g)\in C^\infty(\R^5)\times
  C^\infty(\R^5)\mbox{ radial}: \mathrm{supp}(f,g)\subset \B^5_\epsilon,
  \|(f,g)\|_{H^m(\R^5)\times H^{m-1}(\R^5)}\leq \delta\}. \]
\end{definition}

\begin{lemma}
  \label{lem:locwm}
  Let $m\in \N$ and $m\geq 8$. Then there exists an $\epsilon>0$ such
  that for any pair of functions $(f,g)\in \mc B_{1,\epsilon}^m$ there
  exists a unique solution $u=u_{f,g}\in C^\infty(\Lambda_\epsilon)$
  in $\Lambda_\epsilon$ to the Cauchy problem
  \begin{equation}
    \label{eq:Cauchyinit}
    \left \{\begin{array}{l}
              (\partial_t^2-\Delta_x)u(t,x)=\frac{2|x|u(t,x)-\sin(2|x|u(t,x))}{|x|^3} \\
              u(0,x)=u_1^*(0,x)+f(x),\quad \partial_0 u(0,x)=\partial_0 u_1^*(0,x)+g(x).
            \end{array} \right.
        \end{equation}
        Furthermore, for any multi-index $\alpha\in \N_0^6$ of length
        $|\alpha|\leq m-3$, we have the estimate
        \[ \sup_{(t,x)\in \Lambda_\epsilon}|\partial^\alpha
        u_{f,g}(t,x)-\partial^\alpha u_1^*(t,x)|\lesssim
        \|(f,g)\|_{H^m(\R^5)\times H^{m-1}(\R^5)}. \]
      \end{lemma}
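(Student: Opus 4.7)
The plan is to combine local well-posedness in lightcones (Theorem \ref{thm:LWP}) near the perturbation support with finite speed of propagation (Theorem \ref{thm:uniq}) in the complement, exploiting the fact that $u_1^*$ is itself a smooth solution of Eq.~\eqref{eq:Cauchyinit} away from the blowup point $(1,0)$, which lies outside $\Lambda_\epsilon$ (since at $(1,0)$ one would need $0=|x|\geq |t|+\epsilon=1+\epsilon$). By Lemma \ref{lem:wmsmooth} and Lemma \ref{lem:Moser}, the nonlinearity $\mc N(u)(x)=F(u(x),x)$ with $F(u,x)=\frac{2|x|u-\sin(2|x|u)}{|x|^3}$ is $(m,0)$-admissible for $m\geq 8>\tfrac{5}{2}$, so Theorems \ref{thm:LWP} and \ref{thm:reg} apply directly to Eq.~\eqref{eq:Cauchyinit}.

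First I would fix $T_0:=10\epsilon$ and apply Theorem \ref{thm:LWP} at $x_0=0$ with initial data $(u_1^*(0,\cdot)+f,\partial_0 u_1^*(0,\cdot)+g)$ restricted to $\B^5_{T_0}$. Since $u_1^*(0,\cdot)$ is smooth and bounded near the origin, its $H^m(\B^5_{T_0})$ norm (and that of $\partial_0 u_1^*(0,\cdot)$ in $H^{m-1}$) is of order $\epsilon^{5/2}$ as $\epsilon\to 0$, while $\|(f,g)\|_{H^m\times H^{m-1}}\leq 1$; hence the total data size is uniformly bounded over $\mc B^m_{1,\epsilon}$, and the LWP existence time $T'<T_0$ is bounded below by a positive constant uniform in the data. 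Shrinking $\epsilon$ so that $T'\geq 4\epsilon$, Theorem \ref{thm:reg} promotes the strong $H^m$ solution to a $C^\infty$ solution on the truncated lightcone $\{0\leq t\leq 4\epsilon,\ |x|\leq T_0-t\}$. Time reflection (flipping $t\mapsto -t$ and negating the initial velocity) handles $t\in[-4\epsilon,0]$.

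Next I would extend to all of $\Lambda_\epsilon$ by finite speed of propagation. On $\{|x|>|t|+\epsilon\}$ the initial data agrees with $(u_1^*(0,\cdot),\partial_0 u_1^*(0,\cdot))$ by the support assumption on $(f,g)$, so Theorem \ref{thm:uniq} applied in backward lightcones with tips in this region forces the LWP solution to coincide with $u_1^*$ there. On the overlap between the truncated lightcone and $\{|x|>|t|+\epsilon\}$ (which is non-empty, since $10\epsilon-t>t+\epsilon$ for $t\leq 4\epsilon$) both definitions agree, so setting $u_{f,g}:=u_1^*$ on $\Lambda_\epsilon\cap\{|x|>|t|+\epsilon\}$ yields a globally defined $u_{f,g}\in C^\infty(\Lambda_\epsilon)$.

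Finally, the derivative estimate follows from the Lipschitz continuity asserted in Theorem \ref{thm:LWP}: applied with $(\tilde f,\tilde g)=(u_1^*(0,\cdot),\partial_0 u_1^*(0,\cdot))|_{\B^5_{T_0}}$, whose LWP solution is $u_1^*$ by Theorem \ref{thm:uniq}, it gives
\[
\|u_{f,g}-u_1^*\|_{X^m_{T_0}(T')}+\|\partial_0(u_{f,g}-u_1^*)\|_{X^{m-1}_{T_0}(T')}\lesssim\|(f,g)\|_{H^m(\R^5)\times H^{m-1}(\R^5)}.
\]
Iteratively trading $\partial_t^2$ for $\Delta+[\mc N(u_{f,g})-\mc N(u_1^*)]$ in the equation and using the Banach algebra property of $H^k(\B^5_R)$ for $k>\tfrac{5}{2}$, one obtains the uniform bound $\|\partial_t^j(u_{f,g}-u_1^*)(t,\cdot)\|_{H^{m-j}(\B^5_{T_0-t})}\lesssim\|(f,g)\|_{H^m\times H^{m-1}}$ for $0\leq j\leq m$. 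The 5-dimensional Sobolev embedding $H^3(\B^5_R)\hookrightarrow L^\infty(\B^5_R)$ then converts these into pointwise bounds for every mixed derivative $\partial^\alpha$ with $|\alpha|\leq m-3$ inside the LWP region; on the complementary piece $u_{f,g}-u_1^*\equiv 0$ makes the estimate trivial. The main obstacle is ensuring the uniform lower bound $T'\geq 4\epsilon$, which governs the a priori choice of $\epsilon$; the trading of time derivatives for spatial ones and the gluing are routine.
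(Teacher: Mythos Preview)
Your proposal is correct and follows the same overall strategy as the paper: local well-posedness in a lightcone around the origin via Theorem \ref{thm:LWP}, extension to the rest of $\Lambda_\epsilon$ by finite speed of propagation (Theorem \ref{thm:uniq}) using that the data agree with those of $u_1^*$ outside $\B^5_\epsilon$, regularity upgrade via Theorem \ref{thm:reg}, and then the Lipschitz bound from Theorem \ref{thm:LWP} combined with Sobolev embedding (and the equation to trade time for space derivatives) for the pointwise estimate.

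The one substantive difference is your choice of lightcone radius $T_0=10\epsilon$, which creates the circular dependence between $\epsilon$ and the LWP time $T'$ that you flag as the ``main obstacle''. The paper sidesteps this completely by taking the cone of fixed radius $T=1$: the data $(u_1^*(0,\cdot)+f,\partial_0 u_1^*(0,\cdot)+g)$ then have $H^m(\B^5_1)\times H^{m-1}(\B^5_1)$ norm bounded by a universal constant $M_0$ uniformly over $\mc B^m_{1,1}\supset\mc B^m_{1,\epsilon}$, so Theorem \ref{thm:LWP} yields a fixed existence time $T'\in(0,1)$ independent of $\epsilon$. One then simply chooses $\epsilon<\min(T'/4,\tfrac19)$ after the fact, and the condition $\epsilon<\tfrac19$ guarantees that the slab $[-4\epsilon,4\epsilon]\times\R^5$ is covered by the LWP cone together with the region $\{|x|\geq |t|+\epsilon\}$. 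This decoupling eliminates any need to track how the LWP constants behave as the cone radius shrinks. Your version does work (since the contraction constraint in the proof of Theorem \ref{thm:LWP} allows $T'$ up to $T_0/2=5\epsilon$ once $\epsilon$ is small), but the paper's choice is cleaner.
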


\begin{proof}
  Thanks to Lemma \ref{lem:wmsmooth}, Theorems \ref{thm:LWP},
  \ref{thm:uniq}, and
  \ref{thm:reg} apply to the Cauchy problem
  \eqref{eq:Cauchyinit}.  From Theorem \ref{thm:LWP} we obtain an
  $\epsilon \in (0,\frac19)$ and the existence of a solution $u$ to
  the Cauchy problem \eqref{eq:Cauchyinit} in the truncated lightcone
  $\bigcup_{t\in [-4\epsilon,4\epsilon]}\{t\}\times \B^5_{1-|t|}$ for
  any $(f,g)\in \mc B_{1,1}^m$. In particular, this existence result
  holds for all $(f,g)\in \mc B_{1,\epsilon}^m\subset \mc B_{1,1}^m$.
  Let $(f,g)\in \mc B_{1,\epsilon}^m$.  Since the support of $(f,g)$
  is contained in the ball $\B^5_\epsilon$, it follows from finite
  speed of propagation (Theorem \ref{thm:uniq}) that the unique
  solution $u$ to Eq.~\eqref{eq:Cauchyinit} in the domain
  $\{(t,x)\in \R^{1,5}: -|x|+\epsilon\leq t\leq |x|-\epsilon\}$ is
  $u=u_1^*$.  In summary, we obtain a solution $u$ in
  $\Lambda_\epsilon$ and by Theorem \ref{thm:reg},
  $u\in C^\infty(\Lambda_\epsilon)$.

  Furthermore, from Theorem \ref{thm:LWP} we have the Lipschitz bound
  \begin{align*}
    \sup_{t\in
    [-4\epsilon,4\epsilon]}
    &\|(u(t,\cdot),\partial_t u(t,\cdot))-(u_1^*(t,\cdot), \partial_t
      u_1^*(t,\cdot))\|_{H^m(\R^5)\times H^{m-1}(\R^5)} \\
    &= \sup_{t\in
      [-4\epsilon,4\epsilon]}\|(u(t,\cdot),\partial_t u(t,\cdot))-(u_1^*(t,\cdot), \partial_t
      u_1^*(t,\cdot))\|_{H^m(\B_{1-|t|}^5)\times H^{m-1}(\B_{1-|t|}^5)} \\
    &\lesssim \|(u(0,\cdot),\partial_0
      u(0,\cdot))-(u_1^*(0,\cdot),\partial_0
      u_1^*(0,\cdot))\|_{H^m(\R^5)\times H^{m-1}(\R^5)} \\
    &=\|(f,g)\|_{H^m(\R^5)\times H^{m-1}(\R^5)}
  \end{align*}
  and thus, by Sobolev embedding,
  \[ \sup_{(t,x)\in \Lambda_\epsilon}|\partial^\alpha
  u(t,x)-\partial^\alpha u_1^*(t,x)|\lesssim \|(f,g)\|_{H^m(\R^5)\times
    H^{m-1}(\R^5)}
  \]
  for all multi-indices $\alpha\in \{0,1\}\times \N_0^5$ of length
  $|\alpha|\leq m-3$.  Time derivatives of higher order are estimated
  by using the equation to translate them into spatial derivatives.
 This way, the stated bound follows.
\end{proof}

Now we obtain the initial data for the hyperboloidal evolution by
evaluating the solution from Lemma \ref{lem:locwm} on a suitable
hyperboloid.  Recall from Section \ref{sec:pert} that in terms of the
variable
\begin{align*}
  \Phi(s)(y)&:=e^{-s}\begin{pmatrix} v(s,y) \\ \partial_s
    v(s,y) \end{pmatrix}:=e^{-s}\begin{pmatrix}\widetilde
    u(T+e^{-s}h(y),e^{-s}y) \\ \partial_s \widetilde
    u(T+e^{-s}h(y),e^{-s}y) \end{pmatrix} \\
            &:=e^{-s} \begin{pmatrix}
              u(T+e^{-s}h(y),e^{-s}y)-u_T^*(T+e^{-s}h(y),e^{-s}y) \\
              \partial_s u(T+e^{-s}h(y),e^{-s}y)-\partial_s
              u_T^*(T+e^{-s}h(y),e^{-s}y)
            \end{pmatrix},
\end{align*}
Eq.~\eqref{eq:mainu} reads
\begin{equation}
  \label{eq:Phifull}
  \partial_s\Phi(s)=\mb L \Phi(s)+\mb N(\Phi(s)). 
\end{equation}
This motivates the definition of the following \emph{initial data
  operator}.

\begin{definition}
  Let $R\geq\frac12$ and $k\in \N$, $k\geq 4$.  For $\epsilon>0$
  sufficiently small, we define a map
  $\mb U: \mc B^{k+4}_{1,\epsilon}\times [1-\epsilon,1+\epsilon]\to
  \mc H_R^k$
  as follows.  For $(f,g)\in \mc B_{1,\epsilon}^{k+4}$ let
  $u_{f,g}\in C^\infty(\Lambda_\epsilon)$ be the corresponding unique
  solution to the Cauchy problem \eqref{eq:Cauchyinit} from Lemma
  \ref{lem:locwm}.  Then we set
  \[ \mb U((f,g),T)(y):=e^{-s}\left .\begin{pmatrix}
      u_{f,g}(T+e^{-s}h(y),e^{-s}y)-u_T^*(T+e^{-s}h(y),e^{-s}y) \\
      \partial_s u_{f,g}(T+e^{-s}h(y),e^{-s}y)-\partial_s
      u_T^*(T+e^{-s}h(y),e^{-s}y)
    \end{pmatrix}\right |_{s=\log(-\frac{h(0)}{1+2\epsilon})}.
  \]
\end{definition}

\begin{figure}[ht]
  \centering
  \includegraphics{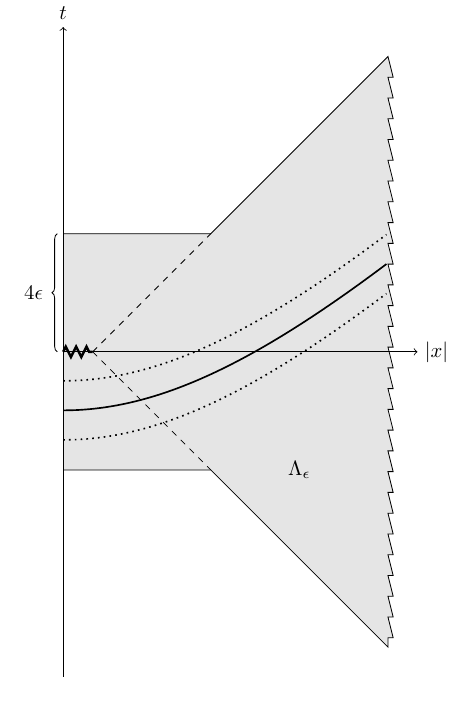}
  \caption{A spacetime diagram illustrating the construction of the
    initial data for the hyperboloidal evolution. The shaded region
    depicts the domain $\Lambda_\epsilon$. The solid curved line
    represents the hyperboloid parametrized by the map
    $y\mapsto (T+e^{-s}h(y), e^{-s}y)$ with $T=1$ and
    $s=\log(-\frac{h(0)}{1+2\epsilon})$. The dotted lines are the
    corresponding translated hyperboloids with the same $s$ but
    $T=1-\epsilon$ and $T=1+\epsilon$, respectively. The zigzag
    segment represents the support of the initial data $(f,g)$.}
  \label{fig:Lambda}
\end{figure}

Consequently, our goal is now to solve Eq.~\eqref{eq:Phifull} for
$s\geq s_0=\log(-\frac{h(0)}{1+2\epsilon})$ with initial data
$\Phi(s_0)=\mb U((f,g),T)$ and a suitable
$T \in [1-\epsilon,1+\epsilon]$.

\subsection{Properties of the initial data operator}
First, we prove mapping properties for the initial data operator
$\mb U$.

\begin{lemma}
  \label{lem:U}
  Let $R\geq\frac12$ and $k\in \N$, $k\geq 4$.  Then there exists an
  $\epsilon>0$ such that the initial data operator
  $\mb U: \mc B^{k+4}_{1,\epsilon}\times [1-\epsilon,1+\epsilon]\to
  \mc H_R^k$
  is well-defined and for any $(f,g)\in \mc B_{1,\epsilon}^{k+4}$, the
  map $\mb U((f,g),\cdot): [1-\epsilon,1+\epsilon]\to\mc H_R^k$ is
  continuous. Furthermore, there exists a $\gamma_\epsilon\in \R$ such
  that
  \[ \mb U((f,g),T)=\gamma_\epsilon (T-1)\mb f_1^*+\mb V((f,g),T), \]
  where $\mb V((f,g),T)$ satisfies the bound
  \[ \|\mb V((f,g),T)\|_{\mc H^k_R}\lesssim
  \|(f,g)\|_{H^{k+4}(\R^5)\times H^{k+3}(\R^5)}+|T-1|^2 . \]
\end{lemma}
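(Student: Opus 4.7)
Set $s_0:=\log(-h(0)/(1+2\epsilon))$, so that $e^{-s_0}=(1+2\epsilon)/|h(0)|$. First I would verify geometrically that, for $\epsilon>0$ small (depending on $R$), the parametrized hyperboloid $\eta_T(s_0,\overline{\B_R^5})$ lies in $\Lambda_\epsilon$ for every $T\in[1-\epsilon,1+\epsilon]$: at $y=0$ the time coordinate equals $T-(1+2\epsilon)\in[-3\epsilon,-\epsilon]$, so the point is in the slab $\{|t|\leq 4\epsilon\}$, while the monotonicity of $|y|-h(y)$ places points with larger $|y|$ in the spacelike wedge $\{t\leq |x|-\epsilon\}$. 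Lemma \ref{lem:locwm} then supplies a unique $u_{f,g}\in C^\infty(\Lambda_\epsilon)$, and composition with the smooth maps $y\mapsto\eta_T(s_0,y)$ and $(T,y)\mapsto u_T^*(\eta_T(s_0,y))$ produces $\mb U((f,g),T)\in C^\infty(\overline{\B_R^5})^2\subset\mc H_R^k$. Joint continuity of $u_{f,g}$ and $u_T^*$ in their arguments gives the claimed continuity of $T\mapsto\mb U((f,g),T)$.

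Next I would split
\[
\mb U((f,g),T)=\bigl[\mb U((f,g),T)-\mb U((0,0),T)\bigr]+\mb U((0,0),T).
\]
The $u_T^*$ contributions cancel in the bracket, leaving $e^{-s_0}$ times the pair $\bigl((u_{f,g}-u_1^*)\circ\eta_T(s_0,\cdot),\ \partial_s[(u_{f,g}-u_1^*)\circ\eta_T(s,\cdot)]|_{s=s_0}\bigr)$. Applied with $m=k+4\geq 8$, Lemma \ref{lem:locwm} bounds $|\partial^\alpha(u_{f,g}-u_1^*)|$ pointwise on $\Lambda_\epsilon$ by $\|(f,g)\|_{H^{k+4}\times H^{k+3}}$ for all $|\alpha|\leq k+1$. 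Since $\eta_T(s_0,\cdot)$ is a smooth diffeomorphism from $\overline{\B_R^5}$ onto a compact subset of $\Lambda_\epsilon$, the chain rule together with the embedding $C^{k+1}\hookrightarrow H^{k+1}$ on bounded domains gives
\[
\|\mb U((f,g),T)-\mb U((0,0),T)\|_{\mc H_R^k}\lesssim \|(f,g)\|_{H^{k+4}\times H^{k+3}}.
\]

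For the remaining piece $\mb U((0,0),T)$, where $u_{0,0}=u_1^*$, I would write $u_T^*(t,x)=F(T-t,x)$ and observe that $u_T^*(\eta_T(s,y))=F(-e^{-s}h(y),e^{-s}y)$ is $T$-independent, while $u_1^*(\eta_T(s,y))=F(1-T-e^{-s}h(y),e^{-s}y)$ depends on $T$ only through a rigid shift in the first slot of $F$. A second-order Taylor expansion in $T-1$ about $T=1$, combined with the identity $\partial_1 F(s,x)=-2/(s^2+|x|^2)$ (which is the computation of $\partial_T u_T^*$ already carried out in the proof of Lemma \ref{lem:sigmapL}), evaluates the first component to $2e^{s_0}(T-1)f_{1,1}^*(y)+O((T-1)^2)$. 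The analogous computation for the second component now involves $\partial_1^2 F$ and $\partial_1\partial_{j+1}F$, and after the algebraic identity $h(y)\partial_1^2 F-y^j\partial_1\partial_{j+1}F=-2e^{3s_0}f_{1,2}^*(y)$ evaluated at $(-e^{-s_0}h(y),e^{-s_0}y)$, it yields $2e^{s_0}(T-1)f_{1,2}^*(y)+O((T-1)^2)$. Setting $\gamma_\epsilon:=2e^{s_0}$ thus gives $\mb U((0,0),T)=\gamma_\epsilon(T-1)\mb f_1^*+O((T-1)^2)$, and because the Taylor remainder is smooth on the compact set $[1-\epsilon,1+\epsilon]\times\overline{\B_R^5}$, the $O((T-1)^2)$ bound holds in every Sobolev norm, in particular in $\mc H_R^k$.

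The only nontrivial point is the identification step: one must check that the two components of $\partial_T\mb U((0,0),T)|_{T=1}$ are in the exact ratio $1:2$ required for proportionality to $\mb f_1^*=(f_{1,1}^*,2f_{1,1}^*)$. This is not accidental, since $\mb f_1^*$ arose in Lemma \ref{lem:sigmapL} precisely as the HSC-image of $\partial_T u_T^*$, which is the very variation being computed here; the coincidence of constants in both components simply reflects that time-translation symmetry. Summing the bounds from the previous two paragraphs yields $\|\mb V((f,g),T)\|_{\mc H_R^k}\lesssim \|(f,g)\|_{H^{k+4}\times H^{k+3}}+|T-1|^2$, completing the plan.
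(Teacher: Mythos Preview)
Your proposal is correct and follows essentially the same route as the paper: the same decomposition into the $(u_{f,g}-u_1^*)$ piece (your $\mb U((f,g),T)-\mb U((0,0),T)$) controlled by Lemma~\ref{lem:locwm}, and the $(u_1^*-u_T^*)$ piece (your $\mb U((0,0),T)$) handled by a first-order Taylor expansion in $T-1$ using the formula for $\partial_T u_T^*$ from Lemma~\ref{lem:sigmapL}. The only cosmetic difference is that the paper obtains the second component by differentiating the first-component expansion in $s$ (exploiting $\partial_s(2e^{2s}f_{1,1}^*)=2e^{2s}f_{1,2}^*$), whereas you compute it directly via your chain-rule identity for $h(y)\partial_1^2 F - y^j\partial_1\partial_{j+1}F$; both yield $\gamma_\epsilon=2e^{s_0}$.
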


\begin{proof}
  According to Lemma \ref{lem:locwm}, there exists an $\epsilon>0$
  such that the operator is well-defined since the hyperboloids on
  which the function $u_{f,g}$ is evaluated lie entirely inside of
  $\Lambda_\epsilon$, see Fig.~\ref{fig:Lambda}.  The statement about
  the continuity is a simple consequence of the fact that
  $u_{f,g}, u_T^* \in C^\infty(\Lambda_\epsilon)$.  To prove the last
  assertion, we rewrite $\mb U$ as
  \begin{align*}
    \mb U((f,g),T)(y)
    &=
      e^{-s}\left .\begin{pmatrix}
          u_{f,g}(T+e^{-s}h(y),e^{-s}y)-u_1^*(T+e^{-s}h(y),e^{-s}y) \\
          \partial_s u_{f,g}(T+e^{-s}h(y),e^{-s}y)-\partial_s
          u_1^*(T+e^{-s}h(y),e^{-s}y)
        \end{pmatrix}\right |_{s=\log(-\frac{h(0)}{1+2\epsilon})} \\
    &\quad +e^{-s}\left .\begin{pmatrix}
        u_1^*(T+e^{-s}h(y),e^{-s}y)-u_T^*(T+e^{-s}h(y),e^{-s}y) \\
        \partial_s u_1^*(T+e^{-s}h(y),e^{-s}y)-\partial_s
        u_T^*(T+e^{-s}h(y),e^{-s}y)
      \end{pmatrix}\right |_{s=\log(-\frac{h(0)}{1+2\epsilon})}.
  \end{align*}
  Now note that $u_{T+t_0}^*(t+t_0,x)=u_T^*(t,x)$ for any $t_0\in \R$
  and thus, by expanding around $T=1$, we infer
  \begin{align*}
    u_1^*(T+e^{-s}h(y),e^{-s}y)
    &=u_{2-T}^*(1+e^{-s}h(y),e^{-s}y) \\
    &=u_1^*(1+e^{-s}h(y),e^{-s}y) \\
    &\quad +\partial_T u_{2-T}^*(1+e^{-s}h(y),e^{-s}y)|_{T=1}(T-1)+\varphi_T(s,y),
  \end{align*}
  where
  $\|(\varphi_T(s,\cdot),\partial_s\varphi_T(s,\cdot))\|_{\mc
    H_R^k}\leq C_s |T-1|^2$.  Since
  \[ \partial_T u_T^*(t,x)=-\frac{2}{(T-t)^2+|x|^2}, \]
  see the proof of Lemma \ref{lem:sigmapL}, we obtain
  \[ \partial_T u_{2-T}^*(t,x)|_{T=1}=-\partial_T
  u_T^*(t,x)|_{T=1}=\frac{2}{(1-t)^2+|x|^2} \] and thus,
  \begin{align*}
    u_1^*(T+e^{-s}h(y),e^{-s}y)
    &=u_1^*(1+e^{-s}h(y),e^{-s}y)+2\frac{e^{2s}}{|y|^2+h(y)^2}(T-1)+\varphi_T(s,y)
    \\
    &=u_T^*(T+e^{-s}h(y),e^{-s}y)+2e^{2s}f_{1,1}^*(y)(T-1)+\varphi_T(s,y) \\
    \partial_s u_1^*(T+e^{-s}h(y),e^{-s}y)
    &=\partial_s u_T^*(T+e^{-s}h(y),e^{-s}y)+2e^{2s}f_{1,2}^*(y)(T-1)+\partial_s\varphi_T(s,y) .
  \end{align*}
  This yields the stated representation.  The bound on
$\mb V((f,g),T)$
  follows easily by Sobolev embedding and the estimate from Lemma
  \ref{lem:locwm}.
\end{proof}

\FloatBarrier

\subsection{Hyperboloidal evolution}

The last step in the proof of Theorem \ref{thm:main} consists of the
hyperboloidal evolution.

\begin{proposition}
  \label{prop:hyp}
  Let $R\geq\frac12$ and $k\in \N$, $k\geq 4$.  Then there exists an
  $M>0$ and $\delta,\epsilon>0$ such that for any pair
  $(f,g)\in \mc B_{\delta/M,\epsilon}^{k+4}$ there exists a
  $T_{f,g}\in [1-\delta,1+\delta]$ and a unique function
  $\Phi_{f,g}\in C([s_0,\infty),\mc H_R^k)$ that satisfies
  \[ \Phi_{f,g}(s)=\mb S(s-s_0)\mb U((f,g),T_{f,g})+\int_{s_0}^s \mb
  S(s-s')\mb N(\Phi_{f,g}(s'))ds',\qquad s_0:=\log\left
    (-\frac{h(0)}{1+2\epsilon}\right ) \]
and
  $\|\Phi_{f,g}(s)\|_{\mc H^k_R}\leq \delta e^{-\omega_0 s}$ for all
  $s\geq s_0$.
\end{proposition}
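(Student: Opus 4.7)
The argument is the standard Lyapunov--Perron scheme: for each fixed $T$ one solves the \emph{modified} equation \eqref{eq:Psim}, which is globally well-posed by Proposition \ref{prop:mod}; one then uses the one-dimensional parameter $T$ to kill the one-dimensional spurious correction $\mb C_{s_0}(\Phi,\mb f)\in \rg\mb P=\langle \mb f_1^*\rangle$, thereby converting the modified equation into the original Duhamel equation from the statement.

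Fix $(f,g)\in \mc B_{\delta/M,\epsilon}^{k+4}$. For each $T\in[1-\delta,1+\delta]$, Lemma \ref{lem:U} supplies the decomposition $\mb U((f,g),T)=\gamma_\epsilon(T-1)\mb f_1^* + \mb V((f,g),T)$ with $\|\mb V((f,g),T)\|_{\mc H_R^k}\lesssim \delta/M+\delta^2$, and in particular $\|\mb U((f,g),T)\|_{\mc H_R^k}\lesssim \delta$. Choosing $M$ large and $\delta$ small, this is $\le \delta/c$ (with $c$ as in Proposition \ref{prop:mod}), so the proposition produces a unique $\Phi^T\in \mc X_R^k(s_0)$, with $\|\Phi^T(s)\|_{\mc H_R^k}\le \delta e^{-\omega_0 s}$, solving
\[
\Phi^T(s)=\mb S(s-s_0)\left[\mb U((f,g),T)-\mb C_{s_0}(\Phi^T,\mb U((f,g),T))\right]+\int_{s_0}^s \mb S(s-s')\mb N(\Phi^T(s'))ds'.
\]
Comparing with the integral equation in the statement, $\Phi^T$ is a solution of the desired problem precisely when $\mb C_{s_0}(\Phi^T,\mb U((f,g),T))=\mb 0$. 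Since $\rg \mb P=\langle\mb f_1^*\rangle$ by Proposition \ref{prop:P}, this is a scalar condition $\theta(T)=0$, where
\[
\theta(T)\mb f_1^* := \mb P\,\mb U((f,g),T)+\mb P\int_{s_0}^\infty e^{s_0-s'}\mb N(\Phi^T(s'))ds'.
\]

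Next I would expand $\theta(T)$ to extract the unstable direction. By Lemma \ref{lem:U} the first piece equals $\gamma_\epsilon(T-1)\mb f_1^* + \mb P\mb V((f,g),T)$. For the integral, Lemma \ref{lem:N} combined with $\|\Phi^T(s')\|_{\mc H_R^k}\le \delta e^{-\omega_0 s'}$ gives $\|\mb N(\Phi^T(s'))\|_{\mc H_R^k}\lesssim \delta^2 e^{-2\omega_0 s'}$, so the integral is of size $\delta^2$. Thus $\theta(T)=\gamma_\epsilon(T-1)+E(T)$ with $|E(T)|\lesssim \delta/M+\delta^2$. The proof of Lemma \ref{lem:U} shows $\gamma_\epsilon=2e^{s_0}\neq 0$, hence for $M$ sufficiently large and $\delta$ sufficiently small, $|E(T)|<|\gamma_\epsilon|\delta/2$ uniformly in $T$, and $\theta(1-\delta)$, $\theta(1+\delta)$ have opposite signs. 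Continuity of $T\mapsto \theta(T)$ follows from the continuity of $T\mapsto \mb U((f,g),T)$ (Lemma \ref{lem:U}), the Lipschitz dependence of $\mb f\mapsto \Phi_{\mb f}$ in Proposition \ref{prop:mod}, the Lipschitz bound for $\mb N$ (Lemma \ref{lem:N}), and dominated convergence for the integral term. The intermediate value theorem then supplies $T_{f,g}\in[1-\delta,1+\delta]$ with $\theta(T_{f,g})=0$; setting $\Phi_{f,g}:=\Phi^{T_{f,g}}$ yields the desired solution, and its uniqueness is inherited from the contraction-mapping uniqueness in Proposition \ref{prop:mod}.

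\textbf{Main obstacle.} The delicate point is calibrating three smallness scales so that the unstable correction can actually be eliminated: the linear instability term $\gamma_\epsilon(T-1)$ has size $\sim \delta$, while the nonlinear feedback contributes $O(\delta^2)$ and the non-unstable part of the initial data contributes $O(\delta/M)$. The argument closes only because $M$ may be chosen \emph{after} the universal constants from Proposition \ref{prop:mod} and Lemma \ref{lem:N} are fixed, and $\delta$ then chosen small compared to $1/M$; keeping track of this ordering of quantifiers, together with verifying continuity of $\theta$ in $T$ through the composition of Lipschitz maps, is the main technical content.
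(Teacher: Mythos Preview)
Your approach is the same Lyapunov--Perron scheme the paper uses: solve the modified equation via Proposition~\ref{prop:mod} for each $T$, then kill the one-dimensional correction $\mb C_{s_0}$ by choosing $T$. The paper phrases the last step as a fixed-point problem $T=1-\widetilde\gamma_\epsilon^{-1}\phi_{f,g}(T)$ rather than the intermediate value theorem, but in one dimension these are equivalent.

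There is one calibration glitch worth fixing. You assert that $\|\mb U((f,g),T)\|_{\mc H_R^k}\lesssim\delta$ and then that ``choosing $M$ large and $\delta$ small, this is $\le\delta/c$''. But the dominant term in $\mb U$ is $\gamma_\epsilon(T-1)\mb f_1^*$, which for $T\in[1-\delta,1+\delta]$ contributes $\sim|\gamma_\epsilon|\,\delta$ to the norm; since $\gamma_\epsilon$ is a fixed nonzero constant and $c$ is already fixed by Proposition~\ref{prop:mod}, the inequality $|\gamma_\epsilon|\,\delta\le\delta/c$ cannot be enforced by adjusting $M$ or $\delta$. The paper's remedy is to search for $T$ on the \emph{smaller} interval $[1-\delta/M_0,\,1+\delta/M_0]$, with $M_0$ chosen after $c$; then $\|\mb U\|\lesssim\delta/M_0+\delta/M_0^2+\delta^2$ can indeed be pushed below $\delta/c$, and the fixed point $T_{f,g}$ found there a fortiori lies in $[1-\delta,1+\delta]$ (the paper then sets $M=M_0^2$). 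With that single adjustment your argument goes through verbatim.
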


\begin{proof}
  Let $c,\delta>0$ be as in Proposition \ref{prop:mod} and choose
  $\epsilon>0$ so small that
  $\mb U: \mc B_{1,\epsilon}^{k+4}\times [1-\epsilon,1+\epsilon]\to\mc
  H_R^k$
  is well-defined, see Lemma \ref{lem:U}.  By choosing $M_0\geq 1$
  sufficiently large, we obtain $\frac{\delta}{M_0}\leq \epsilon$ and
  Lemma \ref{lem:U} yields the bound
  $\|\mb U((f,g),T)\|_{\mc H_R^k}\leq \frac{\delta}{c}$ for all
  $(f,g)\in \mc B_{\delta/M_0^2,\epsilon}^{k+4}$ and
  $T\in [1-\frac{\delta}{M_0},1+\frac{\delta}{M_0}]$.  Consequently,
  Proposition \ref{prop:mod} implies that for any
  $(f,g)\in \mc B_{\delta/M_0^2,\epsilon}^{k+4}$ and
  $T\in [1-\frac{\delta}{M_0},1+\frac{\delta}{M_0}]$, there exists a
  unique function $\Phi_{f,g,T}\in C([s_0,\infty),\mc H_R^k)$
  satisfying
  \[ \Phi_{f,g,T}(s)=\mb S(s-s_0)\left [\mb U((f,g),T)-\mb C_{s_0}\big
    (\Phi_{f,g,T}, \mb U((f,g),T)\big )\right ]+\int_{s_0}^s \mb
  S(s-s')\mb N\big (\Phi_{f,g,T}(s')\big )ds',
  \]
  where $s_0:=\log(-\frac{h(0)}{1+2\epsilon})$.  Furthermore, we have
  the bound
  $\|\Phi_{f,g,T}(s)\|_{\mc H_R^k}\leq \delta e^{-\omega_0 s}$ for all
  $s\geq s_0$.  Thus, our goal is to show that there exists a
  $T_{f,g}\in [1-\frac{\delta}{M_0}, 1+\frac{\delta}{M_0}]$ such that
  \[ \mb C_{s_0}\big (\Phi_{f,g,T_{f,g}}, \mb U((f,g),T_{f,g})\big
  )=\mb 0. \]
  To this end, we define a function
  $\varphi_{f,g}: [1-\frac{\delta}{M_0},1+\frac{\delta}{M_0}]\to \R$
  by
  \[ \varphi_{f,g}(T):=\Big (\mb C_{s_0}\big (\Phi_{f,g,T}, \mb
  U((f,g),T)\big ) \Big | \mb f_1^* \Big )_{\mc H_R^k}.  \]
  By Proposition \ref{prop:mod} and Lemma \ref{lem:U}, $\varphi_{f,g}$
  is continuous.  Recall that
  \[ \mb C_{s_0}\big (\Phi_{f,g,T}, \mb U((f,g),T)\big ) =\mb P\mb
  U((f,g),T)+\mb P\int_{s_0}^\infty e^{s_0-s'}\mb N(\Phi_{f,g,T}(s'))ds' \]
  and from Lemmas \ref{lem:N} and \ref{lem:U} we see that there exists
  a nonzero constant $\widetilde \gamma_\epsilon$ such that
  \[ \varphi_{f,g}(T)=\widetilde\gamma_\epsilon(T-1)+\phi_{f,g}(T) \]
  with a continuous function
  $\phi_{f,g}: [1-\frac{\delta}{M_0},1+\frac{\delta}{M_0}]\to \R$
  satisfying
  $|\phi_{f,g}(T)|\lesssim
  \frac{\delta}{M_0^2}+\delta^2$
  for all $T\in [1-\frac{\delta}{M_0},1+\frac{\delta}{M_0}]$.
  Consequently, the condition $\varphi_{f,g}(T)=0$ is equivalent to
  the fixed point problem
  $T=1-\widetilde\gamma_\epsilon^{-1}\phi_{f,g}(T)$ and if we choose
  $M_0$ large enough and $\delta>0$ sufficiently small,
  $T\mapsto 1-\widetilde\gamma_\epsilon^{-1}\phi_{f,g}(T)$ becomes a
  continuous self-map of the interval
  $[1-\frac{\delta}{M_0},1+\frac{\delta}{M_0}]$ which necessarily has
  a fixed point $T_{f,g}$.  By Proposition \ref{prop:P}, $\mb C_{s_0}$
  has values in $\langle \mb f_1^*\rangle$ and thus,
  \[ \mb C_{s_0}\big (\Phi_{f,g,T_{f,g}}, \mb U((f,g),T_{f,g})\big
  )=\mb 0, \] as desired. The proof is finished by setting $M=M_0^2$.
\end{proof}

\subsection{Proof of Theorem \ref{thm:main}}

Let $m\in \N$, $m\geq 8$.
According to Lemmas \ref{lem:locwm}, \ref{lem:U}, and Proposition
\ref{prop:hyp}, there exists an $\epsilon>0$ such that for any pair of
functions $(f,g)\in \mc B_{\delta/M,\epsilon}^m$ there exists a
$T\in [1-\delta,1+\delta]$ and a continuous function $\Phi=(\phi_1,\phi_2): [s_0,\infty)\to \mc
H_R^{m-4}$ that satisfies
\[ \Phi(s)=\mb S(s-s_0)\mb U((f,g),T)+\int_{s_0}^s \mb
S(s-s')\mb N(\Phi(s'))ds' \]
for all $s\geq s_0:=\log(-\frac{h(0)}{1+2\epsilon})$ and
$\|\Phi(s)\|_{\mc H^{m-4}_R}\leq\delta e^{-\omega_0 s}$.
Since the data $\mb U((f,g),T)$ are smooth, they belong to $\mc
D(\mb L)$ and the function $\Phi$ is a
classical solution to the equation
\[ \partial_s \Phi(s)=\mb L\Phi(s)+\mb N(\Phi(s)). \]
By a simple inductive argument it follows that $\Phi$ is
in fact smooth, cf.~the proof of Theorem \ref{thm:reg}.
By construction, the function $u$, given by
\[ (u\circ \eta_T)(s,y)=(u_T^*\circ\eta_T)(s,y)+e^{s}\phi_1(s)(y), \]
satisfies Eq.~\eqref{eq:mainu} in the domain
$\eta_T([s_0,\infty)\times \B_R^5)$ and we have
\[ \partial_s(u\circ \eta_T)(s,y)=\partial_s (u_T^*\circ
\eta_T)(s,y)+e^{s}\phi_2(s)(y). \]
By Theorem \ref{thm:uniq} we have $u(0,\cdot)=u_1^*(0,\cdot)+f$ and
$\partial_0 u(0,\cdot)=\partial_0 u_1^*(0,\cdot)+g$ and
the stated bounds in Theorem \ref{thm:main} follow
immediately from 
\[ \|\Phi(s)\|_{\mc H_R^{m-4}}^2=\|\phi_1(s)\|_{H^{m-3}(\B_R^5)}^2
+\|\phi_2(s)\|_{H^{m-4}(\B_R^5)}^2\leq \delta^2 e^{-2\omega_0 s}. \]
Finally, $u=u_1^*$ in $\Omega_{T,b}\setminus
\eta_T([s_0,\infty)\times \B_R^5)$ is a consequence of finite speed of
propagation, Theorem \ref{thm:uniq}.

\appendix

\section{Technical lemmas}

\noindent In this section we collect some technical lemmas and elementary
estimates.  We start with two variants of the classical Hardy inequality
in one dimension.

\begin{lemma}
  \label{lem:Hardy1}
  Let $s\in \R\setminus \{-\frac12\}$. Then we have the estimate
  \[ \||\cdot|^s f\|_{L^2(\R)}\lesssim
  \||\cdot|^{s+1}f'\|_{L^2(\R)} \] for all $f\in \mc S(\R)$ satisfying
  \[ \lim_{x\to 0}\left [|x|^{s+\frac12}|f(x)|\right ]=0. \]
\end{lemma}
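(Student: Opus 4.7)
The plan is to prove the bound by integration by parts on each of the half-lines $(0,\infty)$ and $(-\infty,0)$ separately, using that $s\neq -\tfrac12$ so that $|x|^{2s}$ is a derivative of $\tfrac{|x|^{2s+1}}{2s+1}$ away from $0$. The Schwartz decay handles the behavior at $\pm\infty$, and the stated vanishing hypothesis kills the boundary contribution at $0$.

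More precisely, on $(0,\infty)$ I would first work with truncated integrals $\int_\epsilon^R$ to avoid division issues (since we do not know a priori that $\||\cdot|^s f\|_{L^2}$ is finite; this is what we want to prove). Integrating by parts,
\[
\int_\epsilon^R x^{2s}|f(x)|^2\,dx = \left[\tfrac{x^{2s+1}}{2s+1}|f(x)|^2\right]_\epsilon^R - \tfrac{2}{2s+1}\Re\int_\epsilon^R x^{2s+1}\overline{f(x)}f'(x)\,dx.
\]
The boundary term at $R$ vanishes as $R\to\infty$ by Schwartz decay, while the boundary term at $\epsilon$ equals $(\epsilon^{s+1/2}|f(\epsilon)|)^2/(2s+1)\to 0$ as $\epsilon\to 0+$ by the stated hypothesis. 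An application of Cauchy--Schwarz to the remaining integral yields
\[
\int_0^R x^{2s}|f(x)|^2\,dx \leq \tfrac{2}{|2s+1|}\left(\int_0^R x^{2s}|f(x)|^2 dx\right)^{1/2}\left(\int_0^\infty x^{2s+2}|f'(x)|^2 dx\right)^{1/2},
\]
and since the right-hand factor is finite (Schwartz), dividing and then letting $R\to\infty$ gives $\|x^s f\|_{L^2(0,\infty)} \leq \tfrac{2}{|2s+1|}\|x^{s+1}f'\|_{L^2(0,\infty)}$. The identical argument on $(-\infty,0)$ (substituting $x\mapsto -x$) produces the matching bound there, and summing the two halves yields the claim with implicit constant $\tfrac{2}{|2s+1|}$.

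The main technical point---the only place where anything subtle happens---is the justification that the boundary term at $0$ vanishes and that the truncation limit is legitimate; this is why the hypothesis is phrased as a condition on $|x|^{s+1/2}|f(x)|$ rather than merely on $f(0)$. Everything else is routine; there is no obstacle beyond bookkeeping the two half-line contributions and carefully passing to the limits $\epsilon\to 0+$, $R\to\infty$ before inverting the Cauchy--Schwarz estimate.
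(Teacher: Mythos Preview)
Your approach is correct and is essentially the paper's proof: integrate by parts away from the origin, use the hypothesis to kill the boundary contribution at $0$, apply Cauchy--Schwarz, divide, and pass to the limit. One bookkeeping slip to fix: in your displayed inequality you have already sent $\epsilon\to 0$ while keeping $R$ finite and yet dropped the $R$-boundary term, and dividing by $\bigl(\int_0^R x^{2s}|f|^2\,dx\bigr)^{1/2}$ is not a priori justified since that integral may diverge at $0$; the clean order (and what the paper does) is to keep the truncation near the origin, divide on $(\epsilon,\infty)$ where the integral is finite by Schwartz decay at infinity, and only then let $\epsilon\to 0$, using monotonicity of the truncated norm to control the boundary quotient.
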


\begin{proof}
  For $n\in \N$ we have
  \begin{align*}
    \int_{\R\setminus \B_{1/n}}|x|^{2s}|f(x)|^2 dx
    &=\int_{-\infty}^{-1/n} (-x)^{2s}f(x)^2dx+\int_{1/n}^\infty  
      x^{2s}f(x)^2 dx \\
    &=-\tfrac{1}{2s+1}(-x)^{2s+1}f(x)^2\Big|_{-\infty}^{-1/n}
      +\tfrac{1}{2s+1}x^{2s+1}f(x)^2 \Big |_{1/n}^\infty \\
    &\quad +\frac{2}{2s+1}\left [\int_{-\infty}^{-1/n}(-x)^{2s+1}f'(x)f(x)dx
      -\int_{1/n}^{\infty}x^{2s+1}f'(x)f(x)dx \right ]
  \end{align*}
  and Cauchy-Schwarz implies
  \[ \||\cdot|^s f\|_{L^2(\R\setminus \B_{1/n})}^2 \lesssim
B_n(f)
+\||\cdot|^s f\|_{L^2(\R\setminus \B_{1/n})}\|
|\cdot|^{s+1}f'\|_{L^2(\R\setminus
  \B_{1/n})}, \]
where
\[ B_n(f):=    (\tfrac{1}{n})^{2s+1}f(-\tfrac{1}{n})^2
  +(\tfrac{1}{n})^{2s+1}f(\tfrac{1}{n})^2. \]
By assumption, $B_n(f)\to 0$ as $n\to\infty$ and the 
  $L^2$ norms in the above inequality are monotonically increasing
  functions of $n$.
  Consequently, we infer
  \begin{align*} \||\cdot|^s f\|_{L^2(\R\setminus \B_{1/n})} &\lesssim
    \frac{B_n(f)}{\||\cdot|^s f\|_{L^2(\R\setminus \B_{1/n})}}
    +\||\cdot|^{s+1}f'\|_{L^2(\R\setminus\B_{1/n})} \\
    &\lesssim
    \frac{B_n(f)}{\||\cdot|^s f\|_{L^2(\R\setminus \B)}}
    +\||\cdot|^{s+1}f'\|_{L^2(\R\setminus\B_{1/n})}.
    \end{align*}
Both sides of this inequality have limits in
  the extended reals $\R\cup \{\infty\}$ and the claim follows by
  taking $n\to\infty$.
\end{proof}

\begin{lemma}
\label{lem:HardyR}
  Let $s<-\frac12$ and $R>0$. Then we have the estimate
\[ \||\cdot|^s f\|_{L^2(\B_R)}\lesssim
\||\cdot|^{s+1}f'\|_{L^2(\B_R)} \]
for all $f\in C^1(\overline{\B_R})$ satisfying
\[ \lim_{x\to 0}\left [|x|^{s+\frac12}|f(x)|\right ]=0. \]
\end{lemma}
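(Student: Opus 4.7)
The strategy is to adapt the argument of Lemma \ref{lem:Hardy1} to the bounded domain $\B_R$, the only new ingredient being the control of the boundary terms at $\pm R$ which arise from integration by parts. Crucially, the hypothesis $s<-\tfrac12$ (so that $2s+1<0$) gives these boundary contributions the correct sign to be discarded in an upper bound.

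For $n\in\N$ large enough that $1/n<R$, I would split
\[
\int_{\B_R\setminus \B_{1/n}}|x|^{2s}f(x)^2\,dx
=\int_{-R}^{-1/n}(-x)^{2s}f(x)^2\,dx+\int_{1/n}^{R}x^{2s}f(x)^2\,dx
\]
and integrate by parts in each piece against the primitive $\frac{1}{2s+1}x^{2s+1}$ (which is defined since $s\neq -\tfrac12$), exactly as in the proof of Lemma \ref{lem:Hardy1}. This yields
\begin{align*}
\int_{\B_R\setminus \B_{1/n}}|x|^{2s}f(x)^2\,dx
&=\tfrac{1}{2s+1}\left[R^{2s+1}f(R)^2+R^{2s+1}f(-R)^2\right] \\
&\quad -\tfrac{1}{2s+1}\left[(1/n)^{2s+1}f(1/n)^2+(1/n)^{2s+1}f(-1/n)^2\right] \\
&\quad +\tfrac{2}{2s+1}\left[\int_{-R}^{-1/n}(-x)^{2s+1}f'(x)f(x)\,dx-\int_{1/n}^{R}x^{2s+1}f'(x)f(x)\,dx\right].
\end{align*}
Since $2s+1<0$, the coefficient $\frac{1}{2s+1}$ is negative, so the outer boundary terms $\frac{1}{2s+1}R^{2s+1}f(\pm R)^2$ are non-positive and may be dropped. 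The inner boundary terms take the form $-\frac{1}{2s+1}((1/n)^{s+1/2}|f(\pm 1/n)|)^2$, and these tend to $0$ as $n\to\infty$ by the stated limiting hypothesis. Applying Cauchy-Schwarz to the remaining integrals exactly as in the proof of Lemma \ref{lem:Hardy1} gives
\[
\||\cdot|^{s}f\|_{L^2(\B_R\setminus \B_{1/n})}^{2}\lesssim B_n(f)+\||\cdot|^{s}f\|_{L^2(\B_R\setminus \B_{1/n})}\,\||\cdot|^{s+1}f'\|_{L^2(\B_R\setminus \B_{1/n})},
\]
where $B_n(f)\to 0$.

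Now I would argue, just as in Lemma \ref{lem:Hardy1}, by replacing $\||\cdot|^{s}f\|_{L^2(\B_R\setminus \B_{1/n})}$ in the denominator of $B_n(f)/\||\cdot|^{s}f\|_{L^2(\B_R\setminus \B_{1/n})}$ by the larger constant $\||\cdot|^{s}f\|_{L^2(\B_R\setminus \B_{1/2})}$ (for $n\geq 2$, say) to obtain a bound that survives dividing through, and then sending $n\to\infty$ via monotone convergence on both sides. The only subtle point I expect is the same one as in the proof of Lemma \ref{lem:Hardy1}: one must ensure the $L^2$-norm of $|\cdot|^s f$ on $\B_R$ is finite so that the division step is legitimate — this is automatic if the right-hand side is finite, since otherwise the conclusion is trivial, and the limiting procedure can be run with a fixed nonzero denominator. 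Handling the boundary at $\pm R$ correctly and tracking the sign of $2s+1$ is the main (and essentially only) obstacle; everything else parallels Lemma \ref{lem:Hardy1} line by line.
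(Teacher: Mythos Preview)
Your proposal is correct and is exactly the approach the paper has in mind: the paper's own proof reads in its entirety ``Integration by parts, cf.~the proof of Lemma \ref{lem:Hardy1},'' and you have spelled out precisely that adaptation, including the key observation that $2s+1<0$ makes the outer boundary terms at $\pm R$ non-positive so they can be discarded. The only cosmetic point is that your fixed denominator $\||\cdot|^s f\|_{L^2(\B_R\setminus \B_{1/2})}$ tacitly assumes $R>\tfrac12$; replacing $\B_{1/2}$ by, say, $\B_{R/2}$ handles general $R>0$.
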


\begin{proof}
  Integration by parts, cf.~ the proof of Lemma \ref{lem:Hardy1}.
\end{proof}

Next, we derive a convenient expression for the $H^k(\R^5)$ norm of a
radial function $f\in \mc S(\R^5)$ in terms of a weighted $H^k(\R)$
norm of its representative $\widehat f(|x|)=f(x)$.

\begin{lemma}
  \label{lem:Hk5}
  Let $k\in \N_0$. Then we have
  \[ \|f\|_{H^k(\R^5)}\simeq \||\cdot|^2 \widehat f\|_{H^k(\R)} \]
  for all radial $f\in \mc S(\R^5)$, where $f(x)=\widehat f(|x|)$.
\end{lemma}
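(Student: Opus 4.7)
Set $g(r) := r^2\widehat f(r)$. Since $f\in\mc S(\R^5)$ is radial, $\widehat f$ extends to a Schwartz, \emph{even} function on $\R$; consequently $g$ is Schwartz and even, and vanishes to order two at the origin, in the sense that $g(0) = g'(0) = 0$ and $g^{(2m+1)}(0) = 0$ for every $m \in \N_0$. The $k=0$ case is then immediate: passing to polar coordinates gives
\[
\|f\|_{L^2(\R^5)}^2 = |S^4|\int_0^\infty |\widehat f(r)|^2\,r^4\,dr = \tfrac12\,|S^4|\,\|g\|_{L^2(\R)}^2,
\]
where the last equality uses that $|g|^2$ is even.

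The general case rests on three explicit identities. First, the Leibniz rule applied to $g=r^2\widehat f$ yields
\[
g^{(j)}(r) = r^2\widehat f^{(j)}(r) + 2j r\,\widehat f^{(j-1)}(r) + j(j-1)\widehat f^{(j-2)}(r).
\]
Second, inverting the relation on $r>0$ via $\widehat f = r^{-2} g$ and repeatedly differentiating yields
\[
\widehat f^{(l)}(r) = \sum_{i=0}^{l} c_{l,i}\, r^{-2-i}\, g^{(l-i)}(r), \qquad r>0.
\]
Third, for any smooth radial $F$ on $\R^5$, a standard induction on $|\alpha|$ produces
\[
\partial^\alpha F(x) = \sum_{l=1}^{|\alpha|} p^\alpha_l\!\bigl(x/|x|\bigr)\,|x|^{l-|\alpha|}\widehat F^{(l)}(|x|),
\]
with each $p^\alpha_l$ a smooth, bounded function on the unit sphere. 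Integrating the squares of these expressions shows that both sides of the claimed equivalence are comparable to explicit sums of weighted one-dimensional integrals of the form $\int_0^\infty |\widehat f^{(l)}(r)|^2 r^{2s}\,dr$.

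With these identities in place, I would prove the two inequalities as follows. For the direction $\|g\|_{H^k(\R)}\lesssim \|f\|_{H^k(\R^5)}$, I plug the Leibniz formula into $\|g^{(j)}\|_{L^2(\R)}^2 = 2\int_0^\infty |g^{(j)}|^2 dr$; this produces only integrals of $|\widehat f^{(l)}|^2$ with \emph{non-negative} weights $r^{2s}$, $s\in\{0,1,2\}$, each of which is controlled by the radial representation of $\|f\|_{H^k(\R^5)}^2$ (in fact by the principal term $\int_0^\infty |\widehat f^{(l)}|^2 r^4\,dr$ after a trivial weighted estimate). The opposite direction $\|f\|_{H^k(\R^5)}\lesssim \|g\|_{H^k(\R)}$ is the substantive part: plugging the inverse formula into the radial representation of $\|f\|_{H^k(\R^5)}^2$ produces integrals $\int_0^\infty |g^{(m)}|^2\,r^{2t}\,dr$ with \emph{negative} powers of $r$ whenever $m<k$. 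These are the dangerous terms, and I would remove them by iterating Hardy's inequality (Lemma \ref{lem:Hardy1}, or its bounded-interval sibling Lemma \ref{lem:HardyR} after extending to all of $\R$), each application trading one unit of weight for one derivative of $g$, until the weight becomes non-negative. The hypothesis $s\neq -\tfrac12$ is automatic (only integer $s$ arise), and the boundary condition $|x|^{s+1/2}|g^{(m)}(x)|\to 0$ at the origin is automatic from the order-two vanishing of $g$ combined with $g^{(2j+1)}(0)=0$.

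The main obstacle is purely combinatorial: a careful indexing argument must guarantee that the chain of Hardy applications terminates precisely when the weight reaches $r^4$ and the derivative order reaches $k$, matching a term of $\|g\|_{H^k(\R)}^2$. Apart from this bookkeeping, the proof is a clean two-way translation between weighted radial integrals mediated by the substitution $g = r^2\widehat f$ and Hardy's inequality.
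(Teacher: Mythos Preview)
Your real-space plan breaks down in the direction $\|f\|_{H^k(\R^5)}\lesssim\|g\|_{H^k(\R)}$ once $k\geq 3$. The problem is exactly the step where you ``plug the inverse formula'' $\widehat f^{(l)}=\sum_{i=0}^l c_{l,i}r^{-2-i}g^{(l-i)}$ into the radial representation and bound term by term: this discards the cancellations that make $\widehat f^{(l)}$ smooth at the origin, and the individual pieces you produce are infinite. Concretely, the $i=l$ term at top order $|\alpha|=k$ contributes (a multiple of) $\int_0^\infty r^{-2k}|g(r)|^2\,dr$; since $g(r)=r^2\widehat f(r)$ vanishes only to order two at $r=0$ (generically $g''(0)=2\widehat f(0)\neq 0$), the integrand behaves like $r^{4-2k}$ near the origin and diverges for $k\geq 3$. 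Your claim that the Hardy boundary condition ``is automatic from the order-two vanishing of $g$ combined with $g^{(2j+1)}(0)=0$'' is therefore false: for $m=0$ and $s=-k$ you would need $|x|^{-k+1/2}|g(x)|\to 0$, i.e.\ $|x|^{5/2-k}\to 0$, which fails for $k\geq 3$; for $m=2$ the condition fails for the same reason since $g''(0)\neq 0$. No amount of iterating Hardy can repair this, because the very first term you try to bound is already $+\infty$.

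The paper circumvents this obstruction by passing to the Fourier side. Using the explicit form of $J_{3/2}$ one gets
\[
\mc F_5 f(\xi)=4i\pi^2|\xi|^{-3}\mc F_1\big((\cdot)\widehat f\,\big)(|\xi|)-4\pi^2|\xi|^{-2}\mc F_1\big(|\cdot|^2\widehat f\,\big)(|\xi|),
\]
so $\|f\|_{\dot H^k(\R^5)}$ is controlled by $\||\cdot|^{k-1}\mc F_1((\cdot)\widehat f)\|_{L^2(\R)}+\||\cdot|^k\mc F_1 g\|_{L^2(\R)}$. The crucial point is that on the Fourier side a \emph{single} application of Hardy disposes of the first term, because $\partial_\xi\mc F_1((\cdot)\widehat f)=-i\,\mc F_1(|\cdot|^2\widehat f)=-i\,\mc F_1 g$; the boundary condition holds since $(\cdot)\widehat f$ is odd and hence $\mc F_1((\cdot)\widehat f)$ vanishes at $0$. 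This is the missing idea: rather than inverting $g=r^2\widehat f$ in physical space (which costs negative powers of $r$ you cannot afford), one exploits that multiplication by $r$ becomes differentiation in Fourier, turning the comparison into a one-step Hardy estimate with no divergence issues. For the reverse inequality the paper uses $\widehat f^{(j)}(r)=\partial_1^j f(re_1)$ and the $5$-dimensional Hardy inequality $\||\cdot|^{-1}h\|_{L^2(\R^5)}\lesssim\|h\|_{\dot H^1(\R^5)}$, which is the precise content of your ``trivial weighted estimate''.
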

 
\begin{proof}
  For $k=0$ the statement is trivial. Thus, assume $k\in \N$.  Let
  \[ \mc F_d f(\xi):=\int_{\R^d}e^{-i \xi x}f(x)dx \]
  denote the Fourier transform in $d$ dimensions.  Since $f$ is
  radial, we have
  \[ \mc F_5 f(\xi)=(2\pi)^\frac52 |\xi|^{-\frac32}\int_0^\infty \widehat
  f(r)J_{3/2}(r|\xi|)r^\frac52 dr, \]
  see e.g.~\cite{Gra14a}, p.~577.  The Bessel function $J_{3/2}$ can
  be given in terms of elementary functions and we have
  \[ J_{3/2}(z)=\sqrt 2 \pi^{-\frac12}\left (z^{-\frac32}\sin
    z-z^{-\frac12}\cos z\right ). \] Consequently,
  \begin{align*}
    \mc F_5 f(\xi)&=8\pi^2 |\xi|^{-3}\int_0^\infty \widehat
                    f(r)\sin(r|\xi|)r dr
                    -8\pi^2 |\xi|^{-2}\int_0^\infty \widehat f(r)\cos(r|\xi|)r^2 dr \\
                  &=4i\pi^2|\xi|^{-3}\int_\R e^{-i|\xi| r}r\widehat
                    f(r)dr
                    -4\pi^2|\xi|^{-2}\int_\R e^{-i|\xi| r}r^2 \widehat
                    f(r)dr \\
                  &=4i\pi^2 |\xi|^{-3}\mc F_1((\cdot)\widehat f)(|\xi|)
                    -4\pi^2 |\xi|^{-2}\mc F_1(|\cdot|^2 \widehat f)(|\xi|)
  \end{align*}
  since $\widehat f$ is even. Lemma \ref{lem:Hardy1} now yields
  \begin{equation}
    \begin{split}
      \label{eq:fest1}
      \|f\|_{\dot H^k(\R^5)}&\simeq \||\cdot|^k \mc F_5
      f\|_{L^2(\R^5)} \lesssim \||\cdot|^{k-1}\mc F_1((\cdot)\widehat
      f)\|_{L^2(\R)}
      +\||\cdot|^k \mc F_1(|\cdot|^2\widehat f)\|_{L^2(\R)} \\
      &\lesssim \||\cdot|^k \mc F_1((\cdot)\widehat f)'\|_{L^2(\R)}
      +\||\cdot|^k \mc F_1(|\cdot|^2\widehat f)\|_{L^2(\R)} 
      \simeq \||\cdot|^k \mc F_1(|\cdot|^2\widehat f)\|_{L^2(\R)} \\
      &\simeq \||\cdot|^2
      \widehat f\|_{\dot H^k(\R)}
    \end{split}
  \end{equation}
  and thus,
  \[ \|f\|_{H^k(\R^5)}\simeq \||\cdot|^2\widehat
  f\|_{L^2(\R)}+\||\cdot|^2
  \widehat f\|_{\dot H^k(\R)}\lesssim \||\cdot|^2 \widehat
  f\|_{H^k(\R)}. \]

  Conversely, we have $\widehat f(r)=f(re_1)$ and thus,
  $\widehat f^{(j)}(r)=\partial_1^j f(re_1)$ for all $j\in
  \N_0$. Hardy's inequality yields
  \begin{align*}
    \|\widehat f\|_{L^2(\R)}&\simeq \||\cdot|^{-2}f\|_{L^2(\R^5)}\lesssim \|f\|_{\dot H^2(\R^5)} \\
    \||\cdot|\widehat f'\|_{L^2(\R)} &\simeq \||\cdot|^{-1}\partial_1 f\|_{L^2(\R^5)}
                                       \lesssim \|f\|_{\dot H^2(\R^5)} \\
    \||\cdot|^2 \widehat f''\|_{L^2(\R)}&\simeq \|\partial_1^2 f\|_{L^2(\R^5)}
                                          \lesssim \|f\|_{\dot H^2(\R^5)}.
  \end{align*}
  These estimates imply
  $\||\cdot|^2 \widehat f\|_{H^k(\R)}\lesssim \|f\|_{H^k(\R^5)}$,
  which finishes the proof.
\end{proof}

By a standard extension argument, the same bounds hold on balls.

\begin{lemma}
\label{lem:ext_apx}
Fix $R>0$ and $k\in \N_0$. Then there exists an extension $\mc E:
C^k(\overline{\B_R})\to C^k(\R)$ such that $\mc E
f|_{\B_R}=f$, $\supp(\mc E f)\subset
\B_{2R}$, and
\[ \|\mc E f\|_{H^k(\R\setminus \B_R)}\lesssim \|f\|_{H^k(\B_R\setminus
  \B_{R/2})} \]
for all $f\in C^k(\overline{\B_R})$.
\end{lemma}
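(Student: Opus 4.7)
\medskip

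\noindent\textbf{Proof proposal for Lemma \ref{lem:ext_apx}.}
The plan is to use a higher--order reflection extension of Stein type, localized near the two endpoints $\pm R$ of the interval $\mathbb{B}_R=(-R,R)$, and then multiplied by a cutoff to secure the support condition. The crucial twist, which distinguishes this from the textbook version, is to make the reflection \emph{contractive} enough that the reflected region lands in the outer annulus $\mathbb{B}_R\setminus \mathbb{B}_{R/2}$, which is what produces the claimed localized bound.

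First, I would fix $k+1$ distinct numbers $\lambda_1,\dots,\lambda_{k+1}\in (0,\tfrac12)$ and solve the Vandermonde system
\[
\sum_{j=1}^{k+1} c_j (-\lambda_j)^\ell = 1,\qquad \ell=0,1,\dots,k,
\]
which has a unique solution $(c_1,\dots,c_{k+1})$ since the $\lambda_j$ are distinct. I then define reflection operators
\begin{align*}
\mathcal{R}_+ f(r) &:= \sum_{j=1}^{k+1} c_j f\bigl(R-\lambda_j(r-R)\bigr),\qquad r\in [R,\infty), \\
\mathcal{R}_- f(r) &:= \sum_{j=1}^{k+1} c_j f\bigl(-R-\lambda_j(r+R)\bigr),\qquad r\in (-\infty,-R].
\end{align*}
The matching conditions on $c_j$ are precisely what is needed so that $\partial_r^\ell \mathcal{R}_\pm f(\pm R)= f^{(\ell)}(\pm R)$ for $\ell=0,\dots,k$, whence the gluing
\[
g(r):=\begin{cases} \mathcal{R}_- f(r), & r\le -R, \\ f(r), & r\in [-R,R], \\ \mathcal{R}_+ f(r), & r\ge R, \end{cases}
\]
lies in $C^k(\mathbb{R})$. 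I would then fix a cutoff $\chi\in C_c^\infty(\mathbb{R})$ with $\chi\equiv 1$ on $[-R,R]$ and $\supp\chi\subset [-2R,2R]$, and define $\mathcal{E} f:=\chi\cdot g$. The properties $\mathcal{E} f|_{\mathbb{B}_R}=f$ and $\supp(\mathcal{E} f)\subset \mathbb{B}_{2R}$ are then immediate, and $\mathcal{E} f\in C^k(\mathbb{R})$ follows from the reflection matching.

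For the norm bound, the essential point is that on $(R,2R)$, the function $\mathcal{E} f$ and its derivatives up to order $k$ are finite linear combinations (with coefficients depending on $\chi$ and its derivatives, and on the $c_j,\lambda_j$) of terms of the form $f^{(\ell)}(R-\lambda_j(r-R))$ with $\ell\le k$. Since $\lambda_j<\tfrac12$, as $r$ ranges over $(R,2R)$ the argument $R-\lambda_j(r-R)$ stays inside $(R-\lambda_j R, R)\subset (R/2,R)$; a change of variables (with Jacobian $\lambda_j^{-1}$) therefore yields
\[
\|\mathcal{E} f\|_{H^k((R,2R))}\lesssim \|f\|_{H^k((R/2,R))},
\]
and an identical argument on $(-2R,-R)$ produces the symmetric estimate. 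Adding the two gives the claimed bound on $\|\mathcal{E} f\|_{H^k(\mathbb{R}\setminus \mathbb{B}_R)}$ in terms of $\|f\|_{H^k(\mathbb{B}_R\setminus \mathbb{B}_{R/2})}$.

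This construction has no real obstacle: the Vandermonde system is always solvable, and the contractivity condition $\lambda_j<\tfrac12$ is a free parameter. The only item deserving explicit verification is that the reflected arguments indeed remain inside the annulus $\mathbb{B}_R\setminus \mathbb{B}_{R/2}$ uniformly for $r$ in the support of $\chi$ outside $\mathbb{B}_R$; this is what forces the choice $\lambda_j<\tfrac12$ (if the cutoff $\chi$ were supported in $\mathbb{B}_{(1+\alpha)R}$ one could accordingly enlarge the admissible range of $\lambda_j$, but the symmetric choice $\supp\chi\subset \mathbb{B}_{2R}$ suffices here).
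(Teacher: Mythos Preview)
Your argument is correct, but it takes a genuinely different route from the paper's. The paper uses a \emph{single} isometric reflection about each endpoint (e.g.\ $x\mapsto 2R-x$ for $x>R$), localized by a cutoff supported in $\overline{\B_{3R/2}}$, so that the reflected arguments land in $(R/2,R)$. The catch is that a single reflection does not produce a $C^k$ function at the matching points for $k\ge 1$; the paper repairs this by adding explicit polynomial corrections involving the boundary traces $f(\pm R), f'(\pm R),\dots$, and then bounds these traces by the one-dimensional Sobolev embedding $|f^{(j)}(\pm R)|\lesssim \|f\|_{H^k(\B_R\setminus\B_{R/2})}$. The details for general $k$ are only sketched and deferred to a reference.

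Your Stein-type construction instead uses $k+1$ \emph{contracted} reflections with Vandermonde-matched coefficients, so that $C^k$ matching holds automatically and no polynomial corrections (hence no appeal to Sobolev embedding) are needed. The contractivity condition $\lambda_j<\tfrac12$ is exactly what forces the reflected arguments into the annulus $(R/2,R)$ and yields the localized estimate by a direct change of variables. This is cleaner and gives a uniform construction for all $k$ in one stroke, at the modest cost of introducing the Vandermonde system. Both approaches are standard; yours is arguably the more transparent one here.
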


\begin{proof}
We start with the simplest case $k=0$. Let $\chi: \R\to [0,1]$ be a smooth cut-off
satisfying
\[ \chi(x)=\begin{cases}1 & |x|\leq 1 \\
    0 & |x|\geq \frac32
  \end{cases}. \]
Then we define
\[ \mc E f(x):=\chi(\tfrac{x}{R})\begin{cases}
    f(-2R-x) & x<-R \\
    f(x) & x\in [-R,R] \\
    f(2R-x) & x>R 
  \end{cases}.
\]
Evidently, $\mc Ef|_{\B_R}=f$, $\supp(\mc E f)\subset \B_{\frac32
  R}\subset \B_{2R}$, and $\mc E f\in C(\R\setminus\{-R,R\})$. Furthermore,
\begin{align*}
  \lim_{x\to -R-}\mc E f(x)
  &=\chi(-1)f(-R)=f(-R)=\lim_{x\to -R+}f(x)=\lim_{x\to -R+}\mc E f(x) \\
  \lim_{x\to R+}\mc E f(x)
  &=\chi(1)f(R)=f(R)=\lim_{x\to R-}f(x)=\lim_{x\to R-}\mc E f(x)
\end{align*}
and thus, $\mc E f\in C(\R)$.
Finally,
\begin{align*}
  \|\mc E f\|_{L^2(\R\setminus \B_R)}^2
  &=\int_{-\infty}^{-R}|\mc E f(x)|^2 dx+\int_{R}^\infty |\mc E
    f(x)|^2 dx \\
  &=\int_{-\frac32 R}^{-R}\chi(\tfrac{x}{R})^2|f(-2R-x)|^2 dx
    +\int_{R}^{\frac 32 R}\chi(\tfrac{x}{R})^2 |f(2R-x)|^2 dx \\
  &\leq \int_{-\frac32 R}^{-R}|f(-2R-x)|^2 dx
    +\int_{R}^{\frac 32 R}|f(2R-x)|^2 dx \\
  &=\int_{-R}^{-\frac12 R}|f(x)|^2 dx +\int_{\frac12 R}^R |f(x)|^2 dx
  \\
  &=\|f\|_{L^2(\B_R\setminus\B_{R/2})}^2.
\end{align*}
Note, however, that in general $\mc E f\notin C^1(\R)$ since
\[ \lim_{x\to R+}(\mc E f)'(x)=-f'(R). \]
Thus, for $k\geq 1$ the above construction needs to be modified
slightly. The idea is to add suitable polynomials to compensate for
the lack of differentiability at the points $x=-R$ and $x=R$.
For instance, in the case $k=1$ we set
\[ \mc E f(x):=\chi(\tfrac{x}{R})\begin{cases}
    -f(-2R-x)+2f(-R) & x<-R \\
    f(x) & x\in [-R,R] \\
    -f(2R-x)+2f(R) & x>R 
  \end{cases}.
\]
Then $\mc E f\in C^1(\R)$. Furthermore, the one-dimensional Sobolev embedding
yields the bound $|f(-R)|+|f(R)|\lesssim \|f\|_{H^1(\B_R\setminus
  \B_{R/2})}$ and the claimed estimate follows. Similar constructions
exist for general $k\in \N$, cf.~\cite{DonSch16}, Lemma B.2. We omit the details.
\end{proof}

\begin{corollary}
\label{cor:Hk5}
  Fix $R>0$ and $k\in \N$. Then we have
  \[ \|f\|_{H^k(\B^5_R)}\simeq \||\cdot|^2 \widehat f\|_{H^k(\B_R)} \]
  for all radial $f\in C^\infty(\overline{\B^5_R})$, where
  $f(x)=\widehat f(|x|)$.
\end{corollary}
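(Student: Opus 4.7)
The corollary is the local counterpart of Lemma~\ref{lem:Hk5}, and the plan is to reduce each direction of the equivalence to the global one by a suitable Sobolev extension, using Lemma~\ref{lem:ext_apx} for the one-dimensional setting and Lemma~\ref{lem:extension} for the five-dimensional setting. Along the way we use the facts that the extension operator $\mc E$ of Lemma~\ref{lem:ext_apx} is easily seen to preserve evenness (by inspection of the reflection formula), and that Lemma~\ref{lem:Hk5} extends from $\mc S(\R^5)$ to $H^k_{\mathrm{rad}}(\R^5)$ by density.

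For the bound $\|f\|_{H^k(\B^5_R)}\lesssim \||\cdot|^2\widehat f\|_{H^k(\B_R)}$, I first apply Lemma~\ref{lem:ext_apx} to the even function $\widehat f\in C^\infty([-R,R])$ to obtain an even extension $g\in C^k(\R)$ supported in $\B_{2R}$. Then $F(x):=g(|x|)$ is a compactly supported radial function on $\R^5$ belonging to $H^k(\R^5)$, and $F|_{\B^5_R}=f$. By the density extension of Lemma~\ref{lem:Hk5},
\[
\|f\|_{H^k(\B^5_R)} \;\le\; \|F\|_{H^k(\R^5)} \;\simeq\; \||\cdot|^2 g\|_{H^k(\R)}.
\]
Since $g$ is $C^k$ across $\pm R$ I may split $\||\cdot|^2 g\|_{H^k(\R)}^2 = \||\cdot|^2 g\|_{H^k(\B_R)}^2 + \||\cdot|^2 g\|_{H^k(\R\setminus\B_R)}^2$. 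On $\B_R$ the integrand equals $|\cdot|^2\widehat f$; on $\R\setminus\B_R\cap\B_{2R}$ the weight $|\cdot|^2$ is bounded, so by the Leibniz rule
\[
\||\cdot|^2 g\|_{H^k(\R\setminus\B_R)} \;\lesssim_R\; \|g\|_{H^k(\R\setminus\B_R)} \;\lesssim\; \|\widehat f\|_{H^k(\B_R\setminus\B_{R/2})} \;\simeq_R\; \||\cdot|^2\widehat f\|_{H^k(\B_R\setminus\B_{R/2})},
\]
where the penultimate step is the extension estimate of Lemma~\ref{lem:ext_apx} and the last uses that $|\cdot|^2\simeq R^2$ on $\B_R\setminus\B_{R/2}$. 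Combining yields the claim.

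For the reverse bound $\||\cdot|^2\widehat f\|_{H^k(\B_R)}\lesssim \|f\|_{H^k(\B^5_R)}$, I extend $f$ to a function $\widetilde f\in H^k(\R^5)$ via Lemma~\ref{lem:extension} and then average the extension over $SO(5)$; averaging preserves the $H^k$-norm and, since $f$ is already radial on $\B^5_R$, the resulting $\widetilde f$ is radial and still satisfies $\widetilde f|_{\B^5_R}=f$ and $\|\widetilde f\|_{H^k(\R^5)}\lesssim \|f\|_{H^k(\B^5_R)}$. Applying Lemma~\ref{lem:Hk5} (in its $H^k$ form) to $\widetilde f$ gives $\||\cdot|^2\widetilde{\widehat f\,}\|_{H^k(\R)} \simeq \|\widetilde f\|_{H^k(\R^5)}$, and restriction to $\B_R$, where $\widetilde{\widehat f\,}=\widehat f$, finishes the argument.

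The main technical obstacle is the careful bookkeeping of the weights near the boundary: the key point is that one cannot simply multiply the extension estimate by $|\cdot|^2$ inside the norm, because the relation between $\|g\|_{H^k}$ and $\||\cdot|^2 g\|_{H^k}$ on a ball is delicate. What makes it work is (i) the compact support of the extension $g$, which lets one control $\||\cdot|^2 g\|$ by $\|g\|$ up to an $R$-dependent constant, and (ii) the strict positivity of $|\cdot|^2$ on the annulus $\B_R\setminus\B_{R/2}$ where the extension estimate lives. The remaining issue is purely formal: the density extension of Lemma~\ref{lem:Hk5} to $H^k_{\mathrm{rad}}(\R^5)$, which follows from mollification of the Schwartz representatives and preservation of radial symmetry under convolution with a radial kernel.
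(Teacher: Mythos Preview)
Your argument is correct and follows the same overall strategy as the paper: reduce the local equivalence to the global Lemma~\ref{lem:Hk5} via a Sobolev extension, and control the exterior contribution using the annulus estimate of Lemma~\ref{lem:ext_apx} together with the fact that $|\cdot|^2\simeq R^2$ on $\B_R\setminus\B_{R/2}$.

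The one genuine difference is in the reverse inequality. You extend $f$ in five dimensions via Lemma~\ref{lem:extension} and then average over $SO(5)$ to regain radiality. The paper instead reuses the \emph{same} one-dimensional extension $\mc E\widehat f$ for both directions: setting $\widetilde{\mc E}f(x):=(\mc E\widehat f)(|x|)$, one gets directly
\[
\||\cdot|^2\widehat f\|_{H^k(\B_R)}\lesssim \||\cdot|^2\mc E\widehat f\|_{H^k(\R)}\simeq \|\widetilde{\mc E}f\|_{H^k(\R^5)}
\lesssim \|f\|_{H^k(\B_R^5)}+\|\widetilde{\mc E}f\|_{H^k(\R^5\setminus\B_R^5)},
\]
and the last term is again bounded by $\|\widehat f\|_{H^k(\B_R\setminus\B_{R/2})}\lesssim\|f\|_{H^k(\B_R^5)}$ since on the annulus the $5$-dimensional and $1$-dimensional Sobolev norms of a radial function are trivially comparable. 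This avoids the $SO(5)$-averaging and the separate appeal to Lemma~\ref{lem:extension}; your route is equally valid but introduces an extra ingredient that the paper's symmetric use of $\mc E$ sidesteps.
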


\begin{proof}
Let $\widetilde{\mc E}f(x):=(\mc E\widehat f)(|x|)$, where $\mc E:
C^k(\overline{\B_R})\to C^k(\R)$ is
an extension as in Lemma \ref{lem:ext_apx}.
Then, by Lemma \ref{lem:Hk5}, 
\begin{align*}
\|f\|_{H^k(\B_R^5)}&\lesssim \|\widetilde{\mc E} f\|_{H^k(\R^5)}
\simeq \||\cdot|^2\mc E \widehat f\|_{H^k(\R)} 
\simeq \||\cdot|^2 \widehat f\|_{H^k(\B_R)}+\||\cdot|^2 \mc E\widehat
  f\|_{H^k(\R\setminus \B_R)} \\
&\lesssim \||\cdot|^2 \widehat f\|_{H^k(\B_R)}+\|\widehat
  f\|_{H^k(\B_R\setminus\B_{R/2})} \\
&\lesssim \||\cdot|^2 \widehat f\|_{H^k(\B_R)}.
\end{align*}
Conversely,
\begin{align*}
  \||\cdot|^2\widehat f\|_{H^k(\B_R)}
&\lesssim \||\cdot|^2\mc E\widehat f\|_{H^k(\R)}
\simeq \|\widetilde{\mc E}f\|_{H^k(\R^5)}
\simeq \|f\|_{H^k(\B_R^5)}+\|\widetilde{\mc
  E}f\|_{H^k(\R^5\setminus\B_R^5)} \\
&\lesssim \|f\|_{H^k(\B_R^5)}+\|\mc E \widehat f\|_{H^k(\R\setminus
  \B_R)}
\lesssim \|f\|_{H^k(\B_R^5)}+ \|\widehat
  f\|_{H^k(\B_R\setminus\B_{R/2})} \\
&\lesssim \|f\|_{H^k(\B_R^5)},
\end{align*}
again by Lemma \ref{lem:Hk5}.
\end{proof}

To conclude this section, we consider a class of integral operators
that will appear frequently in the proof of Proposition \ref{prop:D}.

\begin{lemma}
  \label{lem:intop}
Let $R>0$, $m,n\in \N_0$, and $n+1-m\geq 0$. Furthermore, let
$\varphi\in C^\infty(\overline{\B_R})$ and define $T:
C^\infty(\overline{\B_R})\to C^\infty(\overline{\B_R})$ by
\[ Tf(x):=\frac{1}{x^m}\int_0^x y^n \varphi(y)f(y)dy. \]
Then we have the bound
\[ \|Tf\|_{H^k(\B_R)}\leq C_k \|f\|_{H^k(\B_R)} \]
for all $f\in C^\infty(\overline{\B_R})$ and $k\in \N_0$. In
particular, $T$ extends to a bounded operator on $H^k(\B_R)$.
\end{lemma}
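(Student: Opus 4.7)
My plan is to pass from the singular integral expression for $Tf$ to a manifestly regular one by the substitution $y = tx$ with $t\in[0,1]$:
\[ Tf(x)=\frac{1}{x^m}\int_0^x y^n\varphi(y)f(y)\,dy = x^{n+1-m}\int_0^1 t^n\,\varphi(tx)f(tx)\,dt. \]
Set $p:=n+1-m\in\N_0$ (this is precisely the hypothesis) and
\[ g(x):=\int_0^1 t^n\,\varphi(tx)f(tx)\,dt, \]
so that $Tf(x)=x^p g(x)$. Since $x\mapsto x^p$ is smooth on $\overline{\B_R}$, multiplication by it is bounded on every $H^k(\B_R)$, and the problem is reduced to showing $\|g\|_{H^k(\B_R)}\lesssim \|f\|_{H^k(\B_R)}$.

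For $f\in C^\infty(\overline{\B_R})$, differentiation under the integral sign is legitimate and the chain rule gives, for $0\le j\le k$,
\[ g^{(j)}(x)=\sum_{i=0}^{j}\binom{j}{i}\int_0^1 t^{n+j}\,\varphi^{(i)}(tx)f^{(j-i)}(tx)\,dt. \]
By Minkowski's integral inequality followed by the substitution $y=tx$,
\[ \bigl\|\varphi^{(i)}(t\cdot)f^{(j-i)}(t\cdot)\bigr\|_{L^2(\B_R)}
\le \|\varphi^{(i)}\|_{L^\infty(\B_R)}\,t^{-1/2}\,\|f^{(j-i)}\|_{L^2(\B_{tR})}
\lesssim t^{-1/2}\,\|f\|_{H^k(\B_R)}, \]
and hence
\[ \|g^{(j)}\|_{L^2(\B_R)}\lesssim \|f\|_{H^k(\B_R)}\int_0^1 t^{n+j-1/2}\,dt\lesssim \|f\|_{H^k(\B_R)}, \]
the $t$-integral converging because $n+j\ge 0$. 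Summing over $j$ gives $\|g\|_{H^k(\B_R)}\lesssim \|f\|_{H^k(\B_R)}$, and therefore $\|Tf\|_{H^k(\B_R)}\lesssim \|f\|_{H^k(\B_R)}$. The estimate extends to arbitrary $f\in H^k(\B_R)$ by the density of $C^\infty(\overline{\B_R})$.

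The essential point, and what I regard as the only genuinely subtle step, is this change-of-variables trick that converts the potentially singular prefactor $x^{-m}$ (with $m$ allowed to be as large as $n+1$) into the \emph{smooth} prefactor $x^{n+1-m}$. A more direct approach via Lemma \ref{lem:HardyR} would handle the subcritical range $m\le n$ cleanly but degenerate precisely in the extremal case $m=n+1$, whereas the identity $Tf=x^p g$ treats all admissible pairs $(m,n)$ uniformly and trivially decouples the regularity of $Tf$ from the size of $m$.
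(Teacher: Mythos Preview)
Your proof is correct and follows essentially the same approach as the paper: both begin with the substitution $y=tx$ to rewrite $Tf(x)=x^{n+1-m}\int_0^1 t^n\varphi(tx)f(tx)\,dt$, and both then differentiate under the integral. The only minor difference is in the final $L^2$ estimate: you use Minkowski's integral inequality together with the scaling $\|h(t\cdot)\|_{L^2(\B_R)}=t^{-1/2}\|h\|_{L^2(\B_{tR})}$, whereas the paper bounds $|(Tf)^{(j)}(x)|\lesssim\sum_{\ell\le j}\frac{1}{x}\int_0^x|f^{(\ell)}(y)|\,dy$ pointwise and then invokes the Hardy-type Lemma~\ref{lem:HardyR}. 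Your closing remark is therefore slightly misleading: the paper \emph{does} use Lemma~\ref{lem:HardyR}, but only \emph{after} the same change of variables you perform, so there is no degeneration at $m=n+1$.
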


\begin{proof}
  For $x\in \overline{\B_R}\setminus\{0\}$ we have
\[ Tf(x)=\frac{1}{x^m}\int_0^x y^n \varphi(y)f(y)dy
=x^{n+1-m}\int_0^1 t^n \varphi(tx)f(tx)dt \]
and this shows that $T$ maps $C^\infty(\overline{\B_R})$ to itself.
By the Leibniz rule we infer
\begin{align*} |(Tf)^{(j)}(x)|
&\leq C_k \sum_{\ell=0}^j \int_0^1 |f^{(\ell)}(tx)|dt 
\leq C_k \sum_{\ell=0}^j \frac{1}{x}\int_0^x |f^{(\ell)}(y)|dy
\end{align*}
for all $j=0,1,2,\dots,k$ and $x\in \B_R\setminus \{0\}$.
Consequently, Lemma \ref{lem:HardyR} yields the bound
\[ \|(Tf)^{(j)}\|_{L^2(\B_R)}\leq C_k \sum_{\ell=0}^j
\|f^{(\ell)}\|_{L^2(\B_R)}\leq C_k \|f\|_{H^k(\B_R)}
 \]
for all $j=0,1,2,\dots,k$.
\end{proof}

\section{Proof of Proposition \ref{prop:D}}
\label{sec:proofD}
\noindent Recall that $\mb D_5 \mb f=\widehat{\mb D}_5 \mb E_1 \mb f$, where $\mb
E_1\mb f(\eta)=\mb f(\eta e_1)$ and
\[ \widehat{\mb D}_5 \begin{pmatrix}
\widehat f_1 \\ \widehat f_2 \end{pmatrix}
=\begin{pmatrix}
a_{11} \widehat f_1' + a_{10}\widehat f_1+a_{20}\widehat f_2 \\
b_{12}\widehat f_1''
+b_{11}\widehat f_1'
+b_{10}\widehat f_1
+b_{21}\widehat f_2'
+b_{20}\widehat f_2
\end{pmatrix}. \]
In other words,
\begin{align*} 
&\mb D_5 \begin{pmatrix}
f_1 \\ f_2 \end{pmatrix}(\eta) \\
&=\begin{pmatrix}
a_{11}(\eta) \partial_1 f_1(\eta e_1) + a_{10}(\eta)f_1(\eta
e_1)+a_{20}(\eta) f_2(\eta e_1) \\
b_{12}(\eta) \partial_1^2 f_1(\eta e_1)
+b_{11}(\eta)\partial_1 f_1(\eta e_1)
+b_{10}(\eta) f_1(\eta e_1)
+b_{21}(\eta) \partial_1 f_2(\eta e_1)
+b_{20}(\eta) f_2(\eta e_1)
\end{pmatrix}. 
\end{align*}
The coefficients are of the form
\begin{align*}
  a_{11}(\eta)&=\eta^2 \varphi_\infty(\eta) 
& a_{10}(\eta)&=3\eta 
& a_{20}(\eta)&=\eta^3\varphi_\infty(\eta) \\
b_{12}(\eta)&=\eta^3\varphi_\infty(\eta) & 
b_{11}(\eta)&=\eta^2\varphi_\infty(\eta) &
b_{10}(\eta)&=-3\eta  \\
b_{21}(\eta)&=\eta^2\varphi_\infty(\eta) & b_{20}(\eta)&=\eta\varphi_\infty(\eta),
\end{align*}
where $\varphi_\infty\in C^\infty(\R)$ denotes a generic smooth and
even function with $\varphi_\infty(0)\not=0$.
From the form of the coefficients it is obvious that $\widehat{\mb
  D}_5$ maps even functions to odd functions.
Our first goal is to prove the bound
\[ \|\mb D_5 \mb f\|_{H^k(\B_R)\times H^{k-1}(\B_R)}\lesssim \|\mb
f\|_{H^{k+1}(\B_R^5)\times H^k(\B_R^5)}. \]
By Lemma \ref{lem:extension} it suffices to show
\begin{equation}
\label{eq:D5bound1}
 \|\mb D_5 \mb f\|_{H^k(\R)\times H^{k-1}(\R)}\lesssim \|\mb
f\|_{H^{k+1}(\R^5)\times H^k(\R^5)} 
\end{equation}
and we may assume that the coefficients of $\mb D_5$ have compact
support.
Hardy's inequality yields
\begin{align*}
  \|[\mb D_5\mb f]_1\|_{L^2(\R)}&\lesssim \||\cdot|^2 \partial_1 f_1((\cdot)e_1)\|_{L^2(\R)}
                                +\||\cdot|f_1((\cdot)e_1)\|_{L^2(\R)}+\||\cdot|^2 f_2((\cdot)e_1)\|_{L^2(\R)} \\
                              &\lesssim \|\nabla f_1\|_{L^2(\R^5)}+\||\cdot|^{-1}f_1\|_{L^2(\R^5)}
                                +\|f_2\|_{L^2(\R^5)} \\
                              &\lesssim \|f_1\|_{H^1(\R^5)}+\|f_2\|_{L^2(\R^5)}
\end{align*}
and from the Leibniz rule we infer
$\|[\mb D_5\mb f]_1\|_{H^k(\R)}\lesssim \|\mb f\|_{H^{k+1}(\R^5)\times
  H^k(\R^5)}$.
Analogously, we obtain
$\|[\mb D_5\mb f]_2\|_{H^{k-1}(\R^5)}\lesssim \|\mb
f\|_{H^{k+1}(\R^5)\times H^k(\R^5)}$ and this proves Eq.~\eqref{eq:D5bound1}.

Thus, it remains to show the more difficult reverse inequality
\[ \|\mb D_5\mb f\|_{H^k(\B_R)\times H^{k-1}(\B_R)}\gtrsim \|\mb
f\|_{H^{k+1}(\B^5_R)\times H^{k}(\B^5_R)} \]
or, by Corollary \ref{cor:Hk5}, the equivalent estimate
\begin{equation}
  \label{eq:D5bound2}
\left   \|\widehat{\mb D}_5 \widehat{\mb f} \right \|_{H^k(\B_R)\times
    H^{k-1}(\B_R)}\gtrsim \left \||\cdot|^2\, \widehat{\mb
    f}\right \|_{H^{k+1}(\B_R)\times H^k(\B_R)}.
\end{equation}
Let $\mb g:=\widehat{\mb D}_5\widehat{\mb f}$.  Then we have
\begin{align*}
  g_1&=a_{11}\widehat f_1'+a_{10} \widehat f_1+a_{20}\widehat f_2 \\
  g_2&=b_{12}\widehat f_1''+b_{11}\widehat f_1'+b_{10}\widehat f_1+b_{21}\widehat f_2'+b_{20}\widehat f_2 .
\end{align*}
We use the first equation to eliminate $\widehat f_2$ from the second
equation. This yields
\begin{equation}
  \label{eq:fhat}\widehat f_1''(\eta)+\left (\frac{6}{\eta}-\frac{h''(\eta)}{h'(\eta)}\right )\widehat f_1'(\eta)
  +3\frac{2h'(\eta)-\eta h''(\eta)}{\eta^2h'(\eta)}\widehat f_1(\eta)
  =\frac{a(\eta)}{\eta^2}g_1'(\eta) 
  +\frac{b(\eta)}{\eta}g_2(\eta),
\end{equation}
where 
\begin{align*}
  a(\eta)&=\frac{2\eta h'(\eta)-h(\eta)[1+h'(\eta)^2]}{\eta
           h'(\eta)-h(\eta)}, &
b(\eta)&=\frac{h'(\eta)}{\eta}\frac{1-h'(\eta)^2}{\eta h'(\eta)-h(\eta)}.
\end{align*}
Note that $a,b\in C^\infty(\R)$ are even and $a(0)\not=0$, $b(0)\not=0$.
If $g_1=g_2=0$, Eq.~\eqref{eq:fhat} has
the two solutions
\[ \phi(\eta)=\frac{\sqrt{2+\eta^2}-\sqrt 2}{\eta^3}=\frac{\varphi_\infty(\eta)}{\eta},\qquad
\psi(\eta)=\frac{1}{\eta^3} \] with Wronskian
\[ W(\eta)=\phi(\eta)\psi'(\eta)-\phi'(\eta)\psi(\eta)=-\frac{1}{\eta^5\sqrt{2+\eta^2}}. \]
Consequently, the variation of constants formula yields
\[ \widehat
f_1(\eta)=c_0\phi(\eta)+c_1\psi(\eta)-\phi(\eta)\int_0^\eta\frac{\psi(\eta')}{W(\eta')}g(\eta')
d\eta' +\psi(\eta)\int_0^\eta\frac{\phi(\eta')}{W(\eta')}g(\eta') d\eta' \]
where $c_0,c_1\in \R$ and $g$ denotes the right-hand side of
Eq.~\eqref{eq:fhat}.  Since $f_1(y)=\widehat f_1(|y|)$ belongs to
$H^3(\B_R^5)$, it follows that
$c_0=c_1=0$. In particular, this shows that the map $\mb D_5:
H^{k+1}_{\mathrm{rad}}(\B_R^5)\times H^k_{\mathrm{rad}}(\B_R^5) \to
H_-^k(\B_R)\times H_-^{k-1}(\B_R)$ is injective. 
Thus, we have $\widehat f_1=T_1 g_1+T_2 g_2$, where
\begin{align*}
  T_1 g_1(\eta)&:=\psi(\eta)\int_0^\eta
                 \frac{\phi(\eta')a(\eta')}{\eta'^2
                 W(\eta')}g_1'(\eta')d\eta' 
-\phi(\eta)\int_0^\eta
                 \frac{\psi(\eta')a(\eta')}{\eta'^2
                 W(\eta')}g_1'(\eta')d\eta' \\
  T_2 g_2(\eta)&:=\psi(\eta)\int_0^\eta
                 \frac{\phi(\eta')b(\eta')}{\eta'
                 W(\eta')}g_2(\eta')d\eta' 
-\phi(\eta)\int_0^\eta
            \frac{\psi(\eta')b(\eta')}{\eta'
                 W(\eta')}g_2(\eta')d\eta'.
\end{align*}
In view of Corollary \ref{cor:Hk5}, we have to prove the bounds
\begin{align*}
  \||\cdot|^2 T_1 g_1\|_{H^{k+1}(\B_R)}&\lesssim \|g_1\|_{H^k(\B_R)}  \\
  \||\cdot|^2 T_2 g_2\|_{H^{k+1}(\B_R)}&\lesssim \|g_2\|_{H^{k-1}(\B_R)}.
\end{align*}
An integration by parts using $g_1(0)=0$
yields
\begin{equation}
\begin{split}
\label{eq:T1g1}
 T_1 g_1(\eta)
&=\phi(\eta)\int_0^\eta
                 \partial_{\eta'}\left [\frac{\psi(\eta')a(\eta')}{\eta'^2
                 W(\eta')}\right ]g_1(\eta')d\eta' 
-\psi(\eta)\int_0^\eta
                 \partial_{\eta'}\left [\frac{\phi(\eta')a(\eta')}{\eta'^2
                 W(\eta')}\right ]g_1(\eta')d\eta' \\
         &=\frac{\varphi_\infty(\eta)}{\eta}\int_0^\eta \eta'\varphi_\infty(\eta')g_1(\eta')d\eta'
           +\frac{\varphi_\infty(\eta)}{\eta^3}\int_0^\eta \eta'\varphi_\infty(\eta')g_1(\eta')d\eta' 
\end{split}
\end{equation}
and for brevity we set
\begin{align*} T_{11}g_1(\eta)&:=\frac{\varphi_\infty(\eta)}{\eta}
\int_0^\eta
\eta'\varphi_\infty(\eta')g_1(\eta')d\eta' \\
  T_{12}g_1(\eta)&:=\frac{\varphi_\infty(\eta)}{\eta^3}\int_0^\eta
                \eta'\varphi_\infty(\eta')g_1(\eta')d\eta'.
\end{align*}
By Lemma \ref{lem:intop} we immediately obtain the bound 
$\||\cdot|^2T_{11}g_1\|_{L^2(\B_R)}\lesssim \|g_1\|_{L^2(\B_R)}$ and, since
\[ \partial_\eta \left [\eta^2 T_{11}g_1(\eta)\right
]=\varphi_\infty(\eta)\int_0^\eta
\eta'\varphi_\infty(\eta')g_1(\eta')d\eta'
+\eta^2 \varphi_\infty(\eta)g_1(\eta), \]
we infer $\||\cdot|^2 T_{11} g_1\|_{H^{k+1}(\B_R)}\lesssim \|g_1\|_{H^k(\B_R)}$.
Similarly, since
\[ \eta^2 T_{12}g_1(\eta)=\frac{\varphi_\infty(\eta)}{\eta}\int_0^\eta
                \eta'\varphi_\infty(\eta')g_1(\eta')d\eta' \]
and
\begin{align*}
  \partial_\eta [\eta^2 T_{12}g_1(\eta)]
&=\frac{\varphi_\infty(\eta)}{\eta^2}\int_0^\eta
  \eta'\varphi_\infty(\eta') g_1(\eta')d\eta'
+\varphi_\infty(\eta)g_1(\eta),
\end{align*}
Lemma \ref{lem:intop} yields $\|T_{12}g_1\|_{H^{k+1}(\B_R)}\lesssim \|g_1\|_{H^k(\B_R)}$.
Next, we turn to the operator $T_2$. We have
\[ \eta^2 T_2 g_2(\eta)=
\frac{\varphi_\infty(\eta)}{\eta}\int_0^\eta
\eta'^3\varphi_\infty(\eta')g_2(\eta')d\eta'
+\eta \varphi_\infty(\eta)\int_0^\eta \eta'
\varphi_\infty(\eta')g_2(\eta')d\eta' \]
and Lemma \ref{lem:intop} immediately yields the bound
$\||\cdot|^2T_2 g_2\|_{L^2(\B_R)}\lesssim \|g_2\|_{L^2(\B_R)}$.
Now we exploit the usual cancellation to obtain
\begin{align*}
 (T_2 g_2)'(\eta)
&=\psi'(\eta)\int_0^\eta\frac{\phi(\eta')b(\eta')}{\eta'W(\eta')}g_2(\eta')
  d\eta' 
-\phi'(\eta)
\int_0^\eta\frac{\psi(\eta')b(\eta')}{\eta'W(\eta')}g_2(\eta') d\eta' \\
&=\frac{\varphi_\infty(\eta)}{\eta^4}\int_0^\eta \eta'^3
  \varphi_\infty(\eta')g_2(\eta')d\eta'
+\frac{\varphi_\infty(\eta)}{\eta^2}\int_0^\eta \eta'
  \varphi_\infty(\eta')g_2(\eta')d\eta'
\end{align*}
and thus,
\begin{align*}
  \partial_\eta \left [\eta^2(T_2g_2)'(\eta)\right ]
&=\frac{\varphi_\infty(\eta)}{\eta^3}\int_0^\eta
  \eta'^3\varphi_\infty(\eta')g_2(\eta')d\eta' \\
&\quad +\eta\varphi_\infty(\eta)\int_0^\eta
  \eta'\varphi_\infty(\eta')g_2(\eta')d\eta'
+\eta\varphi_\infty(\eta)g_2(\eta),
\end{align*}
which yields the bound $\||\cdot|^2(T_2 g_2)'\|_{H^k(\B_R)}\lesssim
\|g_2\|_{H^{k-1}(\B_R)}$ by Lemma \ref{lem:intop}.
Analogously, we infer $\|(\cdot)T_2 g_2\|_{H^k(\B_R)}\lesssim \|g_2\|_{H^{k-1}(\B_R)}$
and in summary,
\begin{align*}
 \||\cdot|^2 T_2 g_2\|_{H^{k+1}(\B_R)}
&\lesssim \||\cdot|^2 T_2 g_2\|_{L^2(\B_R)}+\||\cdot|^2 (T_2 g_2)'\|_{H^k(\B_R)}
+\|(\cdot) T_2 g_2\|_{H^k(\B_R)} \\
&\lesssim \|g_2\|_{H^{k-1}(\B_R)},
\end{align*}
as desired.

Finally, we turn to $\widehat f_2$, which is given by
\begin{align*}
  \widehat f_2
&=\frac{g_1}{a_{20}}-\frac{a_{11}}{a_{20}}\widehat
  f_1'
-\frac{a_{10}}{a_{20}}\widehat f_1=S_1 g_1+S_2 g_2,
\end{align*}
where
\begin{align*}
  S_1 g_1
&:=\frac{g_1}{a_{20}}-\frac{a_{11}}{a_{20}}(T_1
  g_1)'-\frac{a_{10}}{a_{20}}T_1 g_1 \\
S_2 g_2&:=-\frac{a_{11}}{a_{20}}(T_2
               g_2)'-\frac{a_{10}}{a_{20}}T_2 g_2.
\end{align*}
We have to show the bounds 
\begin{align*}
\||\cdot|^2 S_1 g_1\|_{H^k(\B_R)}&\lesssim
\|g_1\|_{H^k(\B_R)} \\
\||\cdot|^2 S_2 g_2\|_{H^k(\B_R)}&\lesssim \|g_2\|_{H^{k-1}(\B_R)}.
\end{align*}
For the bound on $S_1$ we exploit some subtle cancellations. 
From Eq.~\eqref{eq:T1g1} we obtain
\begin{align*}
  (T_1 g_1)'(\eta)
&=\left [ \phi(\eta)\partial_\eta \frac{\psi(\eta)a(\eta)}{\eta^2
  W(\eta)}-\psi(\eta)\partial_\eta\frac{\phi(\eta)a(\eta)}{\eta^2
  W(\eta)}\right ]g_1(\eta) \\
&\quad +\phi'(\eta)\int_0^\eta
                 \partial_{\eta'}\left [\frac{\psi(\eta')a(\eta')}{\eta'^2
                 W(\eta')}\right ]g_1(\eta')d\eta' 
-\psi'(\eta)\int_0^\eta
                 \partial_{\eta'}\left [\frac{\phi(\eta')a(\eta')}{\eta'^2
                 W(\eta')}\right ]g_1(\eta')d\eta' \\
&=\left [\frac{1}{\eta^2}+\varphi_\infty(\eta)\right ]g_1(\eta) \\
&\quad +\phi'(\eta)\int_0^\eta
                 \partial_{\eta'}\left [\frac{\psi(\eta')a(\eta')}{\eta'^2
                 W(\eta')}\right ]g_1(\eta')d\eta' 
-\psi'(\eta)\int_0^\eta
                 \partial_{\eta'}\left [\frac{\phi(\eta')a(\eta')}{\eta'^2
                 W(\eta')}\right ]g_1(\eta')d\eta'
\end{align*}
and thus,
\begin{align*}
  a_{20}(\eta)S_1 g_1(\eta)
&=\left
  [1-\frac{a_{11}(\eta)}{\eta^2}+\eta^2\varphi_\infty(\eta)\right
  ]g_1(\eta) \\
&\quad -\left [a_{11}(\eta)\phi'(\eta)+a_{10}(\eta)\phi(\eta)\right ]\int_0^\eta
                 \partial_{\eta'}\left [\frac{\psi(\eta')a(\eta')}{\eta'^2
                 W(\eta')}\right ]g_1(\eta')d\eta' \\
&\quad +\left [a_{11}(\eta)\psi'(\eta)+a_{10}(\eta)\psi(\eta)\right ]\int_0^\eta
                 \partial_{\eta'}\left [\frac{\phi(\eta')a(\eta')}{\eta'^2
                 W(\eta')}\right ]g_1(\eta')d\eta' \\
&=\eta^2 \varphi_\infty(\eta)g_1(\eta)
+\varphi_\infty(\eta)\int_0^\eta \eta'
  \varphi_\infty(\eta')g_1(\eta')d\eta'
\end{align*}
since
$a_{11}(\eta)\psi'(\eta)+a_{10}(\eta)\psi(\eta)=\varphi_\infty(\eta)$.
We have $|\frac{a_{20}(\eta)}{\eta^3}|\gtrsim 1$ for all $\eta\in \B_R$
which implies
\[ S_1 g_1(\eta)=\frac{\varphi_\infty(\eta)}{\eta}g_1(\eta)
+\frac{\varphi_\infty(\eta)}{\eta^3}\int_0^\eta
\eta'\varphi_\infty(\eta')g_1(\eta')d\eta' \]
and Lemma \ref{lem:intop} yields the desired bound $\||\cdot|^2 S_1
g_1\|_{H^k(\B_R)}\lesssim \|g_1\|_{H^k(\B_R)}$.
Similarly, we have
\begin{align*}
  a_{20}(\eta)S_2 g_2(\eta)
&=-a_{11}(\eta)(T_2 g_2)'(\eta)-a_{10}(\eta)T_2 g_2(\eta) \\
&=\left [a_{11}(\eta)\phi'(\eta)+a_{10}(\eta)\phi(\eta)\right ]
\int_0^\eta \frac{\psi(\eta')b(\eta')}{\eta'W(\eta')}g_2(\eta')d\eta'
  \\
&\quad -\left [a_{11}(\eta)\psi'(\eta)+a_{10}(\eta)\psi(\eta)\right]
\int_0^\eta \frac{\phi(\eta')b(\eta')}{\eta'W(\eta')}g_2(\eta')d\eta'
  \\
&=\varphi_\infty(\eta)\int_0^\eta \eta'
  \varphi_\infty(\eta')g_2(\eta')d\eta'
+\varphi_\infty(\eta)\int_0^\eta \eta'^3 \varphi_\infty(\eta')g_2(\eta')d\eta'
\end{align*}
and thus,
\[ S_2 g_2(\eta)=\frac{\varphi_\infty(\eta)}{\eta^3}\int_0^\eta \eta'
  \varphi_\infty(\eta')g_2(\eta')d\eta'
+\frac{\varphi_\infty(\eta)}{\eta^3}\int_0^\eta \eta'^3
\varphi_\infty(\eta')g_2(\eta')d\eta'. \]
From Lemma \ref{lem:intop} we infer the bound $\||\cdot|^2 S_2
g_2\|_{L^2(\B_R)}\lesssim \|g_2\|_{L^2(\B_R)}$ and
\begin{align*}
 \partial_\eta \left [\eta^2 S_2 g_2(\eta)\right ]
&=\frac{\varphi_\infty(\eta)}{\eta^2}\int_0^\eta \eta'
  \varphi_\infty(\eta')g_2(\eta')d\eta'
+\varphi_\infty(\eta)g_2(\eta) 
\end{align*}
yields $\||\cdot|^2 S_2 g_2\|_{H^k(\B_R)}\lesssim \|g_2\|_{H^{k-1}(\B_R)}$, again
by Lemma \ref{lem:intop}.

\section{A monotonicity formula for the free wave equation}

\noindent In this section we prove a version of the energy identity
that is used for finite speed of propagation.

\begin{lemma}
\label{lem:apxdiv}
  Let $f\in C^1(\overline{\B^d})$ and $d\in \N$. Then we have
\[ \int_{\B^d}f(x)x^j\partial_{x^j} f(x)dx=-\frac{d}{2}\int_{\B^d}f(x)^2
dx+\frac12 \int_{\S^{d-1}}f(\omega)^2 d\sigma(\omega). \]
\end{lemma}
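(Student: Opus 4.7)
The plan is to recognize the integrand as (half of) a divergence plus a correction, so that the statement reduces to an application of the divergence theorem on $\B^d$.

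First I would observe the pointwise identity
\[ f(x)\, x^j \partial_{x^j} f(x) = \tfrac{1}{2}\, x^j \partial_{x^j}(f(x)^2) = \tfrac{1}{2}\, \partial_{x^j}\!\bigl(x^j f(x)^2\bigr) - \tfrac{d}{2}\, f(x)^2, \]
where the second equality uses the product rule together with $\partial_{x^j} x^j = d$. This turns the claim into a pure divergence-theorem computation: the first term on the right is a divergence, and integrating it over $\B^d$ produces a boundary contribution on $\S^{d-1}$.

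Next I would integrate both sides over $\B^d$ and apply the divergence theorem to the first term. Since the outward unit normal to $\S^{d-1}$ at the point $\omega$ is $\omega$ itself, and $|\omega|^2 = \omega^j \omega_j = 1$, the boundary integral collapses to
\[ \tfrac{1}{2}\int_{\S^{d-1}} \omega^j \omega_j\, f(\omega)^2\, d\sigma(\omega) = \tfrac{1}{2}\int_{\S^{d-1}} f(\omega)^2\, d\sigma(\omega), \]
giving exactly the claimed identity.

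There is no real obstacle here: the only regularity one needs is $f \in C^1(\overline{\B^d})$, which is precisely the hypothesis and which is sufficient for the divergence theorem as stated (the vector field $x \mapsto x f(x)^2$ is of class $C^1$ on $\overline{\B^d}$). So the proof is essentially two lines once the divergence structure is exposed.
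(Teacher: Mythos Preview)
Your proposal is correct and essentially identical to the paper's own proof: both compute $\partial_{x^j}(x^j f^2) = d f^2 + 2f\,x^j\partial_{x^j} f$, rearrange, integrate over $\B^d$, and apply the divergence theorem with outward normal $\omega$ on $\S^{d-1}$, using $\omega_j\omega^j = 1$.
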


\begin{proof}
  The chain rule yields
\[ \partial_{x^j}\left [x^j f(x)^2\right]=df(x)^2+2f(x)x^j\partial_{x^j}
f(x) \]
and thus, from the divergence theorem we infer
\begin{align*}
  \int_{\B^d}f(x)x^j\partial_{x^j} f(x)dx
&=-\frac{d}{2}\int_{\B^d}f(x)^2 dx+\frac12 \int_{\B^d}
\partial_{x^j}\left [x^j f(x)^2\right]dx \\
&=-\frac{d}{2}\int_{\B^d}f(x)^2 dx+\frac12 \int_{\partial\B^d}\omega_j\omega^jf(\omega)^2d\sigma(\omega).
\end{align*}
\end{proof}

\begin{lemma}
\label{lem:apxenid}
Let $d\geq 3$ and $T>0$. For $u\in C^2(\R^{1,d})$ and $t\in [0,T)$ set 
\[ E_u(t):=\int_{\B_{T-t}^d}\partial_0 u(t,x)^2 dx+\int_{\B_{T-t}^d}\partial^j u(t,x)\partial_j
  u(t,x)dx
+\frac{1}{T-t}\int_{\partial
\B^d_{T-t}}u(t,\omega)^2 d\sigma(\omega). \]
If $\partial^\mu\partial_\mu u=0$, the function $E_u: [0,T)\to
[0,\infty)$ is monotonically decreasing.
\end{lemma}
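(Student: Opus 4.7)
The plan is to compute $E_u'(t)$ directly, keeping careful track of boundary contributions from the shrinking domain $\B_{T-t}^d$. Writing $R=T-t$ (so $\dot R=-1$), I would split $E_u = E_u^{\mathrm{bulk}} + E_u^{\mathrm{bdry}}$, where $E_u^{\mathrm{bulk}}$ is the usual energy integral and $E_u^{\mathrm{bdry}} = \frac{1}{R}\int_{\partial\B_R^d}u^2\,d\sigma$, and differentiate each piece using the Reynolds/Leibniz rule for moving domains.

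For the bulk part, differentiating under the integral sign and using the moving-boundary correction gives
\[
\frac{d}{dt}E_u^{\mathrm{bulk}}=\int_{\B_R^d}\!\bigl[2\partial_0 u\,\partial_0^2 u+2\partial^j u\,\partial_j\partial_0 u\bigr]dx-\int_{\partial\B_R^d}\!\bigl[(\partial_0 u)^2+|\nabla u|^2\bigr]d\sigma.
\]
An integration by parts on the $\partial_j\partial_0 u$ term produces $-2\int_{\B_R^d}\Delta u\,\partial_0 u\,dx+2\int_{\partial\B_R^d}\partial_0 u\,\partial_\nu u\,d\sigma$, where $\partial_\nu=\omega^j\partial_j$ is the outer normal derivative. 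The wave equation $\partial_0^2 u=\Delta u$ kills the bulk integrand, leaving only boundary contributions. Completing the square in the resulting boundary integrand via $(\partial_0 u)^2+|\nabla u|^2-2\partial_0 u\,\partial_\nu u=(\partial_0 u-\partial_\nu u)^2+|\nabla u|^2-(\partial_\nu u)^2$ and using Cauchy--Schwarz $|\nabla u|^2\geq(\partial_\nu u)^2$ already shows $\frac{d}{dt}E_u^{\mathrm{bulk}}\le 0$.

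For $E_u^{\mathrm{bdry}}$ I would parametrize $\partial\B_R^d$ by $y=R\omega$, $\omega\in\S^{d-1}$, and write $E_u^{\mathrm{bdry}}(t)=R^{d-2}\int_{\S^{d-1}}u(t,R\omega)^2\,d\sigma(\omega)$. Differentiating gives
\[
\frac{d}{dt}E_u^{\mathrm{bdry}}=-\frac{d-2}{R^2}\!\int_{\partial\B_R^d}\!u^2\,d\sigma+\frac{2}{R}\!\int_{\partial\B_R^d}\!u(\partial_0 u-\partial_\nu u)\,d\sigma,
\]
where the extra $-\omega^j\partial_j u$ comes from differentiating $u(t,R\omega)$ in $R$ with $\dot R=-1$. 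Setting $X:=\partial_0 u-\partial_\nu u$ and combining with the bulk computation,
\[
E_u'(t)=-\int_{\partial\B_R^d}\!\Bigl[X^2-\tfrac{2uX}{R}+\tfrac{(d-2)u^2}{R^2}\Bigr]d\sigma-\int_{\partial\B_R^d}\!\bigl[|\nabla u|^2-(\partial_\nu u)^2\bigr]d\sigma.
\]
Completing the square in $X$ rewrites the first bracket as $(X-u/R)^2+(d-3)u^2/R^2$, which is non-negative precisely when $d\ge 3$.

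The conceptual step is recognizing why the boundary term $\frac{1}{R}\int u^2\,d\sigma$ is the \emph{correct} augmentation: its $t$-derivative generates a cross term $\frac{2uX}{R}$ that cancels against the cross term appearing in the energy differentiation, leaving a perfect square plus the sign-definite remainder $(d-3)u^2/R^2$. The main obstacle is therefore purely algebraic bookkeeping---keeping the boundary signs straight across the moving-domain derivative, the integration by parts, and the spherical parametrization---and identifying the completion of squares that makes the dimension condition $d\ge 3$ decisive. Once the final identity is in place the monotonicity $E_u'(t)\le 0$ is immediate, and approximation arguments make the smoothness assumption harmless for the intended applications.
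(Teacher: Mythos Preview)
Your proof is correct and follows essentially the same approach as the paper: both compute $E_u'(t)$ directly, use the wave equation to eliminate the bulk contribution, and show the remaining boundary integrand is nonpositive by the identical completion of squares $(X-u/R)^2+(d-3)u^2/R^2+|\nabla u|^2-(\partial_\nu u)^2$. The only cosmetic difference is that the paper rescales to the unit ball and invokes a divergence identity (its Lemma~\ref{lem:apxdiv}) to differentiate, whereas you apply the Reynolds transport formula directly; these are equivalent bookkeeping devices.
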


\begin{proof}
  We write $E_u=E^0_u+E^1_u+B_u$, where
  \begin{align*}
    E^0_u(t)&=\int_{\B_{T-t}^d}\partial_0 u(t,x)^2 dx=(T-t)^d
              \int_{\B^d}\partial_0 u(t,(T-t)x)^2dx \\
    E^1_u(t)&=\int_{\B_{T-t}^d}\partial^j u(t,x)\partial_j
              u(t,x)dx=(T-t)^d \int_{\B^d}\partial^j u(t,(T-t)x)\partial_j
              u(t,(T-t)x)dx \\
    B_u(t)&=\frac{1}{T-t}\int_{\partial
            \B^d_{T-t}}u(t,\omega)^2 d\sigma(\omega)
            =(T-t)^{d-2}\int_{
            \S^{d-1}}u(t,(T-t)\omega)^2 d\sigma(\omega).
  \end{align*}
Using Lemma \ref{lem:apxdiv}, we compute
\begin{align*}
  \frac{d}{dt}E_u^0(t)
&=-\frac{d}{T-t}E_u^0(t) \\
&\quad +2(T-t)^d \int_{\B^d}\partial_0 u(t,(T-t)x)\left
  [\partial_0^2 u(t,(T-t)x)-x^j\partial_j 
\partial_0 u(t,(T-t)x) \right]dx \\
&=-\frac{d}{T-t}E_u^0(t) 
 +2(T-t)^d \int_{\B^d}\partial_0 u(t,(T-t)x)\partial_0^2
  u(t,(T-t)x)dx \\
&\quad -2(T-t)^{d-1}\int_{\B^d}\partial_0 u(t,(T-t)x)x^j\partial_{x^j} 
\partial_0 u(t,(T-t)x)dx \\
&=-\frac{d}{T-t}E_u^0(t) 
 +2(T-t)^d \int_{\B^d}\partial_0 u(t,(T-t)x)\partial_0^2
  u(t,(T-t)x)dx \\
&\quad +d(T-t)^{d-1}\int_{\B^d}\partial_0 u(t,(T-t)x)^2 dx
-(T-t)^{d-1}\int_{\S^{d-1}}\partial_0 u(t,(T-t)\omega)^2 d\sigma(\omega)
\end{align*}
and thus,
\begin{align*}
  \frac{d}{dt}E_u^0(t)
&=2\int_{\B^d_{T-t}}\partial_0
  u(t,x)\partial_0^2 u(t,x)dx-\int_{\partial\B^d_{T-t}}\partial_0 u(t,\omega)^2 d\sigma(\omega).
\end{align*}
Analogously, we obtain
\[ \frac{d}{dt}E_u^1(t)=2\int_{\B^d_{T-t}}\partial^j
u(t,x)\partial_j\partial_0
u(t,x)dx-\int_{\partial\B^d_{T-t}}\partial^j
u(t,\omega)\partial_j u(t,\omega)d\sigma(\omega) \]
and an integration by parts yields
\begin{align*}
  \frac{d}{dt}E_u^1(t)
  &=-2\int_{\B^d_{T-t}}\partial^j\partial_j
    u(t,x)\partial_0
    u(t,x)dx+2\int_{\partial\B^d_{T-t}}\frac{\omega^j}{|\omega|} \partial_j
    u(t,\omega)\partial_0
    u(t,\omega)d\sigma(\omega) \\
&\quad -\int_{\partial\B^d_{T-t}}\partial^j
u(t,\omega)\partial_j u(t,\omega)d\sigma(\omega).
\end{align*}
Finally,
\begin{align*}
  \frac{d}{dt}B_u(t)
&=-\frac{d-2}{T-t}B_u(t) \\
&\quad +2(T-t)^{d-2}\int_{\S^{d-1}}u(t,(T-t)\omega)\left [
\partial_0 u(t,(T-t)\omega)-\omega^j\partial_j u(t,(T-t)\omega)\right
  ]d\sigma(\omega) \\
&=-\frac{d-2}{(T-t)^2}\int_{\partial\B^d_{T-t}} u(t,\omega)^2
  d\sigma(\omega)
+\frac{2}{T-t}\int_{\partial\B_{T-t}^d}u(t,\omega)\partial_0
  u(t,\omega)d\sigma(\omega) \\
&\quad -\frac{2}{(T-t)^2}\int_{\partial\B_{T-t}^d}\omega^j\partial_j u(t,\omega)u(t,\omega)d\sigma(\omega).
\end{align*}
In summary, since $\partial_0^2 u-\partial^j\partial_j u=0$, we infer
\[ \frac{d}{dt}E_u(t)=\frac{d}{dt}E_u^0(t)+\frac{d}{dt}E_u^1(t)
+\frac{d}{dt}B_u(t)=\int_{\partial\B_{T-t}^d}A_u(t,\omega)d\sigma(\omega), \]
where
\begin{align*}
  A_u(t,\omega)
&=-\partial_0 u(t,\omega)^2-\partial^j u(t,\omega)\partial_j
  u(t,\omega)-(d-2)\frac{u(t,\omega)^2}{(T-t)^2} \\
&\quad +2\frac{\omega^j}{|\omega|}\partial_j u(t,\omega)\partial_0
  u(t,\omega)
+2\partial_0u(t,\omega)\frac{u(t,\omega)}{T-t}
-2\frac{\omega^j}{|\omega|}\partial_j
  u(t,\omega)\frac{u(t,\omega)}{T-t}. 
\end{align*}
Since $d\geq 3$ we obtain
\begin{align*}
A_u(t,\omega)&=-\left [\partial_0 u(t,\omega)-\frac{u(t,\omega)}{T-t}\right
  ]^2-\partial^j u(t,\omega)\partial_j
  u(t,\omega)-(d-3)\frac{u(t,\omega)^2}{(T-t)^2} \\
&\quad +2\frac{\omega^j}{|\omega|}\partial_j u(t,\omega)\left [
\partial_0 u(t,\omega)-\frac{u(t,\omega)}{T-t}\right ] \\
&\leq 0
\end{align*}
for all $t\in [0,T)$ and $\omega\in \partial\B_{T-t}^d$ by Cauchy's inequality.
\end{proof}

\bibliographystyle{plain} \bibliography{wmnew}
 
\end{document}